\PassOptionsToPackage{table}{xcolor}
\documentclass[11pt, reqno]{amsart}
\usepackage{amssymb, amsmath, amsthm,mathrsfs,calligra}
\usepackage{mathtools}
\mathtoolsset{showonlyrefs}
\usepackage[backref, colorlinks=true, linkcolor=blue, citecolor=blue]{hyperref}
\usepackage[alphabetic,backrefs,lite,nobysame]{amsrefs}
\usepackage{amscd}   
\usepackage[top=1in, bottom =1in, left=1in, right = 1in]{geometry}
\usepackage[all,cmtip]{xy} 
\usepackage{setspace}
\usepackage{mathtools}
\usepackage{marginnote}
\usepackage[normalem]{ulem} 
\usepackage{enumitem}
\usepackage{tikz-cd}
\usepackage{diagbox}

\usepackage{xcolor}
\usepackage{colortbl}
\usepackage{booktabs}
\usepackage{makecell} 
\usepackage{caption} 
\definecolor{lightgray}{gray}{0.9}
\definecolor{mediumgray}{gray}{0.8}

\usepackage{tabularx}
\usepackage{array}

\newcolumntype{Y}{>{\raggedright\arraybackslash}X}

\newcommand{\legendre}[2]{\genfrac{(}{)}{}{}{#1}{#2}}

\usepackage{soul}

\DeclareFontEncoding{OT2}{}{} 

\newtheorem{lemma}{Lemma}[section]
\newtheorem{theorem}[lemma]{Theorem}
\newtheorem{proposition}[lemma]{Proposition}
\newtheorem{prop}[lemma]{Proposition}
\newtheorem{cor}[lemma]{Corollary}

\newtheorem{claim*}{Claim}
\newtheorem{thm}[lemma]{Theorem}

\theoremstyle{definition}
\newtheorem{remark}[lemma]{Remark}
\newtheorem{remarks}[lemma]{Remarks}

\newtheorem{example}[lemma]{Example}
\newtheorem{defn}[lemma]{Definition}
\newtheorem{setup}[lemma]{}

\newcommand{\G}{{\mathbb G}}

\newcommand{\PP}{{\mathbb P}}
\newcommand{\C}{{\mathbb C}}
\newcommand{\F}{{\mathbb F}}
\newcommand{\Q}{{\mathbb Q}}
\newcommand{\R}{{\mathbb R}}
\newcommand{\Z}{{\mathbb Z}}

\newcommand{\eps}{\epsilon}

\newcommand{\sO}{{\mathscr O}}
\DeclareMathOperator{\sHom}{\mathscr{H}\text{\kern -4pt {\calligra\large om}}\,}

\newcommand{\calA}{{\mathcal A}}

\newcommand{\calC}{{\mathcal C}}
\newcommand{\calD}{{\mathcal D}}
\newcommand{\calE}{{\mathcal E}}

\newcommand{\calH}{{\mathcal H}}

\newcommand{\calO}{{\mathcal O}}
\newcommand{\calP}{{\mathcal P}}

\newcommand{\calT}{{\mathcal T}}
\newcommand{\calU}{{\mathcal U}}

\newcommand{\calW}{{\mathcal W}}

\newcommand{\ma}{{\mathbf a}}
\newcommand{\mb}{{\mathbf b}}
\newcommand{\ms}{{\mathbf s}}
\newcommand{\mmu}{{\mathbf u}}
\newcommand{\mv}{{\mathbf v}}
\newcommand{\mw}{{\mathbf w}}

\newcommand{\dd}{{\vee\vee}}

\newcommand{\ca}{c_\alpha}
\newcommand{\ia}{i_\alpha}
\newcommand{\Ba}{B_\alpha}
\newcommand{\Da}{\Delta_\alpha}

\DeclareMathOperator{\HH}{H}
\DeclareMathOperator{\MM}{M}

\DeclareMathOperator{\rk}{rk}

\DeclareMathOperator{\End}{End}

\DeclareMathOperator{\Hom}{Hom}
\DeclareMathOperator{\Ext}{Ext}
\DeclareMathOperator{\Tor}{Tor}

\DeclareMathOperator{\Aut}{Aut}
\DeclareMathOperator{\Gal}{Gal}

\DeclareMathOperator{\Br}{Br}

\DeclareMathOperator{\Sym}{Sym}

\DeclareMathOperator{\Pic}{Pic}

\DeclareMathOperator{\Spec}{Spec}

\DeclareMathOperator{\M}{M}

\DeclareMathOperator{\id}{id}

\DeclareMathOperator{\NS}{NS}
\DeclareMathOperator{\T}{T}

\DeclareMathOperator{\ch}{ch}
\DeclareMathOperator{\Hilb}{Hilb}
\DeclareMathOperator{\Stab}{Stab}

\DeclareMathOperator{\alg}{alg}
\DeclareMathOperator{\td}{td}
\DeclareMathOperator{\stl}{st}

\DeclareMathOperator{\sta}{st}
\DeclareMathOperator{\Uhl}{Uhl}
\DeclareMathOperator{\Quot}{Quot}
\DeclareMathOperator{\gr}{gr}
\DeclareMathOperator{\prim}{prim}
\DeclareMathOperator{\codim}{codim}
\DeclareMathOperator{\per}{per}
\DeclareMathOperator{\ind}{ind}
\DeclareMathOperator{\topp}{top}

\newcommand{\isom}{\simeq}

\newcommand{\AlgLat}{\HH^*_{\alg}(S,\alpha,\Z)}

\numberwithin{equation}{section}
\numberwithin{table}{section}

\newcommand{\defi}[1]{\textsf{#1}} 

\title[Geometric realizations of Brauer classes]{Geometric realizations of Brauer classes on K3~surfaces from hyperk\"ahler contractions}
\hypersetup{
	pdfauthor={Sarah Frei; Jack Petok; Anthony V\'arilly-Alvarado},
    pdftitle={Geometric realizations of Brauer classes on K3 surfaces from hyperkähler contractions},
    pdfsubject={Geometric realizations of prime-order Brauer classes on K3 surfaces via contractions of moduli spaces of twisted sheaves},
    pdfkeywords={Brauer classes, K3 surfaces, Severi-Brauer varieties, twisted sheaves, hyperkähler varieties, Bridgeland stability, wall-crossing, Uhlenbeck compactification},
    pdflang={en-US},
    pdfdisplaydoctitle=true,
	colorlinks=true,
	breaklinks=true, urlcolor=blue, linkcolor=blue, citecolor=blue,
	bookmarksopen=true}

\author{Sarah Frei}
\thanks{}
\address{Department of Mathematics MS 136, Rice University, 6100 S.\ Main St., Houston, TX 77005, USA}
\email{sarah.frei@rice.edu}
\urladdr{https://math.rice.edu/\~{}sf31}

\author{Jack Petok}
\thanks{}
\address{Department of Mathematics, Colby College, 4000 Mayflower Hill Dr, Waterville, ME 04901, USA}
\email{jpetok@colby.edu}
\urladdr{https://jackpetok.github.io/}

\author{Anthony V\'arilly-Alvarado}
\thanks{}
\address{Department of Mathematics MS 136, Rice University, 6100 S.\ Main St., Houston, TX 77005, USA}
\email{av15@rice.edu}
\urladdr{http://math.rice.edu/\~{}av15}

\subjclass[2020]{Primary 14F22, 14J42; Secondary 14J28, 14J60}
\keywords{Brauer groups, K3 surfaces, Severi--Brauer varieties, twisted sheaves, hyperk\"ahler varieties, Bridgeland stability, wall-crossing, Uhlenbeck compactification}

\begin{document}

\begin{abstract}
Elements of the Brauer group $\Br(S)$ of a variety $S$ have geometric incarnations as \'etale-projective $S$-bundles, yet producing minimalist constructions of such bundles, which often power arithmetic applications, remains a difficult problem. When $S$ is a K3 surface with Picard rank $1$, we use the birational geometry of moduli spaces of twisted sheaves on $S$ to construct geometric realizations of nontrivial elements of $\Br(S)$. We recover many known geometric constructions of Brauer classes on K3 surfaces while providing a common moduli-theoretic framework for them. As a by-product, we give a new proof of the period-index theorem for very general K3 surfaces.
\end{abstract}

\maketitle

\vspace{-.2in}

\section{Introduction}

\subsection{Main Results}
\label{ss:intro_main_results}
Let $S$ be a polarized K3 surface of degree $2d$ over $\mathbb{C}$. 
One can view the Brauer group $\Br(S)$ of~$S$ as the collection of \'{e}tale-projective bundles over $S$ up to (Morita) equivalence. 
Explicit representations of Brauer classes as these bundles feature in many applications, including:
\begin{enumerate}[leftmargin=*]
    \item Rationality: the triviality of certain Brauer classes shows that some special cubic fourfolds are rational \cites{Hassett99,Kuznetsov,AHTVA, FHVA,  Hassett24};
    \item Birational contractions of low-dimensional hyperk\"ahler manifolds: \cites{HTintnums,HTextremalrays,vGK};
    \item Moduli: understanding the geometry of families of  hyperk\"ahler manifolds \cites{Mukai,IM,IKKR};
    \item Arithmetic: understanding obstructions to weak approximation and the Hasse principle \cites{HVAV,HVA,ADPZ,MSTVA,BVA};
\end{enumerate} 

The constructions of Brauer classes on low-degree K3 surfaces in the applications above may seem ad-hoc when viewed separately in the literature, but there is a unifying viewpoint: each class arises within the exceptional locus in the contraction of some hyperk\"ahler manifold $X$; see \S\ref{subsec:earlierwork}. 
Our goal in this paper is to provide a source of constructions for all Brauer classes along these lines, with information about the dimension of the hyperk\"ahler manifold $X$ and the rank of the projective-bundle representative. 

\begin{thm}[see Theorem~\ref{thm:constructionforanyn}]
\label{thm:intro-all-orders}
    Let $S$ be a very general projective K3 surface of degree~$2d$, and let $n \in \Z_{>0}$. Every nontrivial class $\alpha \in \Br (S)[n]$ has a geometric realization as an \'etale $\PP^{n-1}$-bundle $E\to S$, where $E$ is a subvariety of a hyperk\"ahler manifold $X$ with $\dim X \leq 4n-2$, and there is a birational morphism 
    of hyperk\"ahler varieties $\pi\colon X \to Y$ contracting $E$ onto $S\subset Y$.
    The bundle $E\to S$ moves in a family $Q$ constituting the exceptional locus of $\pi$, which is a \(\PP^{n-1}\)-bundle over \(S \times M\), where  \(M\) is itself a hyperk\"ahler manifold with $\dim M\leq 2n-2$, fitting into the commutative diagram
    \begin{center}
        \begin{tikzcd}
            E \arrow[r,hookrightarrow] \arrow[d, "\alpha"] & Q \arrow[d] \arrow[r, hookrightarrow] & X\arrow[d, "\pi"] \\
            S \arrow[r, hookrightarrow] &S\times M \arrow[r] & Y.
        \end{tikzcd}
    \end{center}
\end{thm}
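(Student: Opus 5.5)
The plan is to realize $E$ as a locus of strictly semistable (non-locally-free) twisted sheaves in a moduli space $X = M_\sigma(v)$ of $\alpha$-twisted objects on $S$, and to let $\pi$ be the contraction to the Uhlenbeck (or Bridgeland-wall) model. Very generality gives $\Pic(S)=\Z H$ with $H^2=2d$. Choose a B-field $B\in \HH^2(S,\Q)$ lifting $\alpha$, so that the twisted algebraic Mukai lattice $\AlgLat$ has rank $2$ (plus the $\Z H$ direction) and contains a class $v_0=(n, \ast, \ast)$ of rank $n$. Because $\alpha$ has order dividing $n$, $n$ is the minimal positive twisted rank, and by period-index considerations it is achieved.

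\textbf{Step 1: the base pieces.} Pick an $\alpha$-twisted Mukai vector $w$ of rank $n$ with $w^2=0$ if possible. Otherwise take the minimal-square vector of rank $n$, whose square $w^2=2k-2$ gives $\dim M=2k$. Let $M=M_H(w)$ be the moduli space of stable $\alpha$-twisted sheaves, which is hyperkähler (Yoshioka). I would bound $w^2\le 2n-4$ by adjusting the fractional part of $c_1$ using the freedom in $B$ and in twisting by $H$, which yields $\dim M\le 2n-2$.

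\textbf{Step 2: the total space.} Set $v=w+(0,0,1)$, or equivalently take $v$ to be the Mukai vector of kernels of surjections $F\to k(p)$ with $F\in M$. Then $v^2=w^2+2$ (adjusted by the pairing) and $\dim X = v^2+2\le \dim M+2+\dots$. Counting, the locus $Q$ of non-locally-free sheaves $\ker(F\to k(p))$ is fibered over $S\times M$ with fiber $\PP(F_p^\vee)\cong\PP^{n-1}$. This gives $\dim Q=2+\dim M+n-1$, and it has codimension $\ge 2$ or is a divisor in $X$. For a general choice this gives $\dim X\le 4n-2$, matching the bound.

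\textbf{Step 3: the contraction and $E$.} Use Bayer–Macrì: the wall in $\Stab(S,\alpha)$ where $v$ destabilizes via $0\to K\to F\to k(p)\to0$ (a totally semistable or flopping/divisorial wall of Brill–Noether/Hilbert–Chow type) gives a nef class on $X$. The induced birational morphism $\pi\colon X\to Y$ identifies $K$ with $F\oplus k(p)[-1]$ and so contracts $Q$ onto $S\times M$, mapping to $Y$. Fixing a point $[F]\in M$ gives $E=\PP(F^\vee|_{S})$, an étale $\PP^{n-1}$-bundle whose Brauer class is $[\alpha]$, since $F$ is locally free $\alpha$-twisted of rank $n$ (a general $F\in M$ is locally free because $M$ has no non-locally-free component by the minimality of $w$). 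This gives the diagram.

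The main obstacle is the lattice step: showing that the wall really exists, that it is the only relevant one (no other isotropic or $-2$ classes in the relevant chamber), and that the contraction is exactly onto $S\times M$. This requires the very-general hypothesis and Bayer–Macrì's classification of walls via the hyperbolic lattice spanned by $v$ and $w$. The dimension bounds $\dim M\le 2n-2$ and $\dim X\le 4n-2$ require a careful choice of $w$ minimizing $w^2$ in its coset, which I would attempt by an explicit reduction argument on the twisted $c_1$ modulo $n$.
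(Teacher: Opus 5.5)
Your architecture is the paper's: a rank-$n$ class $w$, $X=M_H(v)$ for the class $v$ of kernels of surjections $F\to k(p)$, the Uhlenbeck morphism, and $Q$ the $\PP^{n-1}$-bundle of such kernels over $S\times M$. The gap is in the step you defer as ``the main obstacle'': showing that $Q$ is the \emph{entire} exceptional locus and a bundle over \emph{all} of $S\times M$.

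Your plan for that step (a Bayer--Macr\`i classification, with a wall of Brill--Noether/Hilbert--Chow or totally semistable type) is misdirected. The relevant wall is never of Hilbert--Chow type (Proposition~\ref{prop:floppingwalls1}(1)), and it is not totally semistable, since $\mu$-stable vector bundles of class $v$ remain stable on it. Your claim that ``a general $F\in M$ is locally free'' is also too weak for $Q$ to live over all of $S\times M$. The missing idea is elementary and needs no wall theory; one uses Li's morphism directly.
\begin{enumerate}
\item[(i)] Ranks of $\alpha$-twisted sheaves are multiples of $n$. Hence every torsion-free sheaf of class $w$ or $v$ is $\mu$-stable, since there are no subsheaves of intermediate rank. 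It follows that the exceptional locus of Li's morphism is exactly the non-locally-free locus of $X$.
\item[(ii)] Let $F$ be such a sheaf and $\ell=\mathrm{length}(F^{\vee\vee}/F)$. Then $F^{\vee\vee}$ is $\mu$-stable, hence simple, so $v(F^{\vee\vee})^2\ge -2$. For $F\in M$ this reads $w^2-2n\ell\ge -2$; for $E\in X$ it reads $w^2-2n(\ell-1)\ge -2$.
\end{enumerate}
So the bound $w^2\le 2n-4$ forces \emph{every} $F\in M$ to be locally free. It also forces every non-locally-free $E\in X$ to have $E^{\vee\vee}/E\cong k(p)$ with $E^{\vee\vee}\in M$, that is, $E\in Q$. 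The bound on $w^2$ is therefore not merely a dimension count; it is what makes $Q$ equal to the exceptional locus. Very-generality is used only to get $\Pic(S)=\Z H$.

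Several computations also need repair.
\begin{itemize}
\item The kernel of $F\to k(p)$ has class $v=w-(0,0,1)$, not $w+(0,0,1)$.
\item $v^2=w^2+2n$, not $w^2+2$, because $w\cdot(0,0,1)=-n$. Then $\dim M=w^2+2$ and $\dim X=w^2+2n+2$, so both stated bounds follow directly from $w^2\le 2n-4$, with no ``general choice'' needed.
\item The bound on $w^2$ cannot come from changing $B$ by an integral class or twisting by $H$. These act by isometries $\exp(\lambda)$ of the Mukai lattice and preserve $w^2$.
\end{itemize}
Instead, fix $c_1=nB$ and vary the last coordinate: $w=(n,nB,t)$ has $w^2=(nB)^2-2nt$. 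As $t$ ranges over $\Z$, $w^2/2$ runs through a full residue class mod $n$, so it can be placed in $\{-1,0,\dots,n-2\}$. Both $w$ and $w-(0,0,1)$ are primitive because their coefficient on $(n,nB,0)$ is $1$, and $M\neq\emptyset$ by Yoshioka. No period--index input is needed, and using it would make the paper's period--index corollary circular.
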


Our proof of Theorem~\ref{thm:intro-all-orders} explicitly constructs $E \to S$. 
Write $\AlgLat$ for the twisted Mukai lattice of $S$. Given a choice of an {\it admissible Mukai vector} $\ma \in \AlgLat$ (see Definition~\ref{def:admissible}), subject to an arithmetic condition detailed in the proof of Theorem~\ref{thm:Uhladmissiblea}, we set $\mv= \ma +(0,0,-1)$. 
Writing $\Pic(S) = \Z h$,
we take $X$ to be the moduli space of Gieseker-stable $\alpha$-twisted sheaves $\M_h(\mv, \alpha)$ and $Y$ to be the Uhlenbeck moduli space $\M_h^{\Uhl}(\mv, \alpha)$. 
The Uhlenbeck contraction 
\[
    \pi\colon \M_h(\mv, \alpha) \to \M_h^{\Uhl}(\mv, \alpha)
\]
has the properties described in the theorem statement, and furthermore $M=\M_h(\ma, \alpha)$, a smooth variety of dimension $\ma^2 + 2$.

\subsection*{Minimal constructions} 
The admissible Mukai vector $\ma$ controls the relative dimension of the projective bundle $Q$, and the dimensions of $X$ and $M$. 
There is a positive integer $r$ such that $nr = \ma\cdot (0,0,-1)$; the relative dimension is $nr - 1$, and
\[
    \dim M = \ma^2 + 2, \quad\text{and}\quad \dim X = \dim M + 2nr.
\]
To prove Theorem~\ref{thm:intro-all-orders}, we ensure that we can take $r = 1$, but we lose tight control of $\ma^2$, and thus of $\dim M$ and $\dim X$. While it may be desirable in explicit applications to use $r=1$, one may also want constructions for which the $\alpha$-twisted stable bundle is rigid, or $\ma^2=-2$,
e.g., in the explicit arithmetic applications mentioned above. 
However, this is not always possible.

In light of these considerations, our philosophy is to give hyperk\"ahler contractions realizing Brauer classes with $\dim M  \in \{0,2,4\}$, which corresponds to finding admissible Mukai vectors $\ma$ with $\ma^2 \in \{-2, 0, 2\}$. 
We focus on Brauer classes of prime order on very general K3 surfaces to simplify the necessary bookkeeping.
For most Brauer classes, we do find hyperk\"ahler realizations with $r=1$ subject to $\ma^2 \le 2$. 
A sample result is the following.

\begin{thm}[see Theorem~\ref{thm:putittogetherUhl}]
    Let $p$ be a prime and $S$ be a very general K3 surface of degree~$2d$, and if $p>2$  then suppose further that $p\nmid d$. 
    Every nontrivial class $\alpha \in \Br (S)[p]$ has a geometric realization as an \'etale $\PP^{pr-1}$-bundle $E\to S$ for some $r > 0$, where $E$ is a subvariety of a hyperk\"ahler manifold $X$ with $\dim X \leq 2pr+2$, and there is a birational morphism  of hyperk\"ahler varieties $\pi\colon X \to Y$ contracting $E$ onto $S\subset Y$. 
    The bundle $E\to S$ moves in a family $Q$ constituting an irreducible component of the exceptional locus of $\pi$ which is a \(\PP^{pr-1}\)-bundle over \(S \times M\), where  \(M\) is either a point or a K3 surface, fitting into the commutative diagram
    \begin{center}
        \begin{tikzcd}
            E \arrow[r,hookrightarrow] \arrow[d, "\alpha"] & Q \arrow[d] \arrow[r, hookrightarrow] & X\arrow[d, "\pi"] \\
            S \arrow[r, hookrightarrow] &S\times M \arrow[r] & Y.
        \end{tikzcd}
    \end{center}
    When $M$ is a K3 surface, it is a twisted Fourier--Mukai partner of $S$.
\end{thm}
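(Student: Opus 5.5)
We reduce the statement to a lattice-theoretic search, using the Uhlenbeck contraction framework described after Theorem~\ref{thm:intro-all-orders}. Write $\Pic(S)=\Z h$ with $h^2=2d$. Fix $\alpha\in\Br(S)[p]$ nontrivial and a B-field lift $B=\beta/p$ with $\beta\in H^2(S,\Z)$. The twisted algebraic Mukai lattice $\AlgLat$ has rank $3$, and it depends on $\alpha$ only through the discrete invariant of $\beta$ modulo $p$ and modulo $h$. For very general $S$ this is the class of $\beta\cdot h$ (together with $\beta^2$) modulo $p$.

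It therefore suffices to produce an admissible Mukai vector $\ma\in\AlgLat$ (Definition~\ref{def:admissible}) with $\ma^2\in\{-2,0\}$ that satisfies the arithmetic hypothesis of Theorem~\ref{thm:Uhladmissiblea}. Theorem~\ref{thm:Uhladmissiblea} then gives the following for $\mv=\ma+(0,0,-1)$:
\begin{itemize}
\item $X=\M_h(\mv,\alpha)$ is a hyperk\"ahler manifold.
\item The Uhlenbeck morphism $\pi\colon X\to \M^{\Uhl}_h(\mv,\alpha)$ is birational.
\item One component of its exceptional locus is a $\PP^{pr-1}$-bundle $Q\to S\times M$, where $M=\M_h(\ma,\alpha)$ and $pr=\ma\cdot(0,0,-1)$.
\end{itemize}
Restricting $Q$ to $S\times\{m\}$ gives the \'etale bundle $E\to S$ with class $\alpha$. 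The dimension counts $\dim M=\ma^2+2\in\{0,2\}$ and $\dim X=\dim M+2pr\le 2pr+2$ are then immediate.

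When $M$ is a K3 surface ($\ma^2=0$), I would identify it as a twisted Fourier--Mukai partner of $S$ by citing the standard result of C\u{a}ld\u{a}raru and Yoshioka. The universal twisted sheaf on $S\times M$ induces a twisted derived equivalence $D^b(S,\alpha)\simeq D^b(M,\alpha')$, where $\alpha'$ is the obstruction to the existence of a universal family. This requires $\ma$ to be primitive, which I would build into the search.

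\textbf{Key step: a case analysis on $\alpha$.} Write the twisted Mukai vector as $\ma=(pr, c, s)$ in suitable twisted coordinates. Here $c$ lies in the twisted N\'eron--Severi part of $\AlgLat$, of the form $kh+(\text{multiple of }\beta/p)$ made integral. The conditions to meet are:
\begin{itemize}
\item $\ma^2\in\{-2,0\}$;
\item positivity of the rank and the slope conditions from admissibility;
\item the condition from Theorem~\ref{thm:Uhladmissiblea}.
\end{itemize}
The condition $\ma^2\in\{-2,0\}$ becomes a binary quadratic equation roughly of the shape $c^2-2prs=-2$ or $0$. Solving it reduces to representing $-2$ or $0$ by a quadratic form whose discriminant involves $d$ and $p$.

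I would split into two cases:
\begin{itemize}
\item \emph{$\beta\cdot h\not\equiv0 \pmod p$.} I expect we can take $r=1$ and choose $c$ so that $c^2\equiv -2 \pmod{2p}$ or $c^2\equiv 0$ is solvable. The hypothesis $p\nmid d$ makes the relevant residue $2d$ invertible, so we can solve for $k$.
\item \emph{$\beta\cdot h\equiv 0$.} The constraint falls on $\beta^2$, and one may need $r>1$. In that case I would scale the rank to $pr$, choosing $r$ to clear the quadratic obstruction.
\end{itemize}
For $p=2$ I would treat the finitely many classes of $(\beta\cdot h,\beta^2)$ modulo $2$ by hand.

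\textbf{Main obstacle.} I expect the hardest step to be arranging simultaneous solvability of $\ma^2\in\{-2,0\}$ together with the admissibility inequalities and the arithmetic condition of Theorem~\ref{thm:Uhladmissiblea}, uniformly in $d$. The approach I would try first is this: for fixed $p$ and residue data, exhibit explicit families $\ma(d)$ parametrized by congruence classes of $d$ modulo $p$ (or modulo $4p$). I would then verify the inequalities directly, and fall back to $\ma^2=0$ with larger $r$ whenever $-2$ is not represented.
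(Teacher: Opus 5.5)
Your reduction matches the paper's: produce an admissible $\ma=(pr,pr\Ba+kh,t)$ with $\ma^2\in\{-2,0\}$ and $\gcd(r,k)=1$, then apply Theorem~\ref{thm:Uhladmissiblea}. The condition $\gcd(r,k)=1$ is the unnamed ``arithmetic hypothesis'' of that theorem; it is what makes Gieseker-stable sheaves $\mu$-stable. Part (3) of the theorem is available because $\ma^2\le 0\le 2pr-4$.

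\textbf{The gap.} The existence of such an $\ma$ is the real content, and your route to it fails at its first step. You claim that when $\beta\cdot h\not\equiv 0$ one can take $r=1$, because $p\nmid d$ makes $2d$ invertible. With $r=1$, however:
\begin{itemize}
\item $\ma^2=-2$ means $dk^2+\ia k-\ca+1\equiv 0\pmod p$;
\item $\ma^2=0$ means $dk^2+\ia k-\ca\equiv 0\pmod p$.
\end{itemize}
An invertible leading coefficient does not make these solvable. Solvability is governed by whether $\Da-4d$, resp.\ $\Da$, is a square mod $p$. Concretely, take $p=5$, $d=1$, $(\ia,\ca)\equiv(1,4)$, a Type III class. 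Then $k^2+k$ takes only the values $0,1,2$ mod $5$, so no $r=1$ vector with $\ma^2\in\{-2,0\}$ exists. The paper needs $r=2$ here, with $\ma=(10,10\Ba+3h,(8-2\ca)/5)$. The relevant case split is by $\Da$ (zero, nonzero square, nonsquare), not by $\beta\cdot h$ mod $p$.

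\textbf{The hidden constraints once $r>1$.} ``Choosing $r$ to clear the quadratic obstruction'' conceals two conditions that must both be met.
\begin{itemize}
\item Modulo $p$, you need a residue $r_0$ with $r_0^2\Da-4d$ a square. This exists when $\Da\neq 0$, by the counting argument of Lemma~\ref{lem:quadresidue}.
\item Modulo $r$, the equation forces $dk^2\equiv -1\pmod r$, so $-d$ must be a square mod $r$.
\end{itemize}
The paper meets both in one of two ways (Lemmas~\ref{lem:rCongruenceCase1} and~\ref{lem:LiftingSolns}). Either it chooses a prime $r\equiv\pm r_0\pmod p$ with $\legendre{-d}{r}=1$, via Dirichlet and quadratic reciprocity, and glues $k$ by the Chinese Remainder Theorem. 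Or it takes $r=p$ and lifts by Hensel. This choice depends on the prime factorization of $d$, so explicit families indexed by $d$ modulo $4p$ cannot work uniformly.

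\textbf{The fallback fails the gcd condition.} Falling back to ``$\ma^2=0$ with larger $r$'' collides with $\gcd(r,k)=1$. That condition is automatic for $\ma^2=-2$ (reduce mod $r$). But when $\Da$ is a nonsquare, the form $dk^2+\ia kr-\ca r^2$ is anisotropic mod $p$. Every $\ma^2=0$ solution then has $p\mid r$ and $p\mid k$, and Theorem~\ref{thm:Uhladmissiblea} does not apply.

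\textbf{What works.}
\begin{itemize}
\item If $\Da\neq 0$, use $\ma^2=-2$.
\item For Type I, use $\ma^2=-2$ if $\legendre{-d}{p}=1$. Otherwise use $\ma^2=0$ with $r=1$ and $k\equiv -\ia/(2d)$, the double root.
\item For $p=2$, a finite check yields $r=1$ with $\ma^2\in\{-2,0\}$ in every case.
\end{itemize}
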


The condition that $p \nmid d$ if $p>2$ can be removed by modifying the theorem statement. 
We have to allow for $\dim M = 4$ in one case of Theorem~\ref{thm:putittogetherUhl} when $p \mid d$ (``Type B with $p\equiv 3 \bmod 4$;'' see Theorem~\ref{thm:BrauerLatticeClassification} for the classification of Brauer classes).

There is one kind of $p$-torsion Brauer class when $p \mid d$ (``Type B with $p \equiv 1 \bmod 4$'') for which we could not prove that the Uhlenbeck construction outlined above works \emph{while minimizing the dimension of $M$}, due to subtleties in solving various diophantine equations;
see \S\ref{sec:UsingUhlenbeck} and \S\ref{sec:usingmoregeneralcontraction} for details. 
We handle this final case in Theorem~\ref{thm:putittogethergeneral} using Bridgeland moduli spaces for stability conditions that can differ from Gieseker stability to construct~$X$. 
The contraction is induced by wall-crossing on the stability manifold, and fully uses the classification of walls due to Bayer and Macr\`i from~\cite{BMMMP}. 
In Theorem~\ref{thm:putittogethergeneral}, we always have $\dim M = 2$.

As a dividend, Theorem~\ref{thm:putittogethergeneral} gives alternate geometric constructions for some Brauer classes contemplated in Theorem~\ref{thm:putittogetherUhl}, whenever there is an overlap in the hypotheses. 
In particular, the more general construction gives a contraction where $\dim M = 2$ when the Brauer class is of Type~B, $p > 3$, and $p \equiv 3 \bmod 4$. 
Thus, the only time we need $\dim M = 4$ is when $p = 3$ and the class is of Type B.

Our investigation leads to the natural question: for a fixed nontrivial Brauer class $\alpha \in \Br(S)[p]$ and a fixed value of $\dim M\in \{0,2,4\}$, what is the minimal possible $r$ for which there is a hyperk\"ahler contraction realization of~$\alpha$ as an \'etale $\PP^{pr - 1}$-bundle? 
Table~\ref{ta:intro} extracts from \S\S\ref{sec:producingadmissiblevectors}-\ref{sec:mainresultproofs} answers for many types of Brauer class. 
We use our notation for the classification (Types I, II, and~III) of Brauer classes on a very general K3 of degree $2d$ given in Theorem~\ref{thm:BrauerLatticeClassification} when $p \nmid d$.
\begin{table}[h]
    \centering
    \setlength{\tabcolsep}{4pt} 
    \small              
    \rowcolors{3}{lightgray}{white}
    \begin{tabular}{ccccc}
        \toprule
        & \multicolumn{2}{c}{$\ma^2 = -2\ (\Longrightarrow \dim \M = 0, \dim X = 2pr$)} 
        & \multicolumn{2}{c}{$\ma^2 = 0\ (\Longrightarrow \dim \M = 2, \dim X = 2 + 2pr$)} \\
        \cmidrule(lr){2-3} \cmidrule(lr){4-5}
        $\alpha$ & \makecell[c]{construction \\ provided?} & bound for minimal $r$ & \makecell[c]{construction \\ provided?} & bound for minimal $r$ \\
        \midrule
        $2$-torsion & always & \makecell[l]{
            $1$ if $(\ia, \ca) \ne (1,0)$; \\
            $\leq 2^{v_2(d+1)}$ otherwise.
        } & if $(\ia, \ca) \ne (1,1)$ & $1$ \\
        \makecell{odd $p$-torsion \\ (Type I)} & if $\legendre{-d}{p}=1$ & $1$ & 
        always & $1$ \\
        \makecell{odd $p$-torsion \\ (Type II or III)} & always & \makecell[l]{
            $1$ if $\left(\frac{\Da - 4d}{p}\right) \in \{0,1\}$; \\
            $\le p$ if $\legendre{-d}{p}= 1$; \\
            else, no bound.
        } & 
        if $\left(\frac{\Da}{p}\right) \in \{0,1\}$ & $1$ \\
        \bottomrule
    \end{tabular}
    \caption{Summary of constructions given in this paper for $\alpha \in \Br(S)[p]$, $p \nmid d$.}
    \label{ta:intro}
\end{table}

\subsection{Period-index}
Theorem~\ref{thm:intro-all-orders} immediately yields a new proof of the period-index theorem for unramified classes on very general complex projective K3 surfaces. 
Recall the period of $\alpha \in \Br(S)$ is its order as a group element; it is denoted $\per(\alpha)$. 
The index of $\alpha$ is the greatest common divisor of all integers $m$ that occur when representing $\alpha$ as an \'etale $\PP^{m-1}$-bundle $E \to S$; it is denoted $\ind(\alpha)$. 
We always have $\per(\alpha) \mid \ind(\alpha)$, and a special case of a conjecture of Colliot-Th\'el\`ene~\cite{CT} predicts that $\per(\alpha) = \ind(\alpha)$ for factorial, projective surfaces over algebraically closed fields. 
De Jong~\cite{deJong} proved this equality for smooth projective connected surfaces, and Huybrechts and Schr\"oer~\cite{HuybrechtsSchroer} showed the equality for complex K3 surfaces, even in the non-projective case.

\begin{cor}
    Let $S$ be a very general projective K3 surface over $\C$, and let $\alpha \in \Br(S)$.  Then 
    \[
        \per(\alpha) = \ind(\alpha). \eqno{\qed}
    \]
\end{cor}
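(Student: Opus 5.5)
The plan is to deduce the corollary directly from Theorem~\ref{thm:intro-all-orders}, using only the elementary divisibility $\per(\alpha)\mid\ind(\alpha)$.

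Write $n=\per(\alpha)$. If $n=1$, then $\alpha$ is trivial. It is represented by the trivial $\PP^0$-bundle $S\to S$, so $\ind(\alpha)=1$ and there is nothing to prove.

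Assume now $n>1$. Then $\alpha$ is a nontrivial element of $\Br(S)[n]$. Theorem~\ref{thm:intro-all-orders} applies because $S$ is very general of some degree~$2d$. It produces an \'etale $\PP^{n-1}$-bundle $E\to S$ whose Brauer class is $\alpha$; this is the meaning of the vertical arrow labelled $\alpha$ in the diagram. By definition of the index as a greatest common divisor, the integer $n$ is among those realized, so $\ind(\alpha)\mid n=\per(\alpha)$. Combined with $\per(\alpha)\mid\ind(\alpha)$, this gives equality.

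For the divisibility $\per(\alpha)\mid\ind(\alpha)$, I would recall the standard argument. A $\PP^{m-1}$-bundle representing $\alpha$ gives a class in $\HH^1(S,\mathrm{PGL}_m)$. Its image under the boundary map lands in $\HH^2(S,\mu_m)$ via $1\to\mu_m\to\mathrm{SL}_m\to\mathrm{PGL}_m\to1$, and this image maps to $\alpha$. Hence $m\alpha=0$ for every realized $m$, so $\per(\alpha)$ divides their gcd.

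The only point needing care is the uniformity of the hypothesis ``very general''. The corollary's $S$ must satisfy the genericity conditions of Theorem~\ref{thm:intro-all-orders} for every $n$ simultaneously. Each $n$ excludes a countable union of proper closed subsets of moduli, so the union over all $n\ge1$ is still countable and the intersection of the good loci remains very general. Beyond this bookkeeping, all substantive work is in the proof of Theorem~\ref{thm:constructionforanyn}, so I expect no real obstacle in this deduction.
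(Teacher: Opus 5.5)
Your argument is correct and is exactly the paper's: the corollary is stated as an immediate consequence of Theorem~\ref{thm:intro-all-orders} (proved as Theorem~\ref{thm:constructionforanyn}). For $\alpha$ of order exactly $n=\per(\alpha)$, that theorem gives an $\alpha$-twisted bundle of rank $n$, so $\ind(\alpha)\mid\per(\alpha)$, and the reverse divisibility is standard. Your uniformity bookkeeping over all $n$ is harmless but unnecessary, since Theorem~\ref{thm:constructionforanyn} only uses $\Pic(S)=\Z h$, a condition independent of $n$.
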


Perry~\cite{Perry} recently proved that the general period-index conjecture is false. 
However, it may yet hold in particular situations, such as for hyperk\"ahler manifolds~\cites{HuyHK,BH}. We hope that the ideas in this paper might one day help to prove results towards such period-index problems. \\

\subsection{Relation to earlier work}\label{subsec:earlierwork}
Our work can be seen as a generalization of certain classical constructions, some of which we recall here. 
More details can be found in \S\S\ref{sec:cubicfourfolds}--\ref{sec:contact}. 

If $Z \subset \PP^5$ is a very general cubic fourfold containing a plane $P$, then there is a Hodge-theoretically associated twisted K3 surface $(S, \alpha)$ of degree $2$ with $\alpha \in \Br(S)[2]$. 
Voisin makes this association explicit in ~\cite{Voisin86}, showing that the variety $E$ of lines in $Z$ incident to $P$ is a $\mathbb{P}^1$-bundle over $S$ realizing the class $\alpha$. 
The variety $E$ is of course a subvariety of the Fano variety of lines $F(Z)$, which is hyperk\"{a}hler, deformation equivalent to the Hilbert scheme of length two subschemes of~$S$, by the work of Beauville and Donagi~\cite{BD85}. 
Later, Macr\`i and Stellari in ~\cite{MS12} established that $F(Z)$ is a moduli space of stable objects in the derived category $D^b(S, \alpha)$, and birational to a moduli space of stable $\alpha$-twisted sheaves on $S$.

The Brauer class arising from the cubic fourfold with a plane is one of three lattice-theoretic types of two-torsion Brauer classes on very general K3 surfaces of degree 2, as classified by van Geemen~\cite{vanGeemen}, the other two types being associated to Verra fourfolds, and to K3 surfaces of degree 8. 
These two other types are also now understood in terms of hyperk\"{a}hler geometry: O'Grady~(\cite{OGrady}) realizes the Brauer class associated to a Verra fourfold $V$ as a divisor in a hyperk\"{a}hler fourfold of K3$^{[2]}$-type (a conic bundle in an resolution of a singular EPW double sextic), and Hassett and Tschinkel in~\cite{HTintnums}*{Example~4.10} construct the Brauer class associated with the Mukai-dual K3 surface $T$ of degree $8$ as a $\mathbb{P}^3$-bundle in a hyperk\"ahler $8$-fold of K3$^{[4]}$-type (the Hilbert scheme of length four subschemes of $T$).

In all above two-torsion examples in degree 2, there is a hyperk\"{a}hler $X$ of K3$^{[n]}$-type and a contraction $\pi \colon X \to Y$ contracting a $\mathbb{P}^1$- or $\mathbb{P}^3$- bundle representative for $\alpha$ onto $S$. 
It is natural to ask whether all Brauer classes on all very general K3 surfaces can be realized in a similar way, and to record the rank of the projective bundle and the dimension of $X$ for such realizations.
In~\cite{HTextremalrays}, Hassett and Tschinkel computed the dimensions of projective bundles that could arise as part of the exceptional loci in contractions of moduli spaces of twisted sheaves, which are hyperk\"ahler of K3$^{[n]}$-type, for all $n \le 5$. 
Inspired by their work, we set out to realize all nontrivial Brauer classes in this way. 

Recently, van Geemen and Kapustka classified all nontrivial two-torsion Brauer classes on a very general K3 surface of degree $2d$ that can be realized via a K3$^{[2]}$-type fourfold contraction (see~\cite{vGK} and Theorem~\ref{thm:vGKmain} below). 
Crucially, on a given very general K3 surface, there is always some two-torsion Brauer class that {\it cannot} be realized in a K3$^{[2]}$-type fourfold. 
Our construction fills in this gap; see Table~\ref{ta:intro} and also \S\ref{sec:contact}. 
We show that every two-torsion class can be realized through some divisorial contraction (\S\ref{subsec:divorflop}) in a hyperk\"ahler fourfold or sixfold. 
Notably, for some two-torsion Brauer classes, namely the ones precluded from arising via fourfolds by~\cite[Theorem 0.1]{vGK}, the exceptional divisor is a $\mathbb{P}^1$-bundle over $S \times M$ for another K3 surface $M$. 
If instead we want to realize the Brauer class as an entire irreducible component of the exceptional locus (and not just a subvariety of the exceptional locus), we need to use a flopping contraction, yielding a potentially higher-rank projective bundle representative contracted in a higher-dimensional $X$. 

\subsection*{Outline}

With number theorists unfamiliar with stability conditions in mind, in \S\ref{sec:prelims} we cover necessary background on B-field lifting and the twisted Mukai-lattice, moduli spaces of twisted sheaves on K3 surfaces,  stability conditions, and wall-crossing for moduli spaces of Bridgeland stable objects on K3 surfaces developed in~\cite{BMMMP}. 
In \S\ref{sec:contractiongeometry}, we introduce the two main constructions used to prove the main theorems, the Uhlenbeck contraction (Theorem~\ref{thm:uhlenbeckcontraction}) and a contraction from crossing a certain wall in the stability manifold (Theorem~\ref{thm:contractiongeometry}). 
This is the technical heart of the paper.
We identify special elements of the twisted Mukai lattice, which we call {\it admissible Mukai vectors}, in \S\ref{sec:admissiblevectors}. 
The definition is tailored for constructing the desired contractions on moduli spaces defined in terms these vectors. 

In \S\ref{sec:UsingUhlenbeck}, we restrict to Picard rank 1 K3 surfaces and prove a general result that produces a geometric representative of a nontrivial Brauer class (not necessarily of prime order), provided there exists an admissible Mukai vector subject to a mild divisibility condition. 
See Theorem~\ref{thm:Uhladmissiblea}; this uses the classical Uhlenbeck contraction. 
When $d$ is divisible by an odd prime, there are Brauer classes for which the extra condition cannot be made to hold, and so we give an alternate construction of a contraction that serves the same purpose in \S\ref{sec:usingmoregeneralcontraction} using the more general theory of wall-crossing for moduli spaces of Bridgeland-stable objects; see Theorem~\ref{thm:putittogetherBM}. 
As in \S\ref{sec:UsingUhlenbeck}, we can only prove the construction goes through for admissible vectors subject to certain Diophantine conditions, but fortunately, this construction can be used to represent the Brauer classes for which there was not a suitable admissible vector for the Uhlenbeck construction. 

In \S\ref{sec:producingadmissiblevectors}, we construct small square admissible Mukai vectors for all prime-order Brauer classes on very general K3 surfaces by carefully considering certain Diophantine equations. 
In \S\ref{sec:mainresultproofs}, for nontrivial $n$-torsion classes, we show that there exists an admissible Mukai vector of rank $n$, and prove Theorem~\ref{thm:intro-all-orders} in Theorem~\ref{thm:constructionforanyn}. Then we verify that for every prime-order Brauer class, there exist admissible vectors suitable for either the Uhlenbeck construction or the more general wall-crossing construction, in Theorems~\ref{thm:putittogetherUhl} and \ref{thm:putittogethergeneral}, and highlight the minimal ranks and co-ranks for bundles that we can produce with these constructions. 

Our construction produces a hyperk\"{a}hler contraction for some cubic fourfolds with a twisted associated K3 surface. 
If a special cubic fourfold has a prime-order-twisted associated  surface $(S, \alpha)$, then there is a related moduli space of twisted sheaves on $S$ with a contraction that contains a realization of $\alpha$, and this moduli space unifies some classical constructions of these Brauer classes from cubic fourfolds, as explained in \S\ref{sec:cubicfourfolds}. 
Finally, \S\ref{sec:contact} provides a different construction for Brauer classes arising from the moduli problem for Mukai dual K3 surfaces, and explores the relations of our constructions to realizations of Brauer classes from hyperk\"{a}hler contractions found elsewhere in the literature, specifically \cites{vGK, AddingtonTakahashi, HTintnums, MSTVA}.

\subsection*{AI disclosure}
After completing a draft of this paper, we asked advanced AI language models to audit the manuscript for the correctness of its claims, arguments, and calculations, as well as for logical holes. Notably, feedback from these tools encouraged us to make a statement like Theorem~\ref{thm:intro-all-orders} early on in the paper.
Their responses also helped us make corrections and improve the exposition in \S\ref{sec:contractiongeometry} and \S\ref{sec:UsingUhlenbeck}. 
They also pointed out minor errors and missing corner cases in the diophantine analysis of~\S\ref{sec:admissiblevectors}, and \S\S\ref{sec:usingmoregeneralcontraction}--\ref{sec:producingadmissiblevectors}. 
Finally, they suggested Proposition~\ref{cor:floppingwalls7} as a replacement for an incorrect initial statement, along similar lines.
The paper is human-written, and the authors take responsibility for its correctness.

\subsection*{Acknowledgements}
This project grew out of inspiring conversations in the early 2010s between Brendan Hassett and the third-named author, around the time that the preprint version of~\cite{BMMMP} first appeared. 
Hassett and Tschinkel suspected that wall-crossing on the stability manifold might exhibit Brauer classes inside the exceptional locus of the corresponding contraction, as evidenced in their joint work~\cite{HTextremalrays}.

We thank Nicolas Addington, Arend Bayer, Bert van Geemen, Brendan Hassett, Grzegorz Kapustka, Dominique Mattei, and Emanuele Macr\`i for helpful discussions that informed many of the ideas in this paper.  
The first-named author thanks Ryan Takahashi, with whom many of the ideas in \S\ref{subsec:AddingtonTakahashi} were first explored.

During the preparation of this article, Frei was supported by NSF grants DMS-2401601 and DMS-2607398. 
V\'arilly-Alvarado was supported by NSF grants DMS-1902274 and DMS-2302231. 
Additionally, work on this project was done while Frei and Petok were visiting the Hausdorff Research Institute for Mathematics, and they are grateful for the funding provided by the Deutsche Forschungsgemeinschaft (DFG, German Research Foundation) under Germany’s Excellence Strategy – EXC-2047/1 – 390685813. 
V\'arilly-Alvarado conducted some of this work, supported by the National Science Foundation under Grant No.\ DMS-1928930, while in residence at the Simons Laufer Mathematical Sciences Research Institute (formerly MSRI) in Berkeley, California, in the spring of 2023.

\subsection*{Notation} Throughout, we work over the complex numbers. 

For a variety $X$ with $\alpha \in \Br X$, we fix a \v{C}ech cocycle $\{\alpha_{ijk}\in \calO^\times_X(U_{ijk})\}$ representing $\alpha$ with respect to an \'etale cover $\{U_i\}$ of $X$. 
An $\alpha$-twisted coherent sheaf $E$ on $X$ is a collection $(\{E_i\}, \{\varphi_{ij}\})$ of coherent sheaves $E_i$ on $U_i$ and isomorphisms $\varphi_{ij}\colon E_i|_{U_{ij}} \xrightarrow{\sim} E_j|_{U_{ij}}$ such that
\[
    \varphi_{ii} = \id,\quad \varphi_{ji} = \varphi_{ij}^{-1},\quad \text{and}\quad \varphi_{ki} \circ \varphi_{jk}\circ \varphi_{ij}= \alpha_{ijk} \cdot \id.
\]
The collection of $\alpha$-twisted coherent sheaves forms an abelian category, and $D^b(X,\alpha)$ denotes the bounded derived category of $\alpha$-twisted coherent sheaves on $X$. 
An $\alpha$-twisted sheaf $E$ is an $\alpha$-twisted vector bundle if each $E_i$ is locally free of the same rank. 
In this case, we can form $\PP(E) = \PP_X(E)$ by taking $\PP(E_i)$ on each $U_i$ and gluing via the $\varphi_{ij}$; the ambiguity on $U_{ijk}$ given by $\alpha_{ijk}$ vanishes under the projectivization. 
We note that $\PP(E)$ is locally trivial in the \'etale topology. 
In this case, we call $\PP(E)$ an \'etale $\PP^n$-bundle or simply a $\PP^n$-bundle over $X$, where $n=\rk E -1$.

When working in the derived category (e.g.~in \S\ref{subsec:flopgeometryBM}), we use derived functor notation except in cases where the functor is exact. 
The only ambiguity is in the derived dual functor, where $E^\vee = R\calH om(E, \calO_X)$, which is exact if and only if $E$ is locally free.

\section{Preliminaries}\label{sec:prelims}

\subsection{Brauer classes and B-field lifts}
\label{subsec:Bfields}
\ 

\smallskip
\noindent References: \cite{vGK}*{\S2.1}, \cite{VAK3s}*{\S4.7}, \cite{MSTVA}*{\S2.2}.
\smallskip

Let $S$ be a complex K3 surface. 
The cup product 
\[
    \langle\,\cdot\, , \, \cdot\,\rangle\colon \HH^2(S,\Z)\times \HH^2(S,\Z) \to \HH^4(S,\Z)\isom \Z
\]
endows the singular cohomology group $\HH^2(S,\Z)$ with a lattice structure isomorphic to the lattice $\Lambda_{\textrm{K3}} := U^{3} \oplus E_8(-1)^2$, where $U$ is a hyperbolic plane and $E_8(-1)$ is the negative definite $E_8$ lattice. 
The long exact sequence in cohomology derived from the exponential sequence yields an inclusion $\Pic(S) \hookrightarrow \HH^2(S,\Z)$, because $\HH^1(S,\Z) = 0$, as well as the short exact sequence
\begin{equation}
    \label{eq:LESexp}
    0 \to \HH^2(S, \Z)/\Pic(S) \to \HH^2(S,\sO_S) \to \HH^2(S,\sO_S^\times) \to 0.
\end{equation}
Define the \defi{transcendental lattice} of $S$ by $T(S) := (\Pic(S))^\perp \subset \HH^2(S,\Z)$.
Applying the functor $\Tor_{\bullet}^\Z(\,\cdot\,,\Q/\Z)$ to the sequence~\eqref{eq:LESexp} gives an isomorphism
\[
    \Br(S) \xrightarrow{\sim} \left(\HH^2(S, \Z)/\Pic(S)\right) \otimes \Q/\Z.
\]
On the other hand, the map 
\begin{equation}
    \label{eq:BrIsom}
    \begin{split}
        \HH^2(S, \Z)/\Pic(S) &\to T(S)^\vee \coloneqq \Hom_{\Z}(T(S), \Z)\\
        \mu &\mapsto \left[t \mapsto \langle \mu, t\rangle \right],
    \end{split}
\end{equation}
is an isomorphism of abelian groups (see, e.g.,~\cite{VAK3s}*{Lemma~4.13}). 
Together, these maps produce an isomorphism
\begin{equation}
    \label{eq:BrLattice}
    \Br(S) \xrightarrow{\sim} T(S)^\vee \otimes \Q/\Z \isom \Hom_{\Z}(T(S),\Q/\Z)
\end{equation}
whereby a class $\alpha \in \Br(S)$ of order $n$ can be thought of as a group homomorphism $T(S) \to \Q/\Z$ of the form $t \mapsto \frac{\langle\mu_\alpha,t\rangle}{n} \bmod \Z$ for some $\mu_\alpha \in \HH^2(S,\Z)$, unique up to elements of $n\!\HH^2(S,\Z) + \Pic(S)$. 
This map takes values in $\frac{1}{n}\Z/\Z \isom \Z/n\Z$. 
We call the class 
\[
    B_\alpha\coloneqq \frac{1}{n}\mu_\alpha \in \frac{1}{n}\HH^2(S,\Z) \subset \HH^2(S,\Q)
\]
a \defi{B-field} lift of $\alpha$.

\subsubsection{B-fields on very general K3 surfaces}
\label{sss:BfieldsPicardRank1}

We specialize to the case where $\Pic(S) = \Z h$, with $h^2 = 2d$. 
Let $\{e_1,e_2\}$ be a standard basis for the first copy of $U$ in $\Lambda_{\text{K3}}$, so $e_1^2 = e_2^2 = 0$ and $\langle e_1,e_2\rangle = 1$. 
Applying an isometry if necessary, we may assume that $h = e_1 + de_2$, so setting $v = e_1 - de_2$ we have
\[
    T(S) = \langle h\rangle^\perp = \langle v \rangle \oplus \underbrace{U^2 \oplus E_8(-1)^2}_{=: \Lambda'}.
\]
Because the lattice $\Lambda'$ is unimodular, and hence self-dual, the homomorphism $\bar\alpha \colon T(S) \to \Z/n\Z$ corresponding to an $\alpha \in \Br(S)$ of order $n$ is determined by an $\ia \in \Z/n\Z$ and a $\lambda_\alpha \in \Lambda'/n\Lambda'$ such that 
\[
    \bar\alpha(mv + \lambda') = \ia m + \langle\lambda',\lambda_\alpha\rangle \bmod n.
\]
Choose representatives $\ia \in \{0,\dots,n-1\}$ and $\lambda_\alpha \in \Lambda'$, and set
\begin{equation}
    \label{eq:BfieldChoice}
    B_\alpha := \frac{\ia e_2 + \lambda_\alpha}{n} \in \frac{1}{n}\HH^2(S,\Z).
\end{equation}
Then
\[
    \langle B_\alpha,mv + \lambda '\rangle = \frac{\ia m + \langle\lambda',\lambda_\alpha\rangle}{n}
\]
which, modulo $\Z$, equals $\bar\alpha(mv + \lambda')$ after identifying $\Z/n\Z\isom\frac{1}{n}\Z/\Z$. 
Thus $B_\alpha$ is a B-field lift for~$\alpha$.

Define $\ca \in \Z$ by $\lambda_\alpha^2 = -2\ca$; its class modulo $n$ is uniquely determined by $\alpha$, relative to the fixed marking and decomposition above. 
Then, for our choice of B-field, we have
\[
    B_\alpha^2 = -\frac{2c_\alpha}{n^2},\quad\text{and}\quad B_\alpha h = \frac{i_\alpha}{n}.
\]

\subsubsection{Brauer classes and lattices}
\label{subsec:MSTVAreview}

We continue with the setup of \S\ref{sss:BfieldsPicardRank1}, further specializing to the case where the class $\alpha \in \Br(S)$ has prime order $p$. 
The corresponding homomorphism $T(S) \to \Z/p\Z$ is almost determined by its kernel, which is a sublattice $\Gamma_\alpha \subset T(S)$ of index $p$. 
In fact, this kernel suffices to reconstruct the cyclic subgroup $\langle\alpha\rangle < \Br(S)[p]$ of order $p$. 

In~\cite{MSTVA}, the authors classify the possible lattices $\Gamma_\alpha$ when $S$ has Picard rank $1$, up to isomorphism, using a result of Nikulin~\cite{Nikulin}*{Corollary~1.13.3}, establishing that $\Gamma_\alpha$ is determined up to isomorphism by its rank, signature, and discriminant quadratic form, because $\Gamma_\alpha$ is an even indefinite lattice whose rank significantly exceeds the number of generators of its \defi{discriminant group} $d(\Gamma_\alpha) := \Gamma_\alpha^\vee/\Gamma_\alpha$. 

\begin{theorem}
    \label{thm:BrauerLatticeClassification}
    Let $S$ be a K3 surface with $\Pic(S) \isom \Z h$ and $h^2 = 2d$.
    Let $p$ be an odd prime and let $\alpha \in \Br(S)[p]$ be a nontrivial Brauer class, with associated index $p$ sublattice $\Gamma_\alpha \subset T(S)$ and discriminant group $d(\Gamma_\alpha)$. 
    Let $(\ia,\ca) \in (\F_p)^2$ be the quantities associated to $\alpha$ above. 
    Let $\Da := \ia^2 + 4\ca d \in \F_p$ be the discriminant of $\alpha$.
    \smallskip
    \begin{enumerate}[leftmargin=*]
        \item If $p \nmid d$, then there are three isomorphism classes of lattices $\Gamma_\alpha$, classified as follows:
        \smallskip 
        \begin{itemize}[leftmargin=*]
            \item Type I: $\Da = 0$; equivalently, \(d(\Gamma_\alpha) \) is not cyclic.
            \smallskip

            \item Type II: $\Da \neq 0$, and \(\legendre{\Da}{p} = 1\).
            \smallskip

            \item Type III: $\Da \neq 0$, and \(\legendre{\Da}{p} = -1\).
        \end{itemize}
        \smallskip

        \item  If $p \mid d$, then there are four isomorphism classes of lattices $\Gamma_\alpha$, classified as follows:
        \smallskip 
        \begin{itemize}[leftmargin=*]
            \item Type A: $d(\Gamma_\alpha)$ is not cyclic, $\ia=0 $, and $\legendre{\ca}{p}=1$.
            \smallskip

            \item Type B: $d(\Gamma_\alpha)$ is not cyclic, $\ia=0$, and $\legendre{\ca}{p}=-1$.
            \smallskip

            \item Type C: $d(\Gamma_\alpha)$ is not cyclic, $\ia=0$, and $\ca \equiv 0 \bmod p$.
            \smallskip

            \item Type D: $d(\Gamma_\alpha)$ is cyclic, and $\ia\neq 0$.
        \end{itemize}
    \end{enumerate}
\end{theorem}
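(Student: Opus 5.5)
The plan is to compute the discriminant form of $\Gamma_\alpha$ directly and then appeal to Nikulin's uniqueness result~\cite{Nikulin}*{Corollary~1.13.3}. That result says $\Gamma_\alpha$ is determined up to isomorphism by its signature $(2,19)$ and its discriminant quadratic form. The rank hypothesis holds because $\Gamma_\alpha$ has rank $21$ and $d(\Gamma_\alpha)$ has at most three generators. The classification therefore reduces to determining the finite quadratic form $q_{\Gamma_\alpha}$ on $d(\Gamma_\alpha)$ in terms of $(\ia,\ca,d,p)$, and then deciding when two such forms are isometric.

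The first step is to write down $\Gamma_\alpha$ explicitly. In the decomposition $T(S)=\langle v\rangle\oplus\Lambda'$ we have $v^2=-2d$, and
\[
\Gamma_\alpha=\{mv+\lambda' : \ia m+\langle\lambda',\lambda_\alpha\rangle\equiv 0 \bmod p\}.
\]

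\begin{itemize}
\item If $\lambda_\alpha\not\equiv 0 \bmod p$, unimodularity of $\Lambda'$ lets us choose a hyperbolic plane $U\subset\Lambda'$ adapted to $\lambda_\alpha$. After an isometry of $\Lambda'$ we may take $\lambda_\alpha\equiv e+\ca f$ modulo $p$, with $\{e,f\}$ a standard basis of $U$. Only the class of $\lambda_\alpha$ modulo $p$ and its norm matter, and the isometry exists by Eichler's criterion or a direct argument.
\item If $\lambda_\alpha\equiv0$, then $\ia\neq0$ and $\Gamma_\alpha=\langle pv\rangle\oplus\Lambda'$.
\end{itemize}

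Either way the computation reduces to the rank-$3$ lattice $L=\langle v\rangle\oplus U$ and the index-$p$ kernel $K$ of $(m,x,y)\mapsto \ia m+ y+\ca x$, with $\Gamma_\alpha\simeq K\oplus U\oplus E_8(-1)^2$. Since the remaining summand is unimodular, $d(\Gamma_\alpha)=d(K)$.

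Next I compute $\operatorname{disc}K=p^2\cdot 2d$ and read off $d(K)$ from a Gram matrix. The order of $d(K)$ is $2dp^2$, so its $p$-part has order $p^2$ if $p\nmid d$ and order $p^3\cdot(\text{rest})$ if $p\mid d$. The $p$-primary part is either $\Z/p^2$ (cyclic) or $(\Z/p)^2$, together with a $\Z/p$ coming from $v$ when $p\mid d$. For $p\nmid d$ the prime-to-$p$ part is just that of $\langle -2d\rangle$, which is fixed. The whole question is therefore the $p$-part.

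For the $p$-part I take a basis of $K$, for instance $pe$, $f-\ca e$, and $v-\ia e$ when $\lambda_\alpha\ne0$, and compute its Gram matrix modulo powers of $p$. For $p\nmid d$, I expect the $p$-primary discriminant form to be represented by a binary form over $\F_p$ whose determinant equals, up to squares and a fixed factor, $-\Da=-(\ia^2+4\ca d)$. The form degenerates to a noncyclic group exactly when its reduction vanishes, i.e.\ when $\Da=0$. Otherwise the group is cyclic of order $p^2$, and its isometry class is determined by the Legendre symbol of the generator's norm, which I expect to be $\legendre{\Da}{p}$ times a constant. This gives Types I, II and III.

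For $p\mid d$, we have $\Da=\ia^2$. When $\ia\ne0$ the group is cyclic (Type D). When $\ia=0$ it is noncyclic, and its nondegenerate $p$-part is classified by whether $\ca$ is zero, a nonzero square, or a nonsquare modulo $p$, giving Types A, B and C.

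The main obstacle is the bookkeeping in this last step: checking that the discriminant forms for different $(\ia,\ca)$ within one type are really isometric, and that the types give distinct forms. Isometry classes of nondegenerate quadratic forms on $p$-groups for odd $p$ are determined by rank and determinant class on each Jordan component, so the plan is to reduce to that invariant and check the determinant class case by case. I would also double-check the case $\lambda_\alpha\equiv0$ separately, since it falls outside the normal form for $\lambda_\alpha$, though it should land in Type II/III or Type D consistently. Finally, the scalar rescaling of $\alpha$ to $k\alpha$ multiplies $\Da$ by $k^2$, so the types depend only on $\langle\alpha\rangle$, as they must.
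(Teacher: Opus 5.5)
Your strategy is the argument the paper imports from \cite{MSTVA}. You normalize the character $T(S)\to\Z/p\Z$, reduce to the rank-$3$ lattice $K\subset\langle v\rangle\oplus U$, compute its discriminant form prime by prime, and conclude with Nikulin's Corollary~1.13.3 plus the classification of odd $p$-torsion forms by the rank and determinant of their Jordan components. The paper's remark that $-2dp^2q=\Da$ is the Type~II/III case of that computation. However, the computation you sketch, which is the real content of the proof, is wrong in three places and would not yield the stated criteria as written.

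\textbf{Sign and basis.} You have $(e+\ca f)^2=2\ca$, but $\lambda_\alpha^2=-2\ca$. The normal form must therefore be $\lambda_\alpha\equiv e-\ca f \pmod p$, and the character is $(m,x,y)\mapsto \ia m+y-\ca x$. To apply Eichler's criterion, first replace $\lambda_\alpha$ by a primitive vector in its class modulo $p\Lambda'$; this changes $\ca$ only modulo $p$. With your sign, $\Da$ is replaced by $\ia^2-4\ca d$ throughout, which is already the wrong criterion for Type~I. Your basis vectors $f-\ca e$ and $v-\ia e$ are also not in the kernel of your own map: they go to $1-\ca^2$ and $\ia(1-\ca)$. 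With the corrected character, $w_1=pf$, $w_2=e+\ca f$, $w_3=v-\ia f$ is a basis of $K$, with Gram matrix
\[
G=\begin{pmatrix}0&p&0\\ p&2\ca&-\ia\\ 0&-\ia&-2d\end{pmatrix},\qquad \det G=2dp^2 .
\]
Its lower-right $2\times 2$ minor is $-\Da$.

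\textbf{Structure of the $p$-part.} For $p\mid d$ you describe the $p$-part as ``$\Z/p^2$ or $(\Z/p)^2$, together with a $\Z/p$ coming from $v$''. This is wrong, and it contradicts your own claim that Type~D is cyclic. When $\ia\not\equiv 0$, the vector $w_3$ does not split off $p$-adically, and in any case $v$ contributes $\Z/p^{\nu}$ with $\nu=v_p(d)$, not $\Z/p$. Diagonalizing $G$ over $\Z_p$, with $d'=d/p^\nu$, gives the following.
\begin{itemize}
\item $p\nmid d$: $K\otimes\Z_p\simeq\langle -2d\rangle\oplus\left(\begin{smallmatrix}0&p\\ p&\Da/(2d)\end{smallmatrix}\right)$.
  \begin{itemize}
  \item If $p\mid\Da$, the second summand is $\simeq pU$ (Type~I).
  \item Otherwise it is $\simeq\langle \Da/(2d)\rangle\oplus\langle -2dp^2/\Da\rangle$, giving a cyclic $\Z/p^2$ whose class is $\legendre{-2d\Da}{p}$ (Types~II/III).
  \end{itemize}
\item $p\mid d$, $\ia\not\equiv 0$: the block on $w_2,w_3$ has determinant $-\Da\equiv-\ia^2$, so it is unimodular. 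The $p$-part is cyclic of order $p^{\nu+2}$, with class $\legendre{-2d'}{p}$ independent of $\alpha$ (Type~D).
\item $p\mid d$, $\ia\equiv 0\not\equiv\ca$: $K\otimes\Z_p\simeq\langle 2\ca\rangle\oplus\langle -p^2/(2\ca)\rangle\oplus\langle -2d\rangle$. The $p$-part is $\Z/p^2\oplus\Z/p^\nu$, and $\legendre{\ca}{p}$ separates the two forms (Types~A/B). When $\nu=2$ the two pieces form one Jordan component, and its determinant still separates them.
\item $p\mid d$, $\ia\equiv\ca\equiv 0$: $p$ divides every entry of $G$, and $K\otimes\Z_p\simeq pU\oplus\langle -2d\rangle$. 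The $p$-part $(\Z/p)^2\oplus\Z/p^\nu$ needs three generators, which separates Type~C from A/B by the group alone.
\end{itemize}

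You also need to note that every type actually occurs. This holds because $\Br(S)[p]\simeq\Z/p\Z\oplus\Lambda'/p\Lambda'$, so $\ia$ and $\lambda_\alpha \bmod p$ are arbitrary. With these corrections your plan proves the statement.
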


\begin{proof}
    This is a combination of Theorem 9 together with the proofs of Theorem 1 and Proposition 6 in \cite{MSTVA}*{\S 2}. 
    It is worth remarking that, when $p \nmid d$, lattices $\Gamma_\alpha$ of Types II and III have cyclic discriminant group $\Z/2dp^2\Z = \langle v\rangle$, and one distinguishes types with the discriminant quadratic form $q\colon d(\Gamma_\alpha) \to \Q/2\Z$, by whether $-2dp^2q(v)$ is a square modulo $p$ or not. 
    The proof of~\cite{MSTVA}*{Proposition~2} shows that $-2dp^2q(v) = \Da$, giving the simple criterion in the statement of the theorem.
\end{proof}

Throughout the paper, we often use expressions like ``if $\alpha$ is of Type B,'' when it would be more precise to say ``if $\langle\alpha\rangle$ is of Type B.'' This is harmless for our purposes.  For a fixed $\alpha$, we use the specific B-field~\eqref{eq:BfieldChoice} in our constructions, which keeps track of the element of the Brauer group, and we use the type of the subgroup it generates for bookkeeping purposes.

\subsection{The twisted Mukai lattice}
\

\smallskip
\noindent Reference: \cite{HS05}*{\S2}.
\smallskip

Let $S$ be a smooth, projective K3 surface. 
The \defi{Mukai lattice} of $S$ is the free abelian group of rank $24$
\[
    \HH^*(S,\Z) := \HH^0(S,\Z) \oplus \HH^2(S,\Z) \oplus \HH^4(S,\Z) \isom \Z\oplus \Z^{22} \oplus \Z,
\]
endowed with the \defi{Mukai pairing}
\[
    (r,\lambda,s)\cdot(r',\lambda',s') := \lambda\lambda' - rs' - r's \in \Z
\]
via cup products $\HH^i(S,\Z)\times \HH^j(S,\Z) \to \HH^{i+j}(S,\Z)$ and the natural isomorphism $\HH^4(S,\Z) \isom~\Z$.  
This lattice admits a Hodge structure of weight $2$ determined by the assignment
\[
    \HH^{2,0}(S,\C) := \C\cdot \omega_S,
\]
where $\omega_S$ is a nonzero holomorphic two-form.

The entire setup can be twisted by a class $\alpha \in \Br(S)$, after fixing a B-field lift $B \in \HH^2(S,\Q)$ of~$\alpha$. 
We call the pair $(S,\alpha)$ a \defi{twisted K3 surface}.  
The \defi{twisted Mukai lattice} $\HH^*(S,\alpha,\Z)$ is the lattice $\HH^*(S,\Z)$, with its Mukai pairing, but with a new weight $2$ Hodge structure determined by setting
\[
    \HH^{2,0}(S,\alpha,\C) := \C\cdot\omega_{S,B},
\]
where $\omega_{S,B} = \exp(B)(0,\omega_S,0) = (0,\omega_S,\omega_S\wedge  B)$, and 
\[
    \HH^{0,2}(S,\alpha,\C) := \overline{H^{2,0}},\qquad \HH^{1,1}(S,\alpha,\C) := (H^{2,0}\oplus \HH^{0,2})^\perp.
\]
Up to isomorphism, this Hodge structure depends only on the Brauer class $\alpha$, i.e., its Hodge isomorphism class is independent of the B-field representative~\cite{HS05}*{Proposition~4.3}, as reflected by the notation. 
The \defi{extended twisted N\'eron-Severi lattice}, or \defi{algebraic Mukai lattice} is
\[
    \HH^*_{\alg}(S,\alpha,\Z) := \HH^{1,1}(S,\alpha,\C) \cap \HH^*(S,\Z).
\] 
The \defi{extended twisted transcendental lattice}, or \defi{transcendental Mukai lattice} is $\T(S,\alpha) \coloneqq \AlgLat^\perp$. 
By \cite{Huybrechts05}*{Proposition 4.7}, there is a Hodge isometry
\[
    \T(S,\alpha) \cong \Gamma_\alpha.
\]

An element $(r,\lambda,s) \in \HH^*(S,\alpha,\Z)$ is of type $(1,1)$ if 
\[
    \omega_{S,B} \cdot (r,\lambda,s) = 0,
\]
i.e., if $(-r{B} + \lambda)\wedge\omega_S = 0$. 
This condition is equivalent to $-r{B} + \lambda \in \NS(S)_{\Q} = \Q h$. 
Hence, $(r,\lambda,s) \in \HH^*_{\alg}(S,\alpha,\Z)$ if and only if there is an $m \in \Q$ such that $\lambda = r{B} + mh$, where $r \in \Z$, $\lambda \in \HH^2(S,\Z)$, and $s \in \Z$. 
This means that, for every $t \in T(S)$, we must have $\langle rB,t\rangle = \langle \lambda,t\rangle \in \Z$. 
Hence, the map $T(S) \to \Q/\Z$ given by $t\mapsto \langle rB,t\rangle \bmod \Z$ is the zero map, so the class $r\alpha$ represented by $rB$ is trivial, from which we deduce that $n \mid r$.  The following proposition summarizes these observations in the special case when $S$ has Picard rank 1.

\begin{proposition}
    \label{prop:Mukai_Lattice_Basis}
    Let $S$ be a K3 surface with $\Pic(S) = \Z h$, let $\alpha \in \Br(S)$ have order $n>0$, and choose a B-field lift $B$ with $nB \in \HH^2(S,\Z)$. 
    Then
    \begin{equation}
        \label{eq:explicitTwistedNS}
        \AlgLat = \Z(n,nB,0)\oplus \Z(0,h,0) \oplus \Z(0,0,1).
    \end{equation}
    As a result, the Mukai vector
    \[
        \ma = (nr,nrB + kh,t) = r(n,nB,0) + k(0,h,0) + t(0,0,1)
    \]
    is primitive if and only if $\gcd(r,k,t) = 1$. 
    If $\mb = (0,0,-1)$ and $\mv = \ma + \mb$, then $\mv$ is primitive if and only if $\gcd(r,k,t-1) = 1$. 
    In particular, if $\gcd(r,k) = 1$ then both $\ma$ and $\mv$ are primitive.
    \qed
\end{proposition}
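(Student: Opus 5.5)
We prove the lattice description~\eqref{eq:explicitTwistedNS} first; the primitivity statements then follow by elementary arithmetic. By the discussion preceding the proposition, $(r,\lambda,s) \in \HH^*(S,\Z)$ lies in $\AlgLat$ exactly when $\lambda - rB \in \Q h$. That discussion also shows that this forces $n \mid r$.

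For the inclusion $\supseteq$, each of the three proposed generators is integral, because $nB \in \HH^2(S,\Z)$. Each also satisfies the $(1,1)$ condition:
\begin{itemize}
  \item for $(n,nB,0)$ we get $nB - nB = 0$;
  \item for $(0,h,0)$ we get $h \in \Q h$;
  \item for $(0,0,1)$ we get $0$.
\end{itemize}

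For the inclusion $\subseteq$, take $(r',\lambda,s) \in \AlgLat$ and write $r' = nr$. Then
\[
  (r',\lambda,s) - r(n,nB,0) - s(0,0,1) = (0,\lambda - nrB, 0),
\]
and $\lambda - nrB$ lies in $\HH^2(S,\Z) \cap \Q h$. This intersection equals $\Z h$: since $\HH^2(S,\Z)/\Pic(S)$ is torsion-free by~\eqref{eq:BrIsom}, the class $h$ is primitive in $\HH^2(S,\Z)$, so $\Q h \cap \HH^2(S,\Z) = \Z h$. Hence $\lambda - nrB = kh$ with $k \in \Z$.

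The three generators are linearly independent. They sit in distinct coordinates, with rank components $n,0,0$, and $(0,h,0)$ and $(0,0,1)$ are clearly independent. So the sum is direct and we have a $\Z$-basis.

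For primitivity, $\ma$ has coordinates $(r,k,t)$ in this basis of the lattice $\AlgLat$, so $\ma$ is primitive in $\AlgLat$ if and only if $\gcd(r,k,t)=1$. Similarly, $\mv = \ma + (0,0,-1)$ has coordinates $(r,k,t-1)$, giving the criterion $\gcd(r,k,t-1)=1$.

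Finally, if $\gcd(r,k)=1$, then both $\gcd(r,k,t)$ and $\gcd(r,k,t-1)$ divide $\gcd(r,k)=1$, so both $\ma$ and $\mv$ are primitive.

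The only slightly delicate point is the primitivity of $h$ in $\HH^2(S,\Z)$, which we used to identify $\HH^2(S,\Z) \cap \Q h$ with $\Z h$. This follows from $\Pic(S)$ being saturated, which holds because $\Pic(S) = \HH^{1,1} \cap \HH^2(S,\Z)$ by Lefschetz $(1,1)$, or equivalently because the map~\eqref{eq:BrIsom} identifies the quotient with a free group. Primitivity is understood throughout within $\AlgLat$; since $\AlgLat$ is saturated in $\HH^*(S,\Z)$, being defined by a linear condition, this agrees with primitivity in $\HH^*(S,\Z)$.
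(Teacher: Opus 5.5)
Your proposal is correct and takes the same route as the paper. The paper derives $\lambda - rB \in \Q h$ and $n \mid r$ in the discussion preceding the proposition and records the basis as an immediate consequence; you make explicit the steps it leaves implicit: $\HH^2(S,\Z)\cap \Q h = \Z h$ by saturation of $\Pic(S)$, directness of the sum, and the coordinate criterion for primitivity, including the agreement of primitivity in $\AlgLat$ and in $\HH^*(S,\Z)$.
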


We say a class $\ms \in \AlgLat$ is \defi{spherical} if $\ms^2 = -2$ and \defi{isotropic} if $\ms^2 = 0$.

Huybrechts and Stellari define a twisted Chern character
\[
    \ch^B \colon K\left(D^b(S,\alpha)\right) \to \HH^*(S,\alpha,\Z)
\]
from the Grothendieck group of $D^b(S,\alpha)$,
taking values in the Mukai lattice, reducing to the usual Chern character when $B = 0$~\cite{HS05}*{Proposition~1.2}.  
From this character, we build the \defi{Mukai vector}
\begin{align*}
    v \colon K\left(D^b(S,\alpha)\right) &\to \HH^*_{\alg}(S,\alpha,\Z) \\
                                                E &\mapsto \ch^B(E)\cdot\sqrt{\td(S)},
\end{align*}
where $\sqrt{\td(S)} = (1,0,1) \in \HH^*(S,\Z)$.

\subsection{Stability Conditions on twisted K3 surfaces}\ 

\smallskip
\noindent References:  \cite{HuybrechtsLehn}*{Chapter 1}, \cite{HuybrechtsLectures}*{\S2}, \cite{BMJAMS}*{\S2}, \cite{MS21}*{\S2}.
\smallskip

When studying moduli of vector bundles on a fixed curve, considering all vector bundles at once yields a rather wild space. 
One is quickly led to fix certain invariants, such as rank and degree, to obtain a better-behaved space. 
Even then, it is best to restrict to vector bundles whose proper subbundles are ``less ample,'' an idea that leads to the notion of stability.  
The same principles apply when one studies (twisted) sheaves, or, more generally, bounded complexes of (twisted) sheaves, on a K3 surface.  
Rank and degree are replaced by fixing a vector in the Mukai lattice, which encodes the Chern classes of a coherent sheaf. 
The notion of stability is more subtle in this case: there are (literally!) manifold ways to choose a stability condition that gives rise to a reasonable space of sheaves. 
The theory goes back to the work of Bridgeland~\cites{BridgelandAnnals,BridgelandDuke}, who was inspired by the work of Douglas on string theory.

\subsubsection{Classical stability}

Throughout, we fix a twisted K3 surface $(S,\alpha)$ with $\alpha \in \Br(S)$ of order $n\geq 1$, remembering to pick a B-field representative $B\in \frac{1}{n}\HH^2(S,\Z) \subset \HH^2(S,\Q)$ for $\alpha$.
In addition, fix a polarization $h \in \Pic(S)$, i.e., an ample class. 

The most classical notion of stability, historically rooted in the study of vector bundles on curves, is that of slope stability. 
When $E$ is not torsion, its \defi{slope} is defined by 
\[
    \mu(E) = \frac{c_1^B(E)\,h}{nr},
\]
which reduces to the usual slope when $E$ is untwisted (when $\Pic(S) \isom \Z h$, we have $c_1^B(E) = \mu(E) = nr\Ba+kh$). 
A torsion-free $\alpha$-twisted sheaf $E$ is called \defi{$\mu$-semistable} (or \defi{slope semistable}) if $\mu(F)\leq\mu(E)$ for all subsheaves $F\subset E$ with $0<\rk F < \rk E$, and is said to be \defi{$\mu$-stable} if $\le$ can be replaced by $<$.

A more general notion of stability uses the (twisted) Hilbert polynomial. The \defi{$B$-twisted Hilbert polynomial} of an $\alpha$-twisted sheaf $E$ on $S$ is given by 
\[
    P_B(E,m) = \int_S \ch^B(E)\ch(mh)\td(S).
\]
When $B =0$, this reduces to the usual Hilbert polynomial of a sheaf. When $B=\Ba$ is the B-field chosen in~\eqref{eq:BfieldChoice} and $v(E) = (nr, nr\Ba+kh, s)$, this becomes
\[
    P_{\Ba}(E,m) = \frac{nrh^2}{2}m^2+(nr\Ba+kh)\,hm+nr+s.
\]
If we write 
\[
    P_B(E,m) = \sum_{i=0}^d a_i(E)\frac{m^i}{i!}, \quad\text{where }d = \dim(E),
\]
the \defi{reduced $B$-twisted Hilbert polynomial of $E$} is 
\[
    p_B(E,m) := \frac{P_B(E,m)}{a_d(E)}.
\]
An $\alpha$-twisted coherent sheaf $E$ is called \defi{ (Gieseker) semistable} if it is pure and $p_B(F,m) \leq p_B(E,m)$ for all $m \gg 0$ and every proper nontrivial subsheaf $F\subset E$, and $E$ is called \defi{(Gieseker) stable} if $\leq$ can be replaced by $<$. 

For a torsion-free sheaf, the following implications hold:
\[
    \mu\text{-stable } \implies \text{ stable } \implies \text{ semistable } \implies \mu\text{-semistable}.
\]

\subsubsection{Bridgeland stability}\label{subsub:Bridgelandstab}

A \defi{slicing} $\calP$ of the triangulated category $D^b(S,\alpha)$ is a collection $\{\calP(\phi)\}_{\phi \in \R}$ of full extension-closed subcategories such that:
\begin{enumerate}[leftmargin=*]
    \item For any $E \in D^b(S,\alpha)$ there is a sequence of real numbers $\phi_1 > \phi_2 > \cdots > \phi_n$, together with a sequence of triangles
    \begin{center}
        \begin{tikzcd}
            0 = E_0 \arrow[r]  & 
            E_1 \arrow[r]\arrow[d]  & 
            E_2 \arrow[r]\arrow[d]  & 
            \cdots \arrow[r] & 
            E_{n-1} \arrow[r]  & 
            E_n & \hspace{-1.25cm}= E \\
            & A_1 \arrow[lu, dashed] & A_2 \arrow[lu, dashed] & & & A_n \arrow[lu, dashed] \arrow[u]
        \end{tikzcd}
    \end{center}
    such that $A_i \in \calP(\phi_i)$, called the \defi{Harder-Narasimhan filtration} of $E$.
    \smallskip
    
    \item If $\phi_1 > \phi_2$ then $\Hom(\calP(\phi_1),\calP(\phi_2)) = 0$.
    \smallskip
    
    \item $\calP(\phi+1) = \calP(\phi)[1]$.
\end{enumerate}
Property (2) ensures that the Harder-Narasimhan filtration of any $E \in D^b(S,\alpha)$ is unique. 
The categories $\calP(\phi)$ are abelian, and their nonzero objects are called \defi{semistable of phase $\phi$}. 
If $E$ is simple in the abelian category $\calP(\phi)$, then we say $E$ is \defi{stable of phase $\phi$}. \\

A (full, numerical) \defi{Bridgeland stability condition} on $(S,\alpha)$ is a pair $\sigma = (Z,\calP)$, where 
\[
    Z\colon \HH^*_{\alg}(S,\alpha,\Z) \to \C
\]
is a group homomorphism, often called the \defi{stability function}, or \defi{central charge}, and $\calP$ is a slicing of $D^b(S,\alpha)$, satisfying:
\smallskip
\begin{enumerate}[leftmargin=*]
    \item For $0\neq E \in \calP(\phi)$, we have $Z(\mv(E)) \in \R_{> 0}\cdot e^{\phi\pi i}$. 
    \smallskip

    \item (Support Property) Given a norm $||\cdot||$ on $\HH^*_{\alg}(S,\alpha,\R)$, there is a constant $C >0$ such that
    \[
        |Z(\mv(E))| \geq C||\mv(E)||
    \]
for all $E$ semistable with respect to the slicing $\calP$. This condition ensures that each $\calP(\phi)$ has finite length.
\end{enumerate}

If $\sigma=(Z,\calP)$ is a Bridgeland stability condition on $(S,\alpha)$, then every object $E \in \calP(\phi)$ has a finite \defi{Jordan--H\"older filtration} refining the Harder--Narasimhan filtration, i.e., there is a sequence of stable objects $0=E_0', E_1', E_2', \ldots, E'_{n'}=E$, together with a sequence of triangles
\begin{center}
    \begin{tikzcd}
        0 = E'_0 \arrow[r]  & 
        E'_1 \arrow[r]\arrow[d]  & 
        E'_2 \arrow[r]\arrow[d]  & 
        \cdots \arrow[r] & 
        E'_{n'-1} \arrow[r]  & 
        E'_{n'} & \hspace{-1.25cm}= E \\
        & A'_1 \arrow[lu, dashed] & A'_2 \arrow[lu, dashed] & & & A'_{n'} \arrow[lu, dashed] \ar[u]
    \end{tikzcd}    
\end{center}
such that $A_1',...,A'_{n'} \in \calP(\phi)$.
Jordan--H\"older filtrations are not unique, but two filtrations have the same set of quotients up to reordering. 
Two objects $A$, $B \in \calP(\phi)$ are called \defi{S-equivalent} if they have the same Jordan--H\"older factors, up to reordering. 
Note that stable objects are S-equivalent if and only if they are isomorphic.
\smallskip

\subsubsection{The Stability Manifold}
The set $\Stab(S,\alpha)$ of stability conditions on $D^b(S,\alpha)$ possesses an incredibly rich structure.  
In the untwisted case ($\alpha = 0$), Bridgeland showed that it is naturally a complex manifold of dimension $\rk \HH^*_{\alg}(S,\alpha,\Z)$~\cite{BridgelandAnnals}*{Corollary~1.3}, containing a component $\Stab^\dagger(S,\alpha) \subset \Stab(S,\alpha)$ of so-called geometric stability conditions~\cite{BridgelandDuke}*{Definition~11.4}. 
This was all extended to the twisted case by Huybrechts, Macr\`i and Stellari; see~\cite{HMSCompositio}*{\S3.1},\cite{BMMMP}*{\S2}.

\subsubsection{Wall and Chamber decomposition of $\Stab^\dagger(S,\alpha)$}
\label{sss:WallAndChamber}

For a fixed Mukai vector $\mv$, Bridgeland and Toda unearthed a set of \defi{walls} on $\Stab^\dagger(S,\alpha)$ induced by~$\mv$, i.e., a locally finite set of real codimension $1$ submanifolds with boundary with the following remarkable properties:
\smallskip
\begin{itemize}[leftmargin=*]
    \item as $\sigma$ varies strictly inside a chamber of the walls induced by $\mv$, the sets of $\sigma$-semistable and $\sigma$-stable objects in $D^b(S,\alpha)$ do not change;
    \smallskip

    \item if $\sigma$ lies on a single wall, then there is a $\sigma$-semistable object of $D^b(S,\alpha)$ that is unstable on one side of the wall but is semistable on the other side;
\end{itemize}
\smallskip
see~\cite[Proposition~9.3]{BridgelandDuke}, \cite[Proposition~2.8]{Toda} and~\cite[\S3.1]{HMSCompositio}.
There is a useful criterion to determine when a stability condition lies on a wall: if $\mv$ is primitive, then $\sigma$ lies on a wall if and only if there exists a strictly $\sigma$-semistable
object of class $\mv$. We say a stability condition $\sigma$ is \defi{generic (with respect to $\mv$)} if it does not lie on any wall of $\Stab^\dagger(S,\alpha)$ induced by $\mv$; see Figure~\ref{fig:stabmfld}. 
A stability condition $\sigma_0$ lying on a single wall $\calW$ is called \defi{$\calW$-generic (with respect to $\mv$)}.

\begin{figure}[h]
    \centering
    \includegraphics[scale=0.5]{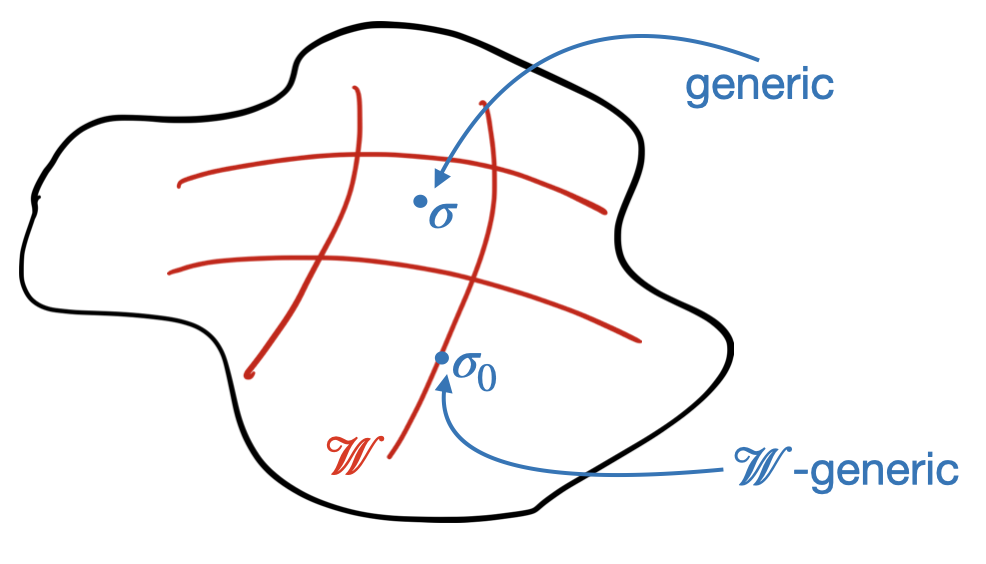}
    \caption{A wall-and-chamber decomposition on $\Stab^\dagger(S,\alpha)$ induced by $\mv \in \AlgLat$.}
    \label{fig:stabmfld}
\end{figure}

To each wall $\calW$ induced by $\mv$, we associate the set of classes
\[
    \calH_\calW := \left\{ \mathbf{w} \in \HH^*_{\alg}(S,\alpha,\Z) \mid \Im\left(Z(\mathbf{w})/Z(\mv)\right) = 0 \text{ for all } \sigma = (Z,\calP) \in \calW \right\},
\]
consisting of those algebraic Mukai vectors whose phase matches that of $\mv$ for all stability conditions on the wall.  
It is a primitive, rank $2$ sublattice of $\HH^*_{\alg}(S,\alpha,\Z)$, containing $\mv$, of signature $(1,1)$ with respect to the Mukai pairing; see~\cite[Proposition~5.1]{BMMMP}.

Conversely, suppose that $\calH \subset \HH^*_{\alg}(S,\alpha,\Z)$ is a primitive rank $2$ sublattice of signature $(1,1)$ containing $\mv$,  and let $\calW_H \subset \Stab^\dagger(S,\alpha) $ be a component of the real codimension 1 submanifold of stability conditions $(Z, \calP)$ such that the ray $\R e^{\phi(\mv) \pi i}$ contains $Z(\calH)$. 
Naturally, we ask whether $\calW_H$ serves as a wall for the Mukai vector $\mv$. 
This question serves as the departure point for Bayer and Macr\`i's spectacular investigation~\cite{BMMMP} of the MMP for moduli of sheaves on twisted K3 surfaces. 
They call $\calW_\calH$ a \defi{potential wall}, determine when a potential wall is, in fact, a wall, and classify the resulting walls according to the role they play in the MMP for these moduli spaces (we recall this result in Theorem~\ref{thm:BMMMPThm5.7}).

\subsection{Moduli spaces and contractions}
\label{ss:moduliandcontractions}
Let $(S,\alpha)$ be a twisted K3 surface. 
Given a Mukai vector $\mv \in \AlgLat$ and a polarization $h \in \Pic(S)$, we denote by $\MM_h(\mv,\alpha)$ the moduli space of S-equivalence classes of Gieseker semistable $\alpha$-twisted sheaves $E$ on $S$ such that $v(E) = \mv$. 
For untwisted sheaves, the moduli space of slope stable sheaves (of rank 2) was first constructed by Maruyama~\cite{Maruyama}, and the moduli space of Gieseker stable sheaves was constructed in~\cite{Gieseker}. 
For the generalization to twisted sheaves, see \cite{Lieblich} and \cite{Yoshioka06}. 

The polarization $h$ is said to be \defi{$\mv$-generic} if it is not contained in a certain locally finite union of hyperplanes in $\NS(S)_{\R}$ \cite[\S4.C]{HuybrechtsLehn}.
When, in addition, $\mv$ is primitive, $\rk \mv >0$, and $\mv^2\geq -2$, the space $\MM_h(\mv,\alpha)$ is a projective hyperk\"ahler manifold (see e.g., \cite[Theorem~3.16]{Yoshioka06}), and in particular is smooth and irreducible. 
Note that if $\Pic(S)=\Z h$, then $h$ is $\mv$-generic for any $\mv$.

The space $\MM_h(\mv,\alpha)$ parametrizes sheaves up to S-equivalence, identifying sheaves with the same associated graded polystable sheaf arising from the Jordan--H\"older filtration \cite{HuybrechtsLehn}. 
Building on work of Uhlenbeck and Donaldson, Li~\cite{Li} constructed a related moduli space parametrizing sheaves up to a coarser equivalence relation. 
He worked in the untwisted case; for the twisted case, see \cite{BMMMP}*{Proposition~8.2}. 
This space contains as an open subset the moduli space of $\mu$-stable $\alpha$-twisted vector bundles with Mukai vector $\mv$, and is thus called the \defi{Uhlenbeck compactification}, written $\MM^{\Uhl}(\mv,\alpha)$. 
Its closed points parametrize $\mu$-semistable sheaves up to the following equivalence relation. 
Let $\gr^\mu(E)$ be the direct sum of the Jordan--H\"older factors of $E$ with respect to slope stability. 
Then $E_1$ and $E_2$ are identified in $\MM^{\Uhl}(\mv,\alpha)$ if and only if $\gr^\mu(E_1)^{\vee\vee} \cong \gr^\mu(E_2)^{\vee\vee}$ and $l_x(\gr^\mu(E_1)^{\vee\vee}/\gr^\mu(E_1)) = l_x(\gr^\mu(E_2)^{\vee\vee}/\gr^\mu(E_2))$ for all closed points $x\in S$, where $l_x(-)$ is the length of the stalk at $x$ as an $\calO_{S,x}$-module; see~\cite[Theorem~4]{Li} and also \cite{HuybrechtsLehn}*{\S8.2}. 
Li shows there is a birational morphism $\pi\colon \MM_h(\mv,\alpha) \to \MM^{\Uhl}(\mv,\alpha)$ sending the S-equivalence class of a sheaf $E$ to its equivalence class under this coarser relation. When $\MM_h(\mv,\alpha)$ contains a $\mu$-stable twisted sheaf, the exceptional locus of $\pi$ is exactly those sheaves that are either not locally free or not $\mu$-stable.\\

More generally, given a $\mv \in \HH^*_{\alg}(S,\alpha,\Z)$ and $\sigma$ a $\mv$-generic stability condition, we denote by $\MM_\sigma(\mv,\alpha)$ the coarse moduli space of $\sigma$-semistable objects in $D^b(S,\alpha)$ with Mukai vector $\mv$; such coarse spaces exist by \cites{BMJAMS, Yoshioka06}. 
When nonempty, it is a normal projective irreducible variety with $\Q$-factorial singularities; we write $\MM^{\stl}_\sigma(\mv,\alpha)$ for its open stable locus.  
If $\mv$ is primitive and $\mv^2\geq -2$, then $\MM_\sigma(\mv,\alpha) = \MM^{\stl}_\sigma(\mv,\alpha)$ is a (nonempty) projective hyperk\"ahler manifold (see, e.g.,~\cite[Theorem~3.6]{BMMMP}). 
In this case, since $\mv$ is primitive and $\sigma$ is generic with respect to $\mv$, we have $\dim \MM_\sigma(\mv,\alpha) = \mv^2 + 2$, by work of Yoshioka and Toda~\cite[Corollary~6.9]{BMJAMS}. 

Following the discussion in~\S\ref{sss:WallAndChamber}, as $\sigma$ varies strictly inside a chamber of $\Stab^\dagger(S,\alpha)$, the moduli space $\MM_\sigma(\mv,\alpha)$ remains immutable. 
However, crossing a wall can induce a birational transformation of $\MM_\sigma(\mv,\alpha)$: 
For each $\sigma \in \Stab^\dagger(S,\alpha)$, Bayer and Macr\`i construct a natural nef divisor class $\ell_\sigma$ in $\NS(\MM_\sigma(\mv,\alpha))_\R$~\cite[Theorem~1.1]{BMJAMS}.
If $\sigma_0$ is a stability condition contained in a wall $\calW$ of $\Stab^\dagger(S,\alpha)$ with respect to $\mv$ and is $\calW$-generic, and if $\sigma_+$ and $\sigma_-$ are stability conditions on opposite sides of $\calW$, then the nef divisor $\ell_{\sigma_0}$ induces respective nef divisors $\ell_{\sigma_+}$ and $\ell_{\sigma_-}$ on $\MM_{\sigma_+}(\mv,\alpha)$ and $\MM_{\sigma_-}(\mv,\alpha)$; see Figure~\ref{fig:wallcrossing}.

\begin{figure}[h]
    \centering
    \includegraphics[scale=0.5]{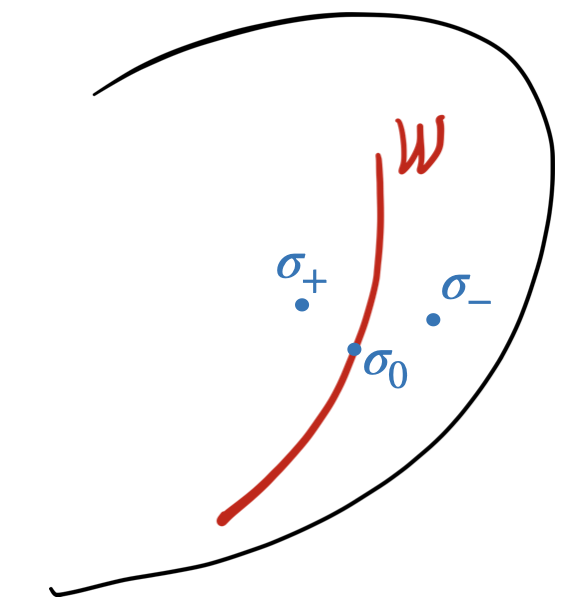}
    \caption{Generic stability conditions $\sigma_+$, $\sigma_- \in \Stab^\dagger(S,\alpha)$ near a wall $\mathcal{W}$ induced by the Mukai vector $\mv$. The stability condition $\sigma_0 \in \calW$ is assumed to be $\calW$-generic.}
    \label{fig:wallcrossing}
\end{figure}

\newpage

\begin{theorem}[{\cite[Theorem~1.4]{BMJAMS}}]
    \label{thm:BMbircontractions}
    Let $\sigma_0 \in \calW$ be $\calW$-generic. If $\sigma_+,$ and $\sigma_-$ are nearby stability conditions on opposite sides of $\calW$, then the divisors $\ell_{\sigma_+}$ and $\ell_{\sigma_-}$ are big and nef.  
    Moreover, they induce birational contractions
    \[
        \pi_+ \colon \MM_{\sigma_+}(\mv,\alpha) \to \overline{\MM}_+ \quad\text{and}\quad \pi_-\colon \MM_{\sigma_-}(\mv,\alpha) \to \overline{\MM}_-
    \]
    that contract precisely the curves of objects which are S-equivalent with respect to $\sigma_0$.
\end{theorem}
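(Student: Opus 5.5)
This result is quoted from Bayer and Macr\`i~\cite[Theorem~1.4]{BMJAMS}, so the proof here is an outline of their argument in our twisted setting, not a new one. The central tool is the positivity lemma. To any $\sigma=(Z,\calP)\in\Stab^\dagger(S,\alpha)$ and any family $\calE$ of $\sigma$-semistable objects of class $\mv$ parametrized by a proper base, one attaches a numerical class $\ell_\sigma$ on the base. Its degree on a curve $C\to \MM_\sigma(\mv,\alpha)$ is
\[
    \ell_\sigma\cdot C=\Im\bigl(-Z(\Phi_\calE(\calO_C))/Z(\mv)\bigr),
\]
where $\Phi_\calE$ is the Fourier--Mukai transform with kernel $\calE$. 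The first step is to show that this degree is nonnegative, and that it vanishes exactly when the objects parametrized by generic points of $C$ are S-equivalent with respect to $\sigma$. We prove this by rotating so that $Z(\mv)\in\R_{<0}$. Then $\Phi_\calE(\calO_C)$ has a filtration whose factors lie in $\calP(1)$ and in the heart $\calP((0,1])$. This is checked by twisting by a high power of an ample line bundle on $C$ and pushing forward, and the claim follows from the sign condition on $\Im Z$ of objects in the heart. The argument is identical for twisted K3 surfaces, using the moduli theory of \cite{Yoshioka06,BMJAMS}.

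Next we apply this on both sides of the wall. For $\sigma_\pm$ in the chambers adjacent to $\calW$, every object of $\MM_{\sigma_\pm}(\mv,\alpha)$ is $\sigma_0$-semistable, because semistability is a closed condition in $\Stab$. So the universal family over $\MM_{\sigma_\pm}(\mv,\alpha)$ defines $\ell_{\sigma_0}$ on it, and this is the class called $\ell_{\sigma_\pm}$. Positivity gives nefness at once. For bigness, we use that $\ell_\sigma$ is ample when $\sigma$ is generic. This holds because $\ell$ depends linearly on $Z$ through the Mukai/Donaldson homomorphism $\theta\colon \mv^\perp\to\NS(\MM)$, and $\ell_{\sigma_\pm}$ is strictly positive on all curves. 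Therefore $\ell_{\sigma_0}$ lies on the boundary of the ample cone, as a limit of ample classes. It is big because $\ell_{\sigma_0}^{\dim}>0$, which we get from the Beauville--Bogomolov form $q(\theta(\cdot))$. That form is computed from the Mukai pairing on $\mv^\perp$, and $Z_{\sigma_0}$ still gives a positive vector there, since $\sigma_0\in\Stab^\dagger$.

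The last step produces the contractions. Since $\MM_{\sigma_\pm}(\mv,\alpha)$ is hyperk\"ahler (or more generally has $K$-trivial klt singularities), base point freeness (Kawamata) applies to the big and nef class $\ell_{\sigma_\pm}$. It gives a birational morphism $\pi_\pm$ to $\overline{\MM}_\pm=\mathrm{Proj}\bigoplus_m H^0(m\ell_{\sigma_\pm})$. A curve is contracted exactly when $\ell_{\sigma_0}\cdot C=0$, and by the equality case of positivity this happens exactly when its objects are $\sigma_0$-S-equivalent. The main obstacle is this equality case. One must show that $\Phi_\calE(\calO_C)$ having phase exactly $1$ forces a constant Jordan--H\"older filtration along $C$. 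We would argue this by showing that $\Hom$ from the varying objects into a fixed $\sigma_0$-stable factor must be constant, which pins down the factors.
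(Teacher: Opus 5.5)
The paper gives no argument for this statement. It imports \cite[Theorem~1.4]{BMJAMS}, which already covers twisted K3 surfaces, and your outline follows Bayer--Macr\`i's strategy. The nefness and bigness parts are essentially right, but need three repairs:
\begin{enumerate}
\item With $Z_0(\mv)=-1$ the triangle is $\bigoplus_i\calE_{c_i}[-1]\to\Phi_\calE(\calO_C)\to\Phi_\calE(L^n)$. So the piece outside the heart lies in $\calP(1)[-1]$, not $\calP(1)$. This is harmless for the sign, since $\Im Z$ vanishes on it.
\item Ampleness of $\ell_\sigma$ for generic $\sigma$ does not come from strict positivity on curves, which does not by itself imply ampleness. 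It comes from openness of $\sigma\mapsto\ell_\sigma$ near $\sigma_\pm$, because $\theta_\mv$ is an isomorphism onto $\NS(\MM)_\R$. Hence a whole neighbourhood of $\ell_{\sigma_\pm}$ consists of nef classes, so $\ell_{\sigma_\pm}$ is ample by Kleiman.
\item In general $\ell_{\sigma_0}$ is only an $\R$-class, so $\mathrm{Proj}\bigoplus_m H^0(m\ell_{\sigma_\pm})$ is meaningless as written. Either move $\sigma_0$ along $\calW$ to make the class rational, or contract the face $\ell_{\sigma_0}^\perp\cap\overline{\mathrm{NE}}$. For the latter, write $\ell_{\sigma_0}\equiv A+E$ with $A$ ample and $E$ effective, and apply the cone and contraction theorems to the klt pair $(\MM_{\sigma_\pm}(\mv,\alpha),\epsilon E)$ with $\epsilon>0$ small. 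This face is $(K+\epsilon E)$-negative.
\end{enumerate}

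The genuine gap is the clause that $\pi_\pm$ contracts \emph{precisely} the curves of $\sigma_0$-S-equivalent objects. You leave this as ``the main obstacle'', and the route you sketch does not close it. Semicontinuity of $\hom(\calE_c,F)$ for a fixed $\sigma_0$-stable $F$ presupposes a factor common to all fibers, which is exactly what must be proved. You also never connect the vanishing of $\ell_{\sigma_0}\cdot C$ to any $\Hom$ statement.

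The missing idea is a single object surjecting onto every fiber. For $n\gg0$, the maps $\Phi_\calE(L^n)\to\calE_c$ induced by $L^n\to L^n|_c$ are epimorphisms in $\calA=\calP((0,1])$ for all $c\in C$, because their kernels $\Phi_\calE(L^n(-c))$ also lie in $\calA$. If $\ell_{\sigma_0}\cdot C=0$, then $\Im Z_0(\Phi_\calE(L^n))=\Im Z_0(\Phi_\calE(\calO_C))=0$, so $\Phi_\calE(L^n)\in\calP(1)$. Every $\calE_c$ is then a quotient, in the finite-length category $\calP(1)$, of one fixed object, so its Jordan--H\"older factors lie in a fixed finite set. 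Only after this does semicontinuity single out one S-equivalence class on a dense open subset of $C$.

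You also fold the converse into the ``equality case'', but strict positivity does not give it. That a curve of $\sigma_0$-S-equivalent objects has $\ell_{\sigma_0}\cdot C=0$ needs its own argument. For instance, on the normalization of $C$, filter the family by families $F_i\boxtimes L_i$ of the fixed stable factors, saturating at finitely many points; this is possible because the relative $\Hom$-modules are finitely generated. Then $Z_0(\Phi_\calE(\calO_C))=\sum_i\chi(L_i)\,Z_0(v(F_i))$ lies in $\R\cdot Z_0(\mv)$, so the degree vanishes.
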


\begin{defn}
    A wall $\calW$ induced by $\mv$ on $\Stab^\dagger(S,\alpha)$ is called a \defi{divisorial wall} if the morphisms $\pi_+$ and $\pi_-$ are both divisorial contractions.  
    It is called a \defi{flopping wall} if we can identify $\overline{\MM}_+ = \overline{\MM}_-$ and the induced map
    \begin{center}
        \begin{tikzcd}
            \MM_{\sigma_+}(\mv,\alpha) \arrow[dr, "\pi_+"'] \arrow[rr, dashed, <->] &  & \MM_{\sigma_-}(\mv,\alpha) \arrow[dl, "\pi_-"] \\
             & \overline{\MM}_+ = \overline{\MM}_- & 
        \end{tikzcd}    
    \end{center}
    is a flop.  
    If no curves are contracted by $\pi_+$ and $\pi_-$ we say that $\calW$ is a \defi{fake wall}.
\end{defn}

To state Bayer--Macr\`i's fundamental classification, we need the concept of positivity: 
Let $\calW_\calH$ be a potential wall for a sublattice $\calH \subset \HH^*_{\alg}(S,\alpha,\Z)$ containing $\mv$, and let $P_\calH \subset \calH\otimes \R$ be the cone generated by integral classes $\mathbf{u} \in \calH$ such that $\mathbf{u}^2\geq 0$ and $\mathbf{u}\cdot \mv >0$. 
We say $\mathbf{a} \in \calH$ is \defi{positive} if $\ma \in P_\calH \cap \calH$. 

\begin{theorem}[{\cite[Theorem~5.7]{BMMMP}}]
    \label{thm:BMMMPThm5.7}
    Fix a primitive $\mv \in \HH^*_{\alg}(S,\alpha,\Z)$ with $\mv^2>0$, and let $\calH \subset \HH^*_{\alg}(S,\alpha,\Z)$ be a primitive hyperbolic rank $2$ sublattice containing $\mv$. 
    Let $\calW_\calH$ be a potential wall associated to $\calH$. 
    Then
    \smallskip
    \begin{enumerate}[leftmargin=*]
        \item $\mathcal W_\calH$ is totally semistable for $\mv$, i.e., $\MM^{st}_{\sigma_0}(\mv,\alpha) = \emptyset$\footnote{While $\sigma_0$ is not generic for $\mv$, the stable locus can be naturally identified with an open subscheme of $M_{\sigma_+}(\mv, \alpha)$, which justifies the notation.} for $\sigma_0 \in \calW_\calH$, if and only if either
        \smallskip
    
        \begin{enumerate}
            \item there exists an isotropic class $\mw\in \calH$ with $\mw\cdot\mv=1$, or
            \smallskip

            \item there exists an effective spherical class $\ms\in \calH$ with $\ms\cdot\mv<0$ (see \cite{BMMMP}*{Proposition~5.5} for effectivity).
        \end{enumerate}
    \end{enumerate}
    \smallskip
    
    In addition:
    \smallskip
    
    \begin{enumerate}[start=2,leftmargin=*]
        \item\label{item:divisorial} The set $\calW_\calH$ is a wall in $\Stab^\dagger(S,\alpha)$ inducing a divisorial contraction if one of the following three conditions hold:
        \smallskip
        
        \begin{enumerate}[leftmargin=*]
            \item{\bf Brill--Noether:} There is a spherical class $\mathbf{s} \in \calH$ with $\mathbf{s}\cdot\mv = 0$. 
            \smallskip
            
            \item{\bf Hilbert--Chow:} There is an isotropic class $\mathbf{w} \in \calH$ with  $\mathbf{w}\cdot\mv = 1$.
            \smallskip
            
            \item{\bf Li--Gieseker--Uhlenbeck:} There is an isotropic class $\mathbf{w} \in \calH$ with $\mathbf{w}\cdot\mv = 2$.
        \end{enumerate}
        \smallskip
        
        \item If none of the conditions \eqref{item:divisorial}(a)--(c) hold but either $\mv = \mathbf{a} + \mathbf{b}$ is a sum of positive classes, or there is a spherical class $\mathbf{s} \in \calH$ with $0 < \mathbf{s}\cdot\mv \leq \mv^2/2$, then $\calW_\calH$ is a wall corresponding to a flopping contraction.
        \end{enumerate}
    In all other cases, $\calW_\calH$ is either a fake wall or it is not a wall in $\Stab^\dagger(S,\alpha)$.
\end{theorem}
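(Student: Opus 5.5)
Theorem~\ref{thm:BMMMPThm5.7} is quoted from \cite{BMMMP}*{Theorem~5.7}, and the paper uses it as a black box. If I had to reprove it, I would follow the strategy of Bayer--Macr\`i: reduce everything to the geometry of the rank~$2$ lattice $\calH$, and compare a $\calW$-generic $\sigma_0\in\calW_\calH$ with nearby generic conditions $\sigma_\pm$.

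\textbf{Step 1: reduction to $\calH$.} For $E$ that is $\sigma_+$-stable of class $\mv$, every $\sigma_0$-Jordan--H\"older factor has Mukai vector in $\calH$, since its phase must agree with that of $\mv$ along the whole wall. The support property makes the set of classes in $\calH$ that can occur finite. Moreover, every primitive $\mathbf{u}\in\calH$ with $\mathbf{u}^2\ge -2$ that is effective has a nonempty moduli space $\MM_{\sigma_\pm}(\mathbf{u},\alpha)$ of dimension $\mathbf{u}^2+2$. Thus the question "which $\sigma_+$-stable objects become strictly semistable at $\sigma_0$" becomes a dimension count over decompositions $\mv=\sum \mathbf{u}_i$ with $\mathbf{u}_i\in P_\calH$ or $\mathbf{u}_i$ effective spherical.

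\textbf{Step 2: part (1), total semistability.}
\begin{itemize}
\item Suppose $\mw$ is isotropic with $\mw\cdot\mv=1$. Then $\MM_{\sigma_0}(\mw,\alpha)$ is a (twisted) K3 surface $Y$, and the universal family gives a Fourier--Mukai equivalence $\Phi$ that sends $\mw$ to the class of a point. Under $\Phi$, $\mv$ becomes the class of an ideal sheaf of points on $Y$, and the wall becomes the Hilbert--Chow wall, where no such object is stable.
\item Suppose $\ms$ is an effective spherical class with $\ms\cdot\mv<0$, and let $S$ be the $\sigma_0$-stable spherical object. Then $\chi(S,E)=-\ms\cdot\mv>0$ forces $\Hom(S,E)\neq0$ or $\Hom(E,S)\neq0$ for every $E$ of class $\mv$. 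Either map destabilizes $E$ at $\sigma_0$.
\item For the converse, apply spherical reflections (twists by $\sigma_0$-stable spherical objects) to move $\mv$ to a class $\mv'$ in the fundamental domain. Then a deformation or dimension argument produces a $\sigma_0$-stable object of class $\mv'$, and hence of class $\mv$.
\end{itemize}

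\textbf{Step 3: part (2), divisorial walls.} In each of the three cases I would exhibit a contracted divisor directly.
\begin{itemize}
\item Brill--Noether: $\ms\cdot\mv=0$ gives $\chi(S,E)=0$. The locus of $E$ with $\Hom(S,E)\neq0$ is then a divisor, a degeneracy locus of expected codimension~$1$, and it is covered by $\PP^1$'s of extensions.
\item Hilbert--Chow and Li--Gieseker--Uhlenbeck: use the Fourier--Mukai transform $\Phi$ from Step~2 to send $\mw$ to the point class. Then $\mv$ has rank~$1$, respectively rank~$2$, on $Y$, and the contraction is identified with the Hilbert--Chow morphism, respectively the Uhlenbeck contraction. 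In each case the exceptional locus is the divisor of non-reduced subschemes, respectively non-locally-free sheaves.
\end{itemize}

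\textbf{Step 4: part (3), flopping walls, and the remaining cases.}
\begin{itemize}
\item Suppose $\mv=\ma+\mb$ with $\ma,\mb$ positive. The extensions of generic stable objects of classes $\ma$ and $\mb$ form a $\PP^{\ma\cdot\mb-1}$-bundle over $\MM(\ma)\times\MM(\mb)$. Its codimension in $\MM_{\sigma_+}(\mv,\alpha)$ is
\[
(\mv^2+2)-(\ma^2+\mb^2+4)-(\ma\cdot\mb-1)=\ma\cdot\mb-1.
\]
This is at least $2$ once the divisorial cases are excluded, so the contraction is small and the flop exchanges the two extension directions.
\item The case of a spherical $\ms$ with $0<\ms\cdot\mv\le\mv^2/2$ is handled analogously, using the loci where $\Hom(S,E)$ jumps.
\item Finally, show that if none of the listed conditions holds, every $\sigma_0$-semistable object of class $\mv$ outside a finite union of loci of negative expected dimension is stable. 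Then no curve is S-equivalent to another, and the wall is fake or not a wall.
\end{itemize}

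\textbf{Main obstacle.} I expect the hardest parts to be the converse statements: the "only if" direction of (1) and the final "all other cases" claim. These require showing that the listed lattice conditions are exhaustive. That in turn needs a careful classification of effective classes in $\calH$ and control of the dimensions of the strictly semistable strata via the hyperbolic geometry of $\calH$. I would first try to reduce these claims to the positive cone $P_\calH$ and the reflection group generated by effective spherical classes.
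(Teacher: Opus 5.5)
The paper does not prove this theorem. It quotes it from Theorem~5.7 of \cite{BMMMP} and uses it as a black box, so your outline can only be measured against Bayer--Macr\`i's own argument. Your plan for isotropic walls (a Fourier--Mukai transform to a Hilbert--Chow or Uhlenbeck wall) and your dimension counts of extension loci point in the right direction. The gaps are in how you handle spherical objects.

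\textbf{Gaps in Step~2.}
In Step~2 you take ``the $\sigma_0$-stable spherical object'' of class $\ms$. An effective spherical class need not be represented by a $\sigma_0$-stable object. What \cite{BMMMP} provides is that $\calH$ carries at most two $\sigma_0$-stable spherical objects $T_1,T_2$, and that every effective spherical class is a nonnegative combination of $\mathbf t_1,\mathbf t_2$. So $\ms\cdot\mv<0$ forces $\mathbf t_i\cdot\mv<0$ for some $i$, and your $\chi$-argument then runs with $T_i$.

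In the converse of (1), reflections are both unnecessary and unusable. They are unnecessary because the hypothesis already says $\mv$ pairs nonnegatively with every effective spherical class. They are unusable because twisting a $\sigma_0$-stable $E$ by a $\sigma_0$-stable spherical $T$ of the same phase gives an extension of $\Ext^1(T,E)\otimes T$ by $E$. That object is strictly $\sigma_0$-semistable whenever the class changes, so ``and hence of class $\mv$'' fails. The real content of the converse, showing that the strictly semistable strata are proper, is left unspecified.

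Likewise, in the Hilbert--Chow bullet the given $\mw$ can itself be destabilized by an effective spherical class pairing negatively with it. You must first choose an isotropic class whose moduli space consists of $\sigma_0$-stable objects.

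\textbf{The failure in Step~4 and the missing reduction.}
The last bullet of Step~4 fails outright. The final sentence of the theorem also covers walls that are totally semistable via an effective spherical $\ms$ with $\ms\cdot\mv<0$ while satisfying none of (2)--(3); these are exactly the totally semistable fake walls. For them, your claim that $\sigma_0$-semistable objects of class $\mv$ are generically $\sigma_0$-stable contradicts your own Step~2.

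The same problem affects Steps~3--4. Your Brill--Noether degeneracy locus and your extension locus over $\MM(\ma)\times\MM(\mb)$ presuppose $\sigma_0$-stable objects of classes $\ms$, $\ma$, $\mb$, which need not exist.

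The missing idea is the reduction in \S6 of \cite{BMMMP}. There, compositions of the twists $\mathrm{ST}_{T_i}^{\pm1}$ identify $\MM_{\sigma_+}(\mv,\alpha)$ with $\MM_{\sigma_\pm}(\mv_0,\alpha)$, compatibly with the induced contractions. Here $\mv_0$ is the representative of $\mv$ in the fundamental domain of the reflection group generated by $\mathbf t_1,\mathbf t_2$. Since these reflections are isometries of $\calH$, conditions (2)--(3) hold for $\mv$ if and only if they hold for $\mv_0$. Apart from the Hilbert--Chow case, the wall is not totally semistable for $\mv_0$, and that is where the dimension counts can actually be run.

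\textbf{Smallness is not proved.}
Even in the non-totally-semistable case, showing that the stratum $[\ma,\mb]$ has codimension $\ma\cdot\mb-1\ge 2$ does not make $\pi_+$ small. You must show that every Jordan--H\"older stratum meeting the exceptional locus has codimension at least $2$. Also, a locus of negative \emph{expected} dimension need not be empty, so that criterion cannot certify that a wall is fake or not a wall.
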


We note that the conditions in Theorem~\ref{thm:BMMMPThm5.7}(2)(a)--(c) are not mutually exclusive, e.g., a divisorial contraction can be simultaneously of Brill--Noether or Li--Gieseker--Uhlenbeck type.

\section{The Geometry of some flopping contractions} \label{sec:contractiongeometry}

Let $(S,\alpha)$ be a twisted K3 surface and let $\ma \in \AlgLat$ be primitive. 
Set $\mb \coloneqq (0,0,-1) \in \AlgLat$ and $\mv = \ma + \mb$. 
As suggested by Theorem~\ref{thm:BMMMPThm5.7}(3), such a partition can induce a birational contraction on $\MM_{\sigma}(\mv,\alpha)$ (for appropriate stability conditions and with additional hypotheses on $\ma$). 
In this section, we give a careful analysis of the geometry of such a contraction, specifically identifying an \'etale projective $S$-bundle representing $\alpha$ or $\alpha^{-1}$. We consider two cases. 
In \S\ref{subsec:flopgeometryUhl}, we take $\sigma \in \Stab^\dagger(S,\alpha)$ to correspond to Gieseker stability, and study the Uhlenbeck contraction. 
In \S\ref{subsec:flopgeometryBM}, we fix a primitive hyperbolic rank $2$ sublattice of $\AlgLat$ and study the contraction arising from an associated wall.

\subsection{Uhlenbeck contractions}
\label{subsec:flopgeometryUhl}

\begin{setup} \label{setup:uhlenbeck_contractions}
    (Uhlenbeck Setup)
    Let $(S,\alpha)$ be a twisted K3 surface, and let $\mb := (0,0,-1) \in \AlgLat$. Fix $\ma \in \AlgLat$, and consider the following set of conditions:
    \smallskip
    
    \begin{enumerate}
        \item $\ma$ is a primitive Mukai vector with $\ma^2\geq -2$ and $\ma \cdot \mb >0$;
        \smallskip

        \item $\mv := \mathbf{a} + \mathbf{b}$ is a primitive Mukai vector; 
        \smallskip
        
        \item there is a polarization $h\in \Pic(S)$ that is generic with respect to both $\ma$ and $\mv$;
        \smallskip

        \item there exists a $\mu$-stable $\alpha$-twisted vector bundle $A\in \MM_h(\ma, \alpha)$;
        \smallskip
        
        \item[($4'$)] every $A\in \MM_h(\ma, \alpha)\neq \emptyset$ is a $\mu$-stable $\alpha$-twisted vector bundle, and every $E\in \MM_h(\mv,\alpha)$ is $\mu$-stable such that the length of $E^\dd/E$ is at most one.
    \end{enumerate}
\end{setup}

When condition (3) holds, the spaces $\MM_h(\ma,\alpha)$ and $\MM_h(\mv,\alpha)$ are smooth projective of the expected dimension.

\begin{theorem}\label{thm:uhlenbeckcontraction}
    Assume that conditions (1)--(4) in setup~\ref{setup:uhlenbeck_contractions} hold. 
    
    \begin{enumerate}[leftmargin=*]
    \item\label{part1} There is a contraction $\pi \colon \MM_h(\mv,\alpha) \to \MM^{\Uhl}(\mv,\alpha)$, a nonempty open subset $U \subset \MM_h(\ma,\alpha)$ parametrizing $\mu$-stable $\alpha$-twisted vector bundles on $S$, an \'etale $\PP^{\ma\cdot\mb - 1}$-bundle $\pi_Q\colon Q \to S\times U$, and a locally closed immersion $g\colon Q \hookrightarrow \MM_h(\mv,\alpha)$ such that $\pi\circ g$ factors as
    \begin{equation}\label{eqn:Uhldiagram}
        \begin{tikzcd}
        Q \arrow[d,"\pi_Q"] \arrow[r, hookrightarrow,"g"] & \MM_h(\mv,\alpha) \arrow[d, "\pi"] \\
        S \times U \arrow[r,"h"] & \MM^{\Uhl}(\mv,\alpha).
        \end{tikzcd}    
    \end{equation}
    \smallskip
    
    \item\label{part2} For every closed point $A \in U$, the restriction $h_A\colon S \isom S\times \{A\} \to \MM^{\Uhl}(\mv,\alpha)$ is a closed immersion that sits in a commutative diagram
    \begin{center}
        \begin{tikzcd}
            \PP(A) \arrow[d,"\alpha"] \arrow[r, hookrightarrow] & \MM_h(\mv,\alpha) \arrow[d, "\pi"] \\
            S \arrow[r, hookrightarrow,"h_A"]  & \MM^{\Uhl}(\mv,\alpha)
        \end{tikzcd}    
    \end{center}    
    and the restriction $\pi\big|_{\PP(A)}\colon \PP(A) \to S$ represents $\alpha$. 
    \smallskip
        
    \item\label{part3} If, in addition, condition ($4'$) holds, then $U=\MM_h(\ma,\alpha)$, the morphism $g$ is a closed immersion, the morphism $h$ is a finite morphism injective on closed points, and, when $\ma\cdot\mb > 1$, $Q$ is the exceptional locus of $\pi$. 
    Moreover, $h\circ \pi_Q$ is the Stein factorization of $\pi\big|_Q$.
    \end{enumerate}
\end{theorem}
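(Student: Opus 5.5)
The construction parametrizes elementary modifications of $\mu$-stable bundles $A$ at points of $S$. Note that $\ma\cdot\mb = \rk(\ma)$, since $(r,\lambda,s)\cdot(0,0,-1)=r$. Write $N:=\ma\cdot\mb$.

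For $A$ an $\alpha$-twisted bundle with $v(A)=\ma$, a point $x\in S$, and a surjection $\phi\colon A\to k(x)$, put $E=\ker\phi$. Then $v(E)=\ma-v(k(x))=\ma+\mb=\mv$. The kernel $E$ is $\mu$-stable whenever $A$ is, because $\mu$-stability only depends on the sheaf in codimension one. Hence $E$ is Gieseker stable and defines a point of $\MM_h(\mv,\alpha)$. Surjections $A\to k(x)$ up to scalars are the points of the fiber $\PP(A_x^\vee)$ (hyperplanes in $A_x$). So for fixed $A$ these kernels form the \'etale $\PP^{N-1}$-bundle $\PP(A^\vee)\to S$, which represents $\alpha^{-1}$. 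Equivalently, in the convention $\PP(E)=\mathrm{Proj}\,\Sym E$ it is $\PP(A)$ and represents $\alpha$; I would fix the projectivization convention so that the statement reads $\PP(A)$. Since $A^{\vee\vee}$-data recovers $A=E^{\vee\vee}$ and $x=\mathrm{supp}(E^{\vee\vee}/E)$, the Uhlenbeck map $\pi$ sends $E$ to the pair $(A,x)$. This is constant along fibers of $\PP(A)\to S$, and distinct $x$ give distinct Uhlenbeck points, which gives the diagram in part (2).

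To globalize, let $U\subset\MM_h(\ma,\alpha)$ be the open locus of $\mu$-stable locally free sheaves; it is nonempty by (4), open by openness of both conditions, and its points are stable by (3). Use a quasi-universal (or $\alpha\boxtimes\beta$-twisted universal) family $\calA$ on $S\times U$, locally free of rank $N$. Take $Q=\PP(\calA^\vee)$, which is an \'etale $\PP^{N-1}$-bundle over $S\times U$ independent of the twist ambiguity. On $S\times Q$ the kernel of the tautological surjection $\calA\to \calO_\Gamma\otimes\calO(1)$, with $\Gamma$ the graph of $Q\to S$, is a flat family of stable sheaves of class $\mv$. This gives the classifying morphism $g$. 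Injectivity on points and on tangent vectors follows from recovering $(A,x,\phi)$ from $E$ via $E^{\vee\vee}$. The tangent check compares $\Ext^1(E,E)$ with deformations of the triple, using $\Hom(E,k(x))\to\Ext^1(k(x),k(x))$-type sequences. Together these make $g$ an immersion onto a locally closed subset. The map $h\colon S\times U\to\MM^{\Uhl}$ is $(x,A)\mapsto[A\oplus k(x)]$, and the Uhlenbeck morphism's universal property on families (Li, \cite{BMMMP}*{Proposition~8.2}) gives $\pi\circ g=h\circ\pi_Q$. Fixing $A$, $h_A$ is injective on points. Showing it is a closed immersion is where I expect the main obstacle, since $\MM^{\Uhl}$ is defined via sections of a determinant line bundle, so its local structure is not directly modular. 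I would try to show injectivity on tangent spaces by exhibiting functions on $\MM^{\Uhl}$ pulled back from $\mathrm{Sym}^\ell S$ (the support of $E^{\vee\vee}/E$). If that is unavailable, I would instead argue via properness plus injectivity plus normality of $S$ after checking that $h_A$ is unramified.

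For part (3), under $(4')$ every $A$ is a $\mu$-stable bundle, so $U=\MM_h(\ma,\alpha)$ is proper. Then $Q$ is proper and $g$, being a proper immersion, is a closed immersion. Every $E\in\MM_h(\mv,\alpha)$ is $\mu$-stable with $\ell(E^{\vee\vee}/E)\le1$. So either $E$ is locally free and $\mu$-stable, and not in the exceptional locus, or $E^{\vee\vee}$ has class $\ma$ and $E$ lies in $g(Q)$. Hence the exceptional locus is contained in $g(Q)$. When $N>1$ the fibers $\PP^{N-1}$ are positive-dimensional and contracted, so the exceptional locus equals $Q$. The map $h$ is proper and injective on points, hence finite, since its fibers are finite. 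The map $\pi_Q$ has connected fibers with $(\pi_Q)_*\calO_Q=\calO_{S\times U}$ and $h$ is finite, so $h\circ\pi_Q$ is the Stein factorization of $\pi|_Q$.
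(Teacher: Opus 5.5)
Your architecture is the paper's: elementary modifications $E=\ker(A\to\calO_x)$, $Q$ the projectivized universal bundle over $S\times U$, the classifying map $g$ built from the universal kernel, and, in part (3), properness of $Q$, finiteness of $h$ and Stein factorization.

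The genuine gap is the sentence ``Together these make $g$ an immersion onto a locally closed subset.'' Injectivity on closed points and on tangent spaces makes $g$ injective and unramified. That criterion yields a closed immersion when the source is proper. In part (1), however, $U$ is only open in $\MM_h(\ma,\alpha)$, so $Q$ need not be proper, and then the implication fails in general. For example, delete one of the two preimages of the node from the normalization $\mathbb{A}^1\to C$ of a nodal cubic: this gives a bijective unramified morphism onto $C$ that is not an immersion.

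What is missing is an inverse constructed in families. The paper stratifies $\MM_h(\mv,\alpha)$ by the length of $E^{\dd}/E$ and uses Lieblich's Lemma~3.2.4.17. On the length-one stratum, the relative double dual of the universal sheaf is a twisted vector bundle whose formation commutes with base change. So over $\MM_{1,U}=\{E\in\MM_1 : E^{\dd}\in U\}$, the sequence $0\to\calU\to\calU^{\dd}\to\calT'\to 0$ is a flat family of length-one quotients. It classifies a morphism $f\colon\MM_{1,U}\to Q\cong\Quot(\calA,1)$ inverse to $g$; this is checked on closed points, the schemes being reduced. Your phrase ``recovering $(A,x,\phi)$ from $E$ via $E^{\dd}$'' is exactly this idea, but it must be carried out over a base rather than pointwise and on tangent vectors. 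Once it is, the tangent computation becomes unnecessary.

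Two further points. First, the morphism $h$ does not come from a universal property of $\MM^{\Uhl}(\mv,\alpha)$, since there is no family over $S\times U$ to classify. It comes from the rigidity lemma: $\pi\circ g$ is constant on the fibers of $\pi_Q$ and $\pi_{Q*}\calO_Q\cong\calO_{S\times U}$, an ingredient you already use in part (3). Second, you leave open that $h_A$ is a closed immersion. Your fallback reduces to proving that $h_A$ is unramified; properness and injectivity then suffice, and normality of $S$ plays no role. That unramifiedness is exactly what you have not supplied. The paper settles this step by checking that the proof of Huybrechts--Lehn, Proposition~8.2.13, goes through for twisted sheaves.
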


Throughout this subsection, we assume conditions (1)--(4) in setup~\ref{setup:uhlenbeck_contractions} hold and write $\MM(\mv,\alpha)=\MM_h(\mv,\alpha)$ and $\MM(\ma,\alpha)=\MM_h(\ma,\alpha)$ to simplify notation. 
The morphism $\pi \colon \MM(\mv, \alpha) \to \MM^{\Uhl}(\mv,\alpha)$ was introduced in \S\ref{ss:moduliandcontractions}. 
Recall that $\pi(E_1)=\pi(E_2)$ for two closed points $E_1$ and $E_2$ if and only if $\gr^\mu(E_1)^{\vee\vee} \cong \gr^\mu(E_2)^{\vee\vee}$ and $l_x(\gr^\mu(E_1)^{\vee\vee}/\gr^\mu(E_1)) = l_x(\gr^\mu(E_2)^{\vee\vee}/\gr^\mu(E_2))$ for all closed points $x\in S$.

Condition (4) in setup~\ref{setup:uhlenbeck_contractions} asserts the existence of a $\mu$-stable $\alpha$-twisted vector bundle $A \in \MM(\ma,\alpha)$. 
The properties of local freeness and of $\mu$-stability are open in flat families, so
\[
    U := \{A \in \MM(\ma,\alpha) : A \text{ is a $\mu$-stable $\alpha$-twisted vector bundle}\}
\]
is a dense open subset of $\MM(\ma,\alpha)$.

Let $\beta \in \Br(U)$ be the restriction to $U$ of the Brauer class on $\MM(\ma,\alpha)$ that obstructs the existence of a universal $\alpha$-twisted sheaf on $S\times \MM(\ma,\alpha)$\footnote{Since $A \in U$ is $\mu$-stable, and thus simple, its automorphism group is $\G_m$, acting by scalar multiplication (here we use that the ground field is algebraically closed). Thus, the restriction of the moduli stack of $\alpha$-twisted sheaves $\mathfrak{M}(\ma,\alpha) \to \MM(\ma,\alpha)$ to $U$ is a $\G_m$-gerbe $\eta\colon \mathfrak{U} \to U$. The class $\beta \in \Br(U)$ (or its inverse, depending on sign conventions) is the Brauer class associated to the gerbe $\eta$.}. 
Then there exists an $\alpha\boxtimes \beta$-twisted universal sheaf $\calA$ on $S \times U$. Since $U$ parametrizes locally free sheaves, $\calA$ is a twisted vector bundle of rank $\ma\cdot\mb>0$. Let $Q := \PP_{S \times U}(\calA)$, with the quotient convention. 
A point in $Q$ over $(x,A) \in S \times U$ gives a surjection $q\colon\calA \twoheadrightarrow \calO_x$ of $\alpha$-twisted sheaves on $S\times \{A\} \isom S$, considered up to scalars. Alternatively, $Q$ is isomorphic to the relative quot scheme $\Quot^\alpha_{S \times U/ U}(\calA,1)$. 
The map $\pi_Q \colon Q \to S\times U$ is the natural projection. 
Its fiber over $(x,A)$ is $\PP(\calA_x) \isom \PP^{\ma\cdot\mb - 1}$, so $\pi_Q$ is an \'etale $\PP^{\ma\cdot\mb - 1}$-bundle.

\begin{lemma}
    \label{lem:slopestableexts}
    Fix $A\in U$. Then for any $x\in S$ and any surjection of sheaves $q\colon A \twoheadrightarrow \calO_x$, the kernel $E_q = \ker(q)$ is slope stable and hence $E_q\in \MM(\mv, \alpha)$.
\end{lemma}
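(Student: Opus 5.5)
The plan is to compare subsheaves of $E_q$ with subsheaves of $A$, using that $A$ and $E_q$ agree in codimension one. First, $E_q$ is torsion-free, since it sits inside the locally free twisted sheaf $A$. It has the same rank as $A$, and the same twisted first Chern class, because $\calO_x$ has Mukai vector $(0,0,1)$. Hence
\[
    v(E_q) = v(A) - v(\calO_x) = \ma + (0,0,-1) = \mv
\]
and $\mu(E_q) = \mu(A)$.

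Next take a saturated subsheaf $F \subset E_q$ with $0 < \rk F < \rk E_q$. Composing with $E_q \hookrightarrow A$ makes $F$ a subsheaf of $A$ of the same rank. Since $\mu$ only depends on $c_1^B$ and the rank, $\mu$-stability of $A$ gives
\[
    \mu(F) < \mu(A) = \mu(E_q).
\]
Thus $E_q$ is $\mu$-stable. One point to check: in the twisted setting, slope is still additive and determined by $(\rk, c_1^B)$, so the inequality for subsheaves of $A$ applies verbatim. Equivalently, one can pass to $F^{\vee\vee} \cap A$ or the saturation of $F$ in $A$, which only increases $c_1$.

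Finally, by the chain of implications recalled earlier ($\mu$-stable $\implies$ stable $\implies$ semistable), $E_q$ is Gieseker stable. Its Mukai vector is $\mv$, and $\mv$ is primitive by condition (2) of Setup~\ref{setup:uhlenbeck_contractions}. Therefore $E_q$ defines a point of $\MM(\mv,\alpha) = \MM_h(\mv,\alpha)$.

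I expect no serious obstacle. The only subtlety is the bookkeeping with twisted slopes: they are well defined up to the common shift by $B$, so comparisons between sheaves with the same $B$ are unaffected.
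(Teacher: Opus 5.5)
Your proposal is correct and follows the paper's proof essentially verbatim. Both arguments get $\mu(E_q)=\mu(A)$ from $0\to E_q\to A\to\calO_x\to 0$, view any subsheaf of $E_q$ of intermediate rank as a subsheaf of $A$ so that $\mu$-stability of $A$ transfers, and conclude from $v(E_q)=\ma+\mb=\mv$. Your extra remarks (restricting to saturated subsheaves, and invoking primitivity of $\mv$) are harmless but not needed.
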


\begin{proof}
    From the short exact sequence
    \begin{equation}
        \label{eq:SES}
        0 \to E_q \to A \to \calO_x\to 0,
    \end{equation}
    we deduce that $\mu(E_q)=\mu(A)$.
    Moreover, if $F\subset E_q$ is a subsheaf with $0<\rk(F)<\rk(E_q)$, then $F\subset A$. 
    By assumption $A$ is slope stable, so $\mu(F)<\mu(A)=\mu(E_q)$. 
    Hence, $E_q$ is also slope stable, and thus Gieseker stable. 
    Since $v(E_q)= v(A) - v(\calO_x) = \ma +\mb =\mv$, the result follows.
\end{proof}

\begin{lemma}
    \label{lem:locallyclosedembedding}
    There is a locally closed embedding $g \colon Q \hookrightarrow \MM(\mv,\alpha)$.
\end{lemma}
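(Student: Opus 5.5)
We construct $g$ from a flat family over $Q$ and then show that the induced classifying morphism is a monomorphism on points and injective on tangent spaces.

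First, the family. On $Q = \PP_{S\times U}(\calA)$ there is a tautological quotient of the pullback of $\calA$ to $S\times Q$. Concretely, consider the graph-type map $\Gamma\colon Q \to S\times Q$, $\xi \mapsto (\pi_S\pi_Q(\xi),\xi)$, which is a closed immersion. Let $p\colon S\times Q \to S\times U$ be $\id_S\times(\mathrm{pr}_U\circ\pi_Q)$. On $S\times Q$ we have the composite surjection
\[
p^*\calA \to \Gamma_*\Gamma^*p^*\calA \to \Gamma_*\calO_{\PP(\calA)}(1).
\]
Its kernel $\calE$ is a $(\alpha\boxtimes\beta')$-twisted sheaf, where $\beta'$ is the pullback of $\beta$ to $Q$ twisted by the class of $\calO(1)$. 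The kernel is flat over $Q$, because the quotient is flat over $Q$: it is a line bundle on the section $\Gamma(Q)$. Its fiber over a point $\xi=(x,A,q)$ is exactly $E_q=\ker(q\colon A\to \calO_x)$, since restriction to fibers is exact for flat quotients. By Lemma~\ref{lem:slopestableexts}, every fiber is Gieseker stable with Mukai vector $\mv$. The coarse moduli property for twisted sheaves allows families twisted by a Brauer class pulled back from the base, so it gives a morphism $g\colon Q\to \MM(\mv,\alpha)$.

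Second, injectivity on closed points. From $E_q$ we recover $A=E_q^{\vee\vee}$ canonically, because $A$ is locally free and $\calO_x$ has codimension-two support. We then recover $x$ as the support of $E_q^{\vee\vee}/E_q$, and $q$ up to scalars as the quotient map. Hence two points with isomorphic kernels coincide: the isomorphism $E_q\cong E_{q'}$ extends to the double duals, and an automorphism of $A$ is a scalar.

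Third, and this is the main obstacle, we upgrade this to a locally closed immersion. We show $dg$ is injective at every point and that $g$ is a homeomorphism onto a locally closed image. For the image, we use that the locus of $E\in\MM(\mv,\alpha)$ with $E^{\vee\vee}$ a $\mu$-stable bundle of class $\ma$ and $\ell(E^{\vee\vee}/E)=1$ is locally closed. This follows from upper semicontinuity of $\ext^1(\calO_x,-)$ and of the length of $E^{\vee\vee}/E$ along flat families, together with openness of $\mu$-stability and local freeness on the double-dual side.

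For tangent spaces, $T_\xi Q$ sits in
\[
0\to \Hom(E_q,\calO_x)\big/\C q \to T_\xi Q \to T_xS\oplus \Ext^1(A,A)\to 0.
\]
Applying $\Hom(E_q,-)$ and $\Hom(-,A)$ to \eqref{eq:SES} gives a comparison with $T_{E_q}\MM(\mv,\alpha)=\Ext^1(E_q,E_q)$. We check that $dg$ is injective by showing that a first-order deformation of $E_q$ which is trivial determines trivial deformations of $A=E_q^{\dd}$, of $x$, and of $q$. The key input is $\Hom(\calO_x,A)=0$, since $A$ is locally free. Together with $\Ext^2(\calO_x,A)\cong\Hom(A,\calO_x)^*$ and simplicity of $A$, this yields an injection $\Ext^1(A,A)\hookrightarrow\Ext^1(E_q,E_q)$ and controls the other pieces. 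Alternatively, we can argue functorially: the inverse $E\mapsto(E^{\vee\vee}, E^{\vee\vee}\to E^{\vee\vee}/E)$ is defined on flat families over the locally closed locus above, since formation of the double dual commutes with base change when the colength is constant. This gives a retraction and hence an immersion. We expect the base-change verification for double duals to be the delicate step, and would try the functorial argument first, falling back on the tangent-space computation.
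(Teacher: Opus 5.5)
Your functorial route is correct and is essentially the paper's proof. The paper:
\begin{itemize}
\item gives $\MM_{\geq 1}$ its reduced structure and works on the open piece $\MM_{1,U}=\{E\in\MM_1 : E^{\dd}\in U\}$;
\item uses Lieblich's Lemma~3.2.4.17 to know that the relative double dual $\calU^{\dd}$ is a twisted vector bundle whose formation commutes with base change there;
\item gets a classifying morphism $f\colon\MM_{1,U}\to Q$ from the universal quotient $\calU^{\dd}\twoheadrightarrow\calU^{\dd}/\calU$;
\item concludes because $f\circ g$ and $g\circ f$ are the identity on closed points of reduced, separated schemes of finite type.
\end{itemize}
Your retraction version is a slight economy. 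Only $f\circ g=\id_Q$ is needed, and it still follows from agreement on closed points because $Q$ is reduced. A section of the separated morphism $f$ is then a closed immersion into $\MM_{1,U}$. With this, your separate injectivity argument becomes redundant.

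Do not, however, fall back on the criterion you announce at the start of step three. Injectivity on closed points and on tangent spaces only makes $g$ unramified and universally injective, i.e.\ a monomorphism. A monomorphism that is a homeomorphism onto a locally closed subset need not be an immersion. For example, map $\mathbb{A}^1$ minus one of the two preimages of the node to a nodal cubic: this is bijective, unramified, and a Zariski homeomorphism (both sides carry the cofinite topology), yet it is not an immersion. What is missing is properness of $g$ over its image. That is not automatic, since $U$ need not be proper, and it is exactly what the double-dual retraction supplies.

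Your tangent-space sketch is also miswritten in two places:
\begin{itemize}
\item $\Hom(E_q,\calO_x)$ is already the whole $(\ma\cdot\mb+1)$-dimensional tangent space of the fiber $\PP(A)=\Quot(A,1)$, including the $T_xS$ directions. Also, $q$ restricts to zero on $E_q$, so quotienting by $\C q$ makes no sense. The correct sequence is $0\to\Hom(E_q,\calO_x)\to T_\xi Q\to\Ext^1(A,A)\to 0$.
\item The long exact sequences give $\Ext^1(A,A)\hookrightarrow\Ext^1(E_q,A)$, using $\Ext^1(\calO_x,A)=0$. They do not give an injection into $\Ext^1(E_q,E_q)$.
\end{itemize}
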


\begin{proof}
    On $Q\times S$, there is a universal short exact sequence
    \[
        0\to \calE \to \calA_Q \to \calT \to 0,
    \]
    where $\calA_Q$ is the pullback of the universal sheaf $\calA$ to $S\times Q$ and $\calT$ is a $Q$-flat family of length~$1$ torsion sheaves on $S$, which when restricted to $S\times \{ q\colon A \twoheadrightarrow \calO_x\}$ is the short exact sequence 
    \[
        0\to E_q \to A \to \calO_x \to 0.
    \]
    Since $\calA_Q$ and $\calT$ are both $Q$-flat, so is $\calE$; by Lemma~\ref{lem:slopestableexts}, the (twisted) sheaf $\calE$ gives rise to a classifying morphism
    \[
        g\colon Q \to \MM(\mv, \alpha),
    \]
    which on closed points sends $[q\colon A\twoheadrightarrow \calO_x]$ to $[E_q]$. 

    There is a stratification of $\MM(\mv,\alpha) = \bigsqcup_{\ell \geq 0} \MM_\ell$ into locally closed subsets, where
    \[
        \MM_\ell \coloneqq \{ E \in \MM(\mv,\alpha) : E^\dd/E \text{ has length } \ell\};
    \]
    see~\cite[Lemma~3.2.4.17 \& Definition~3.2.4.18]{Lieblich} (and \cite[\S9.6]{HuybrechtsLehn} for a discussion in the untwisted case).
    By construction, the scheme-theoretic image of $g$ is contained in the closed subscheme $\MM_{\geq 1}$, which is the complement of the open locus of $\alpha$-twisted vector bundles in $\MM(\mv,\alpha)$; here we give $\MM_{\geq 1}$ the reduced induced subscheme structure. 
    
    We claim that $g$ is an isomorphism of $Q$ onto the subscheme
    \[
        \MM_{1,U} \coloneqq \{E \in \MM_1 : E^\dd \in U\},
    \]
    which is open in $\MM_{\geq 1}$ since again both slope-stability and local freeness are open conditions.
    To see this, let $\calU_{\MM_{1,U}}$ be the restriction of the twisted universal sheaf $\calU$ on $S \times \MM(\mv,\alpha)$ to $S \times \MM_{1,U}$. 
    By \cite[Lemma~3.2.4.17]{Lieblich}, the relative double dual $\calU_{\MM_{1,U}}^{\dd}$ is a twisted vector bundle and the formation of the double dual commutes with base change. 
    Thus, we get a short exact sequence
    \begin{equation}
    \label{eq:universalses}
        0 \to \calU_{\MM_{1,U}} \to \calU_{\MM_{1,U}}^{\dd} \to \calT' \to 0
    \end{equation}
    on $S \times \MM_{1,U}$, where $\calT'$ is a family of length~$1$ torsion sheaves on $S$, which on any $S\times \{E\}$, restricts to 
    \[
        0 \to E \to E^{\dd} \to \calO_x \to 0
    \]
    for some $x\in S$. 
    In particular, by \cite[Lemma~2.1.4]{HuybrechtsLehn}, all of the sheaves in \eqref{eq:universalses} are $\MM_{1,U}$-flat. 
    Since $E\in \MM_{1,U}$, the condition that $E^\dd \in U$ ensures that the quotient $E^{\dd} \to \calO_x$ gives a closed point in $Q$. 
    It follows that the universal quotient $\calU_{\MM_{1,U}}^{\dd} \twoheadrightarrow \calT'$ induces a classifying morphism $f\colon \MM_{1,U} \to Q$. 
    On closed points, both compositions $f\circ g$ and $g\circ f$ are the identity. 
    The schemes $Q$ and $\MM_{1,U}$ are reduced, separated, and finite type over $\C$, so $f\circ g$ and $g\circ f$ are scheme-theoretically the identity. 
    Thus, $g$ is an isomorphism of $Q$ onto the locally closed subscheme $\MM_{1,U} \subset \MM(\mv,\alpha)$.
\end{proof}

\begin{proof}[Proof of Theorem~\ref{thm:uhlenbeckcontraction}]
    By the discussion above and Lemmas~\ref{lem:slopestableexts} and~\ref{lem:locallyclosedembedding}, to complete the proof of ~\eqref{part1} it remains to produce the morphism $h$ making~\eqref{eqn:Uhldiagram} commute. 
    Recalling that the fiber of $\pi_Q$ over the point $(x,A)$ parametrizes quotients $q\colon A\to \calO_x$ up to scaling, the image of each such $q$ under $\pi \circ g$ is the equivalence class of sheaves $E$ with $E^{\dd} \cong A$ and $E^{\dd}/E \cong \calO_x$. 
    Thus, the composition $\pi \circ g$ contracts each fiber of $\pi_Q$. 
    Along with the fact that $\pi_{Q*}\calO_Q \cong \calO_{S\times U}$, since $Q=\PP(\calA)$, it follows that $\pi \circ g$ factors through $\pi_Q$ \cite[Lemma~1.15(b)]{Debarrebook}, giving rise to the morphism $h$. 
    This completes the proof of \eqref{part1}. 
    We note that $h\colon S \times U \to \MM^{\Uhl}(\mv,\alpha)$ is injective on closed points; indeed, equivalence classes $[E]\in \MM^{\Uhl}(\mv,\alpha)$ in the image of $h$ are completely determined by $E^\dd \in U$ and $E^\dd/E \cong \calO_x$ for $x\in S$. 
\smallskip

Fix a closed point $A\in U$. The restriction of $h$ to $S\cong S\times \{A\}$ gives the commutative diagram
\begin{center}
    \begin{tikzcd}
        \PP(A) \arrow[d] \arrow[r, hookrightarrow] & \MM_h(\mv,\alpha) \arrow[d, "\pi"] \\
        S \arrow[r, hookrightarrow,"h_A"]  & \MM^{\Uhl}(\mv,\alpha)
    \end{tikzcd}    
    \end{center}
    where the top morphism, which is the restriction of $g$, is a closed immersion since $\PP(A)$ is projective. 
    To see that $h_A$ is also a closed immersion, one checks that the proof of \cite[Proposition~8.2.13]{HuybrechtsLehn} holds in the case of moduli spaces of twisted sheaves. 
    Since $A$ is $\alpha$-twisted and the morphism $\PP(A) \to S$ in the diagram is the natural projection, this completes the proof of~\eqref{part2}.
    \smallskip

    Now suppose in addition that condition ($4'$) holds. 
    Under this assumption, $U=\MM(\ma,\alpha)$ and $Q$ is a projective bundle over a projective variety, implying that the morphism $g$ is a closed immersion. 
    Moreover, since $E\in \MM(\mv,\alpha)$ is $\mu$-stable with $E^\dd/E$ of length at most one, Lemma~\ref{lem:Eddslopestable} below gives that $E^\dd$ is also $\mu$-stable and, using the stratification introduced in the proof of Lemma~\ref{lem:locallyclosedembedding}, we have $\MM_{1,U}=\MM_{\geq 1}$. 
    Recalling that $\mv = \ma + \mb$, we have
    \[
        \dim Q = \dim (S\times \MM(\ma,\alpha)) + \dim \PP^{\ma\cdot \mb -1} = \ma^2 + 4 + \ma\cdot \mb -1 = \mv^2 - \ma \cdot \mb +3 = \dim \MM(\mv,\alpha) - (\ma \cdot \mb -1).
    \]
    When $\ma\cdot \mb >1$, it follows that the image of $Q$ is a proper irreducible subvariety of $\MM(\mv,\alpha)$ contained in the exceptional locus of $\pi$. 
    In particular, the image of $Q$ under $g$ is exactly $\MM_{\geq 1}$, which is the entire exceptional locus of $\pi$, since we are assuming that every $E\in \MM(\mv,\alpha)$ is $\mu$-stable (recall from \S\ref{ss:moduliandcontractions} that, when $\MM(\mv,\alpha)$ contains $\mu$-stable sheaves, the exceptional locus is exactly those sheaves which are either not locally free or not $\mu$-stable).

    Finally, as noted above, $h$ is injective on closed points, so in particular it is quasi-finite. 
    In this case, since both $S\times \MM(\ma,\alpha)$ and $\MM^{\Uhl}(\mv,\alpha)$ are projective, the morphism $h$ is finite. Then  $(\pi|_Q)_* \calO_Q \cong (h\circ \pi_Q)_*\calO_Q = h_* \calO_{S\times \MM(\ma,\alpha)}$, so 
    \[
        \underline{\Spec}_{\MM^{\Uhl}(\mv,\alpha)} ( (\pi|_Q)_*\calO_Q) \cong \underline{\Spec}_{\MM^{\Uhl}(\mv,\alpha)} (h_* \calO_{S\times \MM(\ma,\alpha)}) \cong S\times \MM(\ma,\alpha),
    \]
    where the last isomorphism holds because $h$ is affine. 
    Thus, the Stein factorization of $\pi|_Q$ is $h\circ \pi_Q$, completing the proof of~\eqref{part3}.
\end{proof}

\begin{lemma}\label{lem:Eddslopestable}
    Let $E$ be a torsion-free $\mu$-stable $\alpha$-twisted sheaf on a smooth surface. Then $E^\dd$ is also $\mu$-stable.
\end{lemma}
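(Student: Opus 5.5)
The argument compares subsheaves of $E^\dd$ with subsheaves of $E$ through the inclusion $E \hookrightarrow E^\dd$. On a smooth surface, $E$ is torsion-free, so the natural map $E \to E^\dd$ is injective and its cokernel $T := E^\dd/E$ is supported in dimension zero. Since $T$ has no codimension-one part, $c_1^B(E^\dd) = c_1^B(E)$ and $\rk E^\dd = \rk E$, hence $\mu(E^\dd) = \mu(E)$. The whole discussion is local on an étale cover with the twisting carried along: double duals, kernels and intersections of $\alpha$-twisted sheaves glue, exactly as in the untwisted setting. So I will treat the twisted sheaves formally as ordinary sheaves and only use additivity of rank and of $c_1^B$ in short exact sequences.

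Now let $F \subset E^\dd$ be a subsheaf with $0 < \rk F < \rk E^\dd$, and set $F' := F \cap E$, the kernel of the composite $F \to E^\dd \to T$. Then $F/F'$ injects into $T$, so it is supported in dimension zero. Consequently $\rk F' = \rk F$ and $c_1^B(F') = c_1^B(F)$, which gives $\mu(F') = \mu(F)$. In particular $0 < \rk F' < \rk E$, and $F' \subset E$.

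The $\mu$-stability of $E$ then gives $\mu(F) = \mu(F') < \mu(E) = \mu(E^\dd)$. Moreover, $E^\dd$ is torsion-free, being reflexive. Hence $E^\dd$ is $\mu$-stable.

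The only point requiring care is the claim that a zero-dimensional twisted sheaf contributes nothing to $c_1^B$. This holds because its twisted Chern character is $(0,0,\ell)$: the B-field twist $\exp(B)$ only alters components of degree higher than the first nonzero one. With that in place, the rest is the standard untwisted argument.
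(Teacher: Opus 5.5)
Your proposal is correct and is the same argument as the paper's: intersect a would-be destabilizing subsheaf $F\subset E^{\vee\vee}$ with $E$, note that $F/(F\cap E)$ embeds in the zero-dimensional sheaf $E^{\vee\vee}/E$ so rank and slope are unchanged, and conclude from the $\mu$-stability of $E$ together with $\mu(E)=\mu(E^{\vee\vee})$. The paper gives this in one line; you additionally spell out the supporting facts it leaves implicit, namely additivity of $c_1^B$, the vanishing of $c_1^B$ on zero-dimensional twisted sheaves, and torsion-freeness of the reflexive hull.
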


\begin{proof}
    Suppose $E^\dd$ is not $\mu$-stable. 
    Since $\mu(E)=\mu(E^\dd)$, the intersection with $E$ of any destabilizing subobject also destabilizes $E$. 
\end{proof}

\subsection{Contractions via Bayer-Macr\`i}
\label{subsec:flopgeometryBM}
We adapt the main result of \cite[Section 14]{BMMMP}, which describes the geometry of a component of the exceptional locus when $M_{\sigma_+}(\mv,\alpha)$ exhibits a flopping contraction, as determined by Theorem~\ref{thm:BMMMPThm5.7}.

\begin{setup}
    \label{setup:flopping_contractions}
    (Bayer--Macr\`i Setup)
    Let $(S,\alpha)$ be a twisted K3 surface, and let $\mb := (0,0,-1) \in \AlgLat$. 
    Fix $\ma \in \AlgLat$, and let $\calH$ be a primitive hyperbolic rank $2$ sublattice of $\AlgLat$ containing $\ma$ and $\mb$. 
    Let $\calW=\calW_\calH$ be a potential wall associated to $\calH$, and let $\sigma_0\in \calW$ be a $\calW$-generic stability condition. Consider the following set of conditions:
    \smallskip
        
    \begin{enumerate}
        \item $\ma$ is primitive in $\HH^*_{\alg}(S,\alpha, \Z)$ with $\ma^2\geq -2$ and $\ma\cdot\mb>0$; 
        \smallskip

        \item $\mv := \mathbf{a} + \mathbf{b}$ is a primitive Mukai vector with $\mv^2 >0$; 
        \smallskip
   
        \item there are no walls in $\Stab^\dagger(S,\alpha)$ for $\mb$;
        \smallskip

        \item $\MM_{\sigma_0}^{st}(\ma, \alpha) \neq \emptyset$;
        \smallskip

        \item the lattice $\calH$ does not contain any classes as in Theorem~\ref{thm:BMMMPThm5.7}(2)(a)--(c);
        \smallskip
        
        \item either $\ma$ is a positive class or $\ma^2 = -2$ and $0 < \ma\cdot\mv \leq \mv^2/2$.
    \end{enumerate}
    \smallskip

    Writing $\sigma_0=(\calP_0, Z_0)$, we have that $\arg Z_0(\ma)=\arg Z_0(\mb)$. 
    Thus, in the chambers adjacent to $\calW$ near $\sigma_0$, we have $\arg Z(\ma) <\arg Z(\mb)$ in one chamber and $\arg Z(\ma) > \arg Z(\mb)$ in the other. Let $\sigma_+=(\calP_+, Z_+)$ be a nearby stability condition in the chamber adjacent to $\calW$ where $\arg Z_+(\ma) < \arg Z_+(\mb)$ (see Figure~\ref{fig:wallcrossing}). 
\end{setup}

\begin{theorem}
    \label{thm:contractiongeometry}
    Assume that conditions (1)--(4) in setup~\ref{setup:flopping_contractions} hold. 
    
    \begin{enumerate}[leftmargin=*]
        \item\label{BMpart1} There is a contraction $\pi_+ \colon \MM_{\sigma_+}(\mv,\alpha) \to \overline{\MM}_+$, a nonempty open subset $U \subset \MM_{\sigma_+}(\ma,\alpha)$ parametrizing $\sigma_0$-stable $\alpha$-twisted objects on $S$, an \'etale $\PP^{\ma\cdot\mb - 1}$-bundle $\pi_Q\colon Q \to S\times U$, and a morphism $g\colon Q \to \MM_{\sigma_+}(\mv,\alpha)$, injective on closed points, and such that $\pi_+\circ g$ factors as
        \begin{center}
            \begin{tikzcd}
                Q \arrow[d,"\pi_Q"] \arrow[r, "g"] & \MM_{\sigma_+}(\mv,\alpha) \arrow[d, "\pi_+"] \\
                S\times U  \arrow[r,"h"] & \overline{\MM}_+.
            \end{tikzcd}    
        \end{center}
        \smallskip
    
    \item\label{BMpart2} Every closed point $A \in U$ is an $\alpha$-twisted vector bundle, and the restriction $h_A\colon S \isom S \times \{A\} \to \overline{\MM}_+$ is a morphism that sits in a commutative diagram
    \begin{center}
        \begin{tikzcd}
            \PP(A^\vee) \arrow[d,"\alpha^{-1}"] \arrow[r, hookrightarrow, "g'"] & \MM_{\sigma_+}(\mv,\alpha) \arrow[d, "\pi_+"] \\
            S \arrow[r, "h_A"]  & \overline{\MM}_+
        \end{tikzcd}    
    \end{center}    
    where $g'$ is a closed immersion and the restriction $\pi_+\big|_{\PP(A^\vee)}\colon \PP(A^\vee) \to S$ represents $\alpha^{-1}$. 
    \smallskip
        
    \item\label{BMpart3} If, in addition, conditions (5)--(6) hold, then $\pi_+$ is a flopping contraction and the (scheme-theoretic) closure of $g(Q)$ is an irreducible component of the exceptional locus of $\pi_+$.
    \end{enumerate}
\end{theorem}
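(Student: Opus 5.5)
The plan mirrors the Uhlenbeck case (Theorem~\ref{thm:uhlenbeckcontraction}), with Bridgeland wall-crossing in place of double duals, following \cite[\S14]{BMMMP}.

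\textbf{Step 1: the objects $A$ and $\mb$.}
By condition (3) there are no walls for $\mb$. Since $\mb=(0,0,-1)$ is the class of $\calO_x[1]$ up to sign conventions, every skyscraper shift is $\sigma$-stable for all $\sigma\in\Stab^\dagger(S,\alpha)$. Hence $\MM_{\sigma}(\mb,\alpha)\isom S$, with points $\calO_x[1]$ (or $\calO_x$, depending on the shift convention matching the phase of $\mv$).

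Let $U\subset \MM_{\sigma_+}(\ma,\alpha)$ be the locus of $\sigma_0$-stable objects. It is open, since stability is open, and nonempty by condition (4).

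For $A\in U$, the object $A$ and each $\calO_x[1]$ are $\sigma_0$-stable of the same phase and are non-isomorphic. So $\Hom(A,\calO_x[1])=0=\Hom(\calO_x[1],A)$ in phase terms. Serre duality then gives $\Ext^1(\calO_x[1],A)\isom \Ext^1(A,\calO_x[1])^\vee$ of dimension $\ma\cdot\mb$, which equals $-\chi$ computed from the Mukai pairing.

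To show $A$ is a twisted vector bundle, I would use $\Hom^i(A,\calO_x)$. Vanishing outside one degree, with total dimension $\ma\cdot\mb$ independent of $x$, makes $A$ locally free of rank $\ma\cdot\mb$. This step uses that $A$ and $\calO_x[1]$ are stable of equal phase for all $x$.

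\textbf{Step 2: extensions and the bundle $Q$.}
Since $\arg Z_+(\ma)<\arg Z_+(\mb)$, the correct extensions for $\sigma_+$-stability are
\[
0\to \calO_x\to E\to A\to 0
\]
in the heart, or equivalently triangles $A\to \calO_x[1]\to E[1]$. These are nonsplit extensions of $A$ by $\calO_x$, parametrized by $\PP(\Ext^1(A,\calO_x))=\PP(A_x^\vee)$.

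Here I would invoke the standard lemma \cite[Lemma~14.?]{BMMMP}: a nonsplit extension of two $\sigma_0$-stable objects of equal phase, with the subobject of larger $\sigma_+$-phase, is $\sigma_+$-stable. Its class is $\ma+\mb=\mv$, so $E\in\MM_{\sigma_+}(\mv,\alpha)$.

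Globalize over $S\times U$ using the $\alpha\boxtimes\beta$-twisted universal bundle $\calA$, and set $Q=\PP(\calA^\vee)$. This is an étale $\PP^{\ma\cdot\mb-1}$-bundle with a universal extension. The universal extension gives the classifying map $g$.

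Injectivity of $g$ on closed points holds because the $\sigma_0$-Jordan--Hölder factors of $E$ recover $(x,A)$. The extension class up to scalar is then recovered from $\Hom(\calO_x,E)=\C$.

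$\pi_+$ is the contraction of Theorem~\ref{thm:BMbircontractions}. It collapses the fibers of $\pi_Q$ because all such $E$ are $\sigma_0$-S-equivalent to $A\oplus\calO_x$. Since $\pi_{Q*}\calO_Q=\calO$, the argument of \cite[Lemma~1.15(b)]{Debarrebook} produces $h$, exactly as in the Uhlenbeck case.

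\textbf{Step 3: part (2).}
Restricting to $A$ gives $\PP(A^\vee)\to S$. Since $A$ is $\alpha$-twisted, $A^\vee$ is $\alpha^{-1}$-twisted, so this bundle represents $\alpha^{-1}$. The map $g'$ is injective and proper.

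To upgrade $g'$ to a closed immersion, I would show injectivity on tangent spaces. A first-order deformation of $E$ that preserves the Jordan--Hölder type must come from deforming $x$ or the extension class, because $\Hom(A,\calO_x)=0$. I expect this tangent computation to be the main technical obstacle, and I would follow the corresponding argument in \cite[\S14]{BMMMP}.

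\textbf{Step 4: part (3).}
Under conditions (5) and (6), Theorem~\ref{thm:BMMMPThm5.7}(3) shows that $\calW$ is a flopping wall, so $\pi_+$ is a flopping contraction.

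For the component claim, compute
\[
\dim Q=\ma^2+4+\ma\cdot\mb-1.
\]
The exceptional locus over the stratum of objects with $\sigma_0$-JH type $(A,\calO_x)$ is exactly $g(Q)$: any $\sigma_+$-stable $E$ with these factors must be such an extension, by the phase ordering.

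This stratum is locally closed, and no other stratum's closure contains it with larger dimension. That follows either from a dimension count of the other JH types allowed by $\calH$, or from \cite[Proposition~14.?]{BMMMP}. Hence $\overline{g(Q)}$ is an irreducible component of the exceptional locus.
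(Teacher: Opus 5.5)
Your outline has the same architecture as the paper's proof: skyscraper shifts as the stable objects of class $\mb$, $U$ the $\sigma_0$-stable locus, a universal bundle, a classifying map $g$, factorization through $\pi_Q$, and Theorem~\ref{thm:BMMMPThm5.7}(3) for the flop. However, Step~2, which is the heart of the construction, fails as written, in three linked ways.
\begin{enumerate}
\item Since $v(\calO_x)=(0,0,1)=-\mb$, an extension $0\to\calO_x\to E\to A\to 0$ has class $\ma-\mb$, not $\mv$.
\item The group you projectivize is $\Ext^1(A,\calO_x)=\Hom(A,\calO_x[1])$. This is exactly the group you showed vanishes in Step~1. It is also $0$ for any locally free $A$, since $\Ext^i(A,\calO_x)=\HH^i(A^\vee\otimes\calO_x)$. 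So every such extension splits and your $Q$ is empty.
\item Your stability lemma is stated backwards. A subobject of larger $\sigma_+$-phase than the quotient destabilizes $E$, so such extensions could at best be $\sigma_-$-stable.
\end{enumerate}
Since $\arg Z_+(\ma)<\arg Z_+(\mb)$, the object of class $\ma$ must be the subobject. The correct objects are triangles $A\to E\to\calO_x[-1]$, with $\calO_x[-1]\in\calP_0(\phi_0)$. This is also the shift for which your fibrewise argument makes $A$ itself locally free, rather than $A[-2]$. These triangles are classified by $\Ext^1(\calO_x[-1],A)\cong\Ext^2(\calO_x,A)\cong\Hom(A,\calO_x)^\vee$, which has dimension $\ma\cdot\mb$.

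Even with the direction fixed, $\sigma_+$-stability of $E$ is not a formal consequence of the phase ordering. The paper obtains it from the Bayer--Macr\`i criterion (Lemma~\ref{lem:BMlem5.9}), after checking two inputs: $\Hom(E,A)=0$, which follows from non-splitness together with $\Hom(\calO_x[-1],A)=0$; and linear independence of $[A]$ and $[E]$. The criterion places $\sigma_0$ in the closure of the locus where $E$ is stable, and the phase inequality then excludes the $\sigma_-$ side.

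Dualizing $A\to E\to\calO_x[-1]$ gives $0\to E^\vee\to A^\vee\to\calO_x\to 0$. This is why the restriction of $Q$ to $S\times\{A\}$ is $\PP(A^\vee)$ (quotient convention) and represents $\alpha^{-1}$. It also makes the tangent computation you postpone short. With $K=E^\vee$, the map $dg'$ is the connecting homomorphism $\Hom(K,\calO_x)\to\Ext^1(K,K)$. It is injective because $\Hom(K,A^\vee)\cong\Hom(A,E)\cong\C$. The same isomorphism $\Hom(A,E)\cong\C$, which amounts to $\Hom(A,\calO_x[-1])=0$, replaces your $\Hom(\calO_x,E)=\C$ in the proof that $g$ is injective on closed points.

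Finally, part~(3) needs no dimension count. The closure of a Jordan--H\"older stratum $\MM_P$ only meets strata of finer partitions. Let $Z\supseteq g(Q)$ be an irreducible component of the exceptional locus, with generic type $P\neq[\mv]$. Then $[\ma,\mb]$ refines $P$, which forces $P=[\ma,\mb]$ and hence $Z=\overline{g(Q)}$.
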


Throughout this subsection, we assume conditions (1)--(4) in setup~\ref{setup:flopping_contractions} hold.

\begin{lemma}\label{lem:M(b)}
    For any $\sigma \in \Stab^\dagger(S,\alpha)$, the assignment $x\mapsto \calO_x[-1]$ gives a canonical isomorphism $S \xrightarrow{\sim} \M_\sigma(\mb,\alpha)$.
\end{lemma}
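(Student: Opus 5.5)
The plan is to compare $\MM_\sigma(\mb,\alpha)$ with the Gieseker moduli space of twisted skyscraper sheaves, and then transport the identification to every $\sigma$ using condition (3) of Setup~\ref{setup:flopping_contractions}: $\mb$ induces no walls on $\Stab^\dagger(S,\alpha)$.

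First I will record the basic invariants of $\mb = (0,0,-1)$. We have $\mb^2 = 0$, and $\mb$ is primitive. For a closed point $x\in S$, the twisted Mukai vector of $\calO_x$ is $(0,0,1)$. Indeed $\ch^B(\calO_x) = (0,0,1)$, since twisting by $\exp(B)$ does not affect a class concentrated in $\HH^4$, and multiplying by $\sqrt{\td(S)}$ leaves it unchanged. Hence $v(\calO_x[-1]) = -v(\calO_x) = \mb$. Moreover, on an open set of the étale cover the cocycle is trivializable, so $\calO_x$ is a canonical $\alpha$-twisted sheaf, unique up to isomorphism. Since $\End(\calO_x)=\C$, it is simple.

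Next I will show that the $\calO_x[-1]$ are $\sigma$-stable for some $\sigma\in\Stab^\dagger(S,\alpha)$. The natural choice is a geometric stability condition $\sigma_{\omega,\beta}$ with $\omega$ sufficiently large in the geometric chamber. By Bridgeland's characterization of $\Stab^\dagger$ (\cite{BridgelandDuke}, extended to the twisted case in \cite{HMSCompositio}), skyscraper sheaves are stable of the same phase there, and $\calO_x[-1]$ is stable with shifted phase. Conversely, any $\sigma$-stable object with Mukai vector $\pm(0,0,1)$ for such $\sigma$ is a shift of a skyscraper. This is part of the same characterization: $\sigma$ is geometric exactly when all skyscrapers are stable of the same phase, and then these are the only stable objects of that class. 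For that $\sigma$, therefore, $x\mapsto \calO_x[-1]$ is a bijection on closed points onto $\MM_\sigma(\mb,\alpha)$. It upgrades to a morphism $S\to\MM_\sigma(\mb,\alpha)$ via the family $\calO_\Delta[-1]$ on $S\times S$, twisted by $\alpha\boxtimes 1$ and flat over the second factor. The space $\MM_\sigma(\mb,\alpha)$ is smooth projective of dimension $\mb^2+2=2$, so a bijective morphism between smooth projective surfaces is an isomorphism (Zariski's main theorem). Alternatively, I can identify this moduli space with the Gieseker moduli of $(0,0,1)$, which is $S$ by \cite{Yoshioka06}.

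Finally, I will extend to arbitrary $\sigma$. Because $\mb$ induces no walls, $\Stab^\dagger(S,\alpha)$ is a single chamber for $\mb$, so the set of $\sigma$-stable objects of class $\mb$ is independent of $\sigma$ up to the overall phase. Hence for every $\sigma$ the stable objects of class $\mb$ are exactly the $\calO_x[-1]$. Since $\mb$ is primitive and there are no strictly semistable objects, $\MM_\sigma(\mb,\alpha)=\MM^{st}_\sigma(\mb,\alpha)$, and the same family gives the isomorphism. The hypothesis could also be bypassed, because isotropic primitive $\mb$ with $\mb\cdot(0,0,1)=0$ and skyscrapers imposes no walls. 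In any case, as stated in Setup~\ref{setup:flopping_contractions}, I may use condition (3).

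The main obstacle is the precise citation that geometric stability conditions in the twisted setting make $\calO_x$ stable and the only stable objects of class $(0,0,1)$. I would cite \cite{HMSCompositio}*{\S3.1} or \cite{BMMMP}*{\S2}, together with the phase-shift bookkeeping showing that the $[-1]$ shift matches the sign convention on $\mb$.
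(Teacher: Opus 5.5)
Your argument is essentially the paper's. The paper also identifies $S\cong \M_h((0,0,1),\alpha)\cong\M_{\sigma'}(\mb,\alpha)$ through the shift for a suitable geometric $\sigma'$. It gets smoothness and dimension $2$ from $\mb$ being primitive and isotropic, and then uses condition (3) to pass to every $\sigma\in\Stab^\dagger(S,\alpha)$. The ``only stable objects of class $(0,0,1)$'' point you flag as the main obstacle needs no special citation. The map $x\mapsto\calO_x[-1]$ is injective on closed points, so its image is a closed surface inside the irreducible surface $\M_{\sigma'}(\mb,\alpha)$, hence all of it.

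Two corrections.

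First, the diagonal family is not $(\alpha\boxtimes 1)$-twisted. For $\alpha\neq 0$ the class $\alpha\boxtimes 1$ restricts to $\alpha$ on $\Delta\cong S$, and $S$ carries no $\alpha$-twisted line bundle. The object $\calO_\Delta[-1]$ is $(\alpha\boxtimes\alpha^{-1})$-twisted; the paper uses the same object, with the factors swapped, to define $\tilde\calA$. A family twisted by a class pulled back from the parameter space still induces the classifying morphism to the coarse moduli space, so the step survives once the twist is fixed.

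Second, drop the aside that condition (3) could be bypassed: it is false in general. As soon as spherical objects exist, the geometric chamber is a proper subset of $\Stab^\dagger(S,\alpha)$. Examples are $\calO_S$ when $\alpha=0$, or the stable twisted bundle with Mukai vector $\ma$ when $\ma^2=-2$. The boundary of the geometric chamber then contains walls for $(0,0,\pm1)$, namely Bridgeland's walls of type $A^\pm$ in \cite{BridgelandDuke}, across which skyscraper sheaves are destabilized. Beyond such a wall, $x\mapsto\calO_x[-1]$ no longer lands in $\M_\sigma(\mb,\alpha)$. The lemma genuinely depends on (3), which is why the paper checks the absence of spherical classes before invoking it.
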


\begin{proof}
    By condition (3), $\Stab^\dagger(S,\alpha)$ does not contain any walls for $\mb$, so any $\sigma \in \Stab^\dagger(S,\alpha)$ is generic with respect to $\mb$. Moreover, since $\mb$ is primitive and isotropic, it follows that $\MM_{\sigma}(\mb,\alpha)$ is smooth of dimension 2~\cite[Theorems 2.15(b), 3.6]{BMMMP}, so it is a K3 surface.

    More explicitly, the shift autoequivalence on $D^b(S,\alpha)$ identifies $S \cong \MM_h((0,0,1),\alpha) \cong \MM_{\sigma'}(\mb,\alpha)$ for some $\sigma' \in \Stab^\dagger(S,\alpha)$. 
    The latter space parametrizes shifts of ($\alpha$-twisted) skyscraper sheaves of points on $S$. 
    The lack of walls in $\Stab^\dagger(S,\alpha)$ with respect to $\mb$ further implies that $\MM_{\sigma'}(\mb, \alpha)$ is isomorphic to $\MM_{\sigma}(\mb, \alpha)$.
\end{proof}

Lemma~\ref{lem:M(b)} guarantees that every $B\in \MM_{\sigma_+}(\mb,\alpha)$ is stable with respect to both $\sigma_+$ and $\sigma_0$. In what follows, we will use the isomorphism of this lemma and write elements of $\MM_{\sigma_+}(\mb,\alpha)$ as $\calO_x[-1]$.

Similarly, there is an object $A$ with $v(A) = \ma$, which is also stable with respect to both $\sigma_+$ and $\sigma_0$: without loss of generality, we can move $\sigma_+$ so that it is generic with respect to $\ma$, and the primitivity of $\ma$ then implies that $\MM_{\sigma_+}(\ma, \alpha) = \MM_{\sigma_+}^{\sta}(\ma, \alpha)$. 
Since $\ma^2\geq -2$, it follows that $\MM_{\sigma_+}(\ma,\alpha)\neq \emptyset$, and each $A\in \MM_{\sigma_+}(\ma, \alpha)$ is $\sigma_0$-semistable (at worst, $\sigma_0$ could lie on a wall for $\ma$); by condition (4) we have $\MM_{\sigma_0}^{\sta}(\ma, \alpha)\neq\emptyset$, so stability being an open condition, we must have that the generic $A$ is $\sigma_0$-stable. Thus,
\[
    U := \{A \in \MM_{\sigma_+}(\ma,\alpha) : A \text{ is $\sigma_0$-stable}\}
\]
is a dense open subset of $\MM_{\sigma_+}(\ma,\alpha)$.

The following lemma is well-known to experts, but we include the details for the reader who is less familiar with stability conditions.

\begin{lemma}
    \label{lem:stableexts}
    Fix $A\in U$. 
    Then for all $x\in S$ and nontrivial $E\in \Ext^1(\calO_x[-1],A)$, $E$ is $\sigma_+$-stable and hence $E\in \MM_{\sigma_+}(\mv, \alpha)$. 
\end{lemma}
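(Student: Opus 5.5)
The plan is the standard argument that a nontrivial extension of two $\sigma_0$-stable objects of the same $\sigma_0$-phase becomes $\sigma_+$-stable on the correct side of the wall. Fix $A\in U$, $x\in S$, and a nontrivial class in $\Ext^1(\calO_x[-1],A)$. It determines a distinguished triangle
\[
    A \to E \to \calO_x[-1] \to A[1],
\]
so $v(E)=\ma+\mb=\mv$. By Lemma~\ref{lem:M(b)}, $\calO_x[-1]$ is $\sigma_0$-stable, and $A$ is $\sigma_0$-stable by the definition of $U$. Since $\arg Z_0(\ma)=\arg Z_0(\mb)$, both objects lie in the abelian category $\calP_0(\phi)$, where $\phi$ is their common phase. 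Because this category is extension-closed, $E\in\calP_0(\phi)$ is $\sigma_0$-semistable, with Jordan--H\"older factors $A$ and $\calO_x[-1]$. In $\calP_0(\phi)$ we thus have a short exact sequence $0\to A\to E\to \calO_x[-1]\to 0$, and it is non-split.

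First I will use the standard fact that $\sigma_+$-stability is controlled by $\sigma_0$. For $\sigma_+$ close enough to $\sigma_0$, with $v(E)=\mv$ fixed, any $\sigma_+$-destabilizing subobject of a $\sigma_0$-semistable $E$ can be taken to be a subobject of $E$ in $\calP_0(\phi)$. This follows from the local finiteness of walls, as in \cite{BMMMP}*{\S5} or \cite{BMJAMS}. So it suffices to show that every proper nonzero subobject $F\subset E$ in $\calP_0(\phi)$ satisfies $\phi_+(F)<\phi_+(E)$.

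Such an $F$ has Jordan--H\"older factors forming a subset of $\{A,\calO_x[-1]\}$. Hence $F$ is isomorphic to $A$ or to $\calO_x[-1]$.
\begin{itemize}
    \item \textbf{Case $F\cong\calO_x[-1]$.} The composite $F\to E\to\calO_x[-1]$ is either zero or an isomorphism, since $\calO_x[-1]$ is stable. If it is an isomorphism, the sequence splits, a contradiction. If it is zero, then $F$ factors through $A$, giving a nonzero map $\calO_x[-1]\to A$ between non-isomorphic stable objects of the same phase. Schur's lemma rules this out, since $v(A)=\ma\ne\mb$. So this case cannot occur.
    \item \textbf{Case $F\cong A$.} By the choice of the chamber, $\arg Z_+(\ma)<\arg Z_+(\mb)$. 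Since $Z_+(\mv)=Z_+(\ma)+Z_+(\mb)$, the phase of $E$ lies strictly between those of $\ma$ and $\mb$. Hence $\phi_+(A)<\phi_+(E)$, and $A$ does not destabilize $E$.
\end{itemize}
Therefore $E$ is $\sigma_+$-stable, and $E\in\MM_{\sigma_+}(\mv,\alpha)$.

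I expect the main obstacle to be justifying the reduction in the first step rigorously: that $\sigma_+$-stability of a $\sigma_0$-semistable object of class $\mv$ can be tested on its $\sigma_0$-Jordan--H\"older subobjects. I would try to cite \cite{BMMMP}*{Lemma~6.1 / \S5} for this, or else prove it. The proof would take a $\sigma_+$-Harder--Narasimhan filtration of $E$ and note that for $\sigma_+$ near $\sigma_0$ its factors are $\sigma_0$-semistable of phase $\phi$, by the support property and finiteness of relevant classes. The filtration then lives in $\calP_0(\phi)$.
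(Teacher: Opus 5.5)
Your proof is correct, but it takes a different route from the paper's.

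\textbf{The paper's route.} The paper never tests $\sigma_+$-stability directly. It checks the hypotheses of Lemma~\ref{lem:BMlem5.9} at $\sigma_0$: that $A \hookrightarrow E \twoheadrightarrow \calO_x[-1]$ is a Jordan--H\"older filtration, that $\Hom(E,A)=0$, and that $[A]$ and $[E]$ are linearly independent. From this it concludes that $E$ is stable on one side of $\calW$. It then excludes the $\sigma_-$ side by contradiction: $\sigma_-$-stability of $E$ would give $\arg Z_-(\ma)<\arg Z_-(\mb)$, which forces the opposite inequality on the $\sigma_+$ side.

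\textbf{Your route.} You unpack what that lemma does when there is a single copy of $A$. You reduce $\sigma_+$-stability to subobjects of $E$ in $\calP_0(\phi)$, show that $A$ is the only proper one, and use $\arg Z_+(\ma)<\arg Z_+(\mb)$ to see that $A$ does not destabilize. The correct side therefore comes out directly rather than by contradiction. Your exclusion of $\calO_x[-1]$ as a subobject mirrors the paper's proof that $\Hom(E,A)=0$. You study the composite $\calO_x[-1]\to E\to\calO_x[-1]$ where the paper studies $A\to E\to A$, and both arguments rest on nonsplitness together with $\Hom(\calO_x[-1],A)=0$.

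\textbf{What each buys.} Your version is self-contained and needs neither the linear-independence check nor a separate side argument. The cost is that the reduction step must be proved or cited: near the wall, $\sigma_+$-stability of a $\sigma_0$-semistable object of class $\mv$ is detected by its subobjects in $\calP_0(\phi)$. That is exactly the content the paper outsources to Lemma~\ref{lem:BMlem5.9}. Your sketch of this step is the standard one and is fine: take the $\sigma_+$-Harder--Narasimhan filtration, use finiteness of the relevant classes via the support property, and use local finiteness of walls. To get stability rather than just semistability, either run the same argument on a $\sigma_+$-Jordan--H\"older subobject, or note that $\mv$ is primitive and $\sigma_+$ is $\mv$-generic.
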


To prove the lemma, we apply the following result of Bayer and Macr\`i (see also \cite{BMJAMS}*{Lemma~9.4}):

\begin{lemma}[{\cite[Lemma 5.9]{BMDuke}}]
    \label{lem:BMlem5.9}
    Let $E \in D^b(S,\alpha)$ and $\sigma \in \Stab^\dagger(S, \alpha)$ be a stability condition such that $E$ is $\sigma$-semistable, and assume that there is a Jordan--H\"older filtration $M^{\oplus r} \hookrightarrow E \twoheadrightarrow N$ of~$E$ such that $M$ and $N$ are $\sigma$-stable, $\Hom(E, M) = 0$, and $[E]$ and $[M]$ are linearly independent classes in $K_{\mathrm{num}}(D^b(S,\alpha))$. 
    Then $\sigma$ is in the closure of the set of stability conditions where $E$ is stable.
\end{lemma}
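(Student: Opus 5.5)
The plan is to exhibit explicitly, arbitrarily close to $\sigma$, stability conditions $\sigma'$ for which $E$ is stable. I will choose them on the side of the relevant wall where the phase of $M$ drops below the phase of $N$.

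\textbf{Choice of $\sigma'$.} Let $\phi$ be the $\sigma$-phase of $E$, so $M,N\in\calP_\sigma(\phi)$. Since $[E]=r[M]+[N]$ and $[E]$, $[M]$ are linearly independent, the classes $[M]$ and $[N]$ are linearly independent in $K_{\mathrm{num}}$. Deforming the central charge $Z$ slightly (which is allowed, since $\Stab^\dagger(S,\alpha)\to\Hom(\AlgLat,\C)$ is a local homeomorphism), I can choose $\sigma'=(Z',\calP')$ arbitrarily close to $\sigma$ with $\phi'(M)<\phi'(N)$. Here $\phi'$ is the phase computed from $Z'$, and $\phi'(M)<\phi'(E)<\phi'(N)$ follows because $Z'(E)=rZ'(M)+Z'(N)$. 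By openness of stability and local finiteness of walls for the finitely many classes involved, I may also take $M$ and $N$ to be $\sigma'$-stable.

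\textbf{Step 1: destabilizers are built from the factors.} Using the support property and Bridgeland's deformation estimates, for $\sigma'$ close enough to $\sigma$ every $\sigma'$-semistable subquotient of $E$ appearing in its $\sigma'$-HN or JH filtration has $\sigma$-phase exactly $\phi$. Indeed, only finitely many classes of bounded mass can occur, and the phases move continuously. Such a subquotient therefore lies in $\calP_\sigma(\phi)$, and its $\sigma$-JH factors are among those of $E$. So any $\sigma'$-destabilizing subobject $F\subset E$ can be taken to be a subobject in the abelian category $\calP_\sigma(\phi)$, with $[F]=a[M]+b[N]$, where $0\le a\le r$, $b\in\{0,1\}$, and $F\neq 0,E$.

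\textbf{Step 2: case analysis.} If $b=0$, then $F$ has only $M$ as JH factors, so $\phi'(F)=\phi'(M)<\phi'(E)$, and $F$ does not destabilize. If $b=1$, then $a<r$ and the quotient $E/F$ in $\calP_\sigma(\phi)$ has only $M$ as JH factors. Because $M$ is stable, $E/F$ admits a nonzero map to $M$. Composing with $E\twoheadrightarrow E/F$ gives a nonzero element of $\Hom(E,M)$, contradicting the hypothesis $\Hom(E,M)=0$. Hence $E$ has no $\sigma'$-destabilizing subobject, so $E$ is $\sigma'$-stable. Letting $\sigma'\to\sigma$ shows that $\sigma$ lies in the closure of the stable locus.

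\textbf{Main obstacle.} The delicate step is Step 1: justifying that for $\sigma'$ sufficiently close to $\sigma$ the relevant $\sigma'$-subobjects of $E$ are $\sigma$-semistable of phase $\phi$, so that everything reduces to the finite-length category $\calP_\sigma(\phi)$. I would handle it via the standard wall-and-chamber argument of Bridgeland and Toda, applied to the class $[E]$, which is the set of finitely many walls near $\sigma$. I would take $\sigma'$ in a chamber adjacent to $\sigma$ on the side where $\phi'(M)<\phi'(N)$. A subobject destabilizing $E$ there must have phase equal to that of $E$ along the wall through $\sigma$, which forces it into $\calP_\sigma(\phi)$.
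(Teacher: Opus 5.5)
Your proposal is correct and is, in substance, the argument behind the cited Bayer--Macr\`i lemma. You perturb $\sigma$ to the side where $\phi'(M)<\phi'(N)$, use finiteness of classes of bounded mass to force any $\sigma'$-destabilizing subobject of $E$ into $\calP_\sigma(\phi)$, and then use $\Hom(E,M)=0$ to exclude every proper subobject except those whose Jordan--H\"older factors are all $M$.

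The paper does not reprove the lemma. It only remarks that the proof for $\calD_0$ carries over to any triangulated category with finite-dimensional $K_{\mathrm{num}}$, and that finiteness is exactly what your Step~1 uses. One caveat on your ``main obstacle'' paragraph: alignment of $Z_\sigma(F)$ with $Z_\sigma(E)$ does not by itself place $F$ in $\calP_\sigma(\phi)$. You need the phase-gap argument you sketch first in Step~1, or closedness of semistability along a path inside the chamber.
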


\begin{proof}
    In \cite{BMDuke}, this lemma is stated for $\calD_0=D^b_0(\mathrm{Tot}\, \calO_{\PP^2}(-3))$, but one can check that the result holds for any triangulated category $\calD$ such that $K_{\mathrm{num}}(\calD)$ is finite dimensional. 
    In particular, it holds for $\calD=D^b(S,\alpha)$.   
\end{proof}

\begin{proof}[Proof of Lemma~\ref{lem:stableexts}]
    We check that the hypotheses of Lemma \ref{lem:BMlem5.9} are satisfied for $\sigma=\sigma_0$. 
    Let $\phi_0$ be the phase of $A$ and $B:=\calO_x[-1]$ with respect to $\sigma_0$. 
    Since $A, B \in \calP_0(\phi_0)$ and $\calP_0(\phi_0)$ is extension-closed, $E$ is $\sigma_0$-semistable. 
    In addition, because $A$ and $B$ are $\sigma_0$-stable, the triangle $A \hookrightarrow E \twoheadrightarrow B$ is a Jordan--H\"older filtration for $E\in \calP_0(\phi_0)$. 
    Next, we claim that $\Hom(E,A)=0$. If there exists a nonzero $f\colon E \to A$, then it is surjective because $A$ is simple. 
    Thus $A \xhookrightarrow{i} E \xrightarrow{f} A$ is in $\End(A)=\C$. 
    If it were zero, then $f$ would factor through $B$, giving a surjective composition $E \to B \to A$, so $\Hom(B,A) \neq 0$. 
    But this cannot happen since $A$ and $B$ have the same phase but are nonisomorphic. 
    So $f\circ i \in \Aut(A) = \C^\times$, and $f$ can be modified to give a splitting of $A\hookrightarrow E \twoheadrightarrow B$, contradicting the assumption on $E$.
    Finally, $[A]$ and $[E]$ are linearly independent in $K(D^b(S,\alpha))$, since a linear dependence between them would lead to a linear dependence between $\ma$ and $\mv$ in $\HH_{\alg}^*(S,\alpha,\Z)$, which cannot hold because $\mv=\ma + \mb$ and $\rk \ma \neq \rk \mb$. 
    Thus by Lemma~\ref{lem:BMlem5.9}, $\sigma_0$ is in the closure of the locus of stability conditions where $E$ is stable, so $E$ is either $\sigma_+$-stable or $\sigma_-$-stable. 
    If $E$ were $\sigma_-$-stable, then writing $\sigma_-=(\calP_-,Z_-)$, we would have $\arg Z_-(\ma) <\arg Z_-(\mb)$, which, since $\sigma_+$ and $\sigma_{-}$ are on opposite sides of $\calW$, would require $\arg Z_+(\ma) > \arg Z_+(\mb)$. 
    But this contradicts the assumed inequality $\arg Z_+(\ma) <\arg Z_+(\mb)$ of setup~\ref{setup:flopping_contractions}. 
\end{proof}

\begin{remark}\label{rmk:HomAE}
    In the proof above, we show that $\Hom(E,A) =0$. 
    Since it will be useful later, we observe that $\Hom(A,E) \cong \Hom(A,A) \cong \C$. 
    Indeed, $\Hom(A,\calO_x[-1])=0$ because $A$ and $\calO_x[-1]$ are $\sigma_0$-stable of the same phase but not isomorphic. 
    Applying $\Hom(A,-)$ to the triangle $A\hookrightarrow E \twoheadrightarrow \calO_x[-1]$ then gives the desired isomorphism, where $\Hom(A,A)\cong \C$ because $A$ is $\sigma_0$-stable.     
\end{remark}

Let $\beta \in \Br(U)$ be the restriction to $U$ of the Brauer class on $\MM_{\sigma_+}(\ma,\alpha)$ which is the obstruction to the existence of a universal $\alpha$-twisted object on $S\times \MM_{\sigma_+}(\ma,\alpha)$. 
Then there exists an $\alpha\boxtimes \beta$-twisted universal complex $\calA$ on $S \times U$. 

We consider $S\times S \times U$ with the projections $p_{ij}$. 
We further write $\delta\colon S \to S\times S$ for the diagonal morphism and $\Delta \subset S\times S$ for its image. 
The object $\calO_\Delta[-1]$ is $(\alpha^{-1}\boxtimes \alpha)$-twisted, since $\alpha^{-1} \boxtimes \alpha$ is trivial along $\Delta \subset S \times S$, which is the support of $\calO_{\Delta}[-1]$. 
We have that $p_{12}^*\calO_{\Delta}[-1]$ is $(\alpha^{-1}\boxtimes \alpha \boxtimes 1)$-twisted, and $p_{23}^*\calA$ is $(1\boxtimes \alpha \boxtimes \beta)$-twisted. 
Using~\cite[Theorem~2.2.6]{Caldararu}, we see that $R\calH om(p_{12}^*\calO_{\Delta}[-1], p_{23}^*\calA)$ is $(\alpha \boxtimes 1 \boxtimes \beta)$-twisted, where $\alpha \boxtimes 1 \boxtimes \beta$ is a Brauer class pulled back along $p_{13}$ to the triple product from $S\times U$, so we can define
\begin{equation}\label{def:tildeA}
    \tilde{\calA} = Rp_{13*}R\calH om(p_{12}^*\calO_{\Delta}[-1], p_{23}^*\calA).  
\end{equation}
In the following lemmas, we show that $\tilde{\calA}[1]$ is an $\alpha \boxtimes \beta$-twisted vector bundle on $S\times U$ isomorphic to $\calA$, in which case we let $Q := \PP_{S \times U}({\calA}^\vee)$, still using the quotient convention. 
While this makes $\tilde\calA$ appear unnecessary, without it we have no information about the derived fibers of $\calA$. Thus, $\tilde\calA$ is necessary for deducing that $\calA$ is a twisted vector bundle on $S\times U$.

\newpage

\begin{lemma}
    \label{lem:tildeA[1]congA}
    There is an isomorphism $\tilde{\calA}[1] \cong \calA$.
\end{lemma}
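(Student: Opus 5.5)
The plan is to compute $\tilde\calA$ using Grothendieck duality for the diagonal. In $\tilde\calA = Rp_{13*}R\calH om(p_{12}^*\calO_\Delta[-1], p_{23}^*\calA)$ we first pull the shift out: $R\calH om(p_{12}^*\calO_\Delta[-1],-) \cong R\calH om(p_{12}^*\calO_\Delta,-)[1]$. This gives $\tilde\calA[1] \cong Rp_{13*}R\calH om(p_{12}^*\calO_\Delta, p_{23}^*\calA)[2]$. Next we identify $p_{12}^*\calO_\Delta$ with $\iota_*\calO_{S\times U}$, where $\iota = \delta\times\id_U\colon S\times U \hookrightarrow S\times S\times U$ is the diagonal embedding. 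This closed immersion is regular of codimension $2$, and it satisfies $p_{13}\circ\iota = \id$ and $p_{23}\circ\iota = \id$.

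The key step is Grothendieck duality for the closed immersion $\iota$, in its twisted form (see \cite{Caldararu}). It gives
\[
R\calH om(\iota_*\calO_{S\times U}, F) \cong \iota_*\left(L\iota^*F \otimes \omega_\iota\right)[-2], \qquad \omega_\iota = \det N_\iota,
\]
and here the normal bundle is $N_\iota \cong T_S$ pulled back, so $\det N_\iota \cong \omega_S^{-1}\cong\calO$ because $S$ is a K3 surface. We apply this with $F = p_{23}^*\calA$. Since $L\iota^*p_{23}^*\calA \cong \calA$, we get $R\calH om(p_{12}^*\calO_\Delta, p_{23}^*\calA) \cong \iota_*\calA[-2]$. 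Pushing forward along $p_{13}$, and noting that $p_{13}\circ\iota=\id$ so that $Rp_{13*}\iota_* = \id$, yields $\tilde\calA[1]\cong \calA[-2][2] = \calA$.

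Along the way we will check that the twists match. The pullback $\iota^*(1\boxtimes\alpha\boxtimes\beta)$ is $\alpha\boxtimes\beta$, and the twist $\alpha\boxtimes1\boxtimes\beta$ of the $R\calH om$ also restricts to $\alpha\boxtimes\beta$ on the diagonal, which is consistent with the twist computation already made before \eqref{def:tildeA}.

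The main obstacle is justifying the duality isomorphism for twisted complexes and making sure the cocycles are compatible. Since $\alpha^{-1}\boxtimes\alpha$ is trivialized along $\Delta$, we can choose cocycle representatives that are compatible on $\Delta$. Then the local statement is the untwisted identity $\mathcal{E}xt^i(\calO_\Delta,F)\cong \iota_*(L\iota^*F\otimes\det N)$ for $i=2$, together with the corresponding shifted statement in the derived category, computed via the Koszul resolution of $\calO_\Delta$. This identity glues because it is canonical. Should the canonical gluing be delicate, the fallback is to argue on fibers: for each $(y,A)$, the derived fiber is $R\Hom(\calO_y[-1],A)$ localized at $y$, which base change reduces to the same Koszul computation. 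This approach requires the triviality of $\omega_S$, or at least a trivialization of $\det N_\iota$, and we fix the one induced by the symplectic form.
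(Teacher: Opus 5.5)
Your proposal is correct and follows essentially the paper's argument. Both use Grothendieck--Verdier duality for the diagonal, together with $\omega_S\cong\calO_S$, to identify $R\calH om(p_{12}^*\calO_\Delta[-1],p_{23}^*\calA)$ with $\iota_*\calA[-1]$, and then push forward along $p_{13}$ using $p_{13}\circ\iota=\id$. The only cosmetic difference is that the paper first computes $\calO_\Delta^\vee\cong\calO_\Delta[-2]$ and then applies $R\calH om(E,F)\cong E^\vee\otimes F$ and the projection formula, whereas you apply $R\calH om(\iota_*\calO,F)\cong\iota_*\iota^!F$ directly.
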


\begin{proof}
    Let $i\colon S \times U \to S\times S \times U$ be $\delta \times \id$. 
    By Grothendieck-Verdier duality \cite[Corollary 3.40]{HuybrechtsFM}, we deduce that 
    \[
        \calO_{\Delta}^\vee = (\delta_* \calO_{S})^\vee \cong \delta_* \omega_S \otimes \omega_{S\times S}^*[-2] \cong \delta_* \calO_S[-2] = \calO_{\Delta}[-2].
    \]
    Using \cite[\href{https://stacks.math.columbia.edu/tag/08DQ}{Lemma 08DQ and Lemma 0B54}]{stacks-project}, we have isomorphisms
    \begin{align*}
        R\calH om (p_{12}^*\calO_{\Delta}[-1],p_{23}^*\calA)    &\cong p_{12}^*\calO_{\Delta}^\vee[1] \otimes p_{23}^*\calA \\
                                                                &\cong p_{12}^*\calO_{\Delta}[-1] \otimes p_{23}^*\calA \\
                                                                &\cong i_* \calA[-1].
    \end{align*}
    Thus,
    \[
        \tilde{\calA} = Rp_{13*}R\calH om (p_{12}^*\calO_{\Delta}[-1],p_{23}^*\calA) \cong R(p_{13}\circ i)_*\calA[-1] = \calA[-1],
    \]
    since $p_{13}\circ i=\id_{S\times U}$.
\end{proof}

\begin{lemma}
    \label{lem:tildeAvectorbundle}
    The complex $\tilde{\calA}[1]\in D^b(S\times U, \alpha\boxtimes \beta)$ is a twisted vector bundle on $S\times U$ of rank $\ma\cdot \mb$ and with fiber over $(x,A)$ isomorphic to $\Ext^1(\calO_x[-1],A)$.
\end{lemma}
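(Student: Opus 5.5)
We will show that the derived fibers of $\tilde{\calA}[1]$ are concentrated in a single degree and have constant dimension $\ma\cdot\mb$. A standard criterion then makes $\tilde{\calA}[1]$ locally free. Twisting does not affect this, since all statements are étale local on $S\times U$.

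\textbf{Base change.} By base change for the proper morphism $p_{13}$ along the inclusion of a closed point $(x,A)\hookrightarrow S\times U$, and using that $\calA$ restricts derivedly to $A$ on $S\times\{A\}$ (it is a universal family), we expect
\[
L\iota_{(x,A)}^*\tilde{\calA}\cong R\Hom_S(\calO_x[-1],A).
\]
Since $p_{12}^*\calO_\Delta$ is flat over the $U$-factor, $R\calH om$ commutes with derived restriction to $S\times S\times\{A\}$. One then restricts the second $S$-factor to the point $x$; alternatively, one can use the explicit isomorphism of Lemma~\ref{lem:tildeA[1]congA} to reduce to computing $L\iota^*\calA$. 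The upshot is
\[
L\iota_{(x,A)}^*\big(\tilde{\calA}[1]\big)\cong R\Hom(\calO_x[-1],A)[1].
\]

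\textbf{Vanishing.} We show that $\Ext^i(\calO_x[-1],A)=0$ for $i\neq 1$.
\begin{itemize}
\item $i=0$: both objects are $\sigma_0$-stable of the same phase and nonisomorphic, so $\Hom(\calO_x[-1],A)=0$.
\item $i=2$: Serre duality on the K3 category gives $\Ext^2(\calO_x[-1],A)\cong\Hom(A,\calO_x[-1])^\vee$, which vanishes for the same reason (this is already used in Remark~\ref{rmk:HomAE}).
\item $i<0$ or $i>2$: both objects lie in the heart $\calP_0((\phi_0-1,\phi_0])$ (up to a common shift), so these groups vanish.
\end{itemize}

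\textbf{Dimension.} The Euler pairing gives
\[
-\chi(\calO_x[-1],A)=\mb\cdot\ma,
\]
so $\dim\Ext^1(\calO_x[-1],A)=\ma\cdot\mb$ for every closed point $(x,A)$.

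\textbf{Local freeness.} A perfect complex on a reduced (indeed smooth, $U$ being open in a hyperkähler manifold) scheme whose derived fibers at all closed points are concentrated in degree $0$ with constant rank is a vector bundle in degree $0$. This follows by descending induction on the top cohomology sheaf using Nakayama's lemma, or from \cite[Lemma~3.31]{HuybrechtsFM}. Hence $\tilde{\calA}[1]$ is an $\alpha\boxtimes\beta$-twisted vector bundle of rank $\ma\cdot\mb$, with fiber $\Ext^1(\calO_x[-1],A)$ at $(x,A)$.

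The main obstacle is making the base-change identification rigorous in the twisted derived setting: derived restriction must commute with $R\calH om$ and with $Rp_{13*}$ for twisted complexes. We would handle this locally on an étale cover trivializing the twists, or more cleanly by passing through $\calA$ via Lemma~\ref{lem:tildeA[1]congA} and Grothendieck–Verdier duality for $\calO_x$. The fiberwise Ext vanishing is then routine from stability.
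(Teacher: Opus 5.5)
Your proposal is correct and follows essentially the same route as the paper. Base change along the flat projection $p_{13}$ (the paper cites Bondal--Orlov, Lemma~1.3) identifies the derived fiber of $\tilde{\calA}$ at $(x,A)$ with $R\Hom(\calO_x[-1],A)$; $\sigma_0$-stability in a common phase together with Serre duality kills $\Ext^i$ for $i\neq 1$; and constant fiber dimension $\ma\cdot\mb$ then gives a vector bundle. Your Euler-pairing computation and explicit local-freeness criterion only spell out steps the paper leaves implicit, but two points need fixing: for $i>2$, lying in a common heart is not enough by itself, since you need Serre duality to reduce to the case $i<0$ (as the paper does), and the factor restricted to $x$ in the base change is the first $S$-factor (the one kept by $p_{13}$), not the second.
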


Along with Lemma~\ref{lem:tildeA[1]congA}, this means $\calA$ is a twisted vector bundle on $S\times U$. 
The technique for the proof is inspired by \cite{Hellmann}*{Proposition~4.4}.

\begin{proof} 
    We start by identifying the fibers of $\tilde\calA$ over $S\times U$. For every $x \in S$ and $A\in U$, there is a fiber product diagram 
    \begin{center}
        \begin{tikzcd}
            \{x\}\times S \times \{A\} \arrow[d, "\pi"] \arrow[r, hookrightarrow, "j"] & S\times S \times U \arrow[d, "p_{13}"] \\
            \{x\}\times \{A\} \arrow[r, hookrightarrow, "i"] & S \times U
        \end{tikzcd}    
    \end{center}
    where $i,j$ are inclusions and $\pi, p_{13}$ are projections. 
    There is a natural base change map \cite[Remark 3.33(i)]{HuybrechtsFM}
    \[
        i^*\tilde \calA \to R\pi_*j^*(R\calH om(p_{12}^*\calO_{\Delta}[-1],p_{23}^*\calA)).
    \]
    On the right-hand side, pullback commutes with sheaf-hom and $j^*p_{23}^*\calA \cong \calA|_{S\times\{A\}} \cong A$, so 
    \[
        R\pi_*j^*(R\calH om(p_{12}^*\calO_{\Delta}[-1],p_{23}^*\calA)) \cong R\pi_* R\calH om(\calO_x[-1], A),
    \]
    and the base change map is of the form $i^*\tilde \calA \to R\pi_* R\calH om(\calO_x[-1], A)$. 
    By \cite[Lemma~1.3]{BondalOrlov}, this map is an isomorphism. After identifying $\{x\}\times S \times \{A\} \cong S$, pushforward along $\pi$ becomes pushforward along the structure map $S\to \Spec \C$, which identifies the derived fiber of $\tilde{\calA}$ over $(x,A)$ as
    \[
        R\pi_* R\calH om(\calO_x[-1], A)\cong R \mathrm{Hom}(\calO_x[-1], A).
    \]
    Since $A$ and $\calO_x[-1]$ are $\sigma_0$-stable of the same phase but not isomorphic, we have $\Ext^i(\calO_x[-1],A)=0$ for $i = 0,2$. For $i<0$, $A[i] \in \calP_0(\phi_0+i)$ and $\phi_0+i < \phi_0$, so $\Ext^i(\calO_x[-1],A)=0$. 
    Similarly, Serre duality shows that $\Ext^i(\calO_x[-1],A)=0$ for $i>2$. 
    It follows then that
    \[R\mathrm{Hom}(\calO_x[-1], A)) \cong \Ext^1(\calO_x[-1],A)[-1].\] 
    Since $\dim \Ext^1(\calO_x[-1],A)=\ma\cdot\mb$ is constant for all $(x,A)\in S\times U$ and $\ma \cdot \mb >0$ (cf.\ setup~\ref{setup:flopping_contractions}), it follows that $\tilde \calA[1]$ is an $\alpha$-twisted vector bundle of rank $\ma\cdot\mb$. 
    Moreover, the fiber of $\tilde \calA[1]$ over $(x,A)\in S\times U$ is $\Ext^1(\calO_x[-1],A)$.
\end{proof}

The lemmas above show that a point in $Q=\PP_{S \times U}({\calA}^\vee)$ over $(x,A) \in S \times U$ is a nontrivial extension $A \hookrightarrow E \twoheadrightarrow \calO_x[-1]$, considered up to equivalence.
Let the map $\pi_Q \colon Q \to S\times U$ be the natural projection. 
Its fiber over $(x,A)$ is $\PP(\calA^\vee|_{(x,A)}) \cong \PP(\Ext^1(\calO_x[-1],A)) \isom \PP^{\ma\cdot\mb - 1}$, so $\pi_Q$ is an \'etale $\PP^{\ma\cdot\mb - 1}$-bundle.

Before the next lemma, we give another interpretation of points in $Q$. 
Fix some $A\in U$. 
As in \S\ref{subsec:flopgeometryUhl}, points of $\PP(A^\vee)$ more naturally parametrize surjections $A^\vee \twoheadrightarrow \calO_x$ as $x\in S$ varies. 
By taking the derived dual of the triangle $A \hookrightarrow E \twoheadrightarrow \calO_x[-1]$ and using that on a K3 surface $(\calO_x[-1])^\vee \cong \calO_x[-1]$, we get a triangle $\calO_x[-1] \hookrightarrow E^\vee \twoheadrightarrow A^\vee$. 
Rotating this triangle and using the fact that the map $A^\vee \to \calO_x$ is nonzero, hence surjective, because $E$ is a nontrivial extension, we acquire a short exact sequence of sheaves $0 \to E^\vee \to A^\vee \to \calO_x \to 0$. 
This dualizing process allows us to pass between the two interpretations of points in $Q$.

\begin{lemma}
    \label{lem:gQtoM}
    There is a morphism $g \colon Q \to \MM_{\sigma_+}(\mv,\alpha)$, injective on closed points, which when restricted to any $\PP(\calA^\vee|_{S\times \{A\}}) = \PP(A^\vee) \subset Q$ is a closed embedding.
\end{lemma}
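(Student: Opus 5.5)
We construct $g$ as a classifying morphism for a universal extension family, mirroring the proof of Lemma~\ref{lem:locallyclosedembedding}, and then check injectivity and the closed embedding property fiberwise.

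\emph{Step 1: the universal extension.} On $Q=\PP_{S\times U}(\calA^\vee)$ the tautological quotient $\pi_Q^*\calA^\vee\twoheadrightarrow\calO_Q(1)$ dualizes to a tautological line subbundle $\calO_Q(-1)\hookrightarrow\pi_Q^*\calA$. By Lemmas~\ref{lem:tildeA[1]congA} and~\ref{lem:tildeAvectorbundle}, $\pi_Q^*\calA\cong\pi_Q^*\tilde\calA[1]$ has fiber $\Ext^1(\calO_x[-1],A)$, and $\tilde\calA$ is defined as a relative $R\calH om$ from the family $\calO_\Delta[-1]$ to $\calA$. The tautological section therefore corresponds, by adjunction, to a morphism on $S\times Q$ (suitably twisted by $\calO_Q(1)$),
\[
\calO_{\Delta_Q}[-1]\otimes\calO_Q(-1)\longrightarrow \calA_Q[1],
\]
where $\Delta_Q\subset S\times Q$ is the graph of $Q\to S$. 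Its cone, shifted, is an object $\calE\in D^b(S\times Q,\alpha\boxtimes\beta')$ sitting in a triangle $\calA_Q\to\calE\to\calO_{\Delta_Q}[-1]\otimes\calO_Q(-1)$. By derived base change (as in the proof of Lemma~\ref{lem:tildeAvectorbundle}), its derived fiber over a point $q\in Q$ is the nontrivial extension $E_q$ of $\calO_x[-1]$ by $A$ determined by $q$. Lemma~\ref{lem:stableexts} shows every $E_q$ is $\sigma_+$-stable of class $\mv$, so $\calE$ is a $Q$-family of $\sigma_+$-stable objects. The classifying property of the coarse space $\MM_{\sigma_+}(\mv,\alpha)$, which extends to families twisted by a Brauer class on the base since the gerbe class only changes the universal object, then yields $g$ with $g(q)=[E_q]$.

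\emph{Step 2: injectivity on closed points.} Suppose $E_q\cong E_{q'}$. By Remark~\ref{rmk:HomAE}, $\Hom(A,E_q)=\C$; moreover, $A$ is recovered as the $\sigma_0$-Jordan--H\"older factor of $E_q$ of class $\ma$, and $\calO_x[-1]$ as the factor of class $\mb$, since Jordan--H\"older factors are unique up to order. Hence $(x,A)=(x',A')$. The inclusion $A\to E_q$ is unique up to scalar, so the extension class is determined up to scalar. Thus $q=q'$.

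\emph{Step 3: closed embedding on each $\PP(A^\vee)$.} The restriction $g|_{\PP(A^\vee)}$ is injective and proper, since $\PP(A^\vee)$ is projective, so it suffices to prove that it is unramified, i.e.\ injective on tangent spaces. This is the main obstacle. We would compare the differential with the Kodaira--Spencer map. A tangent vector to $\PP(A^\vee)$ at $q$ decomposes into a component along $S$ (moving $x$) and a component along the fiber $\PP(\Ext^1(\calO_x[-1],A))$. The differential $dg\colon T_q\to\Ext^1(E_q,E_q)$ is compatible, via the triangle, with the maps $\Ext^1(E_q,E_q)\to\Ext^1(E_q,\calO_x[-1])$ and $\Ext^1(\calO_x[-1],\calO_x[-1])=T_xS$. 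Using $\Hom(E_q,A)=0$ and $\Hom(A,\calO_x[-1])=0$ to chase the long exact sequences, we would show the following.
\begin{itemize}
\item The horizontal component maps injectively to $T_xS$.
\item The vertical component maps to the image of $\Ext^1(\calO_x[-1],A)/\C\cdot e_q$ under $\Ext^1(\calO_x[-1],A)\to\Ext^1(E_q,E_q)$, where $e_q$ is the extension class of $q$. This map is injective modulo $e_q$ because $\Hom(A,E_q)=\C$.
\end{itemize}
Alternatively, we can use the dual description: $E_q^\vee=\ker(A^\vee\twoheadrightarrow\calO_x)$ identifies $\PP(A^\vee)$ with a Quot scheme. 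The derived-dual family then lets us invert $g$ on its image exactly as in Lemma~\ref{lem:locallyclosedembedding}. We would try this second route first, since it reuses the earlier argument verbatim once the image is shown to be a reduced closed subscheme.
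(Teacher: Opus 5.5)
Your proposal is correct. Steps 1 and 2 agree with the paper, but Step 3 follows a genuinely different route.

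\textbf{Steps 1 and 2.} As in the paper, $g$ is the classifying map of the universal extension, and $(x,A)$ is recovered from the uniqueness of the $\sigma_0$-Jordan--H\"older factors. For the last part of injectivity you use $\Hom(A,E_q)\cong\C$ (Remark~\ref{rmk:HomAE}) together with a morphism of triangles, which shows that proportional inclusions of $A$ give proportional extension classes. The paper instead cites \cite[Lemma~6.9]{BMMMP}, that $A$ is the only subobject of $E$. Your version is more self-contained.

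\textbf{Step 3: the paper's argument.} The paper uses the dual description to compute differentials, not to build an inverse. Set $K=\ker(q\colon A^\vee\to\calO_x)=E_q^\vee$. Then $T_{[q]}\PP(A^\vee)\cong\Hom(K,\calO_x)$ as a Quot-scheme tangent space, and $T_{[E_q]}\MM_{\sigma_+}(\mv,\alpha)\cong\Ext^1(K,K)$. The differential $dg'$ is the connecting map obtained by applying $\Hom(K,-)$ to $0\to K\to A^\vee\to\calO_x\to 0$. It is injective if and only if $\Hom(K,K)\to\Hom(K,A^\vee)$ is onto, and this holds because $\Hom(K,A^\vee)\cong\Hom(A,E_q)\cong\C$. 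One exact sequence handles the motion of $x$ and the change of extension class at the same time.

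\textbf{Your route (a).} Separating the two directions costs more bookkeeping.
\begin{itemize}
\item The tangent space does not split canonically, so you should argue with $0\to T_{\mathrm{vert}}\to T_q\PP(A^\vee)\to T_xS\to 0$.
\item To force a vector in $\ker dg$ to be vertical, you need injectivity of $\Ext^1(\calO_x[-1],\calO_x[-1])\to\Ext^1(E_q,\calO_x[-1])$, which follows from $\Hom(A,\calO_x[-1])=0$.
\item You also need compatibility of Kodaira--Spencer classes with the deformed map $\rho\colon E_q\to\calO_x[-1]$.
\item Finally, the kernel of $e'\mapsto\iota[1]\circ e'\circ\rho$ on $\Ext^1(\calO_x[-1],A)$ is $\C e_q$, where $\iota\colon A\to E_q$. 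This again uses $\Hom(A,E_q)\cong\C$.
\end{itemize}
This is correct, and it rests on the same input as the paper.

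\textbf{Your route (b).} You say you would try this first, but it is the more expensive option. It would give the stronger conclusion that $g'$ is an isomorphism onto its reduced image. However, it is not a verbatim reuse of Lemma~\ref{lem:locallyclosedembedding}. Before you get a classifying map to $\Quot(A^\vee,1)=\PP(A^\vee)$, you must show two things:
\begin{itemize}
\item the derived dual of the twisted universal family over the image is a flat family of torsion-free sheaves;
\item its relative double dual has the form $A^\vee\boxtimes L$ for a line bundle $L$ and commutes with base change.
\end{itemize}
The tangent-space computation avoids all of this.
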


\begin{proof}
    By Lemma~\ref{lem:stableexts}, recalling by Lemma~\ref{lem:M(b)} that $\calO_x[-1]\in M_{\sigma_+}(\mb, \alpha)$, each extension $A \hookrightarrow E \twoheadrightarrow \calO_x[-1]$ has $E$ which is $\sigma_+$-stable, and thus the universal extension on $S \times Q$ gives rise to a classifying morphism 
    \[
        g\colon Q \to M_{\sigma_+}(\mv,\alpha).
    \]

    First, we show that $g$ is injective on closed points. 
    If $E\in M_{\sigma_+}(\mv,\alpha)$ fits into two different short exact sequences
    \[
        A_i \hookrightarrow E \twoheadrightarrow \calO_{x_i}[-1],
    \]
    then by the uniqueness of the Jordan--H\"older factors with respect to $\sigma_0$, we must have $A_1\cong A_2$ and $x_1=x_2$ (this uses that $\ma\neq \mb$, since $\mv$ is primitive). 
    Suppose now we have two short exact sequences $A \hookrightarrow E \twoheadrightarrow \calO_x[-1]$ for some $x\in S$. By \cite[Lemma~6.9]{BMMMP} and its proof, $A$ is the only subobject of $E$. 
    The two embeddings of $A$ can only differ by an element of $\Aut(A) = \C^\times$, so the extensions are isomorphic, with an isomorphism given by an element of $\C^\times$. 
    Thus, the map $g$ is injective on closed points.

    Now we fix an $A\in U$ and consider the restriction $g':= g|_{\PP(A^\vee)} \colon \PP(A^\vee) \to \MM_{\sigma_+}(\mv,\alpha)$. 
    To prove that $g'$ is a closed embedding, we show that it is also injective on tangent vectors at closed points \cite{Vakil}*{Theorem~9.1.1}. 
    By the discussion proceeding the lemma, we can consider $g'$ as sending the quotient $q\colon A^\vee \to \calO_x$ to $E=(\ker q)^\vee$. 
    Writing $K=\ker q$, we have $T_{[q]} \PP(A^\vee) \cong \Hom(K,\calO_x)$, and $T_{[E]}\MM_{\sigma_+}(\mv,\alpha) \cong \Ext^1(E,E)\cong \Ext^1(K,K)$. 
    Under this identification, the map $dg' \colon \Hom(K,\calO_x) \to \Ext^1(K,K)$ is the connecting homomorphism in the long exact sequence obtained by applying $\Hom(K,-)$ to the short exact sequence 
    \[
        0\to K \to A^\vee \xrightarrow{q} \calO_x \to 0.
    \]
    The sequence in question is 
    \[
        0 \to \Hom(K,K) \xrightarrow{f} \Hom(K,A^\vee) \to \Hom(K,\calO_x) \xrightarrow{dg'} \Ext^1(K,K) \to \cdots,
    \]
    and $dg'$ is injective if and only if the first map $f$ is an isomorphism. 
    Since $\Hom(K,K)\cong \Hom(E,E)$ and $E$ is $\sigma_+$-stable, we have $\Hom(K,K)=\C$. 
    We know that $f$ is not the zero map, since it sends $\id_K$ to the inclusion $K \hookrightarrow A^\vee$, so it remains to show that $\Hom(K,A^\vee)=\C$. 
    This follows from Remark~\ref{rmk:HomAE} and the isomorphism $\Hom(K,A^\vee)\cong \Hom(A,E)$. 
    This completes the proof that $dg'$ is injective, and thus $g'\colon \PP(A^\vee) \to \MM_{\sigma_+}(\mv,\alpha)$ is a closed embedding.
\end{proof}

We need one final ingredient for the proof of Theorem~\ref{thm:contractiongeometry}(3). 
It is a mild generalization of~\cite{BMMMP}*{Lemmas~14.1 and~14.2} to the case where the lattice $\calH$ is allowed to contain spherical and isotropic vectors.   
Let $\mathfrak P_{\sigma_0}(\mv)$ denote the set of unordered partitions
\[
    P=[\mmu_1,\ldots,\mmu_m],
    \qquad\text{where}\qquad
    \mv=\mmu_1+\cdots+\mmu_m,
\]
and $\mmu_1,\dots,\mmu_m$ are the Mukai vectors of the $\sigma_0$-stable Jordan--H\"older factors of an object in $\MM_{\sigma_+}(\mv,\alpha)$. 
For $P\in\mathfrak P_{\sigma_0}(\mv)$, write $\MM_P \subset \MM_{\sigma_+}(\mv,\alpha)$ for the corresponding Jordan--H\"older stratum. 

\begin{lemma}
    The set $\mathfrak P_{\sigma_0}(\mv)$ is finite, and gives a stratification of $\MM_{\sigma_+}(\mv,\alpha)$
    \begin{equation}
        \label{eq:partitionstratification}
        \MM_{\sigma_+}(\mv,\alpha) =
        \bigsqcup_{P\in\mathfrak P_{\sigma_0}(\mv)} \MM_P.
    \end{equation}
    into locally closed subsets. If $\MM_P \cap \overline{\MM_Q}\neq\emptyset$, then $P$ is a refinement of $Q$.
\end{lemma}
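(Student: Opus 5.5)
The plan follows the proofs of \cite{BMMMP}*{Lemmas~14.1 and~14.2}, checking along the way that nothing uses the absence of spherical or isotropic classes in $\calH$. There are three steps: finiteness of $\mathfrak P_{\sigma_0}(\mv)$, local closedness of each $\MM_P$, and the closure relation.

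\textbf{Finiteness.} For each object $E\in\MM_{\sigma_+}(\mv,\alpha)$, its $\sigma_0$-Jordan--H\"older factors all have the same $\sigma_0$-phase as $\mv$, so their Mukai vectors lie in $\calH$. By \cite{BMMMP}*{Proposition~5.1}, the central charge $Z_0$ maps $\calH$ onto a ray $\R_{>0}e^{i\pi\phi_0}$. Hence each factor class $\mmu_j$ is a class of a $\sigma_0$-semistable object with $0<|Z_0(\mmu_j)|\le |Z_0(\mv)|$. The support property for $\sigma_0$ gives $\|\mmu_j\|\le C^{-1}|Z_0(\mmu_j)|\le C^{-1}|Z_0(\mv)|$. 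So every $\mmu_j$ lies in a fixed bounded subset of the lattice $\calH$, which contains only finitely many integral points. The number of parts $m$ is also bounded, since each $|Z_0(\mmu_j)|$ is bounded below by the minimum of $|Z_0|$ on the finitely many candidate classes. Finiteness of $\mathfrak P_{\sigma_0}(\mv)$ follows. This step does not use the hypothesis on spherical or isotropic classes at all.

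\textbf{Local closedness.} For each class $\mw$ occurring in a partition, I use the open stable locus $\MM^{\stl}_{\sigma_0}(\mw,\alpha)$ and the boundedness of $\sigma_0$-semistable objects of class $\mv$ (from the existence of the moduli spaces \cite{BMJAMS}). The approach is to show that the function $E\mapsto P(E)$ is constructible on $\MM_{\sigma_+}(\mv,\alpha)$ and has the upper-semicontinuity property encoded in the closure statement. I would argue as in \cite{BMMMP}*{\S14}. Pull back the universal family to $\MM_{\sigma_+}(\mv,\alpha)$, which exists \'etale locally or up to a Brauer twist. Over the moduli stack of $\sigma_0$-semistable objects, the relative Harder--Narasimhan and Jordan--H\"older data are defined over a stratification by locally closed substacks. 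This uses openness of stability in families \cite{BMJAMS} together with the fact that ``having a $\sigma_0$-semistable subobject of class $\mmu$'' is a closed condition, because relative Quot-type spaces of subobjects in the heart $\calP_0(\phi_0)$ are proper. The set where the partition is at least as fine as $Q$ is then the image of a proper morphism from a product of semistable moduli spaces, hence closed. Taking differences of these closed sets gives that each $\MM_P$ is locally closed.

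\textbf{Closure relation, and the main obstacle.} Suppose $E\in\MM_P\cap\overline{\MM_Q}$. Choose a curve or DVR family $E_t\to E$ with $E_t\in\MM_Q$ for generic $t$. The $\sigma_0$-JH factors of $E_t$, of classes $\mathbf{q}_1,\dots,\mathbf{q}_k$, specialize by properness of $\sigma_0$-semistable moduli to $\sigma_0$-semistable objects $F_i$ of class $\mathbf{q}_i$. The filtration specializes to a filtration of $E$ with subquotients $F_i$. Refining each $F_i$ by its own JH filtration yields a JH filtration of $E$. By uniqueness of JH factors, $P$ is obtained by partitioning each $\mathbf{q}_i$, so $P$ refines $Q$.

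The main obstacle is the properness of the relevant moduli of semistable factors when $\calH$ contains spherical or isotropic classes, where moduli spaces can be points, can be K3 surfaces, or can parametrize totally semistable objects. I would handle this by working with moduli stacks of $\sigma_0$-semistable objects, which are universally closed by \cite{BMJAMS} regardless of the classes involved. In this way only existence of limits is used, and no smoothness or dimension statements about the factor moduli spaces are needed.
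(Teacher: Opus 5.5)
Your proposal is correct and follows the paper's proof. Finiteness comes from the support property applied to the $\sigma_0$-Jordan--H\"older factors, whose central charges lie on a common ray and sum to $Z_0(\mv)$, which bounds both the factor classes and their number; the stratification and closure relation then follow as in \cite{BMMMP}*{Lemma~14.1} from openness of stability and closedness/properness of semistability in families, which the paper simply cites and you sketch in more detail. One phrasing fix: the closed set $\{E : P(E)\text{ refines } Q\}$ is not an image inside $\MM_{\sigma_+}(\mv,\alpha)$ but the preimage, under the S-equivalence map to $\MM_{\sigma_0}(\mv,\alpha)$, of the image of the direct-sum map $\prod_i \MM_{\sigma_0}(\mathbf{q}_i,\alpha)\to\MM_{\sigma_0}(\mv,\alpha)$, and its closedness already gives the closure relation without a separate DVR argument.
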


\begin{proof}
    The crucial point is finiteness of $\mathfrak P_{\sigma_0}(\mv)$. 
    Write $\sigma_0 = (Z_0,\calP_0)$, and let $E \in \MM_{\sigma_+}(\mv,\alpha)$ be a $\sigma_0$-semistable object. 
    Let $A_1,\dots,A_m$ be its Jordan--H\"older factors, with $\mv(A_j) = \mmu_j$. 
    We have $A_j \in \calP_0(\phi)$ for the same $\phi$, so there are real $r_j > 0$ such that $Z_0(\mmu_j) = r_je^{i\pi\phi}$. Since $Z_0$ is a group homomorphism, we have
    \[
        Z_0(\mv) = \sum_{j = 1}^m Z_0(\mmu_j).
    \]
    On the other hand, every summand $Z_0(\mmu_j)$ has the same phase, so $|Z_0(\mv)| = \sum_j r_j$. 
    For any fixed norm $||\cdot||$ on $\HH^*_{\alg}(S,\alpha,\R)$, the support property (cf.\ \S\ref{subsub:Bridgelandstab}) furnishes a $C > 0$ such that $|Z_0(\mmu)| \geq C \cdot ||\mmu||$ for any $\sigma_0$-semistable class $\mmu$. 
    Hence
    \[
        ||\mmu_j|| \leq \frac{|Z_0(\mmu_j)|}{C} = \frac{r_j}{C} \leq \frac{|Z_0(\mv)|}{C}.
    \]
    But $\AlgLat$ is a lattice in $\HH^*_{\alg}(S,\alpha,\R)$, so the set
    \[
        \left\{ \mmu \in \AlgLat : ||\mmu|| \leq |Z_0(\mv)|/C\right\}
    \]
    is finite, showing there are only finitely many possible $\mmu_i$ among the Jordan--H\"older factors of $E$ as $E$ varies in $\MM_{\sigma_+}(\mv,\alpha)$.

    We must also show that $m$ is finite. 
    Let $\delta$ be the smallest norm of a nonzero vector in the lattice $\AlgLat$. 
    Then $r_j = |Z_0(\mmu_j)|\geq C||\mmu_j|| \geq C\delta$. 
    Since $|Z_0(\mv)| = \sum_{j = 1}^m r_j$, we obtain $|Z_0(\mv)| \geq mC \delta$, and hence $m \leq |Z_0(\mv)|/C\delta$. 
    This completes the proof of finiteness of $\mathfrak P_{\sigma_0}(\mv)$.

    The remaining claims follow as in~\cite{BMMMP}*{Lemma~14.1}, using openness of stability and closedness of semistability in families.
\end{proof}

\begin{lemma}
    \label{lem:twofactorstratum}
    Let $P_0=[\ma,\mb]$. Then $\MM_{P_0}\subset \MM_{\sigma_+}(\mv,\alpha)$ is nonempty and irreducible, its closed points coincide with the closed points of the image of $g\colon Q\to\MM_{\sigma_+}(\mv,\alpha)$, and 
    \[
        \codim_{\MM_{\sigma_+}(\mv,\alpha)} \MM_{P_0} = \ma\cdot\mb - 1.
    \]
\end{lemma}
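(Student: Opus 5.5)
The plan is to compare $\MM_{P_0}$ with $g(Q)$ on closed points, and then to read off irreducibility and dimension from $Q$.

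\emph{Step 1: $g(Q) \subseteq \MM_{P_0}$ on closed points.} A point of $Q$ over $(x,A)$ is a nontrivial extension $A \to E \to \calO_x[-1]$, where $A \in U$ is $\sigma_0$-stable and $\calO_x[-1]$ is $\sigma_0$-stable by Lemma~\ref{lem:M(b)}. So the Jordan--H\"older factors of $g([E])=E$ with respect to $\sigma_0$ have classes $\ma$ and $\mb$. Hence $g(Q)\subseteq \MM_{P_0}$, and in particular $\MM_{P_0}\neq\emptyset$.

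\emph{Step 2: $\MM_{P_0} \subseteq g(Q)$ on closed points.} Let $E\in\MM_{P_0}$, with $\sigma_0$-stable factors $A'$ of class $\ma$ and $B'$ of class $\mb$.
\begin{itemize}
\item Since there are no walls for $\mb$, $B'\cong \calO_x[-1]$ for some $x\in S$.
\item Since $A'$ is $\sigma_0$-stable, it is $\sigma_+$-stable (stability is open), so $A'\in U$.
\item $E$ is $\sigma_+$-stable, and $\arg Z_+(\ma)<\arg Z_+(\mb)$. Therefore the Jordan--H\"older filtration must be $A'\hookrightarrow E\twoheadrightarrow \calO_x[-1]$: the other order would make $\calO_x[-1]$ a $\sigma_+$-destabilizing subobject.
\item The extension is nonsplit, because $E$ is stable.
\end{itemize}
Thus $E$ corresponds to a point of $\PP(\Ext^1(\calO_x[-1],A'))$, which is the fiber of $Q$ over $(x,A')$ by Lemma~\ref{lem:tildeAvectorbundle}. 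So $E\in g(Q)$.

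\emph{Step 3: irreducibility.} Since $U$ is a dense open subset of the irreducible hyperk\"ahler manifold $\MM_{\sigma_+}(\ma,\alpha)$, it is irreducible. Then $Q$, a projective bundle over $S\times U$, is irreducible. Hence its image $\MM_{P_0}$ (with reduced structure) is irreducible.

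\emph{Step 4: codimension.} By Lemma~\ref{lem:gQtoM}, $g$ is injective on closed points, so $\dim\MM_{P_0}=\dim Q$. We compute
\[
\dim Q=2+(\ma^2+2)+\ma\cdot\mb-1 .
\]
Since $\mb^2=0$,
\[
\dim \MM_{\sigma_+}(\mv,\alpha)=\mv^2+2=\ma^2+2\,\ma\cdot\mb+2 .
\]
Subtracting gives codimension $\ma\cdot\mb-1$.

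The main obstacle is Step 2, specifically the identification of $B'$ with some $\calO_x[-1]$ and the ordering of the Jordan--H\"older filtration. I expect both to follow from Lemma~\ref{lem:M(b)} and the phase inequality defining $\sigma_+$ in setup~\ref{setup:flopping_contractions}. A secondary subtlety is that $\MM_{P_0}$ is only locally closed. Irreducibility must therefore be argued for its reduced structure, through the surjection from $Q$ on closed points.
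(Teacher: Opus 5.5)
Your proposal is correct and follows the same route as the paper: you prove both inclusions on closed points, using Lemma~\ref{lem:stableexts} in one direction and, in the other, identifying the $\sigma_0$-factors and using the phase inequality for $\sigma_+$ to force $A'$ to be the subobject; irreducibility then comes from $Q$, and the codimension from a dimension count. The differences are that you compute the codimension directly from injectivity of $g$ on closed points where the paper cites \cite{BMMMP}*{Lemma~14.2}, and that you spell out the converse inclusion the paper only asserts (your ``stability is open'' step is best phrased as: $\sigma_+$ lies in a chamber for $\ma$ whose closure contains $\sigma_0$, so a $\sigma_0$-stable $A'$ remains stable at $\sigma_+$).
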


\begin{proof}
    By Lemma~\ref{lem:stableexts}, for $A \in U$ and $x \in S$, every nontrivial $E\in \Ext^1(\calO_x[-1],A)$ is $\sigma_+$-stable and hence $E\in \MM_{\sigma_+}(\mv, \alpha)$. 
    Conversely, every $\sigma_+$-stable object whose Jordan--H\"older factors with respect to $\sigma_0$ have Mukai vectors $\ma$ and $\mb$ is a nontrivial extension of this form. 
    Therefore, the closed points of the $\PP^{\ma\cdot\mb - 1}$-bundle $Q \to S\times U$ are parametrized by $\MM_{P_0}$. Thus, $\MM_{P_0}$ is nonempty and irreducible. 
    The codimension calculation is exactly as in~\cite{BMMMP}*{Lemma~14.2}.
\end{proof}

\begin{proof}[Proof of Theorem~\ref{thm:contractiongeometry}]
    First, we observe that $\calW$ is a wall: since $\ma\cdot\mb > 0$, Lemma~\ref{lem:tildeAvectorbundle} shows that $\Ext^1(\calO_x[-1],A) \neq 0$, so there exists a strictly $\sigma_0$-semistable object of class $\mv$. 
    Since $\mv$ is primitive, $\calW$ is a wall for $\mv$. 
    Applying Theorem~\ref{thm:BMbircontractions}, there is a divisor $\ell_{\sigma_+}$ which induces a birational contraction $\pi_+\colon \MM_{\sigma_+}(\mv,\alpha) \to \overline{\MM}_+$ that contracts curves of S-equivalent objects with respect to $\sigma_0$.

    We fix $U=\{A\in M_{\sigma_+}(\ma,\alpha) : A \text{ is $\sigma_0$-stable}\}$ as above and let $\calA$ be the twisted universal complex on $S\times U$.
    By Lemmas~\ref{lem:tildeA[1]congA} and \ref{lem:tildeAvectorbundle}, the complex $\calA$ is a twisted vector bundle, so we set $Q=\PP_{S\times U}(\calA^\vee)$.

    By Lemma~\ref{lem:gQtoM}, there is a morphism $g \colon Q \to \MM_{\sigma_+}(\mv,\alpha)$ which is injective on closed points. 
    Recall that the fiber of $\pi_Q$ over a point $(x,A)$ parametrizes nontrivial extensions of $\calO_x[-1]$ by $A$, and since $\calO_x[-1]$ and $A$ are both $\sigma_0$-stable of the same phase, all such extensions $E$ are S-equivalent with respect to $\sigma_0$ (they all have the same Jordan--H\"older factors $\calO_x[-1]$ and $A$). 
    Thus, by \cite[Theorem~1.1]{BMJAMS} the composition $\pi_+\circ g$ contracts the fibers of $\pi_Q$. 
    Together with the isomorphism $\pi_{Q*}\calO_Q \cong \calO_{S\times U}$, it follows that $\pi \circ g$ factors through $\pi_Q$ \cite[Lemma~1.15(b)]{Debarrebook}. 
    This gives the morphism $h$ making the diagram commute and completes the proof of \eqref{BMpart1}.
    \smallskip

    Fix a closed point $A\in U$. The restriction of $h$ to $S\cong S\times \{A\}$ gives the commutative diagram
        \begin{center}
            \begin{tikzcd}
                \PP(A^\vee) \arrow[d] \arrow[r, "g'"] & \MM_{\sigma_+}(\mv,\alpha) \arrow[d, "\pi_+"] \\
                S \arrow[r, "h_A"]  & \overline{\MM}_+
            \end{tikzcd}    
        \end{center} 
    where $g'$ is the restriction of $g$ to $\PP(\calA^\vee|_{S\times \{A\}}) = \PP(A^\vee)$. 
    By Lemma~\ref{lem:gQtoM}, $g'$ is a closed immersion. 
    Since $A^\vee$ is $\alpha^{-1}$-twisted and the morphism $\PP(A^\vee) \to S$ in the diagram is the natural projection, this completes the proof of~\eqref{BMpart2}.
    \smallskip

    Finally, suppose that, in addition, conditions (5)--(6) hold. 
    Then Theorem~\ref{thm:BMMMPThm5.7}(3) implies that $\pi_+$ is a flopping contraction. 
    Lemma~\ref{lem:twofactorstratum} shows that the closure of the image of $g\colon Q \to \MM_{\sigma_+}(\mv,\alpha)$ is contained in some irreducible component $Z$ of the exceptional locus. 
    Intersecting $Z$ with the finite constructible stratification~\eqref{eq:partitionstratification} shows there is a unique $P$ such that $Z\cap \MM_P$ contains a dense open subset of $Z$: indeed,
    \[
        Z = \bigsqcup_{P\in\mathfrak P_{\sigma_0}(\mv)} (Z \cap \MM_P).
    \] 
    taking $Z$-closures we get
    \[
        Z = \bigcup_{P\in\mathfrak P_{\sigma_0}(\mv)} \overline{Z \cap \MM_P}^Z,
    \]
    expressing $Z$ as a finite union of closed subsets. 
    Since $Z$ is irreducible, we must have $Z = \overline{Z \cap \MM_P}^Z$ for some $P$, i.e., $Z\cap \MM_P$ contains a dense open subset of $Z$. 
    If there were two partitions $P_1$ and $P_2$ with this property, we would have $\MM_{P_1}\cap \MM_{P_2} \neq \emptyset$, as dense opens in an irreducible topological space intersect nontrivially, contradicting the disjoint union decomposition~\eqref{eq:partitionstratification}. 
    The partition $P$ is nontrivial because $\pi_+$ is an isomorphism over the $\sigma_0$-stable locus $\MM_{[\mv]}$, and $\codim_{\MM_{\sigma_+}(\mv,\alpha)} Z \geq 1$. 
    Hence, $\MM_{P_0} \cap \overline{\MM_P} \neq \emptyset$ and $P_0$ refines $P$. 
    But $P_0$ has only two summands, so $P_0 = P$. 
    Since $\MM_{P_0}$ is irreducible, we conclude that $Z = \overline{\MM_{P_0}} = \overline{g(Q)}$, and hence $\overline{g(Q)}$ is an irreducible component of the exceptional locus of $\pi_+$, which proves~\eqref{BMpart3}.
\end{proof}

\begin{remark}
    \label{rmk:hinjective}
    As in \S\ref{subsec:flopgeometryUhl}, $h$ is injective on closed points. 
    Indeed, suppose $p=h(x_1,A_1)=h(x_2,A_2)$, and $E_1, E_2 \in \MM_{\sigma_+}(\mv,\alpha)$ are any two points in the fiber $\pi_+^{-1}(p)$. 
    By Zariski's Main Theorem, we know that $\pi_+^{-1}(p)$ is connected, so there are curves $C_1,...,C_n \subset \pi_+^{-1}(p)$ connecting $E_1$ and $E_2$. 
    Let $\tilde C_i \to C_i$ be the normalization of each $C_i$. 
    By \cite{BMJAMS}*{Positivity Lemma 3.3} (see the proof which specifies the generality assumption) and the smoothness of $\tilde C_i$, the images in $C_i$ of any two points of $\tilde C_i$ parametrize S-equivalent objects. 
    Thus, $E_1$ and $E_2$ are S-equivalent and have the same Jordan--H\"older factors. 
    Now, if $E_i$ is the image of $q_i \in \pi_Q^{-1}(x_i,A_i)$, then the Jordan--H\"older factors of $E_i$ are $A_i$ and $\calO_{x_i}[-1]$, which forces $(x_1,A_1)=(x_2,A_2)$. 
\end{remark}

\section{Admissible Mukai vectors}
\label{sec:admissiblevectors}

In the process of producing the desired geometric realizations of Brauer classes on K3 surfaces, we must construct settings under which the conditions of setup~\ref{setup:uhlenbeck_contractions} or \ref{setup:flopping_contractions} are satisfied. 
To aid in this process, we introduce the notion of an admissible Mukai vector. 

In the case that we find ourselves in the setting of setup~\ref{setup:flopping_contractions}, in light of conditions (5)--(6), we would like to know when the resulting contraction in Theorem~\ref{thm:contractiongeometry} (cf.~Theorem~\ref{thm:BMbircontractions}) is a flopping contraction. 
To this end, we determine when a wall $\calH$ associated to a primitive hyperbolic rank~2 lattice $\calH$ containing an admissible Mukai vector $\ma \in \AlgLat$ gives rise to a flopping or divisorial contraction of $\MM_{\sigma_+}(\mv,\alpha)$.

\subsection{Admissible Mukai vectors}
\label{subsec:setupforadmissible}

Let $S$ be a K3 surface with $\Pic(S) = \Z h$ and \(h^2=2d\). 
Let \(\alpha \in \Br(S)\) have order \(n>1\), and let $B_\alpha$ be the B-field in~\eqref{eq:BfieldChoice} associated to $\alpha$. 
By Proposition~\ref{prop:Mukai_Lattice_Basis}, we know that an element $\ma \in \AlgLat$ necessarily has the form
\begin{equation}
    \label{eq:mukai-a}
    \ma = (nr, nr\Ba+kh, t) \quad\text{for some }(r,k,t)\in \Z^3.
\end{equation}
We distill some properties of such an element common to setups~\ref{setup:uhlenbeck_contractions} and~\ref{setup:flopping_contractions}, encoding its diophantine attributes, in the following definition. 
Let $\mb := (0,0,-1) \in \AlgLat$.

\begin{defn}
    \label{def:admissible}
    A vector \(\ma\in \AlgLat\) of the form~\eqref{eq:mukai-a} is \defi{admissible} if $r > 0$, $\ma^2 \geq -2$, and if both \(\ma\) and \(\ma+\mb\) are primitive in $\HH^*_{\alg}(S,\alpha,\Z)$.
\end{defn}

\subsection{Walls associated to admissible vectors}

Let \(\ma\in \AlgLat\) be an admissible vector, and set $\mv := \ma + \mb$. Write $\calH_\ma$ for the saturation of $\langle \ma, \mv\rangle=\langle \ma,\mb\rangle$ in $\AlgLat$. 
The sublattice $\langle \ma,\mv\rangle$ is often already saturated, as the next lemma shows. 

\begin{lemma} 
    \label{lem:AlreadySaturated}
    The lattice $\langle\ma,\mb\rangle$ is saturated in $\AlgLat$ if and only if $\gcd(r,k) = 1$. 
    In particular, $\calH_\ma = \langle\ma,\mb\rangle$ if either $r = 1$ or $\ma^2 \in \{-2,2\}$.
\end{lemma}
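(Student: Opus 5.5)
Since Proposition~\ref{prop:Mukai_Lattice_Basis} gives the basis $e=(n,n\Ba,0)$, $f=(0,h,0)$, $g=(0,0,1)$ of $\AlgLat$, the plan is to work entirely in these coordinates. There we have $\ma = re + kf + tg$ and $\mb = -g$, so $\langle \ma,\mb\rangle = \langle re+kf,\ g\rangle$. A rank-$2$ sublattice spanned by vectors in a free module is saturated exactly when the $2\times 2$ minors of the $3\times 2$ coordinate matrix have gcd $1$. Equivalently, $\Z^3/\langle \ma,\mb\rangle$ must be torsion-free. Modding out by $g$ first, this quotient is $\Z^2/\langle (r,k)\rangle$, which is torsion-free if and only if $(r,k)$ is primitive, i.e.\ $\gcd(r,k)=1$. (The minors are $r$, $k$, and $0$, which gives the same conclusion.) This proves the equivalence.

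For the ``in particular'' clause, the case $r=1$ is immediate. When $\ma^2\in\{-2,2\}$, I will compute $\ma^2$ in coordinates. The general Mukai pairing gives
\[
\ma^2 = (nr\Ba+kh)^2 - 2nrt = n^2r^2\Ba^2 + 2nrk\,\Ba h + 2dk^2 - 2nrt.
\]
Using $\Ba^2=-2\ca/n^2$ and $\Ba h=\ia/n$ from \S\ref{sss:BfieldsPicardRank1}, this becomes
\[
\ma^2 = -2\ca r^2 + 2\ia rk + 2dk^2 - 2nrt .
\]
Now let $g_0=\gcd(r,k)$. Every term on the right is divisible by $2g_0$, because each term contains a factor of $r$ or of $k$ (in fact the first three are divisible by $g_0^2$). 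Hence $2g_0 \mid \ma^2$. If $\ma^2=\pm 2$, this forces $g_0=1$, and the first part then gives saturation.

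The only point requiring care is that $\ia,\ca$ are integers, which holds because $\ia\in\{0,\dots,n-1\}$ and $\ca=-\lambda_\alpha^2/2$ with $\lambda_\alpha$ in an even lattice. Also, the lemma implicitly uses the specific B-field~\eqref{eq:BfieldChoice}, but saturation does not depend on that choice. I expect no real obstacle; the argument is a direct computation.
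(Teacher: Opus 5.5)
Your proof is correct and follows essentially the same route as the paper. In the basis of Proposition~\ref{prop:Mukai_Lattice_Basis}, saturation of $\langle\ma,\mb\rangle$ reduces to primitivity of $(r,k)$; the paper phrases this via rational combinations $a\ma+b\mb$ and B\'ezout, while you use torsion-freeness of the quotient. The case $\ma^2=\pm 2$ comes from the explicit formula for $\ma^2/2$: the paper reduces it modulo $r$ to get $dk^2\equiv \pm 1 \bmod r$, while you observe directly that $\gcd(r,k)$ divides it.
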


\begin{proof}
    Let $a,b \in \Q$ and suppose that
    \[
        a\ma+b\mb  = \left(anr, a(nr\Ba + kh), at-b\right)\in \AlgLat.
    \]
    Inspecting the coordinates reveals that $ar$, $ak$, and $at - b$ are all integers. 
    If $\gcd(r,k) = 1$, then we can choose $u$, $v \in \Z$ such that $ur + vk = 1$. 
    Then $a = u(ar) + v(ak) \in \Z$, and therefore $b$ is also an integer.  
    Conversely, if $e := \gcd(r,k) > 1$ then
    \[
        \frac{1}{e}\ma + \frac{t}{e}\mb = \frac{r}{e}(n,nB_\alpha,0) + \frac{k}{e}(0,h,0) \in \AlgLat
    \]
    is integral, but it is not in the integral span of $\ma$ and $\mb$, showing that $\langle\ma,\mb\rangle$ is not saturated.

    We immediately deduce that if $r = 1$ then $\langle\ma,\mb\rangle$ is saturated. 
    Now suppose that $\ma^2 \in \{-2,2\}$. 
    The equation $\ma^2/2 = \pm 1$ modulo $r$ reduces to $dk^2\equiv \pm 1\bmod r$ (see~\eqref{eq:diophproblem-2} below), so $\gcd(r,k)=1$, and $\langle\ma,\mb\rangle$ is saturated.
\end{proof}

\begin{example}
    If \(\ma^2=0\), then the lattice \(\langle \ma,\mv\rangle = \langle \ma,\mb\rangle\) need not be saturated. 
    For example, take $r=15$, $d=15$, and $\alpha \in \Br(S)[3]$ of Type B (cf.~Theorem~\ref{thm:BrauerLatticeClassification}) with $\ca\equiv 2\bmod 3$. 
    Then the vector $\ma:=(45, 45\Ba+3h,3-5\ca)$ has \(\ma^2=0\) and is admissible. Then
    \[
        \frac{1}{3}\ma-\frac{1}{3}(0,0,-1)=\left(15,15\Ba+h,\frac{4-5\ca}{3}\right)\in \calH_\ma
    \] 
    but is not in $\langle \ma, \mv \rangle$.
\end{example}

\begin{lemma}
    \label{lem:hyperbolic}
    The lattice $\calH_\ma$ is hyperbolic.
\end{lemma}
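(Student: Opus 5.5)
We will show that $\calH_\ma$ has rank $2$ and signature $(1,1)$, i.e.\ that the Gram matrix of the basis $\{\ma,\mb\}$ of $\langle\ma,\mb\rangle\otimes\Q = \calH_\ma\otimes\Q$ has negative determinant. Passing to the saturation does not change the signature, since $\calH_\ma$ and $\langle\ma,\mb\rangle$ span the same rational vector space. The first step is to note that $\ma$ and $\mb$ are linearly independent. Indeed, $\mb=(0,0,-1)$ has rank $0$, while $\ma$ has rank $nr>0$ because $r>0$ for an admissible vector. Hence $\calH_\ma$ has rank exactly $2$.

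Next we compute the pairings using the Mukai pairing $(r,\lambda,s)\cdot(r',\lambda',s')=\lambda\lambda'-rs'-r's$. We get $\mb^2 = 0$, since $\mb$ has rank $0$ and trivial middle component. For the mixed term,
\[
    \ma\cdot\mb = -\big(nr\cdot(-1)\big) - 0\cdot t = nr > 0.
\]
The Gram matrix of $\{\ma,\mb\}$ is therefore
\[
    \begin{pmatrix} \ma^2 & nr \\ nr & 0\end{pmatrix},
\]
with determinant $-(nr)^2<0$. A real symmetric $2\times 2$ form with negative determinant is nondegenerate of signature $(1,1)$, so $\langle\ma,\mb\rangle\otimes\R$ is hyperbolic, and hence so is $\calH_\ma$.

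There is essentially no obstacle here. The only point needing care is the conventions: we must confirm the sign of $\ma\cdot\mb$ from the Mukai pairing, and recall that the value of $\ma^2$ is irrelevant because $\mb$ is isotropic. Alternatively, one can see the conclusion directly: an isotropic vector $\mb$ pairing nontrivially with $\ma$ always spans a hyperbolic plane over $\Q$.
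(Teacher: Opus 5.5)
Your proposal is correct and follows the same route as the paper: you compute the Gram matrix of $\langle\ma,\mb\rangle$, note that its determinant $-(nr)^2$ is negative, and pass to the saturation $\calH_\ma$, which spans the same rational space. You also check explicitly that $\ma$ and $\mb$ are linearly independent (since $\rk \ma = nr > 0$ while $\mb$ has rank $0$), a detail the paper leaves implicit.
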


\begin{proof}
    The Gram matrix of the lattice $\langle\ma,\mb\rangle$ is
        \[
            \begin{pmatrix}
                \ma^2 & nr \\
                nr & 0
            \end{pmatrix}
        \]
    whose determinant $-(nr)^2$ is negative. 
    Hence, the rank-two lattice is indefinite. Its saturation is therefore also hyperbolic.
\end{proof}

Recall that a primitive rank $2$ hyperbolic lattice has an associated set of potential walls in $\Stab^\dagger(S, \alpha)$; see~\S\ref{sss:WallAndChamber}. 
We denote a potential wall associated to $\calH_\ma$ by $\calW_\ma$. 
We investigate the kind of contraction induced by $\calW_\ma$ using Bayer--Macr\`i's Theorem~\ref{thm:BMMMPThm5.7}(2), which requires us to analyze when the lattice \(\calH_\ma\) contains the following types of classes:
\begin{enumerate}[leftmargin=*]
\item a spherical class \(\ms\in\calH_\ma\) with \( \ms\cdot\mv=0\);
    \smallskip
     \item an isotropic class \(\mw\in \calH_\ma\) with \(\mw\cdot\mv=1\);
    \smallskip
    \item an isotropic class \(\mw\in \calH_\ma\) with \(\mw\cdot\mv=2\).
\end{enumerate}

We continue to assume throughout this analysis that $n > 1$, i.e., that the class $\alpha \in \Br(S)$ is nontrivial.

\begin{prop}
    \label{prop:floppingwalls1}
    Let \(\ma = (nr, nr\Ba + kh, t)\in\AlgLat\) be an admissible vector, and let $\calH_\ma$ be as above. 
    Then:
    \begin{enumerate}[leftmargin=*]
        \item There are no isotropic classes $\mw \in \calH_\ma$ with $\mw\cdot\mv = 1$, so the wall $\calW_\ma$ is never of Hilbert-Chow type.
        \smallskip

        \item There is an isotropic class $\mw \in \calH_\ma$ with $\mw\cdot\mv = 2$ if and only if $n = 2$, $r = 1$. 
        In this case, the wall $\calW_\ma$ induces a divisorial contraction of Li-Gieseker-Uhlenbeck type. 
        \end{enumerate}

        \noindent Assume now that $nr > 2$.
        \smallskip

        \begin{enumerate}[resume,leftmargin=*]
        \item If $\ma^2 \in \{-2,2\}$, then $\calH_\ma$ does not contain a spherical class $\ms$ with $\ms\cdot\mv = 0$, and the wall $\calW_\ma$ induces a flopping contraction. 
        \smallskip
        
        \item If $\ma^2 = 0$, then $\calH_\ma$ contains a spherical class $\ms$ with $\ms\cdot\mv = 0$ if and only if $s := \sqrt{nr} \in \Z$ and $s \mid \gcd(r,k,t+1)$. 
        If such a class exists, the wall $\calW_\ma$ induces a Brill-Noether divisorial contraction; otherwise, it induces a flopping contraction.
      \end{enumerate}
\end{prop}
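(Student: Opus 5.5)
The plan is to parametrize $\calH_\ma$ explicitly and reduce each item to a short diophantine problem. Set $e=\gcd(r,k)$. From the proof of Lemma~\ref{lem:AlreadySaturated}, $\calH_\ma$ is spanned by $\mb$ and $\mathbf c:=\frac1e\ma+\frac te\mb$, which is integral. Note that $\ma=e\mathbf c-t\mb$, and when $e=1$ this basis is equivalent to $\{\ma,\mb\}$. The pairings are $\mb^2=0$ and $\mathbf c\cdot\mb=nr/e$. Moreover
\[
\mathbf c^2=\frac{1}{e^2}\bigl(\ma^2+2t\,nr\bigr),
\]
using $\ma\cdot\mb=nr$. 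A general element is $\mw=x\mathbf c+y\mb$ with
\[
\mw^2=x^2\mathbf c^2+2xy\,\frac{nr}{e}, \qquad \mw\cdot\mv=\mw\cdot(e\mathbf c+(1-t)\mb)=x\bigl(e\mathbf c^2+(1-t)\frac{nr}{e}\bigr)+y\,nr.
\]
The last formula is most easily checked by computing $\mw\cdot\ma$ and $\mw\cdot\mb$ separately, which gives $\mw\cdot\mv=\mw\cdot\ma+x\,nr/e$.

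\textbf{Isotropic classes (items (1) and (2)).} The binary form $\mw^2=x\bigl(x\mathbf c^2+2y\,nr/e\bigr)$ factors, so isotropic classes are either multiples of $\mb$ or satisfy $x\mathbf c^2=-2y\,nr/e$. For $\mw=y\mb$ we get $\mw\cdot\mv=y\,nr$. This equals $1$ only if $nr=1$, impossible since $n>1$. It equals $2$ exactly when $nr=2$, i.e.\ $n=2,r=1$.

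For the second family, substitute $x\mathbf c^2=-2ynr/e$ into $\mw\cdot\mv$. The result is a multiple of $nr/e$ times an integer, up to a factor related to $\ma^2$. I expect this to show that $\mw\cdot\mv$ is divisible by a factor forcing $|\mw\cdot\mv|\ge n\ge 2$, with equality $2$ only in the $n=2,r=1$ case. I would handle this via the Gram determinant: $\disc\calH_\ma=-(nr/e)^2$. An isotropic primitive $\mw$ pairs with $\mv$ in a way constrained by $\mw\cdot\calH_\ma\subseteq (nr/e)\Z$ or similar, since a second isotropic line of a hyperbolic plane with square discriminant pairs with everything in multiples of the divisor. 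This divisibility bookkeeping is the main obstacle; I would do it concretely by writing $\mw$ in the rational basis $\{\ma,\mb\}$, with coordinates in $\frac1e\Z$.

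Once a suitable isotropic class exists, Theorem~\ref{thm:BMMMPThm5.7}(2)(c) gives the LGU conclusion. When $r=1$ we have $e=1$, and the class $\mb$ itself with $\mb\cdot\mv=2$ suffices.

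\textbf{Spherical classes (items (3) and (4)).} Solve $\ms^2=-2$ and $\ms\cdot\mv=0$. The second condition forces $\ms$ to be proportional to the orthogonal vector $\mv^\perp$ in $\calH_\ma\otimes\Q$. Writing $\ms=a\ma+b\mb$ with $a,b\in\frac1e\Z$, orthogonality gives $a(\ma^2+nr)+b\,nr=0$. Then
\[
\ms^2=a^2\ma^2+2ab\,nr=-a^2(\ma^2+2nr),
\]
so we need $a^2(\ma^2+2nr)=2$.

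If $\ma^2=\pm2$, then $e=1$ by Lemma~\ref{lem:AlreadySaturated}, so $a\in\Z$, and we need $a^2(2nr\pm2)=2$. This forces $nr\pm1=1$, i.e.\ $nr\in\{0,2\}$, which is excluded by $nr>2$. If $\ma^2=0$, we need $a^2\,nr=1$, so $a=1/s$ with $s^2=nr$. Then $b=-a$, giving $\ms=\frac1s(\ma-\mb)=\frac1s(nr,nrB_\alpha+kh,t+1)$. Integrality in the basis of Proposition~\ref{prop:Mukai_Lattice_Basis} holds exactly when $s\mid r$, $s\mid k$, and $s\mid t+1$.

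Finally, feed these results into Theorem~\ref{thm:BMMMPThm5.7}. Having excluded (2)(a)–(c), the class $\ma$ gives the flop via (3). If $\ma^2\ge0$, then $\ma$ is positive because $\ma\cdot\mv=\ma^2+nr>0$, and $\mb$ is positive since $\mb^2=0$ and $\mb\cdot\mv=nr>0$. If $\ma^2=-2$, use the spherical criterion: $0<\ma\cdot\mv=nr-2\le\mv^2/2=nr-1$, which holds as $nr>2$.
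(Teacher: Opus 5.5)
Your treatment of the spherical classes in (3)--(4), of the multiples of $\mb$, of the ``if'' direction of (2), and of the positivity checks feeding Theorem~\ref{thm:BMMMPThm5.7}(3) is correct and matches the paper. The gap is the second isotropic family, which you leave as an expectation. Carrying out your substitution $x\mathbf c^2=-2y\,nr/e$ gives exactly
\[
    \mw\cdot\mv=\frac{nr}{e}\bigl(x(1-t)-ey\bigr),
\]
with no extra factor involving $\ma^2$. Since $nr/e\ge n\ge 2$, this proves (1) at once, which is tidier than the paper's case analysis. For (2), however, divisibility only gives $nr/e=2$, i.e.\ $n=2$ and $r\mid k$. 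It does not give $r=1$. Closing this requires primitivity of $\ma$, which your proposal never uses. Primitivity is genuinely needed: if $n=2$, $\ma^2=0$, $r>1$ and $r\mid k$, $r\mid t$, then $\frac1r\ma\in\calH_\ma$ is isotropic with $\frac1r\ma\cdot\mv=(\ma^2+2r)/r=2$.

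The gap also affects (4). The flopping conclusion there needs Theorem~\ref{thm:BMMMPThm5.7}(2)(c) excluded when $nr>2$. For $\ma^2=\pm2$ you are safe, since $e=1$ gives $nr/e=nr>2$. For $\ma^2=0$, however, $\langle\ma,\mb\rangle$ need not be saturated, so the case $n=2$, $e=r>1$ is not excluded by what you wrote.

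The paper parametrizes the second isotropic line directly. Write $\mw=\gamma\ma-\delta\mb$; then isotropy and $\mw\cdot\mv=z$ give $\gamma=z/(\ma^2/2+nr)$. Integrality of the coefficient $r\gamma$ of $(n,nB_\alpha,0)$ then does the work for $z=2$:
\begin{itemize}
\item it rules out $\ma^2\ge 2$;
\item for $\ma^2=0$ it forces $n=2$ and $\mw=\frac1r\ma$, so $r\mid k,t$ and primitivity gives $r=1$;
\item for $\ma^2=-2$ it leaves only $(n,r)\in\{(2,1),(3,1)\}$, and $(3,1)$ fails because the third coordinate $t-\frac13$ is not integral.
\end{itemize}

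You can also finish inside your own basis. Write $\mathbf c^2=2C$ and impose $xC=-2y$, $x(1-t)-ry=1$, and $\ma^2/2=r(rC-2t)\ge -1$. Splitting by the parity of $C$ leaves only $r\mid t$ or $r=1$, and primitivity of $\ma$ then gives $r=1$.
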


\begin{proof}
    (1) \& (2): Suppose there is a class $\mw=\gamma \ma -\delta\mb$, with $\gamma$, $\delta \in \Q$, $\mw^2=0$ and $\mw\cdot\mv=z$ with $z\in \{1,2\}$. 
    The relation $\mw\cdot \mv=z$ implies that
    \[
        \delta = \frac{\gamma(\ma^2+nr)-z}{nr}.
    \]
    Solving for $\gamma$ in the relation $\mw^2=0$ gives
    \[
        \gamma=0 \quad \text{or}\quad \gamma = \frac{z}{\ma^2/2+nr}.
    \]
    Suppose first that $\gamma=0$. 
    We find
    \[
        \mw=\gamma \ma - \delta\mb = \left(0,0,-\frac{z}{nr}\right),
    \]
    which is in $\AlgLat$ if and only if $z = 2$, $n = 2$, and $r=1$, in which case $\mw = \mb$ satisfies $\mw^2 = 0$ and $\mw\cdot\mv = 2$, independent of the value of $\ma^2$. 
    By Theorem~\ref{thm:BMMMPThm5.7}(2)(c), in this case the wall $\calW_\ma$ induces a Li-Gieseker-Uhlenbeck contraction. 
    
    Suppose now that $\displaystyle \gamma=\frac{z}{\ma^2/2+nr}$. 
    In the integral basis of Proposition~\ref{prop:Mukai_Lattice_Basis}, the coefficient of $(n,nB_\alpha,0)$ in $\mw$ is $r\gamma$, so
    \begin{equation}
        \label{eq:integrality_first_coordinate}
        \frac{rz}{\ma^2/2+nr} \in \Z.
    \end{equation}
    If $z = 1$ and $\ma^2 \geq 0$ one has $\ma^2/2+nr \geq nr \geq 2r$, which contradicts~\eqref{eq:integrality_first_coordinate}. 
    If $\ma^2 = -2$, the only possible numerical case is $n = 2$, $r = 1$; but then $\delta = -1/2$, and the third coordinate of $\mw$ is not integral. 
    This finishes the proof of (1).

    If $z = 2$ and $\ma^2 \geq 2$, then $\ma^2/2 + nr > 2r$, contradicting~\eqref{eq:integrality_first_coordinate}. 
    If $\ma^2 = 0$, then we must have $n = 2$. 
    Then $\mw = \frac{1}{r}\ma$, so integrality forces $r\mid k, t$. 
    Admissibility of $\ma$ implies that $r = 1$. In this case, $\mw = \ma$ satisfies $\mw^2 = 0$ and $\mw\cdot\mv = 2$. 
    By Theorem~\ref{thm:BMMMPThm5.7}(2)(c), the wall $\calW_\ma$ induces a Li-Gieseker-Uhlenbeck contraction. 
    Finally, if $\ma^2 = -2$,~\eqref{eq:integrality_first_coordinate} leaves only $(n,r) = (2,1)$ or $(3,1)$. 
    The latter forces the third entry of $\mw$ to be $t - 1/3 \notin \Z$, while the former gives the isotropic class 
    \[
        \mw = 2\ma + \mb = (4,4B_\alpha + 2kh,2t-1)
    \]
    and in this case Theorem~\ref{thm:BMMMPThm5.7}(2)(c) implies that the wall $\calW_\ma$ induces a divisorial contraction of Li-Gieseker-Uhlenbeck type. 
    This finishes the proof of (2). 
    \smallskip
     
    Now assume $nr>2$.
    If $\ma^2 \geq 0$, then $\mv = \ma + \mb$ is sum of positive classes, because 
    \[
        \ma\cdot\mv = \ma^2 + \ma\cdot\mb = \ma^2 + nr > 0\quad\text{and}\quad \mb\cdot\mv = \mb\cdot\ma + 0 = nr > 0.
    \]
    On the other hand, if $\ma^2 = -2$ and $nr > 2$ then 
    \[
        0 < \ma\cdot\mv = \ma^2 + \ma\cdot\mb = -2 + nr < -1 + nr = \mv^2/2.
    \]
    In either of these circumstances Theorem~\ref{thm:BMMMPThm5.7}(3) together with our work so far shows that $\calW_\ma$ induces a flopping contraction if and only if \(\calH_\ma\) does not contain a spherical class $\ms$ with $\ms\cdot \mv = 0$. \\
    
    \noindent (3): 
    Suppose that $\ma^2\in \{-2,2\}$ and $\ms=\gamma \ma -\delta \mb \in \calH_\ma$ with $\gamma, \delta \in \Q$ is a spherical class with $\ms\cdot \mv = 0$. 
    Lemma~\ref{lem:AlreadySaturated} says that $\langle\ma,\mb\rangle$ is already saturated, so we must have $\gamma$ and $\delta \in \Z$.    
    The relation $\ms\cdot\mv = 0$ implies that
    \[
        \delta = \left(\frac{\ma^2+nr}{nr}\right)\cdot\gamma.
    \]
    One can solve for $\gamma$ in the relation $\ms^2 = -2$ to obtain
    \[
        \gamma = \pm \frac{1}{\sqrt{\ma^2/2+nr}}.
    \]
    To have $\gamma \in \Z$, we need $\ma^2/2 + nr = 1$, which is possible only if $\ma^2 = -2$, $r = 1$, and $n = 2$, a case excluded by the hypotheses of the Proposition. \\
    
    \noindent (4): 
    Finally, suppose that $\ma^2=0$ and $\ms=\gamma \ma - \delta \mb \in \calH_\ma$ with $\gamma, \delta \in \Q$ satisfies $\ms^2 = -2$ and $\ms\cdot\mv = 0$. 
    The last relation, together with the hypothesis that $\ma^2 = 0$, implies that $\delta = \gamma$.
    One can solve for $\gamma$ in the relation $\ms^2 = -2$ to obtain
    \[
        \gamma = \pm \frac{1}{\sqrt{nr}} 
    \]
    and write down the explicit representation
    \[
        \ms=\pm\left(\frac{nr}{\sqrt{nr}},\frac{nr}{\sqrt{nr}}B_\alpha +\frac{k}{\sqrt{nr}}h, \frac{t+1}{\sqrt{nr}}\right).
    \]    
    Thus, a class $\ms \in \calH_\ma$ with $\ms^2=-2$ and $\ms \cdot \mv = 0$ exists if and only if $s := \sqrt{nr}$ is an integer such that $s \mid \gcd(r,k,t+1)$.
\end{proof}

\section{Using the Uhlenbeck contraction}
\label{sec:UsingUhlenbeck}

We show that, for any nontrivial Brauer class on a Picard rank $1$ K3 surface with an admissible Mukai vector satisfying a mild divisibility condition, there is a geometric realization of the class arising via an Uhlenbeck contraction on a moduli space of Gieseker-stable twisted sheaves. 
The proof recasts Theorem~\ref{thm:uhlenbeckcontraction} in the language of admissible vectors.

\begin{theorem}
    \label{thm:Uhladmissiblea}
    Let $S$ be a K3 surface and let $\alpha \in \Br(S)$ be of order $n>1$. 
    Assume there exists an admissible Mukai vector $\ma = (nr, nr\Ba+kh, t) \in \AlgLat$ such that $\gcd(r,k)=1$; let $\mb \coloneqq  (0,0,-1)$ and $\mv = \ma + \mb$. Assume there exists $h\in \Pic(S)$ which is generic with respect to both $\ma$ and $\mv$.
    
    \begin{enumerate}[leftmargin=*]
    \item\label{part1admissiblea} There is a contraction $\pi \colon \MM_h(\mv,\alpha) \to \MM^{\Uhl}(\mv,\alpha)$, a nonempty open subset $U \subset \MM_h(\ma,\alpha)$, parametrizing $\mu$-stable $\alpha$-twisted vector bundles on $S$, an \'etale $\PP^{\ma\cdot\mb - 1}$-bundle $\pi_Q\colon Q \to S\times U$, and a locally closed immersion $g\colon Q \hookrightarrow \MM_h(\mv,\alpha)$ such that $\pi\circ g$ factors as
    \begin{center}
        \begin{tikzcd}
            Q \arrow[d,"\pi_Q"] \arrow[r, hookrightarrow,"g"] & \MM_h(\mv,\alpha) \arrow[d, "\pi"] \\
            S \times U \arrow[r,"h"] & \MM^{\Uhl}(\mv,\alpha).
        \end{tikzcd}    
    \end{center}
    \smallskip
    
    \item\label{part2admissiblea} For every closed point $A \in U$, the restriction $h_A\colon S \isom S\times \{A\} \to \MM^{\Uhl}(\mv,\alpha)$ is a closed immersion that sits in a commutative diagram
    \begin{center}
        \begin{tikzcd}
            \PP(A) \arrow[d,"\alpha"] \arrow[r, hookrightarrow] & \MM_h(\mv,\alpha) \arrow[d, "\pi"] \\
            S \arrow[r, hookrightarrow,"h_A"]  & \MM^{\Uhl}(\mv,\alpha)
        \end{tikzcd}    
    \end{center}    
    and the restriction $\pi\big|_{\PP(A)}\colon \PP(A) \to S$ represents $\alpha$. 
    \smallskip
        
    \item\label{part3admissiblea} If, in addition, $\ma^2 \leq 2nr-4$, then we can take $U=\MM_h(\ma,\alpha)$, the morphism $g$ is a closed immersion, the morphism $h$ is a finite morphism injective on closed points, and $Q$ is the exceptional locus of $\pi$.
    Moreover, $h\circ \pi_Q$ is the Stein factorization of $\pi\big|_Q$.
    \end{enumerate} 
\end{theorem}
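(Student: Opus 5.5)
The approach is to verify the hypotheses of Theorem~\ref{thm:uhlenbeckcontraction}, namely conditions (1)--(4) of setup~\ref{setup:uhlenbeck_contractions} for parts \eqref{part1admissiblea}--\eqref{part2admissiblea}, and additionally condition ($4'$) for part \eqref{part3admissiblea}; the conclusions then transfer verbatim.

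Conditions (1)--(3) come almost for free. Admissibility gives that $\ma$ and $\mv$ are primitive with $\ma^2\ge -2$. Since $\ma\cdot\mb = nr>0$ because $r>0$, condition (1) holds, and condition (2) is part of admissibility. Condition (3) is assumed.

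For condition (4) we need a $\mu$-stable $\alpha$-twisted vector bundle with Mukai vector $\ma$. Because $\alpha$ has order $n$ and the rank of any $\alpha$-twisted sheaf is a multiple of $n$, I would argue as follows. A $\mu$-semistable sheaf of vector $\ma$ with $\gcd(r,k)=1$ is automatically $\mu$-stable: a destabilizing subsheaf would have rank $nr'$ with $0<r'<r$ and $c_1^B = nr'\Ba + k'h$, and equality of slopes $k'/r' = k/r$ contradicts $\gcd(r,k)=1$. Hence every Gieseker-stable $A\in\MM_h(\ma,\alpha)$ (nonempty since $\ma^2\ge-2$, $\ma$ primitive, $h$ generic) is $\mu$-stable. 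Then $A^{\vee\vee}$ is $\mu$-stable by Lemma~\ref{lem:Eddslopestable}, with Mukai vector $\ma+\ell(0,0,1)$ where $\ell$ is the length of $A^{\vee\vee}/A$. This gives $(\ma+\ell(0,0,1))^2 = \ma^2-2\ell nr\ge -2$, since stable sheaves on K3 have $v^2\ge -2$. A dimension count then shows the non-locally-free locus $\MM_{\ge1}$ has dimension at most $(\ma^2-2\ell nr+2)+2\ell+\ell(nr-1)$, which is less than $\ma^2+2$ for $nr\ge 2$ and $\ell\geq 1$. The locus is therefore a proper closed subset, so the generic point is locally free, giving condition (4). (Alternatively, for $\ma^2=-2$ there is no room for $\ell\ge1$ at all.)

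For part \eqref{part3admissiblea}, I would verify ($4'$) under $\ma^2\le 2nr-4$. First, every $A\in\MM_h(\ma,\alpha)$ is locally free: otherwise $A^{\vee\vee}$ is stable with $v^2=\ma^2-2\ell nr\le 2nr-4-2nr<-2$, a contradiction. Second, every $E\in\MM_h(\mv,\alpha)$ is $\mu$-stable by the same $\gcd(r,k)=1$ argument, since $\mv$ has the same rank and $c_1$ as $\ma$. Third, if $E^{\vee\vee}/E$ has length $\ell$, then $E^{\vee\vee}$ is $\mu$-stable with vector $\mv+\ell(0,0,1)=\ma+(\ell-1)(0,0,1)$, so $\ma^2-2(\ell-1)nr\ge -2$. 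Combined with $\ma^2\le 2nr-4$, this forces $\ell\le1$. Finally, when $nr\ge2$ we have $\ma\cdot\mb=nr>1$, so Theorem~\ref{thm:uhlenbeckcontraction}\eqref{part3} applies, giving that $Q$ is the exceptional locus.

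I expect the main obstacle to be condition (4) in general, specifically the dimension estimate for the non-locally-free locus. That estimate needs a careful bound on the fibers of $E\mapsto E^{\vee\vee}$, namely on the dimension of the Quot scheme of length-$\ell$ quotients of a rank-$nr$ bundle. I would take this bound from the stratification cited from \cite{Lieblich} and \cite[\S9.6]{HuybrechtsLehn}.
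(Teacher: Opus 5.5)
Your proposal is correct and follows essentially the same route as the paper. The paper also proves $\mu$-stability from $\gcd(r,k)=1$ and obtains condition (4) by bounding each non-locally-free stratum by $\dim \MM_h(\ma+\ell(0,0,1),\alpha)+\ell(nr+1)$. It then derives $(4')$ from $v(E^{\vee\vee})^2\geq -2$ (Lemma~\ref{lem:Uhladmissiblea}) and applies Theorem~\ref{thm:uhlenbeckcontraction}. Your count $2\ell+\ell(nr-1)$ is the same Quot-scheme dimension $\ell(nr+1)$, and your remark that $nr>1$ is what makes $Q$ the full exceptional locus in part (3) is a detail the paper leaves implicit.
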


\begin{proof}
    Let $\ma =(nr, nr\Ba+kh, t)\in \AlgLat$ be as in the statement, with $\gcd(r,k)=1$, and $\mv = \ma + \mb$. 
    To prove parts~\eqref{part1admissiblea} and~\eqref{part2admissiblea}, we show that conditions (1)--(4) of setup~\ref{setup:uhlenbeck_contractions} are satisfied. 
    Because $\ma$ is admissible, it is primitive in $\AlgLat$ with $\ma^2\geq -2$, and $\mv$ is primitive. 
    By assumption, $\ma\cdot \mb =nr > 0$. By hypothesis, the polarization $h$ is generic with respect to both $\ma$ and $\mv$. 
    Since $\ma$ is primitive with $\rk \ma = nr>0$ and $\ma^2\geq -2$, it follows from \cite[Theorem~3.16]{Yoshioka06} that $\MM(\ma,\alpha) := \MM_h(\ma,\alpha)\neq\emptyset$. 
    Thus, it remains to check that there exists some $A\in \MM(\ma,\alpha)$ such that $A$ is $\mu$-stable and an $\alpha$-twisted vector bundle. 
    
    First, every $A\in \MM(\ma,\alpha)$ is $\mu$-stable by Lemma~\ref{lem:Uhladmissiblea}(1) below.  
    To see that there exists some $A\in \MM(\ma,\alpha)$ which is an $\alpha$-twisted vector bundle, we show that the locus of twisted sheaves which are not locally free has strictly smaller dimension than that of $\MM(\ma,\alpha)$. 
    The subset of $\MM(\ma,\alpha)$ parametrizing non-locally free sheaves are those $A$ for which $A^\dd/A$ has positive length. 
    If this subset is nonempty, consider the locus of those $A$ for which $A^\dd / A$ has length $\ell \geq 1$, so that $\ma'=v(A^\dd)=\ma - \ell \mb$. 
    Since $A$ is $\mu$-stable, so is $A^\dd$ by Lemma~\ref{lem:Eddslopestable}. 
    Thus, we can consider the subset $V\subset \MM(\ma',\alpha)$ of locally free sheaves, which is nonempty since $A^\dd \in V$. 
    There exists a relative quot scheme $R \to V$ whose fiber over $A' \in V$ parametrizes quotients $A' \twoheadrightarrow T$ for some torsion sheaf $T$ of length $\ell$. 
    As in \S\ref{subsec:flopgeometryUhl}, there is a universal short exact sequence on $S \times R$ which induces a rational map $f\colon R \dashrightarrow M(\ma, \alpha)$ defined over the locus in $V$ of $\mu$-stable sheaves, which again is non-empty since $A^\dd \in V$ is $\mu$-stable. 
    Thus, the dimension of the locus of sheaves in $M(\ma,\alpha)$ for which $A^\dd/A$ has length $\ell$ is bounded above by $\dim R$. 
    The fiber of $R\to V$ over $A' \in V$ is the quot scheme of length-$\ell$ quotients of $A'$, which by \cite{Lieblich}*{Lemma~2.2.7.28} has dimension $\ell(\ma \cdot \mb +1)$, and $\dim V \leq (\ma')^2 + 2 = \ma^2 - 2\ell \ma \cdot \mb + 2$, so that
    \[
        \dim R \leq \ell(\ma\cdot\mb +1) + \ma^2 - 2\ell\ma \cdot \mb + 2 = \ma^2 + 2 - \ell(\ma\cdot \mb -1) = \dim \MM(\ma,\alpha) - \ell(\ma\cdot \mb -1).
    \] 
    By assumption, $\ma \cdot \mb = nr > 1$ and $\ell \geq 1$ so $\dim R < \dim \MM(\ma,\alpha)$, and the general element in $\MM(\ma,\alpha)$ is an $\alpha$-twisted vector bundle.
    
    Conditions (1)--(4) of setup~\ref{setup:uhlenbeck_contractions} being satisfied, we apply Theorem~\ref{thm:uhlenbeckcontraction} to any vector bundle $A\in \MM(\ma,\alpha)$ to conclude \eqref{part1admissiblea} and~\eqref{part2admissiblea}.
    \smallskip

    For part~\eqref{part3admissiblea}, Lemma~\ref{lem:Uhladmissiblea} below shows that $\ma^2 \leq 2nr-4$ implies condition ($4'$) of setup~\ref{setup:uhlenbeck_contractions}. 
    Thus, \eqref{part3admissiblea} follows from Theorem~\ref{thm:uhlenbeckcontraction}\eqref{part3}.
\end{proof}

\begin{lemma}
    \label{lem:Uhladmissiblea}
    Retain the setup and hypotheses as in Theorem~\ref{thm:Uhladmissiblea}.
    
    \begin{enumerate}[leftmargin=*]
        \item Every Gieseker stable $\alpha$-twisted sheaf with Mukai vector $\ma$ or $\mv$ is $\mu$-stable.
        \smallskip
        
        \item Suppose additionally that $\ma^2 \leq 2nr-4$.
        Then every $A \in \MM_h(\ma,\alpha)$ is locally free, and for every $E\in \MM_h(\mv,\alpha)$ the length of $E^\dd/E$ is at most one. 
    \end{enumerate}
\end{lemma}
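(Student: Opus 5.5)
The plan is to argue directly from the numerics of slopes in Picard rank one, using $\gcd(r,k)=1$ for part (1), and then to use the bound $\mathbf{u}^2\geq -2$ for Mukai vectors of simple sheaves for part (2).

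\textbf{Part (1).} Gieseker stability always implies $\mu$-semistability, so it suffices to show that no subsheaf of intermediate rank can have slope equal to that of $\ma$ (or of $\mv$, which has the same rank and first Chern class).

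By Proposition~\ref{prop:Mukai_Lattice_Basis}, any $\alpha$-twisted subsheaf $F$ has Mukai vector of the form $(nr', nr'\Ba + k'h, t')$. Its slope is computed using $\Ba h = \ia/n$ and $h^2=2d$:
\[
    \mu(F)=\frac{r'\ia + 2dk'}{nr'}.
\]
Suppose $0<r'<r$ and $\mu(F)=\mu(A)$. Then
\[
    (r'\ia+2dk')\,r = (r\ia+2dk)\,r',
\]
which simplifies to $k'r = kr'$. Since $\gcd(r,k)=1$, this forces $r\mid r'$, which is impossible. Hence every $\mu$-semistable sheaf with Mukai vector $\ma$ or $\mv$ is $\mu$-stable.

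\textbf{Part (2), twisted sheaves with vector $\ma$.} First record the standard fact that a simple $\alpha$-twisted sheaf $F$ on $S$ satisfies $v(F)^2\geq -2$. This follows from the twisted Riemann--Roch formula $\chi(F,F)=-v(F)^2$, together with $\hom(F,F)=\ext^2(F,F)=1$ by Serre duality.

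Now let $A\in\MM_h(\ma,\alpha)$ and suppose $A^\dd/A$ has length $\ell\geq1$. By part (1) $A$ is $\mu$-stable, so $A^\dd$ is $\mu$-stable by Lemma~\ref{lem:Eddslopestable}, hence simple. Its Mukai vector is $\ma-\ell\mb$, so
\[
    (\ma-\ell\mb)^2=\ma^2-2\ell nr\geq -2,
\]
giving $\ma^2\geq 2\ell nr-2\geq 2nr-2$. This contradicts $\ma^2\leq 2nr-4$, so $A$ is locally free.

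\textbf{Part (2), twisted sheaves with vector $\mv$.} Let $E\in\MM_h(\mv,\alpha)$ with $E^\dd/E$ of length $\ell$. The same argument applies: $E^\dd$ is $\mu$-stable and simple, with Mukai vector
\[
    \mv-\ell\mb=\ma-(\ell-1)\mb.
\]
Its square is $\ma^2-2(\ell-1)nr\geq-2$. If $\ell\geq2$, this gives $\ma^2\geq 2nr-2$, again a contradiction. Hence $\ell\leq 1$.

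The only step requiring care is the inequality $\mathbf{u}^2\geq-2$ for simple twisted sheaves, and it follows immediately from twisted Riemann--Roch and Serre duality.
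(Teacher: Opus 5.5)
Your proof is correct and follows essentially the same route as the paper. In (1) you compare the slope of a potential destabilizing subsheaf $(nr', nr'\Ba+k'h, t')$ with that of $\ma$, and $\gcd(r,k)=1$ rules out $rk'=r'k$ with $0<r'<r$. In (2) you apply $v^2\geq -2$ to the $\mu$-stable double dual, whose Mukai vector $\ma+(0,0,\ell)$ (resp.\ $\ma+(0,0,\ell-1)$) has square $\ma^2-2\ell nr$ (resp.\ $\ma^2-2(\ell-1)nr$); this contradicts $\ma^2\leq 2nr-4$ unless $\ell=0$ (resp.\ $\ell\leq 1$). The only difference is cosmetic: you justify $v(A^\dd)^2\geq -2$ through simplicity and twisted Riemann--Roch, whereas the paper invokes the fact that $\mu$-stable sheaves are Gieseker stable.
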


\begin{proof}
    First, suppose $E$ is a Gieseker stable $\alpha$-twisted sheaf with Mukai vector $\ma$ or $\mv$ which is not $\mu$-stable. 
    Then there is a subsheaf $F \subset E$ of rank $nr'$ with $0 < r' < r$ such that $\mu_{h,\alpha}(F) = \mu_{h,\alpha}(E)$ (since $E$ is Gieseker stable, it is $\mu$-semistable). 
    This means that 
    \[
        \frac{(nr'\Ba + k'h)\,h}{nr'} = \frac{(nr\Ba + kh)\,h}{nr}.
    \]
    We deduce that $rk'h^2=r'kh^2$, so $rk'=r'k$. 
    But $\gcd(r,k) = 1$ implies $r \mid r'$.  This contradicts $0 < r' < r$. 
    Hence, $E$ is slope stable.

    Next, suppose that $\ma^2 \leq 2nr-4$.
    As in the proof of Lemma~\ref{lem:locallyclosedembedding}, we can consider the locally closed stratification of $\MM(\ma,\alpha) = \bigsqcup_{\ell \geq 0} \MM_\ell$ where
    \[
        \MM_\ell \coloneqq \{ E \in \MM(\ma,\alpha) : E^\dd/E \text{ has length } \ell\}.
    \]
    The claim that every $A\in \MM(\ma,\alpha)$ is a twisted vector bundle will follow by showing that $\MM_\ell=\emptyset$ for all $\ell >0$. 
    Write
    \[
        0 \to A\to A^\dd \to T \to 0
    \]
    for the short exact sequence coming from taking the double dual, where $T$ is a torsion sheaf of length $\ell\geq 0$. 
    Since $A$ is $\mu$-stable, so is $A^\dd$ by Lemma~\ref{lem:Eddslopestable}, so $v(A^\dd)^2 \geq -2$ (since $\mu$-stability implies Gieseker stability). 
    We have
    \[
        v(A^\dd)^2=(\ma + (0,0,\ell))^2 = \ma^2 - 2\ell nr.
    \]
    Using the bound that $\ma^2 \leq  2n-4$, we find $v(A^\dd)^2 \leq 2nr(1-\ell) -4$, which along with $v(A^\dd)^2 \geq -2$ forces $\ell=0$.
    Thus, every $A\in \MM(\ma,\alpha)$ satisfies $A\cong A^\dd$, as desired. 
    
    Finally, we show that every $E\in \MM(\mv,\alpha)$ has quotient $E^\dd/E$ of length at most one. 
    Suppose the length of $E^\dd/E$ is $\ell\geq 0$. 
    Then $v(E^\dd) = \mv + (0,0,\ell) = \ma + (0,0,\ell -1)$, and 
    \[
        v(E^\dd)^2 = \ma^2 -2nr(\ell-1).
    \]
    Again under the assumption that $\ma^2\leq 2nr-4$, $\mu$-stability of $E^\dd$ forces $\ell\leq 1$.
\end{proof}

\section{Using the more general theory for a contraction}
\label{sec:usingmoregeneralcontraction}

We determine the conditions under which any Brauer class on a Picard rank 1 K3 surface with an admissible Mukai vector has a geometric realization arising via a contraction on a moduli space of Bridgeland-stable objects. 
Analogously to \S\ref{sec:UsingUhlenbeck}, we reinterpret Theorem~\ref{thm:contractiongeometry} using admissible vectors.

\begin{theorem}
    \label{thm:putittogetherBM}
    Let $S$ be a K3 surface with $\Pic(S) \isom \Z h$ and let $\alpha \in \Br(S)$ be of order $n>1$. 
    Assume there exists an admissible Mukai vector $\ma = (nr, nr\Ba+kh, t) \in \AlgLat$ with $\ma^2 \leq 2$. 
    Suppose that the geometric component $\Stab^\dagger(S,\alpha)$ has no walls with respect to the Mukai vector $\mb = (0,0,-1)$, and if $\ma^2=0$ further suppose that $\MM_{\sigma_0}^{st}(\ma,\alpha)\neq \emptyset$. Let $\mv := \ma + \mb$.
    
    \begin{enumerate}[leftmargin=*]
        \item\label{BMpart1admissiblea} There is a contraction $\pi_+ \colon \MM_{\sigma_+}(\mv,\alpha) \to \overline{\MM}_+$, a nonempty open subset $U \subset \MM_{\sigma_+}(\ma,\alpha)$ parametrizing $\sigma_0$-stable $\alpha$-twisted objects on $S$, an \'etale $\PP^{\ma\cdot\mb - 1}$-bundle $\pi_Q\colon Q \to U\times S$, and a morphism $g\colon Q \to \MM_{\sigma_+}(\mv,\alpha)$, injective on closed points, and such that $\pi_+\circ g$ factors as
        \begin{center}
            \begin{tikzcd}
                Q \arrow[d,"\pi_Q"] \arrow[r, "g"] & \MM_{\sigma_+}(\mv,\alpha) \arrow[d, "\pi_+"] \\
                S\times U  \arrow[r,"h"] & \overline{\MM}_+.
            \end{tikzcd}    
         \end{center}
        \smallskip
    
        \item\label{BMpart2admissiblea} Every closed point $A \in U$ is an $\alpha$-twisted vector bundle, and the restriction $h_A\colon S \isom S \times \{A\} \to \overline{\MM}_+$ is a morphism that sits in a commutative diagram    
        \begin{center}
            \begin{tikzcd}
                \PP(A^\vee) \arrow[d,"\alpha^{-1}"] \arrow[r, hookrightarrow, "g'"] & \MM_{\sigma_+}(\mv,\alpha) \arrow[d, "\pi_+"] \\
                S \arrow[r, "h_A"]  & \overline{\MM}_+
            \end{tikzcd}    
        \end{center}    
        where $g'$ is a closed immersion and the restriction $\pi_+\big|_{\PP(A^\vee)}\colon \PP(A^\vee) \to S$ represents $\alpha^{-1}$. 
        \smallskip
        
        \item\label{BMpart3admissiblea} If, in addition, $nr > 2$, and either $\ma^2 \in \{-2,2\}$ or when $nr = s^2$ is a square we have $s\nmid \gcd(r,k,t+1)$, then $\pi_+$ is a flopping contraction and the (scheme-theoretic) closure of $g(Q)$ is an irreducible component of the exceptional locus of $\pi_+$.
    \end{enumerate}
\end{theorem}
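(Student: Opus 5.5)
The plan is to deduce Theorem~\ref{thm:putittogetherBM} from Theorem~\ref{thm:contractiongeometry}. This means checking conditions (1)--(4) of setup~\ref{setup:flopping_contractions} for parts \eqref{BMpart1admissiblea}--\eqref{BMpart2admissiblea}, and conditions (5)--(6) for part \eqref{BMpart3admissiblea}. The lattice is $\calH := \calH_\ma$, the saturation of $\langle\ma,\mb\rangle$, which is primitive of rank $2$ and hyperbolic by Lemma~\ref{lem:hyperbolic}. We take $\calW = \calW_\ma$ to be an associated potential wall and $\sigma_0\in\calW$ to be $\calW$-generic.

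Condition (1) is immediate from admissibility together with $\ma\cdot\mb = nr>0$. For condition (2), primitivity of $\mv$ is part of admissibility, and
\[
\mv^2 = \ma^2 + 2nr \geq -2 + 2nr > 0,
\]
using $n\geq 2$ and $r\geq 1$. Condition (3) is a hypothesis of the theorem.

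Condition (4), $\MM^{st}_{\sigma_0}(\ma,\alpha)\neq\emptyset$, is a hypothesis when $\ma^2=0$, so the work is in the cases $\ma^2\in\{-2,2\}$. For $\ma^2=-2$, I would argue that $\MM_{\sigma_+}(\ma,\alpha)$ is a single point $A$, which is rigid. If $A$ were strictly $\sigma_0$-semistable, its Jordan--H\"older factors would have classes in $\calH$. By Lemma~\ref{lem:AlreadySaturated} these classes lie in $\langle \ma,\mb\rangle$ and are positive combinations on the ray, so one would obtain a decomposition $\ma = \mmu_1+\mmu_2$ with $\mmu_i = x_i\ma + y_i\mb$ and all $\mmu_i$ semistable, hence $\mmu_i^2\geq -2$. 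I would then rule this out by a direct computation in the Gram matrix $\begin{pmatrix}\ma^2 & nr\\ nr & 0\end{pmatrix}$. Alternatively I would invoke Theorem~\ref{thm:BMMMPThm5.7}(1) applied to $\ma$ (when $\ma^2=2$), showing that $\calH$ contains neither an isotropic $\mw$ with $\mw\cdot\ma=1$ nor an effective spherical $\ms$ with $\ms\cdot\ma<0$. Both are integrality checks of the type in Proposition~\ref{prop:floppingwalls1}: writing $\mw=\gamma\ma-\delta\mb$ with $\gamma,\delta\in\Z$, the isotropy condition forces $\gamma(\gamma\ma^2/2+\delta nr)=0$. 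I expect this step, establishing $\sigma_0$-stability of some object of class $\ma$, to be the main obstacle, especially the spherical case $\ma^2=-2$, where one must separately handle the fact that $\calW$ might also be a wall for $\ma$. The fallback is to show that $\calW$ is not totally semistable for $\ma$ via Theorem~\ref{thm:BMMMPThm5.7}(1), since $\MM_{\sigma_+}(\ma,\alpha)$ is nonempty. Theorem~\ref{thm:contractiongeometry}(1)--(2) then give parts \eqref{BMpart1admissiblea} and \eqref{BMpart2admissiblea} verbatim.

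For part \eqref{BMpart3admissiblea}, assume $nr>2$. Proposition~\ref{prop:floppingwalls1}(1) and (2) show that $\calH$ contains no isotropic $\mw$ with $\mw\cdot\mv\in\{1,2\}$, because $(n,r)\neq(2,1)$. Proposition~\ref{prop:floppingwalls1}(3) and (4), under the stated square-divisibility hypothesis when $\ma^2=0$, exclude a spherical $\ms$ with $\ms\cdot\mv=0$; this gives condition (5). For condition (6), the computation in the proof of Proposition~\ref{prop:floppingwalls1} shows the following. If $\ma^2\geq0$, then $\ma$ is positive because $\ma\cdot\mv = \ma^2+nr>0$. If $\ma^2=-2$, then
\[
0<\ma\cdot\mv = nr-2 < nr-1 = \mv^2/2 .
\]
Theorem~\ref{thm:contractiongeometry}(3) then yields that $\pi_+$ is a flopping contraction and that $\overline{g(Q)}$ is an irreducible component of its exceptional locus.
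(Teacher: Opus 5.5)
Your proposal follows the paper's proof except in one step. Conditions (1)--(3), the case $\ma^2=2$ of condition (4) via Theorem~\ref{thm:BMMMPThm5.7}(1) applied to $\ma$, and part (3) via the computations in Proposition~\ref{prop:floppingwalls1} are exactly what the paper does. You also make explicit the check $\mv^2=\ma^2+2nr>0$, which the paper leaves tacit. One harmless slip: with $\mw=\gamma\ma-\delta\mb$, the isotropy equation is $\gamma(\gamma\ma^2/2-\delta nr)=0$.

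The step that differs is condition (4) when $\ma^2=-2$, and there your argument has a flaw as written. Your fallback does not reach this case, because Theorem~\ref{thm:BMMMPThm5.7} is stated for a vector of positive square. The paper instead uses the quadratic formula in $\calH=\langle\ma,\mb\rangle$ to show that $\pm\ma$ are the only spherical classes in $\calH$. It then invokes \cite[Proposition~6.3]{BMMMP} to get a $\sigma_0$-stable object of class $\ma$.

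Your main route is to show directly, by lattice considerations, that the unique $A\in\MM_{\sigma_+}(\ma,\alpha)$ is not strictly $\sigma_0$-semistable. That is a legitimate and more elementary alternative. But the step \emph{all $\mmu_i$ semistable, hence $\mmu_i^2\geq-2$} is false: the bound $\mmu^2\geq-2$ holds for stable objects, not semistable ones (take $F^{\oplus 2}$ with $F$ spherical). So you cannot lump the Jordan--H\"older factors into two pieces and bound both squares.

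The repair is to use all the stable factors $\mmu_1,\dots,\mmu_m$ with $m\geq 2$; each lies in $\calH$ by $\calW$-genericity and Lemma~\ref{lem:AlreadySaturated}.
\begin{itemize}
\item For $\mmu=x\ma+y\mb$ one has $\mmu^2=2x(nry-x)$. So the classes in $\calH$ of square $\geq-2$ are $\pm\ma$ together with $C\cup(-C)$, where $C=\{x\geq 0,\ nry\geq x\}$ contains $\mb$.
\item The kernel of $Z_0$ on $\calH\otimes\R$ is a nonzero subspace of $\ker Z_0$, which is negative definite. Hence $Z_0$ has constant sign on the connected set $C\setminus\{0\}$.
\item That sign is the one of $Z_0(\ma)$, since $Z_0(\mb)$ lies on the ray of $Z_0(\ma)$. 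So no factor can be $-\ma$ or lie in $-C$.
\item No factor can be $\ma$ either, since $m\geq 2$ and the remaining factors would have central charges on the positive ray summing to zero.
\item Thus every $\mmu_j\in C$, and by convexity $\ma=\sum_j\mmu_j\in C$. This forces $\ma^2\geq 0$, contradicting $\ma^2=-2$.
\end{itemize}
With this fix, your route replaces the citation of Bayer--Macr\`i's Proposition~6.3 with positivity of the central charge on the wall lattice. The paper's route is shorter given that citation.
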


\begin{proof}
    Let $\ma =(nr, nr\Ba+kh, t)\in \AlgLat$ be as in the statement, and $\mv = \ma + \mb$. 
    To prove parts~\eqref{BMpart1admissiblea} and~\eqref{BMpart2admissiblea}, we show that conditions (1)--(4) of setup~\ref{setup:flopping_contractions} are satisfied. 
    Since $\ma$ is admissible, both it and $\mv$ are primitive in $\AlgLat$, $\ma^2\geq -2$, and moreover, $\ma\cdot \mb =nr>0$. 

    Let $\calH$ be the saturation of $\langle \ma, \mb \rangle$ in $\AlgLat$. 
    By Lemma~\ref{lem:hyperbolic}, $\calH$ is a primitive hyperbolic rank 2 sublattice of $\AlgLat$. 
    By assumption, there are no walls in $\Stab^\dagger(S,\alpha)$ for $\mb$, so it remains to show that $\MM_{\sigma_0}^{st}(\ma, \alpha) \neq \emptyset$. If $\ma^2=0$, this holds by assumption. Otherwise, we argue based on the value of $\ma^2 \in \{-2, 2\}$. 
    \begin{itemize}[leftmargin=*]
        \item $\ma^2=-2$: By Lemma \ref{lem:AlreadySaturated}, $\calH = \langle \ma, \mb\rangle$. 
        We check that $\ma$ is the unique (up to sign) spherical class in $\calH$. 
        Indeed, suppose that $\ms=\gamma\ma + \delta \mb$ for some $\gamma, \delta \in \Z$, and that $\ms^2=-2$. 
        Then
        \[
            -2=\ms^2=-2\gamma^2+2\gamma\delta nr \implies 0 = \gamma^2 -\gamma \delta nr -1.
        \]
        The quadratic formula gives
        \[
            \gamma = \frac{1}{2}\left(\delta nr \pm \sqrt{\delta^2n^2r^2 +4}\right),
        \]
        and the quantity $\delta^2n^2r^2+4$ is a square if and only if $\delta^2n^2r^2=0$, if and only if $\delta=0$. 
        This forces $\gamma = \pm 1$, so that $\ms = \pm \ma$. 
        Recalling that $\sigma_0$ is $\calW$-generic, \cite[Proposition 6.3]{BMMMP} shows there is a unique $\sigma_0$-stable object $F$ with $v(F)=\ma$.  
        \smallskip
        
        \item $\ma^2=2$: Since $\sigma_0$ is $\calW$-generic, if $\sigma_0$ lies on a wall for $\ma$, we may assume that wall is $\calW$. 
        Thus, we apply Theorem~\ref{thm:BMMMPThm5.7}(1) to show that, if $\calW$ is a wall for $\ma$, then it is not a totally semistable wall. 
        First, we check that there is no isotropic $\mw \in \calH$ with $\ma\cdot\mw=1$. 
        Suppose, to the contrary, that there are $\gamma, \delta \in \Q$ such that
        \[
            \mw := \gamma \ma + \delta \mb \in \calH,\quad\text{and}\quad \mw^2=0,\ \ma\cdot\mw=1.
        \]
        By Lemma~\ref{lem:AlreadySaturated}, $\calH = \langle \ma, \mb\rangle$ and we can assume $\gamma, \delta \in \Z$. 
        The condition $\mw^2=0$ gives $0=\gamma(\gamma + \delta nr)$, so $\gamma =0$ or $\gamma = -\delta nr$. 
        The condition $\ma\cdot\mw =1$ gives $1=2\gamma +\delta nr$. 
        If $\gamma =0$, then $\delta nr =1$, which is impossible since $\delta \in \Z$ and $n > 1$. 
        If $\gamma = -\delta nr$, then $-\delta nr = 1$, which is again impossible. 
        We conclude that no such isotropic class exists.

        Next, we check that there is no effective spherical class $\ms = \gamma\ma + \delta\mb \in \calH$ with $\ma\cdot\ms<0$. 
        Indeed, as above, 
        \[
            -2=\ms^2=2\gamma^2+2\gamma\delta nr \implies 0 = \gamma^2 +\gamma \delta nr +1,
        \]
        so the quadratic formula gives 
        \[
            \gamma = \frac{1}{2}\left(-\delta nr \pm \sqrt{\delta^2n^2r^2 -4}\right).
        \]
        The quantity $\delta^2n^2r^2 -4$ is a square if and only if $\delta nr = \pm 2$. 
        This forces $\delta = \pm 1$, $n = 2$ and $r = 1$, and therefore $\ms = \ma - \mb$ or $-\ma + \mb$. 
        In either case $\ma\cdot\ms = 0$, so the required spherical class cannot exist.
    \end{itemize}
    Conditions (1)--(4) of setup~\ref{setup:flopping_contractions} being satisfied, we apply Theorem~\ref{thm:contractiongeometry} to conclude \eqref{BMpart1admissiblea} and~\eqref{BMpart2admissiblea}.

    To prove part~\eqref{BMpart3admissiblea}, assume that $nr>2$ and $\ma^2 \in \{-2,2\}$ or when $nr = s^2$ is a square we have $s\nmid \gcd(r,k,t+1)$. 
    The proof of Proposition~\ref{prop:floppingwalls1} shows that
    conditions (5)--(6) in setup~\ref{setup:flopping_contractions} hold. 
    Thus, part (\ref{BMpart3admissiblea}) follows from Theorem~\ref{thm:contractiongeometry}\eqref{BMpart3}.
\end{proof}

\begin{remark}
    \label{rmk:describeMsigma}
    Using the primitivity of $\ma$, we can say more about the subset $U\subset \MM_{\sigma_+}(\ma,\alpha)$, depending on $\ma^2$:
    \begin{itemize}[leftmargin=*]
        \item If $\ma^2=-2$, then $\MM_{\sigma_+}(\ma,\alpha)\cong \MM_{\sigma_0}(\ma,\alpha)=\{A\}$, a unique stable (with respect to both $\sigma_+, \sigma_0$) $\alpha$-twisted vector bundle, and $U=\MM_{\sigma_+}(\ma,\alpha)$.
        \smallskip
    
        \item If $\ma^2=0, 2$, then $\MM_{\sigma_+}(\ma,\alpha)=\MM_{\sigma_+}^{st}(\ma,\alpha)$, and since $\MM_{\sigma_0}^{st}(\ma,\alpha)\neq \emptyset$, it follows that $U\subset \MM_{\sigma_+}(\ma,\alpha)$ is a nonempty open (hence dense) subset of the hyperk\"ahler twofold (i.e. K3 surface), respectively fourfold, $\MM_{\sigma_+}(\ma,\alpha)$.
\end{itemize}
\end{remark}

\section{Finding admissible Mukai vectors: Diophantine Analysis}\label{sec:producingadmissiblevectors}

This section forms the computational heart of the paper. 
We construct admissible vectors $\ma$ with $\ma^2 \le 2$ which are suitable for Theorems~\ref{thm:Uhladmissiblea} and~\ref{thm:putittogetherBM}.

Let $S$ be a complex K3 surface with $\Pic(S) = \Z h$ and \(h^2=2d\). 
Fix \(\alpha \in \Br(S)\) of prime order \(p\). 
Let $B_\alpha$ be the B-field lift~\eqref{eq:BfieldChoice} associated to $\alpha$. 
We calculated that
\[
    B_\alpha^2 = -\frac{2c_\alpha}{p^2},\quad\text{and}\quad B_\alpha h = \frac{i_\alpha}{p}.
\]
Let $\Da = \ia^2 + 4d\ca \in \F_p$ be the discriminant of $\alpha$.

The goal of this section is to produce an admissible Mukai vector $\ma \in \AlgLat$, of the form
\begin{equation}
    \ma = (pr, pr\Ba+kh, t) \in \AlgLat\quad\text{for some }(r,k,t)\in \Z^3,
\end{equation}
that we can use to apply either Theorem~\ref{thm:Uhladmissiblea} or~\ref{thm:putittogetherBM}.
We also aim to have $\ma^2$ or $r$ be small: We manage to keep $\ma^2\leq 2$, and often find an admissible $\ma$ with $r=1$. 
First, we compute 
\[ 
    \ma^2=r^2p^2B_\alpha^2+2dk^2+2krpB_\alpha h-2ptr = -2r^2\ca +2dk^2+2kr\ia -2ptr.
\]
Since the Mukai lattice is even, we want to find \((r,k,t) \in \Z_{>0}\times\Z^2\) minimizing
\begin{equation}
    \label{eq:vectorlength}
    \frac{\ma^2}{2}=-r^2c_\alpha +dk^2+kri_\alpha -ptr\geq -1.
\end{equation}

Recall from \S\ref{subsec:MSTVAreview} that when \(p>2\) and \(p \nmid d\), there are three types (lattice-theorically) of Brauer classes, and when \(p>2\) and \(p \mid d\), there are four types. 
For this reason, we split the analysis into three cases, delineated by the following three propositions.

\begin{prop}\label{prop:diophantinepeq2}
    Let \(p=2\). 
    Then:
    \begin{enumerate}[leftmargin=*]
        \item There exists an admissible Mukai vector \(\ma\in \AlgLat\) with \(\ma^2=-2\) if and only if either
        \smallskip
        \begin{enumerate}[leftmargin=*]
            \item \((\ia,\ca \bmod 2)\in\{(0,1),(1,0),(1,1)\}\); or
            \smallskip
            \item \(2\nmid d\) and \((\ia,\ca \bmod 2)=(0,0)\). 
        \end{enumerate}
        \smallskip
        
        \item There exists an admissible Mukai vector \(\ma\in \AlgLat\) with \(\ma^2=0\) if and only if
        \smallskip
        \begin{enumerate}[leftmargin=*]
            \item[(i)]  $d$ is even and $(\ia, \ca\bmod 2) \in\{(0,0), (1,0), (1,1)\}$; or
            \smallskip
            \item[(ii)] $d \equiv 0 \bmod{4}$, and  \((\ia, \ca\bmod 2) = (0,1)\); or
            \smallskip
            \item[(iii)]  $d$ is odd  and $(\ia, \ca\bmod 2) \in \{(0,0), (0,1), (1,0)\}$.
        \end{enumerate}
        \end{enumerate}
\end{prop}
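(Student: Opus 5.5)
The plan is to turn both parts into congruence conditions on $(r,k)$ and then settle each residue class of $(\ia,\ca \bmod 2)$ by an explicit small solution or by a $2$-adic parity obstruction.

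\textbf{Reduction.} By~\eqref{eq:vectorlength} with $p=2$, we have
\[
    \frac{\ma^2}{2}=-r^2\ca+dk^2+kr\ia-2tr.
\]
Prescribing $\ma^2/2=\epsilon\in\{-1,0\}$ therefore amounts to finding $r>0$ and $k$ with
\[
    dk^2+r(k\ia-r\ca)\equiv \epsilon \pmod{2r},
\]
after which $t$ is uniquely determined. Admissibility then only adds primitivity of $\ma$ and $\mv$, i.e.\ $\gcd(r,k,t)=\gcd(r,k,t-1)=1$ by Proposition~\ref{prop:Mukai_Lattice_Basis}.

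Two observations control primitivity:
\begin{itemize}
    \item If $\epsilon=-1$, then $dk^2\equiv -1 \pmod r$ forces $\gcd(r,k)=1$, so primitivity is automatic.
    \item In general, $t$ and $t-1$ cannot both be odd, so primitivity forces $r$ and $k$ not to be both even.
\end{itemize}

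\textbf{Existence.} I will exhibit explicit solutions, mostly with $r=1$. For $r=1$ the congruence just says that $dk^2+k\ia-\ca-\epsilon$ is even, and I will test $k\in\{0,1\}$.

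For $\ma^2=-2$:
\begin{itemize}
    \item $k=0$ works when $\ca$ is odd. This covers $(0,1)$ and $(1,1)$.
    \item $k=1$ works when $d+\ia-\ca$ is odd. This covers $(0,0)$ with $d$ odd, and $(1,0)$ with $d$ even.
    \item The remaining case is $(1,0)$ with $d$ odd. Write $d+1=2^m u$ with $u$ odd, and take $r=2^m$, $k=1$. The congruence becomes $d+1+r\equiv 0 \pmod{2r}$, i.e.\ $r(u+1)\equiv 0\pmod{2r}$, which holds because $u$ is odd. This matches the bound $2^{v_2(d+1)}$ in Table~\ref{ta:intro}.
\end{itemize}

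For $\ma^2=0$ with $r=1$:
\begin{itemize}
    \item $k=0$ works when $\ca$ is even. This covers $(0,0)$ and $(1,0)$ for every $d$.
    \item $k=1$ works when $d+\ia-\ca$ is even. This gives $(1,1)$ for $d$ even and $(0,1)$ for $d$ odd.
    \item The remaining case is $(0,1)$ with $4\mid d$. Take $r=2$, $k=1$ and $t=d/4-\ca$. Then $\ma^2=0$, and $\gcd(r,k)=1$ gives primitivity.
\end{itemize}

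\textbf{Non-existence.} This is the part needing care, since $r$ is arbitrary. In each case I split by the parity of $r$.

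\begin{itemize}
    \item \emph{$\ma^2=-2$, $(0,0)$, $d$ even.} Both $d$ and $\ca$ are even and $\ia=0$. If $r$ is odd, reduce the congruence mod $2$; if $r$ is even, reduce it mod $2$ as a consequence of the congruence mod $2r$. In both cases the left side $dk^2-r^2\ca$ is even while the right side $-1$ is odd.
    \item \emph{$\ma^2=0$, $(1,1)$, $d$ odd.} If $r$ is odd, then mod $2$ the left side is $k^2+k-1$, which is odd, a contradiction. If $r$ is even, then mod $2$ we get $k^2\equiv 0$, so $k$ is even, contradicting the primitivity observation.
    \item \emph{$\ma^2=0$, $(0,1)$, $d\equiv 2\bmod 4$.} If $r$ is odd, then mod $2$ we get $0\equiv 1$. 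If $r$ is even, then $k$ is odd by primitivity and $r^2\ca\equiv 0\pmod{2r}$, so $dk^2\equiv 0\pmod{2r}$. In particular $4\mid dk^2$, which is impossible.
\end{itemize}

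The expected obstacle is precisely this non-existence direction for $\ma^2=0$. There $\gcd(r,k)$ need not be $1$, and the key step is the observation that simultaneous primitivity of $\ma$ and $\mv$ rules out $r$ and $k$ both being even; without it, even values of $r$ cannot be excluded. I will also double-check that the case list in each part is exhaustive and matches (a)--(b) and (i)--(iii) in the statement exactly.
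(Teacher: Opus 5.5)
Your proof is correct and takes essentially the same route as the paper. For existence you use explicit small solutions: $r=1$ in most cases, $r=2^{v_2(d+1)}$ with $k=1$ for $(1,0)$ with $d$ odd, and $r=2$, $k=1$ for $(0,1)$ with $4\mid d$. For non-existence you use parity obstructions together with the observation that primitivity of both $\ma$ and $\mv$ rules out $r$ and $k$ both being even. Your write-up is simply a more explicit version of the paper's ``elementary inspection.''
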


\begin{prop}
    \label{prop:diophantinepnmidd}
    Let \(p>2\) and assume \(p \nmid d\). 
    Then:
    \smallskip
    \begin{enumerate}[leftmargin=*]
        \item There exists an admissible Mukai vector \(\ma\in \AlgLat\) with \(\ma^2=-2\) if and only if either
        \smallskip
        \begin{enumerate}[leftmargin=*]
            \item $\Da = 0$ (Type I) and $\legendre{-d}{p} = 1$, or
            \smallskip
            \item $\Da \neq 0$ (Types II and III). 
        \end{enumerate}
        \smallskip
        \item There always exists an admissible Mukai vector \(\ma\in \AlgLat\) with \(\ma^2=0\).
    \end{enumerate}
\end{prop}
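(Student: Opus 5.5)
The plan is to reduce both parts to congruence conditions on $(r,k)$, using only equation~\eqref{eq:vectorlength} and Proposition~\ref{prop:Mukai_Lattice_Basis}. Write
\[
    Q(r,k) := dk^2 + \ia kr - \ca r^2, \qquad\text{so that}\qquad \tfrac{\ma^2}{2} = Q(r,k) - ptr.
\]
For fixed $(r,k)$, varying $t$ makes $\ma^2/2$ range over the residue class of $Q(r,k)$ modulo $pr$. So an admissible $\ma$ with $\ma^2/2 = m \in \{-1,0\}$ exists if and only if there are $r>0$ and $k$ with $pr \mid Q(r,k) - m$, $\gcd(r,k,t)=1$ and $\gcd(r,k,t-1)=1$, where $t = (Q(r,k)-m)/(pr)$. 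By Proposition~\ref{prop:Mukai_Lattice_Basis}, the gcd conditions hold automatically when $\gcd(r,k)=1$.

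Modulo $p$, the form $Q$ is the binary quadratic form $dk^2 + \ia kr - \ca r^2$. Its discriminant is $\Da$, and the shifted polynomial $Q(1,k)+1$ has discriminant $\Da - 4d$.

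\emph{Part (1), necessity.} Any solution satisfies $Q(r,k) \equiv -1 \pmod p$. In Type I we have $\Da = 0$, and then $4dQ \equiv (2dk + \ia r)^2 \pmod p$. Hence $Q \equiv d\,x^2$ for some $x$, so $Q \equiv -1$ forces $\legendre{-d}{p} = 1$.

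\emph{Part (1), sufficiency in Type I.} If $\legendre{-d}{p}=1$, take $r=1$. The quadratic $dk^2 + \ia k - \ca + 1$ has discriminant $\Da - 4d = -4d$, which is a square mod $p$, so it has a root $k$ mod $p$. Set $t = (Q(1,k)+1)/p$.

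\emph{Part (1), sufficiency in Types II/III.} Here $\Da \neq 0$, so $Q$ is a nondegenerate binary form over $\F_p$ and represents every nonzero residue. Pick $(r_0,k_0) \in \F_p^2$ with $Q(r_0,k_0) \equiv -1$.
\begin{itemize}
    \item If $r_0 \equiv 0$, then $dk_0^2 \equiv -1$, and $r=1$ works exactly as in Type I.
    \item If $r_0 \not\equiv 0$, choose a prime $r \equiv r_0 \pmod p$ with $\legendre{-d}{r} = 1$, $r \nmid d$. Such primes exist by Chebotarev applied to $\Q(\zeta_p, \sqrt{-d})$. The fields $\Q(\zeta_p)$ and $\Q(\sqrt{-d})$ are linearly disjoint because the unique quadratic subfield of $\Q(\zeta_p)$ is ramified only at $p$, while $p \nmid d$.
    \item By the Chinese remainder theorem, choose $k$ with $k \equiv k_0 \pmod p$ and $dk^2 \equiv -1 \pmod r$. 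Then $Q(r,k) \equiv dk^2 \equiv -1 \pmod r$, and $Q(r,k) \equiv -1 \pmod p$. Since $\gcd(p,r)=1$, this gives $pr \mid Q+1$.
    \item Finally $\gcd(r,k)=1$ because $dk^2 \equiv -1 \pmod r$. Set $t = (Q(r,k)+1)/(pr)$.
\end{itemize}

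\emph{Part (2).} If $\legendre{\Da}{p} \in \{0,1\}$ (Types I and II), take $r=1$ and solve $dk^2 + \ia k - \ca \equiv 0 \pmod p$, whose discriminant is $\Da$. In Type III, $Q$ is anisotropic mod $p$, so $r=1$ fails. Instead take $r=p$ and $k=pk'$. Then $Q(p,pk') = p^2 f(k')$ with $f(x) = dx^2 + \ia x - \ca$, so $pr \mid Q$ automatically and $t = f(k')$. Now $\gcd(r,k)$ is a power of $p$, and $p \nmid f(k')$ always since $f$ has no root mod $p$. So we must choose $k'$ with $f(k') \not\equiv 0, 1 \pmod p$ and $\gcd(1,k')=1$ is not needed since $\gcd(r,k,t)$ and $\gcd(r,k,t-1)$ divide $\gcd(p, t(t-1))$.
\begin{itemize}
    \item For $p>3$, $f$ takes $(p+1)/2 \ge 3$ distinct values mod $p$, so some value avoids $\{0,1\}$.
    \item For $p=3$, $f$ takes two values, neither equal to $0$. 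If one of them equals $1$, I would instead vary the construction: try $r = pr'$ with a suitable $r'$, or pass to another $(r,k)$ with $p \mid \gcd(r,k)$, and check directly that some $t \not\equiv 0,1$ occurs.
\end{itemize}

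The main obstacle is twofold: the Chebotarev step in Part (1), which is needed to match $r$ modulo $p$ while making $-d$ a square modulo $r$, and the small-$p$ bookkeeping in Type III for Part (2).
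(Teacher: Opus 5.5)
Your overall route matches the paper's: reduce \eqref{eq:diophproblem-2} modulo $p$, find a residue solution $(r_0,k_0)$, lift it with a prime $r\equiv r_0 \pmod p$ for which $-d$ is a square mod $r$ plus CRT, and in Type~III use $(r,k)=(p,pk')$. Your Chebotarev argument is a valid substitute for Lemma~\ref{lem:rCongruenceCase1}. However, there is a genuine error in Part~(1), Types II/III, in the subcase $r_0\equiv 0$.

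You claim that $r=1$ then works ``exactly as in Type I.'' That is false. With $r=1$, the quadratic $dk^2+\ia k-\ca+1$ has discriminant $\Da-4d$, not $-4d$. Knowing $dk_0^2\equiv -1$ tells you nothing about $\Da-4d$ once $\Da\neq 0$. Nor can this case be avoided by re-choosing $(r_0,k_0)$. Take $p=3$, $d\equiv 2 \pmod 3$, $\ia=1$, $\ca\equiv 0 \pmod 3$, so $\Da=1$ (Type~II). Then $Q\equiv 2k^2+rk \pmod 3$ takes the value $-1\equiv 2$ only at $(r_0,k_0)=(0,\pm 1)$. Hence no $r\not\equiv 0 \pmod 3$ solves the equation at all; a genuine solution is $(r,k,t)=(3,4,5)$.

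The missing step is to take $r=p$. Since $dk_0^2\equiv -1$ and $2dk_0\not\equiv 0 \pmod p$, Hensel's lemma lifts $k_0$ to a root $k$ of $dk^2+p\ia k+1\equiv 0 \pmod{p^2}$. Then $t=(Q(p,k)+1)/p^2$ is an integer and $\gcd(p,k)=1$. This is exactly Case~3 of Lemma~\ref{lem:LiftingSolns}. For $p\ge 5$ you could alternatively re-choose $(r_0,k_0)$: $Q\equiv -1$ has $p-\legendre{\Da}{p}\ge 4$ solutions, and at most two of them have $r_0\equiv 0$. For $p=3$ in Type~II, the Hensel step is unavoidable.

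Second, you leave Type~III with $p=3$ in Part~(2) open, but there is no obstacle. The polynomial $f(x)=dx^2+\ia x-\ca$ takes exactly $(p+1)/2=2$ distinct values mod $3$. Neither value is $0$, because $f$ has no root. So the two values are $1$ and $2$, and choosing $k'$ with $f(k')\equiv 2$ gives $t\not\equiv 0,1 \pmod 3$, as required. Your proposed fallback of varying $r=pr'$ is unnecessary. The paper reaches the same conclusion by checking that the discriminant $\Da+8d$ of $f(x)-2$ is a square mod $3$.
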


\begin{prop}\label{prop:diophantinepmidd}
    Let \(p>2\) and assume \(p \mid d\). 
    Then:
    \smallskip
    \begin{enumerate}[leftmargin=*]
        \item There exists an admissible Mukai vector \(\ma\in \AlgLat\) with \(\ma^2=-2\) if and only if \(\alpha\) is of Type A or D. 
        \smallskip
        \item There exists an admissible Mukai vector \(\ma\in \AlgLat\) with \(\ma^2=0\) if and only if \(\alpha\) is of Type A, C, or D, or if $\alpha$ is of Type B and either $p \neq 3$ or, if $p = 3$ and either $9 \mid d$ or there is a prime $q\equiv 2 \bmod 3$ that divides $d$.
        \smallskip
        \item There exists an admissible Mukai vector \(\ma\in \AlgLat\) with \(\ma^2=2\) if:
        \begin{enumerate}[leftmargin=*]
            \item $\alpha$ is of type A, $p \equiv 1 \bmod 4$, and $d/p \notin (\Q^\times)^2$; or
            \item $\alpha$ is of type B and $p \equiv 3 \bmod 4$; or
            \item $\alpha$ is type D.
        \end{enumerate}
    \end{enumerate}
\end{prop}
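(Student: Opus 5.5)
The plan is to reduce every case to an explicit congruence problem. Writing $m:=\ma^2/2\in\{-1,0,1\}$ and using $p\mid d$ in \eqref{eq:vectorlength}, an admissible $\ma=(pr,pr\Ba+kh,t)$ with $\ma^2=2m$ amounts to a triple $(r,k,t)$ with $r>0$ and
\[
    t=\frac{-r^2\ca+dk^2+kr\ia-m}{pr}\in\Z,
\]
together with $\gcd(r,k,t)=\gcd(r,k,t-1)=1$ (Proposition~\ref{prop:Mukai_Lattice_Basis}). The integrality splits into two conditions:
\begin{enumerate}
    \item $r\mid dk^2-m$;
    \item a condition modulo $p$ (or modulo $p^2$ when $p\mid r$), which reads $-r^2\ca+kr\ia\equiv m \pmod p$ when $p\nmid r$.
\end{enumerate}
Whenever $\gcd(r,k)=1$, both primitivity conditions are automatic. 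So I will always aim for $\gcd(r,k)=1$, and ideally for $r=1$.

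\textbf{Type D ($\ia\neq0$).} I take $r=1$. The congruence becomes $k\ia\equiv \ca+m\pmod p$, which is solvable for every $m$ because $\ia$ is invertible. This handles the Type D parts of (1), (2) and (3c) at once.

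\textbf{Types A, B, C ($\ia=0$).} Here I first derive the necessary conditions.
\begin{itemize}
    \item If $p\mid r$, reducing modulo $p$ forces $m\equiv0$.
    \item If $p\nmid r$, we need $-r^2\ca\equiv m\pmod p$.
\end{itemize}
For $m=-1$ this says $\ca\equiv r^{-2}$ is a nonzero square, which is precisely Type A. This gives the ``only if'' in (1).

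For $m=0$ in Type C, take $r=1$: then $t=(dk^2-\ca)/p$, and we choose $k$ so that $\gcd(1,\ldots)$ is trivially fine. For Types A and B with $m=0$ one must take $p\mid r$, say $r=pr'$. Then the condition modulo $p^2$ becomes $dk^2\equiv0$ together with a condition involving $\ca r'^2$ and $d/p$; I will solve it by choosing $r'$ and $k$ appropriately. The residual obstruction for Type B at $p=3$ should appear as a norm condition from $\Q(\sqrt{-3})$, explaining the hypotheses $9\mid d$ or a prime $q\equiv2\bmod3$ dividing $d$.

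For $m=1$ we need $-\ca\equiv r^{-2}$, so $-\ca$ must be a square. This holds in Type B exactly when $p\equiv3\bmod4$, and in Type A exactly when $p\equiv1\bmod4$, matching (3a) and (3b).

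\textbf{Main obstacle.} The hard step is sufficiency for Types A and B when $r\neq1$ is forced: we need $r\equiv\pm s\pmod p$, where $s$ is a prescribed square root, and simultaneously $r\mid dk^2-m$ with $\gcd(r,k)=1$.

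I would first try taking $r$ to be a prime in the residue class $\pm s\bmod p$ that splits appropriately, so that $m/d$ is a square modulo $r$. This uses Dirichlet's theorem combined with quadratic reciprocity, and one must check that the splitting condition for $\Q(\sqrt{md})$ is compatible with the residue class modulo $p$; this is delicate because $p$ ramifies. The convenient choice $r=dk^2-m$ itself only gives $r\equiv -m\pmod p$. So in general I would instead solve the Pell-type equation
\[
    ru-dk^2=-m
\]
with $r$ in the right class. This is where hypotheses such as $d/p\notin(\Q^\times)^2$ in (3a) should enter: they guarantee that the relevant binary form represents the needed class rather than degenerating.
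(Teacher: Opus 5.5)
Your reduction to congruences is correct, and it settles every case that can be done with $r=1$: Type D in all three parts, Type C in (2), and the ``only if'' of (1). It also yields the necessary condition $\legendre{-\ca}{p}=1$ for $\ma^2=2$. The existence statements with $\ia=0$, however, are left unproved. These are Type A in (1), and (3a), (3b). You locate the obstacle correctly: you need a prime $r$ in a prescribed class $\pm s \bmod p$ with $\legendre{md}{r}=1$, so that $dk^2\equiv m \bmod r$ is solvable, and then CRT. But you stop there, and the compatibility you call delicate is exactly what has to be proved.

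It can be resolved. Since $p\mid d$, reciprocity ties $\legendre{md}{r}$ to $\legendre{r}{p}$ only through the factor $\legendre{p}{r}^{v_p(d)}$. The remaining freedom is the sign of $s$, the class of $r\bmod 8$, and $r$ modulo the other odd primes dividing $d$ to odd order, and this always suffices:
\begin{itemize}
\item For $\ma^2=-2$, this is Lemma~\ref{lem:rCongruenceCase2}, fed into Lemma~\ref{lem:LiftingSolns}.
\item For Type B with $\ma^2=2$, use Lemma~\ref{lem:rCongruenceCase3}, choosing the root $u$ of $u^2\equiv-\ca^{-1}$ with $\legendre{u}{p}=1$; this is possible because $p\equiv 3\bmod 4$.
\item For Type A with $\ma^2=2$, the paper uses Chebotarev in $\Q(\sqrt d,\zeta_p)$. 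The hypothesis $d/p\notin(\Q^\times)^2$ enters precisely to guarantee $\Q(\sqrt d)\cap\Q(\zeta_p)=\Q$, since the quadratic subfield of $\Q(\zeta_p)$ is $\Q(\sqrt p)$. It has nothing to do with a binary form degenerating.
\end{itemize}
Your fallback ``Pell-type equation'' $ru-dk^2=-m$ is only a restatement of $r\mid dk^2-m$ and gives no control over $r \bmod p$.

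Part (2) for Types A and B has a further problem: your guiding principle of arranging $\gcd(r,k)=1$ cannot work there. With $\ia=0$, $\ca\not\equiv 0$ and $\ma^2=0$ you are forced to take $p\mid r$, and then $p^2\mid dk^2$. If $p^2\nmid d$ this forces $p\mid k$, so primitivity must be secured through $t$ and $t-1$ instead. That is where the content lies:
\begin{itemize}
\item $(p,p,d-\ca)$ works exactly when $\ca\not\equiv-1$.
\item If $\ca\equiv -1$ and $p^2\mid d$, use $(p,1,d/p^2-\ca)$.
\item If $\ca\equiv -1$ and $p\geq 5$, use $(p^2,px,dx^2/p-p\ca)$ with $dx^2/p\not\equiv 0,1\bmod p$.
\item If $p=3$ (necessarily Type B), use $(3,1,d/9-\ca)$ or $(3q,3,d/q-\ca q)$.
\end{itemize}
The ``only if'' for $p=3$ also needs a real argument, not a heuristic about norms from $\Q(\sqrt{-3})$. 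Write $r=3x$ and $k=3y$; primitivity then forces $t\equiv 2\bmod 3$. Comparing $3$-adic valuations in $xt=-\ca x^2+3d'y^2$ (with $d=3d'$) shows that the prime-to-$3$ part of $x$ is $\equiv 2\bmod 3$. Hence some prime $q\equiv 2\bmod 3$ divides $x$ to odd order. Since $q\nmid 3d'$, this gives $q\mid y$ and then $q\mid t$, contradicting primitivity of $\ma$.
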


We summarize these results in Tables~\ref{tab:2nmidd}--~\ref{tab:pmidd}. 
Along the way, we keep track of what we can say about \(r\in \Z_{>0}\), since this value (along with the value $\ma^2 \in \Z$) will play a role in the dimension of the hyperk\"ahler variety we produce in \S\ref{sec:mainresultproofs}. 
We write $r_{\min}$ for the smallest possible $r$ for which there is an admissible Mukai vector $\ma$ of rank $pr$ with the given $\ma^2$.

\begin{remarks}\ 
    \begin{enumerate}[leftmargin=*]
        \item Much of the case of \(p=2\) was handled in \cite[Proposition~2.6]{vGK}. 
        Indeed, van Geemen and Kapustka determine when \(\ma\) exists with \(\ma^2=-2\) and \(r=1\). 
        We recall their result in \S \ref{subsec:2torsionvGK}.
        \smallskip

        \item Proposition~\ref{prop:diophantinepmidd}(3) can also be phrased as and `if and only if' statement. 
        What is missing is: when $\alpha$ is of Type~A, $p \equiv 1 \bmod 4$ and $d = ps^2$ for some $s \in \Z_{>0}$, then either $-\ca \bmod p \in(\F_p^\times)^4$, or $s$ is odd and $p \equiv 5 \bmod 8$. We do not include the details because we do not need such admissible vectors in our subsequent analysis.
    \end{enumerate}
\end{remarks}

Proving the propositions amounts to solving the diophantine equation
\begin{equation}
    \label{eq:diophproblem-2}
    {-r^2c_\alpha +dk^2+kri_\alpha -ptr=m,}\quad\text{where }m \in \{-1,0,1\}.
\end{equation}
Ideally, we would like a solution with $m = -1$, as this gives a vector $\ma$ with $\ma^2 = -2$, and in this case, there is a unique stable $\alpha$-twisted sheaf on $S$ with Mukai vector $\ma$. 
Sadly, it is not always possible to solve~\eqref{eq:diophproblem-2} with $m = -1$, so we look for solutions with $m = 0$. 
There is one stubborn case in Proposition~\ref{prop:diophantinepmidd} that forces us to look for a solution with $m = 1$.

Happily, solutions to~\eqref{eq:diophproblem-2} with $m = -1$ or $1$ automatically yield \emph{primitive} vectors \(\ma\) and $\mv:=\ma+(0,0,-1)$:

\begin{lemma}
    \label{lem:primitiveMukaivec}
    If \(\ma\) is defined by a solution $(r,k,t)$ to~\eqref{eq:diophproblem-2} with $m = -1$ or $1$, then $\gcd(r,k) = 1$, and consequently both \(\ma\) and $\mv:=\ma+(0,0,-1)=(pr,rpB_\alpha+kh,t-1)$ are primitive in $\AlgLat$.
\end{lemma}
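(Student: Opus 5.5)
Reduce the equation~\eqref{eq:diophproblem-2} modulo $\gcd(r,k)$, and then invoke Proposition~\ref{prop:Mukai_Lattice_Basis}.

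Let $e := \gcd(r,k)$. Every term on the left-hand side of~\eqref{eq:diophproblem-2} except $dk^2$ carries a factor of $r$, and $dk^2$ carries a factor of $k^2$. Hence
\[
    -r^2\ca + dk^2 + kr\ia - ptr = r(-r\ca + k\ia - pt) + dk^2,
\]
and this is divisible by $e$, because $e \mid r$ and $e \mid k$. (In fact each of $r^2$, $k^2$, $kr$, $tr$ is divisible by $e$.) So $e \mid m$. When $m = \pm 1$ this gives $e = 1$, that is, $\gcd(r,k) = 1$. This is the same observation as in the proof of Lemma~\ref{lem:AlreadySaturated}, where reducing modulo $r$ gives $dk^2 \equiv \pm 1 \bmod r$.

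Primitivity now follows from Proposition~\ref{prop:Mukai_Lattice_Basis}, applied with $n = p$. In the basis~\eqref{eq:explicitTwistedNS}, the coordinates of $\ma$ are $(r,k,t)$, so $\ma$ is primitive exactly when $\gcd(r,k,t) = 1$. Likewise, the coordinates of $\mv = \ma + \mb$ are $(r,k,t-1)$, so $\mv$ is primitive exactly when $\gcd(r,k,t-1) = 1$. Both gcds divide $\gcd(r,k) = 1$, so both vectors are primitive. The proposition already records this as its closing sentence: if $\gcd(r,k) = 1$ then both $\ma$ and $\mv$ are primitive.

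There is no real obstacle here. The only point needing care is that the divisibility must be run on the integer equation~\eqref{eq:diophproblem-2}, with $\ca$ and $\ia$ taken as the fixed integer representatives, and not on the classes mod $p$. The equation holds in $\Z$, so this is fine. I would also note that no positivity of $r$ is needed for this statement.
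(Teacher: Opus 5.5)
Your proof is correct and matches the paper's argument. The paper reduces the equation modulo $r$ to get $dk^2 \equiv m \bmod r$ with $m = \pm 1$, which forces $\gcd(r,k)=1$; you instead reduce directly modulo $\gcd(r,k)$, which is only a cosmetic difference, and both arguments then conclude by Proposition~\ref{prop:Mukai_Lattice_Basis}.
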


\begin{proof}
    When $m = -1$ or $1$, the relation~\eqref{eq:diophproblem-2} modulo \(r\) reduces to $dk^2\equiv \mp 1\bmod r$, so $\gcd(r,k)=1$. 
    Apply Proposition~\ref{prop:Mukai_Lattice_Basis}.
\end{proof}

We begin in \S\ref{subsec:diophantinepeq2} by proving Proposition~\ref{prop:diophantinepeq2}, which can be thought of as a warm-up for the cases where \(p>2\). 
In \S\ref{subsec:diophantine}, we prove some lemmas of purely number-theoretic content. 
In \S\S\ref{subsec:diophantinepnmidd}--\ref{subsec:diophantinepmidd}, we prove Propositions~\ref{prop:diophantinepnmidd} and~\ref{prop:diophantinepmidd}, respectively.


\begin{table}[t]
    \centering
    \rowcolors{5}{lightgray}{white} 
    \begin{tabular}{cccccc}
        \toprule
         & \multicolumn{2}{c}{$\ma^2 = -2$} & \multicolumn{2}{c}{$\ma^2 = 0$} \\
         \cmidrule(lr){2-3} \cmidrule(lr){4-5}
        $(\ia, \ca\bmod 2)$ & Existence? & $(r_{\min},k)$ & Existence? & $(r_{\min},k)$ \\
        \midrule
        $(0,0)$ & always & $(1,1)$ & always & $(1,0)$ \\
        $(0,1)$ & always & $(1,0)$ & always & $(1,1)$ \\
        $(1,0)$ & always & $(\le 2^{v_2(d+1)},1)$ & always & $(1,0)$ \\
        $(1,1)$ & always & $(1,0)$ & never & $-$ \\
        \bottomrule
    \end{tabular}
    \caption{When $p=2$ and $2\nmid d$, the existence of an admissible vector $\ma=(rp, rp\Ba+kh, t)\in \AlgLat$ with $\ma^2 = -2$ or $0$. The entry $(r_{\min},k)$ indicates (an upper bound for) the smallest possible $r$ that produces the desired $\ma$, together with a corresponding value for $k$.}
    \label{tab:2nmidd}
\end{table}

\begin{table}[h]
    \centering
    \rowcolors{5}{lightgray}{white}
    \begin{tabular}{cccccc}
        \toprule
         & \multicolumn{2}{c}{$\ma^2 = -2$} & \multicolumn{2}{c}{$\ma^2 = 0$} \\
         \cmidrule(lr){2-3} \cmidrule(lr){4-5}
        $(\ia, \ca\bmod 2)$ & Existence? & $(r_{\min},k)$ & Existence? & $(r_{\min},k)$ \\
        \midrule
        $(0,0)$ & never & $-$ & always & $(1,0)$ \\
        $(0,1)$ & always & $(1,0)$ & 
        \makecell{always if $d \equiv 0 \bmod 4$ \\ never if $d \equiv 2 \bmod 4$} & 
        \makecell{$(2,1)$ \\ $-$} \\
        $(1,0)$ & always & $(1,1)$ & always & $(1,0)$ \\
        $(1,1)$ & always & $(1,0)$ & always & $(1,1)$ \\
        \bottomrule
    \end{tabular}
    \caption{When $p=2$ and $2\mid d$, the existence of an admissible vector $\ma=(rp, rp\Ba+kh, t)\in \AlgLat$ with $\ma^2 = -2$ or $0$. The entry $(r_{\min},k)$ indicates (an upper bound for) the smallest possible $r$ that produces the desired $\ma$, together with a corresponding value for $k$.}
    \label{tab:2midd}
\end{table}

\newpage

\subsection{Proof of Proposition~\ref{prop:diophantinepeq2}}\label{subsec:diophantinepeq2}

\subsubsection{Mukai vectors with $\ma^2=-2$}\label{subsubsec:peq2a^2eq-2} 
An elementary inspection of~\eqref{eq:diophproblem-2} when $m = -1$ and $p=2$ reveals that the diophantine equation has an integral solution if and only if one of the following holds:
\begin{enumerate}
    \item[(a)] \((\ia,\ca\bmod 2)\in \{(0,1),(1,0),(1,1)\}\); or
    \smallskip
    \item[(b)] \(2\nmid d\) and \((\ia,\ca \bmod 2)=(0,0)\).
\end{enumerate}
\smallskip

\noindent We record a few observations depending on the pair \((\ia,\ca \bmod 2)\).
\smallskip

\noindent{$\bullet$} \((\ia, \ca \bmod 2) =(0,0)\). We can take \(r=1\) and \(k\) odd.
\smallskip

\noindent{$\bullet$} \((\ia, \ca \bmod 2) =(0,1)\). There is a solution with \(r=1\): just take \(k=0, t=\frac{1-\ca}{2}\).
\smallskip

\noindent{$\bullet$} \((\ia, \ca \bmod 2) =(1,0)\). 
There is a solution with \(r=1\) if and only if $2\mid d$ and \(k\) is odd. 
If $2\nmid d$, we write $d+1=2^{v_2(d+1)}(2m+1)$, where $v_2(\,\cdot\,)$ is the usual $2$-adic valuation. 
There is a solution with $r = 2^{v_2(d+1)}$ and $k=1$ because
\begin{align*}
    -r^2\ca + d(1)^2 + r(1) + 1 &= -r^2\ca + (d+1) + r \\
                                &= -2^{2v_2(d+1)}\ca+ 2^{v_2(d+1)}(2m+1) + 2^{v_2(d+1)} \\
                                &= 2^{v_2(d+1)+1}(-2^{v_2(d+1)-1}\ca+m+1) \\
                                &\equiv 0 \bmod{2r}. 
\end{align*}
\smallskip
\noindent{$\bullet$} \((\ia, \ca \bmod 2) =(1,1)\). 
There is a solution with \(r=1\): just take \(k=0, t=\frac{1-\ca}{2}\).

\subsubsection{Mukai vectors with $\ma^2=0$} When $p = 2$ and $m = 0$,~\eqref{eq:diophproblem-2} \emph{always} has a solution. 
Moreover, we can take $r = 1$ in all cases except when $(\ia,\ca,d) \equiv (0,1,0) \text{ or } (1,1,1) \bmod 2$. 
In these exceptional cases, a solution with $r = 2$ exists. 

\subsubsection{Primitivity} To complete the proof of Proposition~\ref{prop:diophantinepeq2}, it remains to determine which solutions of~\eqref{eq:diophproblem-2} yield primitive vectors \(\ma\) and \(\mv:=\ma+(0,0,-1)\). 
If \(\ma^2=-2\), we apply Lemma~\ref{lem:primitiveMukaivec}. 
Furthermore, if \(r_{\min}=1\), both \(\ma\) and \(\mv\) are automatically primitive. 
This only leaves two cases to consider: 
\smallskip

\noindent $\bullet$ When $(\ia,\ca,d) \equiv (1,1,1) \bmod 2$ there are never solutions with $\ma^2=0$ and with the required primitivity, because $r$ and $k$ must both be even. 
Indeed, if $\ma$ is primitive, then $t$ must be odd, which forces $\mv$ to be not primitive (since $t-1$ is even). 
\smallskip

\noindent $\bullet$ When $(\ia,\ca,d) \equiv (0,1,0) \bmod 2$ there are solutions with $\ma^2=0$ and with the necessary primitivity conditions if and only if $d\equiv0 \bmod{4}$: if $d\equiv 0 \bmod{4}$ then there's a primitive solution with $r=2$ and $k$ odd, while if $d \equiv 2 \bmod{4}$ then necessarily any solutions must have $r$ and $k$ both even and then an argument as above shows no primitive solutions exist. 
\smallskip

This completes the proof of Proposition~\ref{prop:diophantinepeq2}. 
See Tables~\ref{tab:2nmidd} and~\ref{tab:2midd} for a summary.\qed

\subsection{Diophantine considerations}\label{subsec:diophantine}

We collect some number-theoretic lemmas necessary for the proofs of  Propositions~\ref{prop:diophantinepnmidd} and~\ref{prop:diophantinepmidd}.

\begin{lemma}
    \label{lem:quadresidue}
    Let $p$ be an odd prime, $a \in \F_p^\times$ and $b$, $c \in \F_p$. The set
        \[
            \{ax^2 + bx + c : x \in \F_p \}
        \]
    contains a square modulo $p$, where zero is allowed.
\end{lemma}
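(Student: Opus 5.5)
The plan is to complete the square and then count values with a pigeonhole argument. Since $p$ is odd and $a \neq 0$, both $2$ and $a$ are invertible in $\F_p$, so
\[
    ax^2+bx+c = a\left(x+\tfrac{b}{2a}\right)^2 + \left(c - \tfrac{b^2}{4a}\right).
\]
As $x$ runs over $\F_p$, the quantity $y = x + b/(2a)$ also runs over all of $\F_p$. It therefore suffices to show that the set $\{ay^2 + e : y \in \F_p\}$ contains a square, where $e := c - b^2/(4a)$.

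The key step is a cardinality count.
\begin{itemize}
    \item The squares in $\F_p$ (with $0$ included) form a set $\mathcal S$ of size $(p+1)/2$.
    \item The set $\{ay^2 + e\} = a\mathcal S + e$ is the image of $\mathcal S$ under an injective affine map, so it also has size $(p+1)/2$.
    \item Since $(p+1)/2 + (p+1)/2 = p+1 > p$, the two subsets of $\F_p$ must intersect.
\end{itemize}
Any element of the intersection is a square of the form $ay^2 + e$. This gives the required $x$, namely $x = y - b/(2a)$.

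There is no real obstacle here; the only points needing care are two pieces of bookkeeping. First, $0$ must be counted as a square, which is what makes $|\mathcal S| = (p+1)/2$ rather than $(p-1)/2$. Second, completing the square requires $p \neq 2$, which is exactly where the hypothesis that $p$ is odd enters.
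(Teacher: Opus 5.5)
Your proposal is correct and follows essentially the same argument as the paper. Both complete the square to reduce to $b=0$, then observe that the value set $\{ay^2+e\}$ has exactly $(p+1)/2$ elements, because it is the image of the squares under the injective affine map $s\mapsto as+e$. The only difference is the last step: the paper notes that $(p+1)/2$ exceeds the $(p-1)/2$ non-residues, while you pigeonhole against the $(p+1)/2$ squares; these are equivalent forms of the same count.
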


\begin{proof}
Since completing a square over $\F_p$ is possible when $p$ is odd, we may reduce to the case where $b = 0$. 
Let \(S =\{ax^2 + c : x \in \F_p\}\). 
There are only \(\frac{p-1}{2}\) quadratic non-residues modulo \(p\), so the claim follows by showing that \(\# S > (p-1)/2\). 
Indeed, 
\[
    \#S = \#\{x^2 : x \in \F_p\} = \frac{p+1}{2} > \frac{p-1}{2},
\]
because the transformation \(x\mapsto ax + c\) is invertible.
\end{proof}

\begin{lemma}
    \label{lem:rCongruenceCase1}
    Fix $d \in \Z_{>0}$, an odd prime $p \nmid d$, and $r_0 \in (\F_p)^\times$. 
    There exists a prime~$r\nmid d$, with $r\equiv r_0 \bmod p$, such that $\legendre{-d}{r} = 1$. 
\end{lemma}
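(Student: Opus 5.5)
The plan is to combine Dirichlet's theorem on primes in arithmetic progressions with quadratic reciprocity. The condition $\legendre{-d}{r}=1$ depends only on the residue of $r$ modulo $4|d|$, or modulo $4d$ to be safe. So we look for a single residue class $c \bmod 4dp$ with three properties: it is coprime to $4dp$, it reduces to $r_0$ modulo $p$, and every prime $r\equiv c \bmod 4dp$ satisfies $\legendre{-d}{r}=1$. Since $\gcd(p,4d)=1$, the Chinese remainder theorem glues a class modulo $p$ with a class modulo $4d$. Dirichlet's theorem then gives infinitely many primes in the glued class, and we pick one not dividing $d$. This is automatic, because the class is coprime to $d$.

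The key step is to produce a class modulo $4d$ on which $\legendre{-d}{\cdot}$ equals $1$. The simplest choice is $c_0\equiv 1 \bmod 4d$. For a prime $r\equiv 1\bmod 4d$, write $-d=-2^{e}\prod q_i^{a_i}$ with the $q_i$ odd primes. We have $\legendre{-1}{r}=1$ since $r\equiv 1\bmod 4$. We have $\legendre{2}{r}=1$ whenever $e\ge 1$, since then $8\mid 4d$ and so $r\equiv1\bmod 8$. Finally, by reciprocity $\legendre{q_i}{r}=\legendre{r}{q_i}=\legendre{1}{q_i}=1$, using that $r\equiv1\bmod 4$ and $r\equiv 1\bmod q_i$. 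Hence $\legendre{-d}{r}=1$.

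Putting these together, choose $c$ modulo $4dp$ with $c\equiv 1 \bmod 4d$ and $c\equiv r_0\bmod p$. Then $\gcd(c,4dp)=1$ because $r_0\ne0$ in $\F_p$. Dirichlet's theorem yields infinitely many primes $r\equiv c\bmod 4dp$. Any such $r$ satisfies $r\equiv r_0 \bmod p$ and $r\nmid d$, and $\legendre{-d}{r}=1$ by the computation above.

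The only step needing care is the reciprocity computation, in particular the factor $\legendre{2}{r}$ when $d$ is even. I would handle it as above by noting that $4d$ is then divisible by $8$. Alternatively, one can avoid the case analysis altogether: $r\equiv 1\bmod 4d$ makes $-d$ a square modulo every odd prime divisor of $d$, which is the same content as the reciprocity check.
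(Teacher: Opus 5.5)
Your proof is correct and is essentially the paper's argument: Dirichlet's theorem applied to a progression glued by the Chinese remainder theorem, followed by a quadratic reciprocity check, with your single condition $r\equiv 1 \bmod 4d$ playing the role of the paper's conditions $r\equiv 1 \bmod 8$ and $r\equiv 1 \bmod \ell$ for every prime $\ell\mid d$. One small quibble: your closing aside is garbled, since $-d\equiv 0$ modulo any prime dividing $d$. What you mean there is that $r$ is a square modulo those primes, or more cleanly that $r\mapsto\legendre{-d}{r}$ is a character modulo $4d$ and hence trivial on $r\equiv 1 \bmod 4d$; the main argument does not rely on this aside.
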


\begin{proof}
    Let $d=\prod_{\ell \mid d} \ell^{a_\ell}$ be a prime power decomposition of $d$, i.e., for a prime $\ell \mid d$, we let $a_\ell = v_\ell(d)$, where $v_\ell$ is the usual $\ell$-adic valuation. 
    By Dirichlet's Theorem on Arithmetic Progressions, there exists a prime $r$ such that
    \[
        r \equiv 
        \begin{cases}
            r_0 \bmod p \\
            1 \bmod 8 \\
            1 \bmod \ell \text{ for each prime } \ell \mid d .
        \end{cases}
    \]
    Then 
    \begin{align*}
        \legendre{-d}{r} &= \legendre{-1}{r}\cdot\prod_{\substack{\ell \mid d \\ a_\ell \equiv 1 \bmod 2}} \legendre{\ell}{r} \\
        &= (-1)^{\frac{r-1}{2}} \cdot \legendre{2}{r}^{a_2} \cdot \prod_{\substack{\ell \mid d,\, \ell\neq 2 \\ a_\ell \equiv 1 \bmod 2}} \legendre{r}{\ell}\cdot (-1)^{\frac{r-1}{2}\cdot\frac{\ell - 1}{2}}. 
    \end{align*}
    Our choice of $r$ ensures the above product equals $1$.
\end{proof}

By a similar argument, one shows the following:

\begin{lemma}
    \label{lem:rCongruenceCase3}
    Fix $d \in \Z_{>0}$, an odd prime $p \mid d$, and $r_0 \in (\F_p)^\times$ with $\legendre{r_0}{p} = 1$. 
    There exists a prime~$r\nmid d$, with $r\equiv r_0 \bmod p$, such that $\legendre{d}{r} = 1$. 
    \qed
\end{lemma}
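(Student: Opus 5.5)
We mimic the proof of Lemma~\ref{lem:rCongruenceCase1}: use Dirichlet's theorem to choose a prime $r$ in a suitable residue class modulo $8p\prod_{\ell\mid d}\ell$, and then evaluate $\legendre{d}{r}$ by quadratic reciprocity. The twist is that now $p \mid d$, so the congruence $r\equiv r_0 \bmod p$ is forced upon us and cannot also be arranged to be $1 \bmod p$. This is why the hypothesis $\legendre{r_0}{p}=1$ appears.

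Write $d=\prod_{\ell\mid d}\ell^{a_\ell}$ with $a_\ell=v_\ell(d)$. By Dirichlet's theorem (and the Chinese remainder theorem, since $p$ is odd), choose a prime $r$ with
\[
    r\equiv r_0 \bmod p,\qquad r\equiv 1 \bmod 8,\qquad r\equiv 1 \bmod \ell \text{ for each prime } \ell\mid d,\ \ell\neq p.
\]
Since $r_0\not\equiv 0 \bmod p$ and $r\equiv 1$ modulo every other prime divisor of $d$, we get $r\nmid d$; we may also take $r$ larger than $d$.

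Next we compute the Legendre symbol. We have
\[
    \legendre{d}{r}=\legendre{2}{r}^{a_2}\prod_{\substack{\ell\mid d,\ \ell\ \mathrm{odd}\\ a_\ell \text{ odd}}}\legendre{\ell}{r}.
\]
Because $r\equiv 1\bmod 8$, we have $\legendre{2}{r}=1$. Moreover, every sign $(-1)^{\frac{r-1}{2}\cdot\frac{\ell-1}{2}}$ arising from reciprocity is trivial, so $\legendre{\ell}{r}=\legendre{r}{\ell}$ for each odd $\ell$. For $\ell\neq p$ this symbol equals $\legendre{1}{\ell}=1$. For $\ell=p$ (which contributes only when $a_p$ is odd) it equals $\legendre{r_0}{p}=1$ by hypothesis. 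Hence $\legendre{d}{r}=1$. Note that we use $d$ itself rather than $-d$, so no factor $\legendre{-1}{r}$ appears; in any case that factor would also be $1$.

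The only point needing care is the factor at $\ell=p$. In the case $p\nmid d$ it was absent, whereas here it is pinned to $\legendre{r_0}{p}$ and must be handled by the hypothesis. Everything else is routine bookkeeping.
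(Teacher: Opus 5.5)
Your proof is correct and is essentially the argument the paper has in mind. The paper omits the proof, saying only that it follows ``by a similar argument'' to Lemma~\ref{lem:rCongruenceCase1}: Dirichlet's theorem with $r\equiv 1 \bmod 8$ and $r\equiv 1$ modulo the other prime divisors of $d$, then quadratic reciprocity, with the hypothesis $\legendre{r_0}{p}=1$ handling the factor at $p$.
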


\begin{lemma}
    \label{lem:rCongruenceCase2}
        Fix $d \in \Z_{>0}$, an odd prime $p \mid d$, and some $b \in (\F_p)^\times$. There exists a prime~$r\nmid d$, with 
        \[
            r \equiv 
            \begin{cases}
                \phantom{-}b \bmod p & \text{if }\legendre{b}{p} = 1, \text{ or if } \legendre{b}{p} = -1 \text{ and } \legendre{-b}{p} = -1,\\
                -b \bmod p & \text{if }\legendre{-b}{p} = 1, 
            \end{cases}
        \]
    such that $\legendre{-d}{r} = 1$. 
\end{lemma}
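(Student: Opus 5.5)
The plan is to copy the Dirichlet-plus-reciprocity argument of Lemma~\ref{lem:rCongruenceCase1}. The one change is that now $p \mid d$, so the residue of $r$ modulo $p$ enters the symbol $\legendre{-d}{r}$ through the factor $\legendre{p}{r}$ whenever $a_p := v_p(d)$ is odd. That factor cannot be forced to equal $1$ freely, because $r \bmod p$ is prescribed up to sign.

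Write $d = \prod_{\ell \mid d} \ell^{a_\ell}$. By multiplicativity,
\[
\legendre{-d}{r} = \legendre{-1}{r}\legendre{2}{r}^{a_2}\prod_{\substack{\ell \mid d,\ \ell \text{ odd} \\ a_\ell \text{ odd}}}\legendre{\ell}{r}.
\]
Every choice below is a system of congruences on $r$ modulo $8$, modulo each odd prime $\ell \mid d$, and modulo $p$, with all target residues units. Dirichlet's theorem then produces a prime $r$ satisfying the system, and any such $r$ automatically satisfies $r \nmid d$.

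\textbf{First case:} $\legendre{b}{p}=1$ or $\legendre{-b}{p}=1$. Set $r \equiv b$ or $r\equiv -b \bmod p$ according to the statement, so that $\legendre{r}{p}=1$. Also impose $r \equiv 1 \bmod 8$ and $r \equiv 1 \bmod \ell$ for every odd prime $\ell \mid d$ with $\ell \neq p$.
\begin{itemize}
\item Since $r \equiv 1 \bmod 8$, both $\legendre{-1}{r}$ and $\legendre{2}{r}$ equal $1$.
\item For odd $\ell \ne p$, reciprocity gives $\legendre{\ell}{r}=\legendre{r}{\ell}=1$, the sign vanishing because $r\equiv 1 \bmod 4$.
\item For $\ell = p$, reciprocity gives $\legendre{p}{r}=\legendre{r}{p}=1$.
\end{itemize}
Hence $\legendre{-d}{r}=1$. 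If $a_p$ is even, the $p$-factor is absent, so this construction works regardless of $\legendre{b}{p}$.

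\textbf{Second case:} $\legendre{b}{p}=\legendre{-b}{p}=-1$ and $a_p$ odd. This is the main obstacle. It forces $\legendre{-1}{p}=1$, i.e.\ $p\equiv 1 \bmod 4$. With $r \equiv b \bmod p$ we are stuck with $\legendre{p}{r}=\legendre{r}{p}=-1$, since the reciprocity sign is trivial for $p \equiv 1 \bmod 4$. The fix is to spend the freedom modulo $8$ to cancel this sign. Take $r \equiv 7 \bmod 8$, so that $\legendre{-1}{r}=-1$ and $\legendre{2}{r}=1$. The product of the $(-1)$- and $p$-factors is then $+1$.

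It remains to neutralise the odd primes $\ell \mid d$ with $\ell\neq p$ and $a_\ell$ odd. Now $r\equiv 3 \bmod 4$, so $\legendre{\ell}{r}=\legendre{r}{\ell}(-1)^{(\ell-1)/2}$. Choose $r \bmod \ell$ to be a nonzero quadratic residue when $\ell\equiv 1\bmod 4$ and a nonresidue when $\ell \equiv 3 \bmod 4$; then each such factor equals $1$. Combining these congruences with $r\equiv b \bmod p$ and applying Dirichlet gives the desired prime.

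The only delicate point is this last sign bookkeeping in the doubly-nonsquare case. It is also the reason the statement singles out that case with $r\equiv b$ rather than attempting $r \equiv 1 \bmod 8$.
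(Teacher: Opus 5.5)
Your proof is correct and follows the paper's Dirichlet-plus-reciprocity argument, including the reductions for $a_p$ even and for $\pm b$ a square. The only divergence is in the case $\legendre{b}{p}=\legendre{-b}{p}=-1$ with $a_p$ odd. There the paper splits into two subcases: if some other odd prime $q\mid d$ has odd exponent, it keeps $r\equiv 1 \bmod 8$ and takes $r$ a nonresidue mod $q$; if none does, it takes $r\equiv 7\bmod 8$. You instead always take $r\equiv 7 \bmod 8$ and neutralize every other odd $\ell$ with odd exponent by choosing $\legendre{r}{\ell}=(-1)^{(\ell-1)/2}$, which handles both subcases uniformly.
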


\begin{proof}
    Let $d=\prod_{\ell \mid d} \ell^{a_\ell}$ be a prime power decomposition of $d$. 
    By Dirichlet's Theorem on Arithmetic Progressions, there exists a prime $r$ such that
    \[
        r \equiv 
        \begin{cases}
            \pm b \bmod p \\
            1 \text{ or 7} \bmod 8 \\
            r_\ell \bmod \ell \text{ for each odd prime } \ell \mid d, \ell \neq p 
        \end{cases}
    \]
    where we make the choices left open here at the end.
    If $a_p$ is even then take $r_\ell = 1$ for all $\ell$, and $r \equiv 1 \bmod{8}$ and then the proof proceeds as in Lemma~\ref{lem:rCongruenceCase1}. 
    So assume $a_p \equiv 1 \bmod 2$. 
    Then
    \begin{align*}
        \legendre{-d}{r} &= \legendre{-1}{r}\cdot \legendre{p}{r} \cdot \prod_{\substack{\ell \mid d, \ell \neq p \\ a_\ell \equiv 1 \bmod 2}} \legendre{\ell}{r} \\
        &=(-1)^{\frac{r-1}{2}} \cdot \legendre{\pm b}{p}\cdot(-1)^{\frac{r-1}{2}\frac{p-1}{2}} \cdot \legendre{2}{r}^{a_2} \cdot \prod_{\substack{\ell \mid d,\, \ell\neq 2,p \\ a_\ell \equiv 1 \bmod 2}} \legendre{r}{\ell}\cdot (-1)^{\frac{r-1}{2}\cdot\frac{\ell - 1}{2}}.
    \end{align*}
    
    If either one of $\pm b$ is a square modulo $p$, then pick the corresponding sign on $b$, together with $r_\ell = 1$ for all $\ell$ and $r \equiv 1 \bmod{8}$, and then, for a choice of $r$ with these properties, the above product equals $1$. 
    Otherwise, neither $b$ nor $-b$ is a square modulo $p$, and then necessarily $p \equiv 1 \bmod{4}$. 
    
    \smallskip
    \noindent {\bf Subcase A:} Suppose that there is at least one odd prime $q \ne p$, $q \mid d$, with $a_q \equiv 1 \bmod 2$. 
    Then take any sign on $b$, let $r_q$ be any nonresidue modulo $q$, and let $r_\ell=1$ for all other $\ell$, and take $r \equiv 1 \bmod{8}$. 
    Then the product of interest equals 1 for such $r$.
    
    \smallskip
    \noindent{}{\bf Subcase B:} There are no odd primes dividing $d$ to an odd power besides $p$ itself. 
    In this case, the last part of the product as we've expressed it is empty. 
    We choose either sign on $b$, and take $r\equiv 7 \bmod{8}$.
\end{proof}

\begin{lemma}
\label{lem:LiftingSolns}
    Fix integers \(\ia, \ca\), $d \in \Z_{>0}$, and an odd prime \(p\). 
    Then~\eqref{eq:diophproblem-2} with $m = -1$ has a solution \((r, k, t) \in \Z_{>0}\times \Z^2\) if and only if it has a solution modulo \(p\). 
    Moreover, if the solution has $r\equiv 1 \bmod p$, then there is a solution with $r=1$, and if the solution has $r \equiv 0 \bmod p$, then there is a solution with $r=p$.
\end{lemma}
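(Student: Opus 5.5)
The plan is to reduce everything to choosing $r$ and $k$ well, since $t$ enters \eqref{eq:diophproblem-2} only through the term $-ptr$. Write $Q(r,k) := -\ca r^2 + \ia kr + dk^2$. An integral solution with $m=-1$ then amounts to a pair $(r,k)\in\Z_{>0}\times\Z$ with
\[
    Q(r,k) + 1 \equiv 0 \bmod pr,
\]
after which $t := (Q(r,k)+1)/(pr)$. A solution modulo $p$ is a pair $(r_0,k_0)\in\F_p^2$ with $Q(r_0,k_0)\equiv -1 \bmod p$. The ``only if'' direction is immediate by reduction. For the converse I would split into cases according to $r_0$.

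\emph{Case $r_0\equiv 1$.} Take $r=1$ and any integer $k\equiv k_0 \bmod p$. The required congruence is then just the congruence modulo $p$, so $t$ is integral. This gives the ``$r=1$'' clause of the lemma.

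\emph{Case $r_0\equiv 0$.} The mod $p$ equation reads $dk_0^2\equiv -1$, which forces $p\nmid d$ and $k_0\not\equiv 0$. Take $r=p$. We need
\[
    Q(p,k)+1 = dk^2 + \ia kp - \ca p^2 + 1 \equiv 0 \bmod p^2 .
\]
Viewed as a polynomial in $k$, this has a simple root $k_0$ modulo $p$, because its derivative $2dk_0+\ia p\equiv 2dk_0\not\equiv 0$ since $p$ is odd. Hensel's lemma lifts $k_0$ to a root modulo $p^2$, giving the ``$r=p$'' clause.

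\emph{Case $r_0\not\equiv 0,1$.} Here I would take $r$ to be a prime, $r\ne p$, with $r\equiv r_0 \bmod p$, $r\nmid d$, and $-d$ a square modulo $r$. Choose $k$ by the Chinese remainder theorem with $k\equiv k_0 \bmod p$ and $dk^2\equiv -1 \bmod r$; the latter is solvable because $-d$ is a nonzero square modulo $r$. Modulo $r$ we then have $Q(r,k)+1\equiv dk^2+1\equiv 0$, and modulo $p$ we have $Q(r,k)+1\equiv Q(r_0,k_0)+1\equiv 0$. Since $\gcd(p,r)=1$, this gives divisibility by $pr$.

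The main obstacle is producing such a prime $r$, which is exactly what the earlier Dirichlet-type lemmas do. If $p\nmid d$, Lemma~\ref{lem:rCongruenceCase1} gives $r$ in any prescribed nonzero class modulo $p$ with $\legendre{-d}{r}=1$. If $p\mid d$, Lemma~\ref{lem:rCongruenceCase2} applied with $b=r_0$ only gives $r\equiv \pm r_0 \bmod p$. This sign ambiguity is harmless, because $Q(-r,-k)=Q(r,k)$: if the lemma forces $r\equiv -r_0$, I replace the mod $p$ solution $(r_0,k_0)$ by $(-r_0,-k_0)$ and run the same argument with $k\equiv -k_0 \bmod p$. Finally, I would check that the primes produced in those proofs are odd and distinct from $p$ (they are chosen $\equiv 1$ or $7 \bmod 8$ and $\equiv \pm r_0\not\equiv 0 \bmod p$), so the congruences modulo $p$ and modulo $r$ are independent and the resulting $r$ is positive.
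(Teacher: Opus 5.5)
Your proposal is correct and follows the paper's proof essentially step for step. Both split into the same three cases on $r_0$: for $r_0\equiv 1$ take $r=1$; for $r_0\equiv 0$ take $r=p$ and Hensel-lift $k_0$ modulo $p^2$; otherwise take a Dirichlet prime $r\equiv \pm r_0 \bmod p$ from Lemma~\ref{lem:rCongruenceCase1} or~\ref{lem:rCongruenceCase2} and glue with the Chinese remainder theorem. Your observation that $Q(-r,-k)=Q(r,k)$ makes explicit why the paper's choice $k\equiv \epsilon k_0 \bmod p$ works.
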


\begin{proof}
    Let \(r_0, k_0\) be a solution to \eqref{eq:diophproblem-2} with $m = -1$ modulo $p$, so
    \[
        dk_0^2 +k_0r_0\ia - r_0^2\ca + 1 \equiv 0 \bmod p.
    \]
    There are three cases.
    \smallskip

    \noindent {\bf Case 1:} $r_0 \equiv 1 \bmod p$. Take $r = 1$ and let $k$ to be the smallest positive integer such that $k \equiv k_0 \bmod p$. 
    Then $dk^2+k\ia - \ca + 1 = pt$ for some $t \in \Z$. 
    This completes a solution to \eqref{eq:diophproblem-2}.
    \smallskip

    \noindent {\bf Case 2:} \(r_0 \not\equiv 0,1 \bmod p\). 
    Depending on whether $p\mid d$ or not, by either Lemma~\ref{lem:rCongruenceCase1} or~\ref{lem:rCongruenceCase2}, there exists a prime \(r\) coprime to \(d\) such that \(r \equiv \eps r_0 \bmod p\) and \(\legendre{-d}{r} = 1 \), for some $\eps \in \{\pm 1\}$ so in particular \(dk^2 \equiv - 1 \bmod {r} \) has a solution \(k_1 \in \Z/r\Z\). 

    We now have the integer \(r\) that will be part of our solution, and we produce the \(t\) and \(k\). 
    Since \((p,r) =1\), the Chinese Remainder Theorem guarantees there exists \(k \in \Z\) such that \(k \equiv \epsilon k_0 \bmod{p}\) and \(k \equiv k_1 \bmod{r}\) (same $\epsilon$ as above). 
    Therefore we have
    \[
        dk^2 + kr\ia - r^2\ca + 1 - tpr =0 
    \]
    for some $t \in\Z$, solving~\eqref{eq:diophproblem-2}.

    \smallskip

    \noindent {\bf Case 3:} \(r_0 \equiv 0 \bmod p\). 
    Then necessarily $p\nmid d$ and $k_0\not\equiv 0 \bmod p$. 
    Set $r = p$.
    Let \(f(x) =dx^2 +  p\ia x + 1  \in (\Z/p^2\Z)[x] \), and let \(x=k_0\) be a solution to \(f(x) \equiv 0 \bmod{p}\) from above. 
    Since \(f'(k_0) \equiv 2dk_0 \not\equiv 0 \bmod p\), an application of Hensel's lemma gives a \(k_1 \in \Z\) satisfying \(f(k_1) \equiv 0 \bmod{p^2}\). 
    Then set
    \[
        t = \frac{-p^2\ca + dk_1^2 + p\ia k_1 + 1}{p^2}
    \]
    to obtain the solution \((p, k_1, t)\) to \eqref{eq:diophproblem-2}.
\end{proof}

\subsection{Proof of Proposition~\ref{prop:diophantinepnmidd}}
\label{subsec:diophantinepnmidd}

When $m = -1$, the equation~\eqref{eq:diophproblem-2} reduces modulo $p$ to
\begin{equation}
    \label{eq:-2eqmodp}
    dk^2+kr\ia  -r^2\ca + 1 \equiv 0 \bmod p,
\end{equation}
As a quadratic in $k$, its discriminant is $r^2\Da - 4d$. 
If $\Da = 0$, this is $-4d$, so solvability is equivalent to $-d$ being a square modulo $p$ (and we can take $r = 1$). 
If $\Da \neq 0$, applying Lemma~\ref{lem:quadresidue} with $a=\Da$, $b = 0$, and $c=-4d$, we deduce there is an $r_0 \in \F_p$ such that $r_0^2\Da - 4d$ is a square modulo $p$. 
Hence there is a pair $(k_0,r_0) \in (\F_p)^2$ solving~\eqref{eq:-2eqmodp}. 
By Lemma~\ref{lem:LiftingSolns}, we get a solution to~\eqref{eq:diophproblem-2} with $r>0$. 
Using Lemma~\ref{lem:primitiveMukaivec}, this completes part (1) of the proof of the Proposition. 
Before moving on to part (2), we note that if $-d$ is a square modulo $p$, then the original equation~\eqref{eq:diophproblem-2} with $r=p$ has a solution in $k$ and $t$ (it has a solution in $k$ modulo $p$ and so has a solution modulo $p^2$ via Hensel lifting). 
So in this case, we can take $r=p$ and so $r_{\min} \le p$.
\smallskip

When $m = 0$, the equation~\eqref{eq:diophproblem-2} modulo $p$ reads
\begin{equation}
    \label{eq:0eqmodp}
    dk^2+kr\ia  -r^2\ca  \equiv 0 \bmod p;
\end{equation}
its discriminant, as a quadratic in $k$, is $r^2\Da$. 
If $\Da$ is a square modulo $p$, then take $r = 1$, solve the congruence for $k$, and take $t = (dk^2 + k\ia - \ca)/p$. 
Otherwise, the only solution modulo $p$ has $p \mid r, k$ and is of the form
\begin{equation}
    \label{eq:parametrizingwithx}
    (r, k, t) = (p, px, dx^2 + \ia x - \ca). 
\end{equation}
By Proposition~\ref{prop:Mukai_Lattice_Basis}, it remains to choose an integer $x$ so that $t$ and $t-1$ are both nonzero modulo~$p$. 
If $p > 3$, the proof of Lemma~\ref{lem:quadresidue} shows that the quadratic polynomial $dx^2 + \ia x - \ca$ takes $(p+1)/2 > 2$ values, so one can avoid $0$ and $1$. 
If $p = 3$, the condition $\Da = 2$ shows that the equation $dx^2 + \ia x - \ca = 2$ has a solution: its discriminant is $\Da + 8d \equiv 2 + 2d \bmod 3$, which is $1$ or $0$ according as $d \equiv 1$ or $2\bmod 3$.

This completes the proof of Proposition~\ref{prop:diophantinepnmidd}. 
See Table~\ref{tab:pnmidd} for a summary.\qed

\begin{table}[h]
    \centering
    \setlength{\tabcolsep}{4pt}
    \small            
    \rowcolors{5}{lightgray}{white}
    \begin{tabular}{ccccc}
        \toprule
         & \multicolumn{2}{c}{$\ma^2 = -2$} & \multicolumn{2}{c}{$\ma^2 = 0$} \\
         \cmidrule(lr){2-3} \cmidrule(lr){4-5}
        $\alpha$ & Existence? & $r_{\min}$ & Existence? & $r_{\min}$ \\
        \midrule
        Type I & $\iff \left(\frac{-d}{p}\right) = 1$ & $1$  & always & $1$ \\
        \makecell{Type II\\or III} & always & 
        \makecell[l]{
            $1$ $\iff \left(\frac{\Da - 4d}{p}\right) \in \{0,1\}$; \\
            $\leq p$ if $\left(\frac{-d}{p}\right) = 1$; \\
            else, no control over $r_{\min}$.
        } & 
        always & 
        \makecell[l]{
            $1$ $\iff \left(\frac{\Da}{p}\right) \in \{0,1\}$; \\
            $p$ $\iff \left(\frac{\Da}{p}\right) = -1$.
        } \\
        \bottomrule
    \end{tabular}
    \caption{When $p > 2$ and $p \nmid d$, the existence of an admissible vector $\ma = (rp, rp\Ba + kh, t) \in \AlgLat$ with $\ma^2 = -2$ or $0$. The $r_{\min}$ indicates (an upper bound for) the smallest possible $r$ that produces the desired $\ma$.}
    \label{tab:pnmidd}
\end{table}

\newpage

\subsection{Proof of Proposition~\ref{prop:diophantinepmidd}}\label{subsec:diophantinepmidd}

\begin{table}[h]
    \centering
    \setlength{\tabcolsep}{4pt}
    \footnotesize
    \rowcolors{3}{white}{lightgray}
    \begin{tabular}{ccccccc}
        \toprule
         & \multicolumn{2}{c}{$\ma^2 = -2$} & \multicolumn{2}{c}{$\ma^2 = 0$} & \multicolumn{2}{c}{$\ma^2 = 2$} \\
         \cmidrule(lr){2-3} \cmidrule(lr){4-5} \cmidrule(lr){6-7}
        $\alpha$ & Existence? & $r_{\min}$ & Existence? & $r_{\min}$ & Existence? & $r_{\min}$ \\
        \midrule
        A & always & 
        \makecell[l]{
            $1 \iff \ca \equiv 1\bmod p$ \\
            else, no control
        } & 
        always & see Rem.~\ref{rmk:rminsforpmidd} & 
        \makecell[l]{$p \equiv 1 \bmod 4$ \\ $d/p \notin (\Q^\times)^2$} & 
        \makecell[l]{
            $1$ if $k = 1$, and \\ \phantom{$1$ if }$\ca \equiv - 1\bmod p$ \\
            \phantom{$1$ }else, no control
        } \\
        
        B & never & $-$ & 
        \makecell[l]{
            $p \neq 3$, or $p = 3$ \& \\
            either $9 \mid d$, or \\
            $q \mid d$ for a prime \\
            $q \equiv 2 \bmod 3$
        } & 
        see Rem.~\ref{rmk:rminsforpmidd} & 
        $p \equiv 3 \bmod 4$ & 
        \makecell[l]{
            $1 \iff \ca \equiv - 1\bmod p$ \\
            \phantom{$1$ \,}else, no control
        } \\

        C & never & $-$ & always & $1$ & never & $-$ \\

        D & always & $1$ & always & $1$ & always & $1$ \\
        \bottomrule
    \end{tabular}
    \caption{When $p \mid d$ and $p > 2$, the existence of an admissible vector $\ma = (rp, rp\Ba + kh, t) \in \AlgLat$ with $\ma^2 = -2$, $0$, or $2$. The $r_{\min}$ indicates  (an upper bound for) the smallest possible $r$ that produces the desired $\ma$.}
    \label{tab:pmidd}
\end{table}

\subsubsection{Admissible vectors with $\ma^2=-2$}\label{subsubsec:pmidda^2eq-2}

By Lemma~\ref{lem:LiftingSolns}, we know that~\eqref{eq:diophproblem-2} has a solution with $m = -1$ and $r>0$ if and only if it has a solution modulo $p$. 
One checks this is the case if and only if \(\alpha\) is of Type A or D. 
In the case of Type A, we can require $r = 1$ if and only if $\ca \equiv 1\bmod p$. 
For Type D, we can take $r=1$, and $k$ any integer satisfying $k \equiv  \ia^{-1}(\ca -1) \bmod{p}$. 
By Lemma~\ref{lem:primitiveMukaivec}, the solutions thus constructed determine admissible vectors $\ma$ and $\mv$.

\subsubsection{Admissible vectors with $\ma^2=0$}
\label{subsubsec:pmidd0}

Here, we study solutions to~\eqref{eq:diophproblem-2} with $m=0$ and $r>0$. 
Specifically, we show:
\begin{itemize}[leftmargin=*]
    \item For Brauer classes of Type A, C, and D, we always get an admissible $\ma$ with $\ma^2=0$; 
    \smallskip
    \item In Type B, there is an admissible $\ma$ with $\ma^2=0$ if and only if $p\ne 3$ or if $p=3$ and either $9\mid d$ or $q\mid d$ for some prime $q \equiv 2 \bmod{3}$.
\end{itemize}

If $\alpha$ is of Type A or B, then $\ia=0$ and $\ca\neq 0$ forces $r\equiv 0 \bmod p$.

\smallskip
\noindent {\bf Case 1:} $\ca \not\equiv -1\bmod p$: we have an admissible solution with $r=k=p$, and $t=d-\ca$. 

\smallskip
\noindent {\bf Case 2:} $\ca \equiv -1\bmod p$:
we have two subcases:

\begin{itemize}[leftmargin=*]
    \item If $p^2 \mid d$ then $(r,k,t) = (p,1,\frac{d}{p^2} - \ca)$ gives admissible vectors $\ma$ and $\mv$ (primitivity follows from taking $k = 1$).

    \item If $p^2 \nmid d$ and $p \geq 5$, choose $x\not\equiv 0 \bmod p$ such that $dx^2/p \not\equiv 0, 1 \bmod p$, and set $(r,k,t) = (p^2,px,dx^2/p - p\ca)$. 
    Such an $x$ exists because the set of nonzero values of $dx^2/p$ contains at least two elements when $p \geq 5$.
    
    If $p = 3$ then $\ca \equiv - 1 \equiv 2 \bmod 3$ and thus $\legendre{\ca}{p} = -1$, so $\alpha$ is necessarily of Type B. 
    If $9 \mid d$ then set $(r,k,t) = (3,1,d/9 - \ca)$. 
    If there is a prime $q \equiv 2 \bmod{3}$ dividing $d$ take $(r,k,t) = (3q,3,\frac{d}{q} - \ca q)$. 
    Then $t \equiv 2 \bmod 3$ and $q\nmid k$ so both $\ma$ and $\mv$ are primitive. 
    Conversely, assume that $9 \nmid d$ and no prime $q \equiv 2 \bmod 3$ divides $d$. Write $d = 3d'$, where $3\nmid d'$ and every prime divisor of $d'$ is $1\bmod 3$. 
    Suppose an admissible $\ma$ exists. 
    Then $3\mid r, k$ so we write $r = 3x$ and $k = 3y$ for integers $x$ and $y$; primitivity of $\ma$ and $\mv$ forces $t \equiv 2 \bmod 3$. Then 
    \begin{equation}
        \label{eq:p=3_corner_case}
        xt = -\ca x^2 + 3d'y^2.
    \end{equation}
    Now write $x = 3^ex_0$ and $y = 3^fy_0$, with $3\nmid x_0y_0$. If $e = 0$, then $x = x_0 \equiv 2 \bmod 3$. 
    Otherwise $e = 1 + 2f$ by comparing the $3$-adic valuations of each side of~\eqref{eq:p=3_corner_case}.  
    We deduce that $x_0t \equiv d'y_0^2 \bmod 3$, and hence that $x_0 \equiv 2 \bmod 3$ again. 
    Either way, some prime $q \equiv 2 \bmod 3$ divides $x_0$ to an odd power. 
    By hypothesis, $q \nmid d'$. The relation~\eqref{eq:p=3_corner_case} implies that $x \mid 3d'y^2$, so $q \mid y$. 
    Moreover, $v_q(x) < 2v_q(y)$, so both terms in $t = -\ca x + 3d'y^2/x$ are divisible by $q$ and hence $q \mid t$. 
    Thus $q$ divides $r$, $k$, and $t$, contradicting primitivity of $\ma$. 
\end{itemize}

\smallskip
If $\alpha$ is Type C, then $(r,k,t) = (1,1,(d-\ca)/p)$ is a solution to~\eqref{eq:diophproblem-2} with $m=0$. Since $r=1$, the corresponding $\ma$ and $\mv$ are automatically primitive. 

If $\alpha$ is Type D, then there is a solution giving primitive $\ma$ and $\mv$ with $r=1$ and $k$ any integer such that $k \equiv \ia^{-1}\ca \bmod{p}$. 

\begin{remark}
    \label{rmk:rminsforpmidd}
    We extract the $r_{\min}$ from the above proof: 
    \begin{itemize}[leftmargin=*]        
        \item Types A and B, $\ca\not\equiv -1\bmod p$ or $\ca\equiv -1\bmod p$ and $p^2\mid d$: $r_{\min} = p$.
        \smallskip
        
        \item Types A and B, $\ca\equiv -1\bmod p$, $p^2\nmid d$, and $p\neq 3$: $p<r_{\min} \leq p^2$.
        \smallskip
        
        \item Type B, $\ca\equiv -1\bmod p$, $p^2\nmid d$, and $p = 3$: $r_{\min} \leq 3q$ where $q$ is the smallest prime $q \equiv 2 \bmod{3}$ with $q \mid d$.
        \smallskip
        
        \item Types C and D: $r_{\min}=1$.
    \end{itemize}
\end{remark}

\subsubsection{Admissible vectors with $\ma^2=2$}\label{subsubsec:pmidd2}

Here, we show that~\eqref{eq:diophproblem-2} has  solutions with $m=1$ and $r>0$ if any one of the following conditions hold:
\begin{itemize}[leftmargin=*]
    \item \(\alpha\) is Type A and $p \equiv 1 \bmod 4$ and $d/p \notin (\Q^\times)^2$;
    \smallskip
    \item $\alpha$ is Type B and $p \equiv 3 \bmod 4$;
    \smallskip
    \item $\alpha$ is Type D.
\end{itemize}
Lemma~\ref{lem:primitiveMukaivec} ensures that the corresponding $\ma$ is admissible.

If $\alpha$ is of Type A or~B (or indeed,~C), then $\ia = 0$, and~\eqref{eq:diophproblem-2} has a solution if and only if there are $r \in \Z_{>0}$ and $k \in \Z$ satisfying the congruences
\begin{equation}
    \label{eq:p_mid_d_a^2=2_congruences}
    -\ca r^2 \equiv 1 \bmod p \qquad\text{and}\qquad dk^2 \equiv 1 \bmod r
\end{equation}
Necessity follows from reducing~\eqref{eq:diophproblem-2} modulo~$p$ and~$r$, respectively. 
To see why these congruences suffice, note that the first congruence implies that $p \nmid r$, so the two congruences together imply that $-\ca r^2 + dk^2 -1$ is divisible by $pr$. 
This means we can set $t := (-\ca r^2 + dk^2 -1)/pr$. 

The left congruence in~\eqref{eq:p_mid_d_a^2=2_congruences} can be solved if and only if $\legendre{-\ca}{p} = 1$. 
If $\alpha$ is of Type~A, then we must have $p \equiv 1 \bmod 4$. 
Let $u \in \F_p^\times$ satisfy $u^2 = -\ca^{-1} \bmod p$. 
If $d/p \notin (\Q^\times)^2$, then $\Q(\sqrt{d}) \cap \Q(\zeta_p) = \Q$, because the unique quadratic subfield of the cyclotomic field $\Q(\zeta_p)$ is $\Q(\sqrt{p})$ when $p \equiv 1 \bmod 4$. 
The Galois group of the compositum field $K := \Q(\sqrt{d},\zeta_p)$ is the product $\Gal(\Q(\sqrt{d}))\times \Gal(\Q(\zeta_p))$, which is abelian. 
By the Chebotarev density theorem, applied to $K$, there is a prime $r$ whose Frobenius restricts to the identity on $\Q(\sqrt{d})$ and to $\zeta_p \mapsto \zeta_p^u$ on $\Q(\zeta_p)$. 
In other words, $\legendre{d}{r} = 1$ and $r \equiv u \bmod p$. 
This shows that~\eqref{eq:p_mid_d_a^2=2_congruences} has a solution for Type~A classes when $p \equiv 1 \bmod 4$ and $d/p \notin (\Q^\times)^2$.

If $\alpha$ is of Type~B, then $\legendre{-\ca}{p} = 1$ forces $p \equiv 3 \bmod 4$. We claim that we can \emph{always} solve~\eqref{eq:p_mid_d_a^2=2_congruences} in this case. 
Moreover, we show a solution with $r = 1$ exists if and only if $\ca \equiv -1 \bmod p$. 
First, let $u \in \F_p^\times$ satisfy $u^2 \equiv -\ca^{-1} \bmod p$. 
Applying Lemma~\ref{lem:rCongruenceCase3} with $r_0 := u$ we conclude there is a prime $r$ with $r \equiv u \bmod p$ and $\legendre{d}{r} = 1$; note that we can choose $u$ with $\legendre{u}{p} = 1$ since the two roots of $u^2 \equiv -\ca^{-1} \bmod p$ have opposite Legendre symbols, as $\legendre{-1}{p} = -1$. 
Hence, we may choose a $k$ such that $dk^2 \equiv 1 \bmod r$ and solve the congruences~\eqref{eq:p_mid_d_a^2=2_congruences}. 
Solving~\eqref{eq:diophproblem-2} with $r = 1$ in this case requires $\ca \equiv -1 \bmod p$. 
Conversely, if this congruence is satisfied, then we may take $r = k = 1$, and set $t = (d - \ca - 1)/p$ to solve~\eqref{eq:diophproblem-2}.

For $\alpha$ of Type C, no solutions to~\eqref{eq:diophproblem-2} exist because $\ca = 0$, so~\eqref{eq:p_mid_d_a^2=2_congruences} has no solution.

For Type D, we rearrange~\eqref{eq:diophproblem-2} with $m=1$ to
\[
    -r^2\ca + dk^2 +kr\ia =1 + prt.
\]
There is always a solution modulo $p$ with $r=1$, by taking any $k$ with $k \equiv \ia^{-1}(1+\ca) \bmod{p}$. 
This solution can be lifted proceeding as in Case~1 of Lemma~\ref{lem:LiftingSolns}.
\smallskip

This completes the proof of Proposition~\ref{prop:diophantinepmidd}. See Table~\ref{tab:pmidd} for a summary. \qed

\section{Proof of the main results}
\label{sec:mainresultproofs}

In \S\ref{subsec:maintheorems}, we give geometric constructions of Brauer classes on a very general K3 surface. 
Let $S$ be a K3 surface of degree $2d$ with Picard group $\Pic(S) = \Z h$. 
For nontrivial $\alpha \in \Br (S)[n]$, we use the Uhlenbeck contraction of \S\ref{sec:UsingUhlenbeck} to prove Theorem~\ref{thm:intro-all-orders}, giving the stated geometric realization of $\alpha$ as a subvariety of a moduli space of twisted Gieseker-stable sheaves on $S$. 
For prime-order Brauer classes $\alpha$ on~$S$, we combine the results of \S\S\ref{sec:UsingUhlenbeck}--\ref{sec:usingmoregeneralcontraction} together with the diophantine analysis in \S\ref{sec:producingadmissiblevectors} to produce a hyperk\"ahler manifold $X$ with a contraction realizing $\alpha$ for which the dimension of moduli of the $\alpha$-twisted vector bundle is minimized. 
We give two constructions. 
The first uses the Uhlenbeck contraction, and the second uses the more general contraction of \S\ref{sec:usingmoregeneralcontraction} coming from crossing a wall in the stability manifold. 
Together, these results produce geometric constructions for all prime order Brauer classes. 
Finally, in \S\ref{subsec:divorflop}, we determine when these contractions are divisorial or flopping, which allows us to further describe the irreducible component of the exceptional locus containing the \'etale projective bundle representing the Brauer class.

\subsection{Geometric realizations for Brauer classes}\label{subsec:maintheorems}

\begin{lemma}
    \label{lem:admissibleareq1}
    Let $S$ be a K3 surface of degree $2d$ with $\Pic(S) = \Z h$; fix a nontrivial $\alpha \in \Br(S)[n]$. There exists an admissible Mukai vector $\ma \in \AlgLat$ with $r=1$ and $-2 \leq \ma^2 \leq 2nr - 4$.
\end{lemma}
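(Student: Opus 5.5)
With $r=1$, the vector is $\ma=(n, nB_\alpha+kh, t)$. By the computation in \S\ref{sec:producingadmissiblevectors} (which does not use primality of the order),
\[
    \frac{\ma^2}{2} = -\ca + dk^2 + k\ia - nt .
\]
Here $\ia,\ca$ are the integers attached to the $B$-field~\eqref{eq:BfieldChoice}, and $n$ now replaces $p$. The plan is to choose any integer $k$, say $k=0$, and then tune $t$ to place $\ma^2/2$ in a prescribed window.

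Set $N := -\ca + dk^2 + k\ia$. As $t$ ranges over $\Z$, the value $\ma^2/2 = N - nt$ runs through the full residue class of $N$ modulo $n$. Any set of $n$ consecutive integers therefore contains exactly one attainable value. The target interval for $\ma^2/2$ is $[-1, n-2]$, which is equivalent to $-2 \le \ma^2 \le 2n-4$. This interval consists of exactly $n$ consecutive integers. Hence there is a unique $t$ with $-1 \le \ma^2/2 \le n-2$, and in particular $-2 \le \ma^2 \le 2nr-4$ with $r=1$. Since $n>1$, the interval is nonempty.

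It remains to check admissibility in the sense of Definition~\ref{def:admissible}. We have $r=1>0$ and $\ma^2 \ge -2$ by construction. Because $r=1$, we get $\gcd(r,k)=1$, so Proposition~\ref{prop:Mukai_Lattice_Basis} shows that both $\ma$ and $\mv=\ma+\mb$ are primitive. Lemma~\ref{lem:AlreadySaturated} also gives saturation of $\langle \ma,\mb\rangle$, although the lemma does not require it.

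I do not expect a real obstacle here. The only point needing care is to confirm that the formula for $\ma^2$ holds for arbitrary order $n$, not just prime $p$. This follows from $B_\alpha^2 = -2\ca/n^2$ and $B_\alpha h = \ia/n$, which were established in \S\ref{sss:BfieldsPicardRank1} for general $n$. One should also note that the window has length exactly $n$, which is why the bound $2nr-4$ is sharp for this pigeonhole argument.
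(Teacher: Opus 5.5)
Your proposal is correct and follows the paper's proof: the paper also takes $r=1$ and $k=0$, picks the unique $m\in\{-1,0,\dots,n-2\}$ with $m\equiv -\ca \bmod n$ (exactly your window of $n$ consecutive values for $\ma^2/2$), sets $t=(-\ca-m)/n$ to get $\ma^2=2m$, and deduces primitivity of $\ma$ and $\mv$ from Proposition~\ref{prop:Mukai_Lattice_Basis} since $r=1$.
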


\begin{proof}
    As in \S\ref{sec:producingadmissiblevectors}, let $\Ba$ be the B-field lift~\eqref{eq:BfieldChoice} associated to $\alpha$, which has 
    \[
        B_\alpha^2 = -\frac{2c_\alpha}{n^2},\quad\text{and}\quad B_\alpha h = \frac{i_\alpha}{n}.
    \]  
    Since we want an admissible Mukai vector with $r=1$, we are looking for $\ma \in \AlgLat$ of the form $\ma = (n, n\Ba+kh, t)$ for some $(k,t)\in \Z^2$. 
    Let $m \in \{-1,0,1,...,n-2\}$ be the unique integer such that $m\equiv -\ca \bmod n$, and set
    \[
        k=0,\quad \quad t = \frac{-\ca - m}{n}.
    \]
    Then $\ma^2 = -2\ca -2n\left(\frac{-\ca - m}{n}\right) = 2m \in \{-2,0,2,...,2n-4\}$. 
    By Proposition~\ref{prop:Mukai_Lattice_Basis}, $\ma$ and $\mv = \ma + (0,0,-1)$ are both primitive and hence $\ma$ is admissible.
\end{proof}

\begin{thm}\label{thm:constructionforanyn}
    Let $S$ be a K3 surface of degree $2d$ with $\Pic(S) = \Z h$, and let $\alpha \in \Br (S)[n]$ be a nontrivial class. 
    There is an $\alpha$-twisted vector bundle $A$ on $S$ of rank $n$ and a closed immersion $\PP(A) \hookrightarrow \MM_h(\mv,\alpha)$ into a smooth moduli space of Gieseker-stable $\alpha$-twisted sheaves on $S$. 
    Additionally, there is a birational morphism $\pi \colon \MM_h(\mv,\alpha) \to \MM^{\Uhl}(\mv,\alpha)$ giving the commutative diagram
    \begin{center}
        \begin{tikzcd}
            \PP(A) \arrow[d, "\alpha"] \arrow[r, hookrightarrow] & \MM_h(\mv,\alpha) \arrow[d, "\pi"] \\
            S \arrow[r, hookrightarrow] & \MM^{\Uhl}(\mv,\alpha).
        \end{tikzcd}    
    \end{center}
    Moreover, $\PP(A)$ is a subvariety of the exceptional $Q$ locus of $\pi$, which is a $\PP^{n-1}$-bundle over $S\times \MM_h(\ma,\alpha)$ with $\ma^2 = v(A)^2 \leq 2n-4$, fitting into the commutative diagram
    \begin{center}
        \begin{tikzcd}
            Q \arrow[d] \arrow[r, hookrightarrow] & \MM_h(\mv,\alpha) \arrow[d, "\pi"] \\
            S\times \MM_h(\ma,\alpha) \arrow[r, "h"] & \MM^{\Uhl}(\mv,\alpha).
        \end{tikzcd}    
    \end{center}
\end{thm}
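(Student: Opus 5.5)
The plan is to combine Lemma~\ref{lem:admissibleareq1} with Theorem~\ref{thm:Uhladmissiblea}. Lemma~\ref{lem:admissibleareq1} supplies an admissible Mukai vector $\ma = (n, n\Ba, t)$ with $r=1$, $k=0$ and $-2 \le \ma^2 \le 2n-4$. We then set $\mb = (0,0,-1)$ and $\mv = \ma+\mb$.

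The first step is to check the hypotheses of Theorem~\ref{thm:Uhladmissiblea}. Since $r=1$, we have $\gcd(r,k)=\gcd(1,0)=1$. Since $\Pic(S)=\Z h$, the polarization $h$ is generic with respect to every Mukai vector, in particular $\ma$ and $\mv$ (\S\ref{ss:moduliandcontractions}). Moreover $\ma\cdot\mb = nr = n > 1$, which was used in the proof of that theorem to ensure that the general point of $\MM_h(\ma,\alpha)$ is locally free. The inequality $\ma^2 \le 2n-4 = 2nr-4$ is exactly the extra hypothesis of part~\eqref{part3admissiblea}.

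The second step is to read off the conclusions. Part~\eqref{part3admissiblea} gives $U = \MM_h(\ma,\alpha)$, and every $A\in\MM_h(\ma,\alpha)$ is a $\mu$-stable $\alpha$-twisted vector bundle of rank $\ma\cdot\mb\cdot$(no, rank $\rk\ma$) $= n$, with $v(A)=\ma$. Let $Q=\PP(\calA)\to S\times\MM_h(\ma,\alpha)$ be the \'etale $\PP^{\ma\cdot\mb-1}=\PP^{n-1}$-bundle. Then $g\colon Q\hookrightarrow \MM_h(\mv,\alpha)$ is a closed immersion, and $h\circ\pi_Q$ is the Stein factorization of $\pi|_Q$, which gives the second diagram. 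For the first diagram, fix any closed point $A\in\MM_h(\ma,\alpha)$ (nonempty by Yoshioka, as in the proof of Theorem~\ref{thm:Uhladmissiblea}). Part~\eqref{part2admissiblea} gives the closed immersion $h_A\colon S\hookrightarrow\MM^{\Uhl}(\mv,\alpha)$. The map $\PP(A)\hookrightarrow\MM_h(\mv,\alpha)$ is the restriction of the closed immersion $g$, and $\PP(A)\to S$ represents $\alpha$. The moduli space $\MM_h(\mv,\alpha)$ is smooth because $\mv$ is primitive with positive rank and $h$ is generic. The morphism $\pi$ is birational, being Li's Uhlenbeck morphism, and it is not an isomorphism here because $Q$ is contracted.

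The one point needing care is the phrase ``$Q$ is the exceptional locus''. Part~\eqref{part3admissiblea} of Theorem~\ref{thm:uhlenbeckcontraction} proves this when $\ma\cdot\mb>1$, and here $\ma\cdot\mb = n>1$ because $\alpha$ is nontrivial. Everything else is bookkeeping, so I do not expect a real obstacle. The only place I would double-check is the length computation in Lemma~\ref{lem:Uhladmissiblea}(2) in the boundary case $\ma^2 = 2n-4$. There $v(A^\dd)^2 = 2n-4-2n\ell$, which is $<-2$ for $\ell\ge1$. Likewise, for $E\in\MM_h(\mv,\alpha)$ we have $v(E^\dd)^2 = 2n-4-2n(\ell-1) < -2$ for $\ell\ge 2$. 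So both bounds hold, and the statement follows.
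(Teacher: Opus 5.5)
Your proposal is correct and is essentially the paper's own argument: take the $r=1$, $k=0$ admissible vector from Lemma~\ref{lem:admissibleareq1} (so $\gcd(r,k)=1$ and $\ma^2\le 2n-4$), use $\Pic(S)=\Z h$ to get genericity of $h$, and read off everything, including part~(3), from Theorem~\ref{thm:Uhladmissiblea}. Your extra verification of the boundary case $\ma^2=2n-4$ in Lemma~\ref{lem:Uhladmissiblea}(2), and of $\ma\cdot\mb=n>1$ for the exceptional-locus claim, is accurate.
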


\begin{proof}
    By Lemma~\ref{lem:admissibleareq1}, there exists an admissible Mukai vector $\ma = (nr, nr\Ba+kh, t) \in \AlgLat$ with $r=1$ and $\ma^2 \leq 2nr-4$, so $\gcd(r,k)=1$. The hypothesis $\Pic(S) = \Z h$ ensures the polarization $h$ is generic with respect to both $\ma$ and $\mv$.  
    Thus, Theorem~\ref{thm:Uhladmissiblea} gives the desired contraction and geometric construction of the class $\alpha$, along with the commutative diagrams. 
    In particular, since $\ma$ has $r=1$, it follows that there exists an $\alpha$-twisted vector bundle $A$ with $\rk A = n$ and $Q$ is a $\PP^{n-1}$-bundle over $S\times \MM_h(\ma,\alpha)$, as desired.
\end{proof}

Finally, we note that the dimensions are as stated in Theorem~\ref{thm:intro-all-orders}:
\[
    \dim \MM_h(\ma,\alpha) = \ma^2 + 2 \leq 2n-2,\quad \dim \MM_h(\mv,\alpha) = (\ma + \mb)^2 +2 = \ma^2 + 2n +2 \leq 4n-2.
\]
When $\ma^2=0$, so that $\MM_{\sigma_+}(\ma,\alpha)$ is a smooth K3 surface, it follows from \cite[Theorem~4.3]{Yoshioka06}, \cite[Theorem~0.1]{HS06} that $\MM_{\sigma_+}(\ma,\alpha)$ is twisted derived equivalent to $(S,\alpha)$. This completes the proof of Theorem~\ref{thm:intro-all-orders}.
\smallskip

In the remainder of this subsection, we restrict our attention to prime-order Brauer classes.

\begin{thm}\label{thm:putittogetherUhl}
    Let $S$ be a K3 surface of degree $2d$ with $\Pic(S) = \Z h$, and let $\alpha \in \Br(S)$ be a class of prime order~$p$.
    Assume one of the following holds:
    \begin{enumerate}[leftmargin=*]
        \item $p=2$ or $p>2$ and $p\nmid d$, or
        \smallskip
        \item $p>2$, $p \mid d$ and if $\alpha$ is of Type B, then $p \equiv 3 \bmod 4$.
    \end{enumerate}
    There is an $\alpha$-twisted vector bundle $A$ on $S$ of rank $pr$ for some $r\geq 1$ and a closed immersion $\PP(A) \hookrightarrow \MM_h(\mv,\alpha)$ into a smooth moduli space of Gieseker-stable $\alpha$-twisted sheaves on $S$. 
    Additionally, there is a birational morphism $\pi \colon \MM(\mv,\alpha) \to \MM^{\Uhl}(\mv,\alpha)$ giving the commutative diagram
    \begin{center}
        \begin{tikzcd}
            \PP(A) \arrow[d, "\alpha"] \arrow[r, hookrightarrow] & \MM_h(\mv,\alpha) \arrow[d, "\pi"] \\
            S \arrow[r, hookrightarrow] & \MM^{\Uhl}(\mv,\alpha).
        \end{tikzcd}    
    \end{center}
    Moreover, $\PP(A)$ is a subvariety of an irreducible component $Q \subset \MM_h(\mv,\alpha)$ of the exceptional locus of $\pi$, which is a $\PP^{pr-1}$-bundle over $S\times \MM_h(\ma,\alpha)$, where $\ma^2 = v(A)^2 \in \{-2,0,2\}$, fitting into the commutative diagram
    \begin{center}
        \begin{tikzcd}
            Q \arrow[d] \arrow[r, hookrightarrow] & \MM_h(\mv,\alpha) \arrow[d, "\pi"] \\
            S\times \MM_h(\ma,\alpha) \arrow[r] & \MM^{\Uhl}(\mv,\alpha).
        \end{tikzcd}    
    \end{center}
\end{thm}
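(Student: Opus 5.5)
The plan is to reduce everything to Theorem~\ref{thm:Uhladmissiblea}. Its input is an admissible vector $\ma=(pr,pr\Ba+kh,t)$ with $\gcd(r,k)=1$ and $\ma^2\in\{-2,0,2\}$. We also want $\ma^2\le 2pr-4$, so that part~\eqref{part3admissiblea} applies and $Q$ is the whole exceptional locus, hence in particular an irreducible component.

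Since $\Pic(S)=\Z h$, the polarization $h$ is automatically generic for $\ma$ and $\mv$. Given such an $\ma$, Theorem~\ref{thm:Uhladmissiblea} supplies the following:
\begin{itemize}
\item $U=\MM_h(\ma,\alpha)$;
\item the closed immersion $g\colon Q\hookrightarrow \MM_h(\mv,\alpha)$, where $Q\to S\times \MM_h(\ma,\alpha)$ is an \'etale $\PP^{pr-1}$-bundle (since $\ma\cdot\mb=pr$);
\item the closed immersions $h_A$ and $\PP(A)\hookrightarrow\MM_h(\mv,\alpha)$, with $\PP(A)\to S$ representing $\alpha$.
\end{itemize}
Irreducibility of $Q$ is clear because $\MM_h(\ma,\alpha)$ is irreducible. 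So the proof is entirely a matter of producing the right $\ma$ in each case.

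The vectors come from \S\ref{sec:producingadmissiblevectors}, and the order of attack is as follows.

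\begin{enumerate}
\item Use $\ma^2=-2$ whenever Propositions~\ref{prop:diophantinepeq2}--\ref{prop:diophantinepmidd} provide it. Lemma~\ref{lem:primitiveMukaivec} then gives $\gcd(r,k)=1$, and $-2\le 2pr-4$ holds since $pr\ge 2$ (the case $pr=2$ gives exactly $-2\le 0$).
\item Otherwise use $\ma^2=2$ when it is available. Again Lemma~\ref{lem:primitiveMukaivec} gives coprimality, and $2\le 2pr-4$ requires $pr\ge 3$. This holds since this case only occurs for odd $p$.
\item Otherwise fall back on $\ma^2=0$. Here $0\le 2pr-4$ holds as long as $pr\ge 2$, which is fine. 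The issue is $\gcd(r,k)=1$, which is not automatic for isotropic vectors (cf.\ the example after Lemma~\ref{lem:AlreadySaturated}).
\end{enumerate}

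Now go through the cases. For $p=2$, $\ma^2=-2$ always exists except when $2\mid d$ and $(\ia,\ca)\equiv(0,0)$. In that exceptional case Table~\ref{tab:2midd} gives $\ma^2=0$ with $(r,k)=(1,0)$, so $r=1$ and coprimality is free. For $p>2$ with $p\nmid d$, Proposition~\ref{prop:diophantinepnmidd} gives $\ma^2=-2$ except for Type I classes with $\legendre{-d}{p}=-1$. For those, the proof of Proposition~\ref{prop:diophantinepnmidd} shows $\Da=0$ is a square, so a solution with $m=0$ and $r=1$ exists. For $p\mid d$:
\begin{itemize}
\item Types A and D use $\ma^2=-2$.
\item Type C uses $\ma^2=0$ with $(r,k)=(1,1)$.
\item Type B with $p\equiv 3\bmod 4$ uses $\ma^2=2$ from Proposition~\ref{prop:diophantinepmidd}(3)(b).
\end{itemize}
Hypothesis (2) is exactly what excludes Type B with $p\equiv 1\bmod 4$, where only isotropic vectors with possibly non-coprime $(r,k)$ were found.

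The main obstacle is the bookkeeping that every case really lands on a vector with $\gcd(r,k)=1$. This is the step I would check most carefully, especially for the $\ma^2=0$ fallbacks. I would verify in each fallback case that $r=1$ can be taken, so coprimality is automatic, rather than relying on the $r=p$ or $r=p^2$ constructions of \S\ref{subsubsec:pmidd0}, where $p\mid k$ fails coprimality. Finally, I would note that $\ma^2=v(A)^2\in\{-2,0,2\}$ by construction, and that $\pi$ is birational by Li's result recalled in \S\ref{ss:moduliandcontractions}.
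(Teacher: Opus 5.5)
Your proposal is correct and is essentially the paper's proof. Both arguments reduce to Theorem~\ref{thm:Uhladmissiblea} by choosing, case by case from the tables of \S\ref{sec:producingadmissiblevectors}, an admissible $\ma$ with $\gcd(r,k)=1$: Lemma~\ref{lem:primitiveMukaivec} handles $\ma^2=\pm2$, and $r=1$ handles the isotropic fallbacks ($p=2$ with $(\ia,\ca)\equiv(0,0)$ and $2\mid d$; Type~I with $\legendre{-d}{p}=-1$; Type~C). Both then check $\ma^2\le 2pr-4$ to invoke part~\eqref{part3admissiblea}, which needs $pr\ge 3$ only in the Type~B case with $\ma^2=2$; the only difference is that you systematically prefer $\ma^2=-2$, whereas the paper also allows $\ma^2=2$ (Types A, D) or minimal-$r$ vectors (for $p=2$).
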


\begin{proof}
    We apply Theorem~\ref{thm:Uhladmissiblea} after exhibiting an admissible Mukai vector $\ma = (pr, pr\Ba+kh, t) \in \AlgLat$ with $\gcd(r,k)=1$.
    Note that the hypothesis $\Pic(S) = \Z h$ ensures the polarization $h$ is generic with respect to both $\ma$ and $\mv=\ma +\mb$. 
    The gcd condition can be checked by inspecting the tables in \S\ref{sec:producingadmissiblevectors}. 
    
    When $p=2$, any choice of admissible vector with minimal $r$ works; see Tables~\ref{tab:2nmidd} and~\ref{tab:2midd}.
    
    Now suppose that $p > 2$, and recall that, by Lemma~\ref{lem:primitiveMukaivec}, if $\ma^2\in \{-2,2\}$ then admissible vectors satisfy $\gcd(r,k)=1$. 
    If $p\nmid d$, we choose the following vectors, depending on the type of $\alpha$ (see Table~\ref{tab:pnmidd}):
    \begin{itemize}[leftmargin=*]
        \item for Type I, any admissible vector with $\gcd(r,k) = 1$ works: any admissible $\ma$ with $\ma^2 = 0$ and $r = 1$ has this property, and vectors with $\ma^2 = -2$ have this property if $\legendre{-d}{p} = 1$.
        \smallskip
        
        \item for Types II or III, pick an admissible vector with $\ma^2=-2$.
    \end{itemize}
    When $p \mid d$, we choose the following vectors, depending on the type of $\alpha$ (see Table~\ref{tab:pmidd}):
    \begin{itemize}[leftmargin=*]
        \item for Type A, pick any admissible vector with $\ma^2=-2$ (or, if $p\equiv 1 \bmod 4$ and $d/p \notin (\Q^\times)^2$, with $\ma^2=2$). 
        \smallskip
        
        \item for Type B, our hypotheses imply that $p\equiv 3 \bmod 4$, so pick an admissible vector with $\ma^2=2$.
        \smallskip
        
        \item for Type C, we only have admissible vectors with $\ma^2=0$, and we pick one with $r=1$.
        \smallskip
        
        \item for Type D, pick any admissible vector with $\ma^2\in \{-2,2\}$.
    \end{itemize}
    Thus, we can conclude by applying Theorem~\ref{thm:Uhladmissiblea}\eqref{part1admissiblea} and~\eqref{part2admissiblea}. 
    Moreover, in each case, we can choose $\ma^2 \leq 0\leq 2pr-4$ except in Type B, when we must take $\ma^2=2$. In this case, $p\equiv 3 \bmod 4$, so that $pr>2$ and Theorem~\ref{thm:Uhladmissiblea}\eqref{part3admissiblea} holds. 
    This completes the proof. 
\end{proof}

Theorem~\ref{thm:putittogetherUhl} gives a way to construct geometric realizations of prime order Brauer classes $\alpha$ on a very general K3 surface $S$ of degree $2d$ for almost all types of classes; indeed, the only case not covered by it occurs when $\alpha \in \Br(S)$ has odd prime order $p$, is of Type B in the classification of Theorem~\ref{thm:BrauerLatticeClassification}, and $p \equiv 1 \bmod 4$. 
Note that in this case we must have $p \mid d$.

To handle this final case, we use the more general flopping contractions discussed in \S\ref{subsec:flopgeometryBM} and \S\ref{sec:usingmoregeneralcontraction}. 
Crucially, we leverage the circumstance that, in this case, the geometric component of the stability manifold $\Stab^\dagger(S,\alpha)$ has no walls with respect to the Mukai vector $\mb = (0,0,-1)$. 
There are other types of Brauer classes in Theorem~\ref{thm:BrauerLatticeClassification} for which this circumstance also holds. 
Thus, Theorem~\ref{thm:putittogethergeneral} provides an alternative geometric realization to Theorem~\ref{thm:putittogetherUhl} for these Brauer classes. 
In some cases, the alternative twisted bundle $P \to S$ representing $\alpha$ may have smaller rank than the realization witnessed in Theorem~\ref{thm:putittogetherUhl}, which can be desirable in applications. 
On the other hand, the contraction we use to construct the twisted bundle is not guaranteed to arise from the Gieseker chamber of $\Stab^\dagger(S,\alpha)$.

\begin{thm}
    \label{thm:putittogethergeneral}
    Let $S$ be a K3 surface of degree $2d$ with $\Pic(S) = \Z h$, and let $\alpha \in \Br(S)$ be a class of prime order~$p$.
    Assume one of the following holds:
    \begin{enumerate}[leftmargin=*]
        \item $p=2$, $2\mid d$, and $(\ia, \ca \bmod 2) = (0,0)$, or
        \smallskip
        \item $p>2$, $p \nmid d$, $\alpha$ is of Type I and $\legendre{-d}{p}=-1$, or
        \smallskip
        \item $p>2$, $p \mid d$, and $\alpha$ is of Type C, or
        \smallskip
        \item $p>3$, $p \mid d$, and $\alpha$ is of Type B.
    \end{enumerate}
    There is an $\alpha$-twisted vector bundle $A'$ on $S$ of rank $pr$ for some $r\geq 1$ and a closed immersion $\PP(A') \hookrightarrow \MM_{\sigma_+}(\mv,\alpha^{-1})$ into a smooth moduli space of Bridgeland-stable $\alpha^{-1}$-twisted objects on $S$. 
    Additionally, there is a birational morphism $\pi_+ \colon \MM_{\sigma_+}(\mv,\alpha^{-1}) \to \overline{M}_+$ giving the commutative diagram
    \begin{center} 
        \begin{tikzcd}
            \PP(A') \arrow[d, "\alpha"] \arrow[r, hookrightarrow] & \MM_{\sigma_+}(\mv,\alpha^{-1}) \arrow[d, "\pi_+"] \\
            S \arrow[r] & \overline{M}_+.
        \end{tikzcd}    
    \end{center}
    Moreover, $\PP(A')$ is a subvariety of an irreducible variety $Q$ which maps injectively on closed points to  $\MM_{\sigma_+}(\mv,\alpha^{-1})$, whose image is contained in the exceptional locus of $\pi_+$. 
    The variety $Q$ is a $\PP^{pr-1}$-bundle over $S\times \MM_{\sigma_+}(\ma,\alpha^{-1})$, where $\ma^2 = v(A')^2 =0$, fitting into the commutative diagram
    \begin{center}
        \begin{tikzcd}
            Q \arrow[d] \arrow[r] & \MM_{\sigma_+}(\mv,\alpha^{-1}) \arrow[d, "\pi_+"] \\
            S\times \MM_{\sigma_+}(\ma,\alpha^{-1}) \arrow[r] & \overline{M}_+.
        \end{tikzcd}    
    \end{center}
\end{thm}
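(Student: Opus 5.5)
The plan is to apply Theorem~\ref{thm:putittogetherBM} to the twisted K3 surface $(S,\alpha^{-1})$. Since $\alpha^{-1}$ generates the same subgroup as $\alpha$, it has the same type in Theorem~\ref{thm:BrauerLatticeClassification}. Part~\eqref{BMpart2admissiblea} of that theorem then yields $\PP(A^\vee)\to S$ representing $(\alpha^{-1})^{-1}=\alpha$. So we set $A' := A^\vee$, which is an $\alpha$-twisted vector bundle of rank $pr$. Everything therefore reduces to checking the hypotheses of Theorem~\ref{thm:putittogetherBM} for $\alpha^{-1}$ in each of the four cases. Concretely, we need:
\begin{itemize}
\item an admissible $\ma$ with $\ma^2=0$;
\item $\Stab^\dagger(S,\alpha^{-1})$ has no walls for $\mb$;
\item $\MM^{st}_{\sigma_0}(\ma,\alpha^{-1})\neq\emptyset$.
\end{itemize}
Replacing $\alpha$ by $\alpha^{-1}$ changes $(\ia,\ca)$ to $(-\ia,\ca)$ modulo $p$, because $B_{\alpha^{-1}}=-B_\alpha$ up to a lattice choice. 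The type and the existence statements of Propositions~\ref{prop:diophantinepeq2}--\ref{prop:diophantinepmidd} are unaffected by this.

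\textbf{Step 1 (admissible vectors).} The vector comes from \S\ref{sec:producingadmissiblevectors}:
\begin{itemize}
\item in case (1), from Table~\ref{tab:2midd}, where $(0,0)$ gives $r=1$, $k=0$;
\item in case (2), from Proposition~\ref{prop:diophantinepnmidd}(2);
\item in case (3), from the Type C construction with $r=1$;
\item in case (4), from Proposition~\ref{prop:diophantinepmidd}(2), which applies since $p\neq 3$.
\end{itemize}
For the flopping conclusion we also need $nr=pr>2$ (automatic for odd $p$; for $p=2$ we need $r\ge 2$ or must accept a divisorial wall). We then need to check that $pr=s^2$ fails, or that $s\nmid\gcd(r,k,t+1)$, in the spirit of Proposition~\ref{prop:floppingwalls1}(4). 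The statement only asks that the image of $Q$ lie in the exceptional locus, so parts~\eqref{BMpart1admissiblea}--\eqref{BMpart2admissiblea} suffice.

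\textbf{Step 2 (no walls for $\mb$; the main obstacle).} Walls for the isotropic class $\mb$ come from Theorem~\ref{thm:BMMMPThm5.7}(1)-type phenomena. These are spherical classes $\ms$ with small $\ms\cdot\mb$, or isotropic $\mw$ with $\mw\cdot\mb=1$, i.e.\ the twisted surface being derived equivalent to an untwisted one or containing rigid objects destabilizing $\calO_x$. Concretely, we would show that $\AlgLat$ contains no spherical class $\ms=(pr',pr'B+k'h,t')$ with $\ms\cdot\mb=pr'$ small enough to produce a wall for $\mb$.

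This reduces to the unsolvability of $-r'^2\ca+dk'^2+k'r'\ia-pt'r'=-1$. By Propositions~\ref{prop:diophantinepeq2}(1), \ref{prop:diophantinepnmidd}(1), \ref{prop:diophantinepmidd}(1) this has no solution precisely in cases (1)--(4): there are no $\ma^2=-2$ vectors for $(0,0)$ with $2\mid d$, for Type I with $\legendre{-d}{p}=-1$, or for Types B and C. With no spherical classes, and no isotropic $\mw$ with $\mw\cdot\mb=1$ (ranks are divisible by $p$), the wall classification for isotropic $\mb$, as in \cite{BMMMP}, gives that $\MM_\sigma(\mb,\alpha^{-1})\cong S$ for all $\sigma$ with no walls. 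I expect making this invocation of Bayer--Macr\`i precise to be the hardest step. That includes confirming that non-existence of spherical classes (rather than merely admissible ones, which also require primitivity and $r>0$) follows from the diophantine results; note that spherical classes are automatically primitive and $r'$ may be taken positive up to sign.

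\textbf{Step 3 (stable objects of class $\ma$).} When $\ma^2=0$ we must show $\MM_{\sigma_0}^{st}(\ma,\alpha^{-1})\neq\emptyset$. We would argue as in the $\ma^2=2$ case of Theorem~\ref{thm:putittogetherBM}, using Theorem~\ref{thm:BMMMPThm5.7}(1) for $\ma$. By Step 2, $\calH$ has no spherical classes, so it suffices to exclude an isotropic $\mw\in\calH$ with $\mw\cdot\ma=1$. But every class of $\calH$ has rank divisible by $p$, and $\mb$ is in $\calH$ with $\mb\cdot\ma=pr$. A short computation with $\mw=\gamma\ma+\delta\mb$ (allowing rational coefficients from the saturation) then shows $\mw\cdot\ma\in p\Z$, so $\calW$ is not totally semistable for $\ma$.

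Finally, the dimension count $\dim \MM_{\sigma_+}(\ma,\alpha^{-1})=2$ and the diagrams follow from Remark~\ref{rmk:describeMsigma} and Theorem~\ref{thm:putittogetherBM}. Setting $Q=\PP(\calA^\vee)$ over $S\times U$, with $U$ dense in the K3 surface, gives the stated family.
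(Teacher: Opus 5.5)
Your overall plan is the paper's. You apply Theorem~\ref{thm:putittogetherBM} to $\alpha^{-1}$ with $A'=A^\vee$, take an isotropic admissible $\ma$ from \S\ref{sec:producingadmissiblevectors}, and get the absence of walls for $\mb$ from the absence of spherical classes in $\HH^*_{\alg}(S,\alpha^{-1},\Z)$, which the diophantine analysis supplies. Your remarks on signs and automatic primitivity are exactly what that deduction needs.

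The gap is in Step~3 and your final sentence. First, Theorem~\ref{thm:BMMMPThm5.7} is stated for $\mv^2>0$, so its part~(1) cannot be applied to the isotropic class $\ma$. Second, and more importantly, even granting it, ``$\calW$ is not totally semistable for $\ma$'' only yields $\MM^{st}_{\sigma_0}(\ma,\alpha^{-1})\neq\emptyset$, i.e.\ that $U$ is a dense open subset. The statement asserts more: $Q$ is a $\PP^{pr-1}$-bundle over all of $S\times\MM_{\sigma_+}(\ma,\alpha^{-1})$. That requires $U=\MM_{\sigma_+}(\ma,\alpha^{-1})$, meaning every $\sigma_+$-stable object of class $\ma$ is $\sigma_0$-stable. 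Your conclusion with $U$ merely dense does not give this.

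The missing observation is already implicit in your Step~2. Having no spherical classes in the whole algebraic lattice rules out walls for \emph{every} primitive isotropic class, not only $\mb$; the paper cites \cite{HMSCompositio}*{Corollary~2.10} for this. Directly: the $\sigma_0$-Jordan--H\"older factors of an object of primitive isotropic class would have nonnegative squares and lie in one component of the positive cone of the hyperbolic lattice $\calH$. Non-proportional classes there cannot sum to an isotropic class, and proportional ones contradict primitivity.

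Since $\ma^2=0$, $\sigma_0$ lies on no wall for $\ma$. Hence $\MM_{\sigma_0}(\ma,\alpha^{-1})\cong\MM_{\sigma_+}(\ma,\alpha^{-1})$ consists entirely of stable objects. This gives $\MM^{st}_{\sigma_0}(\ma,\alpha^{-1})\neq\emptyset$ and $U=\MM_{\sigma_+}(\ma,\alpha^{-1})$ at once, which is how the paper argues. You should also state explicitly why $g(Q)$ lies in the exceptional locus: $\pi_+\circ g$ contracts the fibers $\PP^{pr-1}$ of $\pi_Q$, which are positive-dimensional because $pr>1$.
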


\begin{remark}
    We can say more about the morphism $\colon S \to \overline{M}_+$ in the theorem statement, so called $h_{A^{\vee}}$ as in Theorem~\ref{thm:contractiongeometry}. 
    As explained in Remark~\ref{rmk:hinjective}, $h_{A^{\vee}}$ is injective on closed points. 
    Because $S$ is proper, $h_{A^{\vee}}$ is thus either a closed embedding or a normalization onto its image.
\end{remark}

\begin{proof}
    We apply Theorem~\ref{thm:putittogetherBM} for $\alpha^{-1}$ (and $A'=A^\vee$) by exhibiting an admissible Mukai vector $\ma = (pr, prB_{\alpha^{-1}}+kh, t) \in \HH^*_{\alg}(S,\alpha^{-1},\Z)$ with $\ma^2 \leq 2$, showing that if $\ma^2=0$ then $\MM_{\sigma_0}^{st}(\ma,\alpha^{-1})\neq\emptyset$, and showing that the geometric component $\Stab^\dagger(S,\alpha^{-1})$ has no walls with respect to $\mb = (0,0,-1)$. 
    The first condition is checked by inspecting the tables in \S\ref{sec:producingadmissiblevectors}, where admissible Mukai vectors are constructed for every prime-order Brauer class. 
    For the latter two conditions, we prove in the given cases that there are no  spherical classes in $\HH^*_{\alg}(S,\alpha^{-1},\Z)$, noting that the lattice-theoretic type (Theorem~\ref{thm:BrauerLatticeClassification}) of $\alpha$ is the same as that of $\alpha^{-1}$ (so that $\HH^*_{\alg}(S,\alpha^{-1},\Z)$ contains spherical classes if and only if $\AlgLat$ does):
    \smallskip
    
    \begin{enumerate}
        \item $p=2$, $2 \mid d$, and $(\ia, \ca\bmod 2)=(0,0)$ is checked in \S\ref{subsubsec:peq2a^2eq-2};
        \smallskip
        
        \item $p>2$, $p \nmid d$, $\alpha$ is of Type I and $\legendre{-d}{p}=-1$ is checked in \S\ref{subsec:diophantinepnmidd}; and
        \smallskip
        
        \item $p>2$, $p \mid d$, and $\alpha$ is of Type B or C is checked in \S\ref{subsubsec:pmidda^2eq-2}.
    \end{enumerate}
    \smallskip
    A lack of spherical classes implies there are no walls in $\Stab^\dagger(S,\alpha^{-1})$ for any isotropic vectors~\cite{HMSCompositio}*{Corollary~2.10} (see also \cite{HMSCompositio}*{Remark~3.11} in the case of $\mb =(0,0,-1)$). 
    Thus, if $\ma^2=0$, $\sigma_0$ is not on a wall for $\ma$, and $\MM_{\sigma_0}(\ma,\alpha^{-1})\cong \MM_{\sigma_+}(\ma,\alpha^{-1})$ contains stable objects.

    It remains to show that in these cases, the $U$ in Theorem~\ref{thm:putittogetherBM} satisfies $U=\MM_{\sigma_+}(\ma,\alpha^{-1})$, so that $Q$ is an irreducible variety contained in the exceptional locus of $\pi_+$. 
    This follows from the fact that for these Brauer classes we can pick an admissible $\ma$ with $\ma^2=0$ (this is where the hypothesis that $p > 3$ for Type B classes is used). 
    Indeed, by the isomorphism above and the fact that $\ma$ is primitive, $U = \MM^{st}_{\sigma_0}(\ma,\alpha^{-1})\cong \MM^{st}_{\sigma_+}(\ma,\alpha^{-1}) = \MM_{\sigma_+}(\ma,\alpha^{-1})$. 

    Finally, by construction the composition $Q\to \MM_{\sigma_+}(\mv, \alpha^{-1})\to \overline{M}_+$ contracts the fibers of $Q \to S\times \MM_{\sigma_+}(\ma,\alpha^{-1})$ to a point. 
    Since the fibers are $\PP^{pr-1}$ with $pr>1$, the image of $Q$ is contained in the exceptional locus of $\pi_+$.
\end{proof}

\begin{cor}
    Let $S$ be a K3 surface of degree $2d$ with $\Pic(S) = \Z h$, and let $\alpha \in \Br(S)$ be a class of prime order $p$. 
    Then one of Theorems~\ref{thm:putittogetherUhl} or~\ref{thm:putittogethergeneral} produces a geometric realization of $\alpha$ as a subvariety of a hyperk\"ahler manifold of K3$^{[n]}$-type.
\end{cor}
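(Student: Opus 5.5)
The plan is a case check: every prime-order class falls under the hypotheses of Theorem~\ref{thm:putittogetherUhl} or of Theorem~\ref{thm:putittogethergeneral}. In either case the ambient space is a moduli space $\MM_h(\mv,\alpha)$ or $\MM_{\sigma_+}(\mv,\alpha^{-1})$ with $\mv$ primitive and $\mv^2\geq -2$. Such a space is a projective hyperk\"ahler manifold deformation equivalent to a Hilbert scheme of points on a K3 surface, by the results of Yoshioka~\cite{Yoshioka06} for twisted Gieseker moduli and of Bayer--Macr\`i~\cite{BMMMP}*{Theorem~3.6} for Bridgeland moduli.

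First I dispose of the cases covered by Theorem~\ref{thm:putittogetherUhl}. That theorem applies when $p=2$, or when $p$ is odd and $p\nmid d$. It also applies when $p$ is odd, $p\mid d$, and $\alpha$ is of Type A, C, or D, or of Type B with $p\equiv 3\bmod 4$. By the classification in Theorem~\ref{thm:BrauerLatticeClassification}, the only classes left are those with $p$ odd, $p\mid d$, $\alpha$ of Type B, and $p\equiv 1\bmod 4$. In particular $p\geq 5>3$.

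Such a class satisfies hypothesis (4) of Theorem~\ref{thm:putittogethergeneral}, so that theorem gives a realization. The bundle $\PP(A')\to S$ represents $\alpha$ and sits inside $\MM_{\sigma_+}(\mv,\alpha^{-1})$. I still need to check that this space is genuinely a hyperk\"ahler manifold of K3$^{[n]}$-type. Primitivity of $\mv$ is part of admissibility. We also have $\mv^2=\ma^2+2pr=2pr>0$. Genericity of $\sigma_+$ with respect to $\mv$ can be arranged by perturbing $\sigma_+$ inside its chamber. Then \cite{BMMMP}*{Theorem~3.6} gives smoothness, projectivity, and the deformation type.

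I expect the only real obstacle to be bookkeeping: confirming that the two theorems' hypothesis lists exhaust all types. The delicate points are the prime $p=3$ in Type B and the parity of $p$ modulo $4$. Since $p\equiv 1\bmod 4$ forces $p\neq 3$, and Type B with $p=3\equiv 3\bmod 4$ is already covered by Theorem~\ref{thm:putittogetherUhl}, the cases close up.
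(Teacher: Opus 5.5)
Your proposal is correct and follows the paper's proof: the only classes not covered by Theorem~\ref{thm:putittogetherUhl} are the Type B classes with $p \mid d$ and $p \equiv 1 \bmod 4$. These force $p \geq 5$, so they fall under hypothesis (4) of Theorem~\ref{thm:putittogethergeneral}. Your extra check that the ambient moduli spaces are of K3$^{[n]}$-type, using primitive $\mv$ with $\mv^2 = \ma^2 + 2pr > 0$ together with the results of Yoshioka and Bayer--Macr\`i, is a point the paper leaves implicit, and you handle it correctly.
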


\begin{proof}
    As remarked above, the only type of class $\alpha$ not covered by Theorem~\ref{thm:putittogetherUhl} is Type B when $p \equiv 1 \bmod 4$. 
    In this case, $p\geq 5 >3$, in which case $\alpha$ is covered by Theorem~\ref{thm:putittogethergeneral}.
\end{proof}

\begin{remarks}
\label{rems:generalcase}\
\begin{enumerate}
    \item In both Theorems~\ref{thm:putittogetherUhl} and \ref{thm:putittogethergeneral}, when $\ma^2=-2$, we have $Q=\PP(A)$ and the Severi-Brauer variety is an irreducible component of the exceptional locus of $\pi$ or $\pi_+$, respectively. 
    \smallskip
    
    \item We construct the moduli spaces to satisfy 
    \[
        \dim \MM_h(\mv,\alpha) = \dim \MM_{\sigma_+}(\mv,\alpha^{-1}) = \ma^2 + 2pr +2 \leq 2pr + 4.
    \]
    Therefore, in keeping $r$ and/or $\ma^2$ small, we also manage to bound $\dim \MM_h(\mv,\alpha) = \dim \MM_{\sigma_+}(\mv,\alpha^{-1})$, which is helpful for possible applications.
\end{enumerate} 
\end{remarks}

\subsection{Divisorial versus flopping contractions}\label{subsec:divorflop}

Theorem~\ref{thm:putittogethergeneral} does not indicate whether the contraction realizing a class $\alpha$ is a flopping or divisorial contraction. 
Ideally, we would be able to identify $Q$ in the theorem statement as an irreducible component of the exceptional locus, but this can only happen if $Q$ has codimension one or if $\pi_+$ is flopping. 
For completeness, we also determine whether the contraction in Theorem~\ref{thm:putittogetherUhl} is flopping or divisorial.

\begin{prop}
    Let $S$ be a K3 surface of degree $2d$ with $\Pic(S) = \Z h$, and let $\alpha \in \Br(S)$ be a class of prime order $p$ satisfying the assumptions of Theorem~\ref{thm:putittogetherUhl}. 
    If $p=2$, then one can choose an admissible vector so that $\pi$ is a divisorial contraction, and otherwise it is a flopping contraction.
\end{prop}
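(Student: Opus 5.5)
The plan is to compare the dimension of the component $Q$ of the exceptional locus with $\dim \MM_h(\mv,\alpha)$, using the vectors $\ma$ chosen in the proof of Theorem~\ref{thm:putittogetherUhl}. For those vectors the computation in the proof of Theorem~\ref{thm:uhlenbeckcontraction}(3) gives
\[
    \codim_{\MM_h(\mv,\alpha)} g(Q) = \ma\cdot\mb - 1 = pr - 1 .
\]
So $g(Q)$ is a divisor exactly when $pr = 2$, i.e.\ $p=2$ and $r=1$, and has codimension at least $2$ whenever $pr>2$.

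For $p=2$ we pick, from Tables~\ref{tab:2nmidd} and~\ref{tab:2midd}, an admissible vector with $r=1$. Such a vector always exists, since at least one of the columns $\ma^2=-2$ or $\ma^2=0$ has $r_{\min}=1$ for every pair $(\ia,\ca)$ and every parity of $d$. Then $\ma\cdot\mb = 2$ and $g(Q)$ has codimension one. Because the exceptional locus of a birational morphism contains a divisor, $\pi$ is divisorial. This agrees with Proposition~\ref{prop:floppingwalls1}(2): $\mb$ is isotropic with $\mb\cdot\mv = 2$, which is the Li--Gieseker--Uhlenbeck situation of Theorem~\ref{thm:BMMMPThm5.7}(2)(c). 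The Uhlenbeck contraction is the contraction attached to the wall whose lattice $\calH$ contains $\mb$, and $\mb$ lies in $\calH_\ma$.

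For $p>2$ we have $pr\ge 3$. We show $\pi$ is small, so that, since $\MM_h(\mv,\alpha)$ is hyperk\"ahler and hence has trivial canonical class, $\pi$ is a flopping contraction. The cleanest route is to identify $\pi$ with the contraction $\pi_+$ coming from the wall $\calW_\ma$ whose lattice is $\calH_\ma = \langle \ma,\mb\rangle$. This lattice is saturated by Lemma~\ref{lem:AlreadySaturated}, because $\gcd(r,k)=1$ for the chosen vectors. We then invoke Proposition~\ref{prop:floppingwalls1}(1)--(4):
\begin{itemize}
    \item Part (1) excludes Hilbert--Chow walls.
    \item Part (2) excludes Li--Gieseker--Uhlenbeck walls, because $n=p>2$.
    \item Part (3) excludes Brill--Noether walls when $\ma^2\in\{-2,2\}$.
    \item When $\ma^2=0$, part (4) requires $pr$ to be a square $s^2$ with $s\mid\gcd(r,k,t+1)$. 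In the cases used (Type I or C with $r=1$), $pr=p$ is not a square.
\end{itemize}
Theorem~\ref{thm:BMMMPThm5.7}(3) then shows that $\calW_\ma$ induces a flop.

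The main obstacle is justifying that the Uhlenbeck morphism $\pi$ is precisely the wall-crossing contraction for the potential wall attached to $\calH_\ma$, which is the Gieseker-chamber boundary. I would argue as follows. By \cite{BMMMP}*{Proposition~8.2} the Uhlenbeck wall is the one whose lattice contains $\mb = v(\calO_x[-1])$ together with $\mv$. That lattice is the saturation of $\langle\mv,\mb\rangle = \langle\ma,\mb\rangle$, namely $\calH_\ma$, so the wall classification of Proposition~\ref{prop:floppingwalls1} applies directly to $\pi$.

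If this identification proves delicate, I would fall back on a direct argument in the style of Theorem~\ref{thm:uhlenbeckcontraction}(3). When $\ma^2\le 2pr-4$, which holds for all chosen vectors when $p>2$, condition ($4'$) says the exceptional locus is exactly $g(Q)$. That locus has codimension $pr-1\ge 2$, so $\pi$ is small and therefore a flop.
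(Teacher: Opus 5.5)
Your proposal is correct and follows the paper's proof: $g(Q)$ has codimension $\ma\cdot\mb-1=pr-1$, so it is a divisor exactly when $p=2$ and $r=1$, and the tables always supply such a vector. For $p>2$, the Uhlenbeck contraction is the wall-crossing contraction for $\calH_\ma$, and Proposition~\ref{prop:floppingwalls1}(3)--(4) shows it is a flop. Your fallback argument is a slightly more self-contained variant of this second step: condition ($4'$) makes $g(Q)$ the whole exceptional locus, of codimension $pr-1\ge 2$, so the $K$-trivial contraction $\pi$ is small. This avoids the identification with the Bayer--Macr\`i wall, which the paper invokes only implicitly.
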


\begin{proof}
    In this case, the exceptional locus of $\pi\colon \MM_{h}(\mv,\alpha) \to \MM^{\Uhl}(\mv,\alpha)$ is the subvariety $Q$ parametrizing non-locally free $\alpha$-twisted sheaves in $\MM_{h}(\mv,\alpha)$. 
    Thus, the contraction is divisorial if and only if $Q$ has codimension one in  $\MM_{h}(\mv,\alpha)$. 
    The dimension of $Q$ is given by
    \[
        \dim Q = pr-1 + 2 + \ma^2 +2 = \ma^2 +pr + 3,
    \]
    while $\dim \MM_h(\mv,\alpha) = 2pr + \ma^2 + 2$. 
    The difference is one if and only if $pr=2$, which forces $p=2, r=1$. 
    Inspecting Tables~\ref{tab:2nmidd} and~\ref{tab:2midd}, we see that there is an admissible Mukai vector with $r=1$ for every $2$-torsion class. 
    For $p>2$, the dimension count, along with the fact that the contraction is induced by a wall-crossing in the stability manifold, ensures the contraction is a flop; see Proposition~\ref{prop:floppingwalls1}(3) \& (4).
\end{proof}

\begin{remark}
    Recall that the case of $p=2, r=1$, and $\ma^2=-2$ was studied in \cite{vGK}. See \S\ref{subsec:2torsionvGK} for a comparison with their work and the case of $2$-torsion classes which cannot be realized as a subvariety of a hyperk\"ahler fourfold. 
    When $d=1$, this $2$-torsion class naturally arises as a $\PP^3$-bundle over $S$, which we construct as a subvariety of a hyperk\"ahler $8$-fold using an admissible Mukai vector $\ma$ with $\ma^2=-2$. 
    This is the case of Mukai duality; see also Example~\ref{ex:Mukaidualpeq2}. 
    However, it can also be constructed as a $\PP^1$-bundle over $S$ which is a subvariety of a hyperk\"ahler $6$-fold with a divisorial contraction; in this case, the exceptional locus is a $\PP^1$-bundle over $S\times \MM_h(\ma,\alpha)$ with $\ma^2=0$.
\end{remark}

\begin{prop}
    \label{cor:floppingwalls7}
    Let $p$ be a prime with $p \mid d$ and let $\alpha \in \Br(S)$ be an element of order $p>2$. 
    The following are equivalent:
    \begin{enumerate}[leftmargin=*]
        \item There exists an admissible Mukai vector $\ma=(pr,pr\Ba+kh,t)\in\AlgLat$ with $\ma^2=0$ such that the wall $\calW_\ma$ induces a flopping contraction.
        \smallskip

        \item There exists an admissible Mukai vector $\ma\in\AlgLat$ with $\ma^2=0$.
        \smallskip

        \item One of the following holds:
        \begin{enumerate}[leftmargin=*]
            \item $\alpha$ is of Type A, C, or D;
            \item $\alpha$ is of Type B and $p\neq3$;
            \item $\alpha$ is of Type B, $p=3$, and either $9\mid d$ or there is a prime $q\equiv2\bmod 3$ dividing $d$.
        \end{enumerate}
    \end{enumerate}
\end{prop}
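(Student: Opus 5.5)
The implications (1)$\Rightarrow$(2) and (2)$\Leftrightarrow$(3) are essentially already in hand, so the plan reduces to proving (2)$\Rightarrow$(1). First, (1)$\Rightarrow$(2) is trivial. Next, (2)$\Leftrightarrow$(3) is exactly Proposition~\ref{prop:diophantinepmidd}(2), whose proof in \S\ref{subsubsec:pmidd0} gives both existence and the converse. For (2)$\Rightarrow$(1), note that $p>2$ and $r\ge 1$ give $nr=pr\ge 3>2$. So we are in case (4) of Proposition~\ref{prop:floppingwalls1}. The wall $\calW_\ma$ is flopping unless $pr=s^2$ is a perfect square with $s\mid\gcd(r,k,t+1)$, in which case it is Brill--Noether. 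The task is therefore to show that, whenever an admissible isotropic vector exists, one can be chosen avoiding this Brill--Noether condition.

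The key observation is that $pr=s^2$ with $s\mid r$ is very restrictive. Write $r=p^{e}r'$ with $p\nmid r'$. Then $pr$ is a square exactly when $e$ is odd and $r'$ is a square, so $s=p^{(e+1)/2}\sqrt{r'}$. The condition $s\mid r$ then requires $p^{(e+1)/2}\mid p^{e}$, which holds, and $\sqrt{r'}\mid r'$, which also holds. Thus the real constraint is that $s$ divide both $k$ and $t+1$. In particular, $p\mid k$ and $p\mid t+1$ are necessary for Brill--Noether. So it suffices to exhibit, in each case of (3), an admissible isotropic $\ma$ for which either $pr$ is not a square, or $p\nmid k$, or $t\not\equiv -1\bmod p$.

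I then go through the explicit vectors constructed in \S\ref{subsubsec:pmidd0}.
\begin{enumerate}
    \item Types C and D use $r=1$, so $pr=p$ is not a square.
    \item Types A and B with $\ca\not\equiv-1$ use $(r,k,t)=(p,p,d-\ca)$. Here $pr=p^2$ and $s=p$, but $t+1\equiv 1-\ca\bmod p$. This is nonzero unless $\ca\equiv1$, which is fine whenever $\ca\not\equiv 1$. If $\ca\equiv 1$ (possible only in Type A), I instead take $k=px$ with $x$ chosen so that $t=dx^2-\ca$ avoids $0$, $1$, and $-1$ modulo $p$, hence $t+1\not\equiv 0$. When $p\mid d$, however, $dx^2\equiv 0$, so $t\equiv -\ca$ is fixed. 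I would then exploit the freedom $t\mapsto$ changes through $d/p$ terms, or switch to $r=p$, $k=p x$ with the full expression $t=(dx^2\cdot p^2/p^2 - \ca p^2)/p^2$ computed carefully. This needs checking.
    \item In the subcase $\ca\equiv-1$ with $p^2\mid d$, we have $k=1$, so $p\nmid k$.
    \item In the subcase $p^2\nmid d$, $p\ge5$, we have $r=p^2$ and $pr=p^3$, which is not a square.
    \item For $p=3$, Type B: the choice $k=1$ handles $9\mid d$. The choice $r=3q$ gives $pr=9q$, which is not a square.
\end{enumerate}

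The main obstacle is the Type A case with $\ca\equiv 1\bmod p$ (where $t+1\equiv 2\ca$ fails to vanish automatically, so this may in fact be fine). I would recompute $t+1\equiv -\ca+1$ precisely from $\ma^2=0$ with $(r,k)=(p,p)$, which gives $pt=dp-\ca p$, that is, $t=d-\ca$. If $t+1\equiv 0$ genuinely occurs, I would replace $k=p$ by $k=px$ for a suitable $x$, or use the $r=p^2$ construction, which gives $pr=p^3$, not a square, and whose admissibility needs only $dx^2/p\not\equiv 0,1$. For $p=3$ this last step requires a separate small check.
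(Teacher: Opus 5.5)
There is a genuine gap in the case you flagged yourself: $\alpha$ of Type A with $\ca\equiv 1\bmod p$. The rest is sound. Since $pr\ge 3$, Proposition~\ref{prop:floppingwalls1}(4) applies, and a Brill--Noether wall forces $p\mid k$ and $p\mid t+1$. The vectors of \S\ref{subsubsec:pmidd0} do give flopping walls for Types C and D, for Types A and B with $\ca\not\equiv\pm1$, for $\ca\equiv-1$, and for Type B with $p=3$.

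For $\ca\equiv1$, however, the vector $(p,p,d-\ca)$ has $t+1\equiv 1-\ca\equiv 0\bmod p$. Your ``$t+1\equiv 2\ca$'' is a slip, so this wall really is Brill--Noether. Your fallbacks do not close the case:
\begin{itemize}
\item With $r=p$ you always get $t\equiv-\ca\equiv-1$.
\item The $r=p^2$ vector needs $dx^2/p\not\equiv 0,1\bmod p$. This fails when $p^2\mid d$; there $(p,1,d/p^2-\ca)$ would rescue you, but you never invoke it.
\item The same vector fails when $p=3$ and $d\equiv 3\bmod 9$, since then $t\equiv1\bmod 3$ and $\mv$ is not primitive.
\end{itemize}

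More seriously, the sufficient condition you aim for ($pr$ not a square, or $p\nmid k$, or $t\not\equiv-1\bmod p$) can be unattainable. Take $p=d=3$ and $\ca\equiv1\bmod 3$. Then $\ma^2=0$ forces $r=3r_1$, $k=3k_1$ and $t=3k_1^2/r_1-\ca r_1$. Admissibility forces $t\equiv2\bmod 3$. For each prime $q\neq3$ dividing $r_1$ (hence dividing $k_1$), primitivity forces $q\nmid t$, i.e.\ $v_q(r_1)=2v_q(k_1)$. A $3$-adic check then rules out $3\mid r_1$, because it would give $t\equiv1\bmod3$. So every admissible isotropic $\ma$ has $pr=(3m)^2$, $3\mid k$ and $3\mid t+1$, and none of your escape routes exists.

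The missing idea is to make the divisibility $s\mid\gcd(r,k,t+1)$ fail at an auxiliary prime rather than at $p$. The paper does exactly this, uniformly for Types A and B with $p\neq3$:
\begin{itemize}
\item Choose $u\in\F_p^\times$ with $\ca u^2\not\equiv-1$; for $\ca\equiv1$, $u=1$ works.
\item Take a prime $\ell\equiv u\bmod p$ with $\ell\nmid d(d-1)(d+1)$, and set $(r,k,t)=(p\ell^2,p\ell,d-\ca\ell^2)$.
\item Then $\ma^2=0$ because $\ia=0$, and $\gcd(r,k)=p\ell$.
\item Admissibility holds because $t\equiv-\ca u^2\not\equiv0,1\bmod p$ and $t\equiv d\not\equiv0,1\bmod\ell$.
\item The wall is flopping: although $pr=(p\ell)^2$ is a square, $t+1\equiv d+1\not\equiv0\bmod\ell$, so $p\ell\nmid\gcd(r,k,t+1)$.
\end{itemize}
If you insert this vector for the case $\ca\equiv1$, the rest of your case analysis goes through as a valid alternative to the paper's four-case proof.
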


\begin{proof}
    The implication $(1)\Rightarrow(2)$ is trivial, while the equivalence $(2)\Leftrightarrow(3)$ is Proposition~\ref{prop:diophantinepmidd}(2). 
    It remains to prove that $(3)\Rightarrow(1)$. 
    Since $p > 2$, Proposition~\ref{prop:floppingwalls1}(4) shows that if $\ma=(pr,pr\Ba+kh,t)\in\AlgLat$ is an admissible Mukai vector with $\ma^2=0$, then the wall $\calW_\ma$ is flopping unless $pr=s^2$ is a square and $s\mid\gcd(r,k,t+1)$.
    \smallskip
    
    \noindent {\bf Case 1:} $\alpha$ is of Type $A$ or it is of Type $B$ with $p \neq 3$:

    \noindent We have $\ia=0$, $\ca\not\equiv0\bmod p$. 
    Choose $u\in\F_p^\times$ such that $\ca u^2\not\equiv-1\bmod p$ (when $p = 3$, this is possible since $\alpha$ is of Type A.). 
    By Dirichlet's Theorem on Arithmetic Progressions, there is a prime $\ell$ such that $\ell \equiv u \bmod p$ and $\ell\nmid d(d-1)(d+1)$ (note that $d(d-1)(d+1) \neq 0$ as $p\mid d$ and $p > 2$ force $d\geq 3$). Set
    \[
        (r,k,t) = (p\ell^2,p\ell,d - \ca\ell^2).
    \]
    Since $\ia=0$, we have $-\ca r^2+dk^2-prt = 0$, so $\ma^2 = 0$ in this case. 
    We must check that $\ma$ is admissible. 
    Since $\gcd(r,k) = p\ell$, it is enough to check that neither $p$ nor $\ell$ divide $t$ or $t-1$. 
    Modulo $p$, we have $t \equiv -\ca u^2\bmod p$, and the right hand side is neither $0$ nor $1 \bmod p$ by construction. 
    Modulo $\ell$, we have $t \not\equiv 0,1 \bmod \ell$ on account of $\ell\nmid d(d-1)$. 
    Finally, we have $pr = (p\ell)^2$, so we must check that $p\ell \nmid \gcd(r,k,t+1)$ to ensure $\calW_\ma$ is a flopping wall. 
    This follows from our choice of $\ell \nmid (d+1)$.

    \smallskip
    \noindent {\bf Case 2:} $\alpha$ is of Type $C$:

    \noindent Here $\ia\equiv\ca\equiv0\bmod p$. Set
    \[
        (r,k,t) = \left(1,1,\frac{d-\ca}{p}\right)
    \]
    Then $\ma^2 = 0$, and since $r = k = 1$, the vector $\ma$ is admissible. 
    Moreover, $pr = p$, which is not a square, so $\calW_\ma$ is a flopping wall.

    \smallskip
    \noindent {\bf Case 3:} $\alpha$ is of Type $D$:

    \noindent Here $\ia\not\equiv0\bmod p$. 
    Choose $k\in\Z$ such that $\ia k\equiv\ca\bmod p$, and set $r = 1$ and $t=(-\ca+dk^2+\ia k)/p$. 
    Then $\ma^2 = 0$, and since $r = 1$, the vector $\ma$ is admissible and $\calW_\ma$ is a flopping wall, as in Case 2.

    \smallskip
    \noindent {\bf Case 4:} $\alpha$ is of Type $B$ with $p = 3$:

    \noindent Here $\ia=0$ and $\ca\equiv2\bmod3$. Suppose first that $9\mid d$. Set
    \[
        (r,k,t) = \left(3,1,\frac{d}{9}-\ca\right)
    \]
    Then $\ma^2 = 0$, and since $k = 1$, the vector $\ma$ is admissible. 
    We have $pr = 3^2$, but $3 \nmid \gcd(3,1,t+1)$, so $\calW_\ma$ is a flopping wall.

    \noindent Finally, suppose instead that a prime $q\equiv2\bmod3$ divides $d$. Set
    \[
        (r,k,t) = \left(3q,3,\frac{d}{q}-\ca q\right).
    \]
    As checked in \S\ref{subsubsec:pmidd0}, $\ma^2=0$ and $\ma$ is admissible.
    Finally, $pr = 3^2q$, which is not a square, so the wall $\calW_\ma$ is flopping.
\end{proof}

\begin{prop}
    Let $S$ be a K3 surface of degree $2d$ and Picard rank $1$, and let $\alpha \in \Br(S)$ be a class of prime order~$p$ satisfying the assumptions of Theorem~\ref{thm:putittogethergeneral}. 
    If $p=2$, then one can choose an admissible vector so that $\pi_+$ is a divisorial contraction. 
    Otherwise, one can choose an admissible vector so that $\pi_+$ is a flopping contraction. 
\end{prop}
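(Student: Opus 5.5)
The plan is to go through the four cases of Theorem~\ref{thm:putittogethergeneral}. In each case we exhibit an explicit admissible $\ma$ with $\ma^2=0$ in $\HH^*_{\alg}(S,\alpha^{-1},\Z)$, and we read off the type of the wall $\calW_\ma$ from Proposition~\ref{prop:floppingwalls1}. Two facts make this possible. First, $\alpha^{-1}$ has the same type as $\alpha$, with invariants $(-\ia,\ca)$ up to the choice of representatives. Second, in each case $\Stab^\dagger$ has no walls for $\mb$, as shown in the proof of Theorem~\ref{thm:putittogethergeneral}. Hence Theorem~\ref{thm:putittogetherBM} applies to any such $\ma$, and Proposition~\ref{prop:floppingwalls1} identifies the contraction $\pi_+$ through Theorem~\ref{thm:BMMMPThm5.7}.

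\textbf{Case $p=2$ ($2\mid d$, $(\ia,\ca)\equiv(0,0)$).} Table~\ref{tab:2midd} gives an admissible $\ma$ with $\ma^2=0$ and $r=1$, for instance $k=0$ and $t=-\ca/2$. Then $n=2$ and $r=1$, so Proposition~\ref{prop:floppingwalls1}(2) gives the isotropic class $\mw=\ma$ with $\mw\cdot\mv=2$. By Theorem~\ref{thm:BMMMPThm5.7}(2)(c) the wall is of Li--Gieseker--Uhlenbeck type, so $\pi_+$ is divisorial.

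\textbf{Case $p>2$.} Here $nr=pr>2$ automatically, so Proposition~\ref{prop:floppingwalls1}(4) applies. The wall is flopping unless $pr=s^2$ with $s\mid\gcd(r,k,t+1)$, so in each subcase we must choose $\ma$ to avoid this.

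For Type I with $p\nmid d$, take the $r=1$ vector from Proposition~\ref{prop:diophantinepnmidd}(2); its existence follows because $\Da=0$ is a square. Then $pr=p$ is not a square, so the wall is flopping.

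For Type C, the vector $(r,k,t)=(1,1,(d-\ca)/p)$ has $pr=p$, which is not a square, so the wall is flopping.

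For Type B with $p>3$, use Case~1 of the proof of Proposition~\ref{cor:floppingwalls7}. Choose a prime $\ell\equiv u \bmod p$ with $\ca u^2\not\equiv -1$ and $\ell\nmid d(d^2-1)$, and set $(r,k,t)=(p\ell^2,p\ell,d-\ca\ell^2)$. That argument shows $\ma$ is admissible with $\ma^2=0$ and that $p\ell\nmid \gcd(r,k,t+1)$, so the wall is flopping. Alternatively, one can simply cite Proposition~\ref{cor:floppingwalls7}, (3)$\Rightarrow$(1), applied to $\alpha^{-1}$.

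In every case with $p>2$, Theorem~\ref{thm:putittogetherBM}(3) then shows that $\pi_+$ is flopping, and that $\overline{g(Q)}$ is an irreducible component of the exceptional locus.

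\textbf{Main obstacle.} The delicate point is the Type B case. A naive choice such as $r=k=p$ with $pr=p^2$ might satisfy $p\mid \gcd(r,k,t+1)$ and produce a Brill--Noether wall. This is why we use the vector from Proposition~\ref{cor:floppingwalls7}.

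A second check is that all hypotheses of Theorem~\ref{thm:putittogetherBM} hold for the chosen $\ma$ and not merely for some vector. The relevant hypothesis is $\MM^{st}_{\sigma_0}(\ma,\alpha^{-1})\neq\emptyset$. It holds because there are no spherical classes, hence no walls for isotropic $\ma$, exactly as argued in the proof of Theorem~\ref{thm:putittogethergeneral}.
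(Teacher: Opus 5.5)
Your proof is correct and follows the paper's argument. It uses the same case split over the hypotheses of Theorem~\ref{thm:putittogethergeneral}: Proposition~\ref{prop:floppingwalls1}(2) gives a Li--Gieseker--Uhlenbeck wall when $p=2$ and $r=1$, and Proposition~\ref{prop:floppingwalls1}(4) gives a flopping wall when $p>2$ (via the $r=1$ vectors for Types I and C, and Case~1 of Proposition~\ref{cor:floppingwalls7} for Type B).

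One small correction to your ``main obstacle'' remark: for Type B the naive vector $r=k=p$ cannot produce a Brill--Noether wall, since $t+1\equiv 1-\ca\not\equiv 0 \bmod p$ when $\ca$ is a nonresidue. Its actual defect is that it fails to be admissible when $\ca\equiv -1 \bmod p$.
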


\begin{proof}
    First, if $p=2$, then $2 \mid d$ and $(\ia,\ca \bmod 2) = (0,0)$, in which case we construct an admissible $\ma$ with $r=1$. 
    By Proposition~\ref{prop:floppingwalls1}(2), $\pi_+$ is a divisorial contraction of Li-Gieseker-Uhlenbeck type.

    Suppose now that $p>2$. If $p \nmid d$, $\alpha$ is of Type I and $\legendre{-d}{p}=-1$, we construct an admissible $\ma$ with $\ma^2=0$ and $r=1$, so that $pr=p$ is not a square. 
    Proposition~\ref{prop:floppingwalls1}(4) gives that $\pi_+$ is a flopping contraction. 
    If instead $p \mid d$, and $\alpha$ is of Type C, we construct an admissible $\ma$ with $\ma^2=0$ and apply Proposition~\ref{cor:floppingwalls7}. 
    If $p \mid d$, and $\alpha$ is of Type B, we proceed as follows. If $\alpha$ admits an admissible Mukai vector $\ma$ with $\ma^2 = 0$, then Proposition~\ref{cor:floppingwalls7} allows us to choose such a vector whose associated wall is flopping. 
    Otherwise, the same proposition shows we must have $p=3$, and this case is excluded by the hypotheses of Theorem~\ref{thm:putittogethergeneral}.
\end{proof}

\section{Cubic fourfolds with associated twisted K3 surfaces}\label{sec:cubicfourfolds}

Let $Y\subset \PP^5$ be a smooth cubic fourfold, and let $H$ be the hyperplane class on $Y$. 
Inside the moduli space $\calC$ of smooth cubic fourfolds, we can consider those for which there is a rank two primitive sublattice $K \subset \HH^{2,2}(Y,\Z) \coloneqq \HH^4(Y,\Z)\cap \HH^{2,2}(Y)$ with $H^2 \in K$. 
Such cubic fourfolds are called \defi{special}, and are parametrized by countably many divisors $\calC_d \subset \calC$, where $Y \in \calC_d$ if the discriminant of the intersection form on $K$ is $d$ \cite{Hassett00}. 
The divisor $\calC_d$ is nonempty if and only if $d>6$ and $d\equiv 0, 2 \bmod 6$. 
Since the Hodge conjecture is known for cubic fourfolds \cite{Voisin13}, it follows that a cubic fourfold is special if and only if it contains algebraic surfaces which are not homologous to a complete intersection. 
In this case, $K\subseteq A(Y)$ where $A(Y)$ is the lattice of algebraic cycles in $\HH^{2,2}(Y,\Z)$.

In \cite{Hassett00}, Hassett introduced a numerical condition on $d$ that determines when $Y\in \calC_d$ exhibits a Hodge isometry (up to sign) between $K^\perp \subset \HH^{4}(Y,\Z)$ and $\HH^2(S,\Z)_{\prim}$ for some polarized K3 surface $S$. 
It is conjectured that this numerical condition is equivalent to the rationality of $Y$ \cites{Hassett00, Kuznetsov, AddingtonThomas, BLMNPS, Guere}. 

There is another numerical condition,
\begin{align}
    (**')\quad  & d \text{ is even and in the prime factorization } d/2 =\prod p_i^{n_i}, \text{ one has } \\ 
    & n_i \equiv 0 \bmod 2 \text{ for all primes } p_i \equiv 2 \bmod 3,
\end{align}
and $Y$ is in $\calC_d$ with $d$ satisfying $(**')$ if and only if there is a Hodge isometry $\T(Y)(-1) \cong \T(S,\alpha)$ for some polarized K3 surface $S$ and $\alpha \in \Br(S)$, where $\T(Y) = A(Y)^\perp \subset H^4(Y,\Z)$ \cite{HuyK3cat}. 

Fix a positive integer $d$ and a prime $p$ such that $2dp^2$ satisfies condition $(**')$. 
Given a very general polarized K3 surface $S$ of degree $2d$, one can use the strategy of \cite[Proposition 10]{MSTVA} to determine which lattice-theoretic type (as in Theorem~\ref{thm:BrauerLatticeClassification}) of nontrivial class $\alpha \in \Br(S)[p]$ gives rise to a Hodge isometry $K^\perp(-1) \cong \T(S,\alpha)$ for some special cubic fourfold $Y$ with $K\subset \HH^4(Y,\Z)\cap \HH^{2,2}(Y,\C)$. 
In this case, $Y\in \calC_{2dp^2}$. 
When $2dp^2=8,18$, and $24$, it has been shown that $\alpha =0$ implies that $Y$ is rational \cites{Hassett99, AHTVA, Hassett24}.

Fix a twisted K3 surface $(S,\alpha)$ which is associated to a special cubic fourfold $Y\in \calC_{2dp^2}$. 
Assume for simplicity that $\alpha\in \Br(S)$ is of prime order $p$, and that it satisfies the assumptions of Theorem~\ref{thm:putittogetherUhl}. 
Thus, there is a primitive Mukai vector $\mv\in \AlgLat$ with a geometric realization $\PP(A) \hookrightarrow \MM_h(\mv,\alpha)$ of $\alpha$. 
Since $\mv\in \AlgLat$ is primitive with $\mv^2>0$ and $h \in \Pic(S)$ is generic for $\mv$, \cite{Yoshioka06}*{Theorem~3.19} gives a Hodge isometry 
\[
    \HH^2(\MM_{h}(\mv,\alpha),\Z) \cong \mv^\perp \subset \HH^*(S,\alpha,\Z),
\]
which restricts to a Hodge isometry $\NS(\MM_h(\mv,\alpha)) \cong \mv^\perp \cap \AlgLat$. 
Thus,
\[
    \T(\MM_h(\mv,\alpha)) = \NS(\MM_h(\mv,\alpha))^\perp \cong (\mv^\perp \cap \AlgLat)^\perp \subseteq \T(S,\alpha),
\]
where the last orthogonal complement is taken inside $\mv^\perp$. 
Since $\mv \in \AlgLat$, we have $\T(S,\alpha) \subseteq\mv^\perp$, from which it follows that $\T(S,\alpha) \cong \T(\MM_h(\mv,\alpha))$. 
We conclude that
\begin{equation}\label{eqn:transcendentalHodgeiso}
    \T(\MM_h(\mv,\alpha)) \cong \T(Y)(-1).  
\end{equation}
As a consequence, the local period domain for the moduli space of polarized hyperk\"ahler manifolds of Picard rank at least $2$ with invariants matching $\MM_h(\mv,\alpha)$ is isomorphic to the local period domain for $\calC_{2dp^2}$. 

In the following subsections, we further explore these observations in low discriminants.

\subsection{Cubic fourfolds of discriminant 8 and 24}

Let $Y$ be a smooth cubic fourfold containing a plane $P\subset Y$. 
These fourfolds are parametrized by the Hassett divisor $\calC_8$. 
For $Y\in \calC_8$, there is an associated twisted K3 surface $(S,\alpha)$ with $S$ of degree $2$ and $\alpha \in \Br(S)[2]$. 
The geometry of this association is explored in \cites{Voisin86, Hassett99}. 
In this case, the associated Brauer class is studied in \cite{vGK}*{\S5.2}, where van Geemen and Kapustka produce a contraction of a hyperk\"ahler fourfold $\MM_h(\mv,\alpha)$ realizing $\alpha$, and show that the Fano variety of lines on $Y$ is birational to $\MM_h(\mv,\alpha)$. 
It was previously known that such a birational isomorphism exists \cite{HuyK3cat}*{Proposition~4.1}, but in \cite{vGK} the authors make that explicit.

Similarly, when $Y$ is a smooth cubic fourfold containing a nodal sextic del Pezzo surface, we have $Y\in \calC_{24}$, and there is an associated twisted K3 surface $(S,\alpha)$ with $S$ of degree 6 and $\alpha \in \Br(S)[2]$. 
The geometry of these cubic fourfolds is explored in \cite{Hassett24}. 
It is again the case that \cite{vGK} constructs a geometric realization of this Brauer class inside a hyperk\"ahler fourfold; indeed, one can check that $\alpha$ has a B-field with $\Ba^2 \equiv \frac{1}{2} \bmod \Z$ (or, in our notation, $\ia = 1, \ca \equiv 1 \bmod 2$). 
As before, the Fano variety of lines is birational to a moduli space of twisted sheaves on a K3 surface, and \cite{Hassett24}*{Theorem~5.3} makes this explicit.

\subsection{Cubic fourfolds of discriminant 18}

Let $Y$ be a smooth cubic fourfold containing a sextic elliptic ruled surface $T\subset Y$. 
These fourfolds are parametrized by the Hassett divisor $\calC_{18}$. 
For $Y\in \calC_{18}$, there is an associated twisted K3 surface $(S,\alpha)$ with $S$ of degree $2$ and $\alpha \in \Br(S)[3]$. The geometry of this association is explored in \cites{AHTVA, BVA}. 

As explained in \cite[\S2.7]{MSTVA}, when $Y$ and $S$ are both very general, such Brauer classes are of Type I. 
In this case, $\legendre{-d}{p} = \legendre{2}{3}=-1$, so there is no admissible Mukai vector $\ma$ with $\ma^2=-2$, but there is one with $\ma^2 =0$ and $r=1$ (see Table~\ref{tab:pnmidd}). 
By Theorem~\ref{thm:putittogetherUhl}, there is a $\PP^2$-bundle $P \to S$ representing $\alpha$ which is a subvariety of the $8$-fold $\MM_{h}(\mv, \alpha)$, $\mv=\ma+\mb$, and the Uhlenbeck contraction contracts $P$ onto $S$.

In fact, the choice of $\ma$ can be made explicit. 
Using Theorem~\ref{thm:BrauerLatticeClassification}, we find the lattice classification for choices $(\ia,\ca \bmod 3)\in (\Z/3\Z)^2$ as given in Table~\ref{tab:alpha_valuesmod3}. 
Thus, a cubic fourfold of discriminant 18 has an associated twisted K3 surface $(S,\alpha)$ with $\alpha$ having invariants $(\ia,\ca \bmod 3) \in \{(0,0), (1,2), (2,2)\}$. 
For each pair of possible invariants, we give admissible Mukai vectors with $\ma^2 = 0$ in Table~\ref{tab:admissibleapeq3}. 
\smallskip

\begin{table}
    \centering
    \renewcommand{\arraystretch}{1.2}
    \begin{tabular}{c|ccc}
        \hline
        $i_{\alpha} \setminus c_{\alpha} \bmod 3$ & 0 & 1 & 2 \\
        \hline
        0 & I & II & III \\
        1 & II & III & I \\
        2 & II & III & I \\
        \hline
    \end{tabular}
    \caption{Values for pairs $(i_{\alpha}, c_{\alpha} \bmod 3)$ when $d=1$.}
    \label{tab:alpha_valuesmod3}
\end{table}

\begin{table}
    \centering
    \renewcommand{\arraystretch}{1.2}
    \begin{tabular}{c|cc}
        \hline
        $(\ia, \ca \bmod 3)$ & $\ma$ \\
        \hline
        $(0,0)$ & $\left(3,3\Ba, -\frac{\ca}{3}\right)$ \\
        $(1,2)$ & $\left(3, 3\Ba+h, \frac{2-\ca}{3}\right)$ \\
        $(2,2)$ & $\left(3, 3\Ba+2h, \frac{8-\ca}{3}\right)$ \\
        \hline
    \end{tabular}
    \caption{Isotropic admissible Mukai vectors $\ma$ for pairs $(i_{\alpha}, c_{\alpha} \bmod 3)$ of type I when $d=1$.}
    \label{tab:admissibleapeq3}
\end{table}

The cubic fourfold $Y$ also has a naturally associated hyperk\"ahler $8$-fold, namely the LLSvS $8$-fold $Z(Y)$, parametrizing equivalence classes of twisted cubic curves in $Y$ \cite{LLSvS}. 
By \cite{AddingtonGiovenzana}*{Theorem~3(ii)}, $Z(Y)$ is birational to a moduli space of $\alpha$-twisted sheaves on $S$, where $(S,\alpha)$ is associated to $Y$. We make the moduli space explicit.

\begin{prop}
    \label{prop:disc18HKs}
    Let $Y$ be a very general cubic fourfold of discriminant $18$. 
    Then the LLSvS $8$-fold $Z(Y)$ is birational to the moduli space of $\alpha$-twisted sheaves $\MM_h(\mv,\alpha)$ on a K3 surface $S$ where $(S,\alpha)$ is associated $Y$ and $\mv = \ma + \mb$ with $\ma$ one of the admissible Mukai vectors as in Table~\ref{tab:admissibleapeq3}.
\end{prop}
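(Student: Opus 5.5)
We would prove the proposition by combining the result of Addington--Giovenzana with the global Torelli theorem for hyperk\"ahler manifolds of K3$^{[n]}$-type. Fix the associated twisted K3 surface $(S,\alpha)$, with $S$ of degree $2$ and $\alpha\in\Br(S)[3]$ of Type~I. By Table~\ref{tab:alpha_valuesmod3}, the invariants satisfy $(\ia,\ca \bmod 3)\in\{(0,0),(1,2),(2,2)\}$. For the corresponding $\ma$ of Table~\ref{tab:admissibleapeq3}, we check directly from \eqref{eq:vectorlength} that $\ma^2=0$. For instance, in the case $(1,2)$ we have $-\ca+1+1-3\cdot\frac{2-\ca}{3}=0$. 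Since $r=1$, both $\ma$ and $\mv=\ma+\mb$ are primitive, and $\mv^2=\ma^2+2\cdot 3=6$. Hence $\MM_h(\mv,\alpha)$ is a hyperk\"ahler $8$-fold of K3$^{[4]}$-type, matching $\dim Z(Y)=8$.

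By \cite{AddingtonGiovenzana}*{Theorem~3(ii)}, $Z(Y)$ is birational to $\MM_{\sigma}(\mw,\alpha)$ for some primitive $\mw\in\AlgLat$ with $\mw^2=6$ and some generic $\sigma$. All such moduli spaces for a fixed $\mw$ are birational to one another, since wall-crossing induces birational maps \cite{BMMMP}. The task therefore reduces to showing that $\MM_\sigma(\mw,\alpha)$ and $\MM_h(\mv,\alpha)$ are birational. Our plan is to use the birational global Torelli theorem (Markman), which requires a Hodge isometry $\HH^2(\MM_\sigma(\mw,\alpha),\Z)\cong\HH^2(\MM_h(\mv,\alpha),\Z)$ that is a parallel-transport operator. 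By Yoshioka's theorem \cite{Yoshioka06}*{Theorem~3.19}, the two second cohomologies are $\mw^\perp$ and $\mv^\perp$ inside $\HH^*(S,\alpha,\Z)$. It therefore suffices to find a Hodge isometry $\phi$ of the twisted Mukai lattice with $\phi(\mw)=\pm\mv$ that preserves orientation, or equivalently one that comes from a derived autoequivalence of $D^b(S,\alpha)$.

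The key step is the lattice computation. Here $\AlgLat$ has rank $3$ and discriminant related to $2dp^2=18$, and $\T(S,\alpha)\cong\Gamma_\alpha$. We would show that $\mw$ and $\mv$ lie in the same orbit under Hodge isometries of $\HH^*(S,\alpha,\Z)$ acting trivially (or by $\pm1$) on the discriminant of $\T(S,\alpha)$. Concretely, first compute the divisibility of $\mv$ in $\AlgLat$ and the class $\mv/\operatorname{div}(\mv)$ in the discriminant group $d(\mv^\perp\cap\AlgLat)$, and do the same for $\mw$. Then apply Eichler's criterion, or Nikulin's results, to the unimodular lattice $\HH^*(S,\Z)$: primitive vectors of the same square are conjugate under isometries fixing the transcendental part. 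This holds because the orthogonal complement of $\T(S,\alpha)$ contains enough hyperbolic planes after extension, and $\Gamma_\alpha$ is very general. Since $Y$ is very general, the only Hodge isometries of $\T(S,\alpha)$ are $\pm1$, so any isometry of $\AlgLat$ acting as $\pm1$ on its discriminant extends to the whole lattice. Orientation can be fixed by composing with $-\mathrm{id}$ or a reflection, in the form of a spherical twist, or with the dual/shift autoequivalence.

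The main obstacle is to ensure that the extended isometry is orientation-preserving, i.e.\ that it is induced by an autoequivalence. This lets us avoid Markman's monodromy subtleties: for K3$^{[4]}$-type with $n-1=3$ a prime power, the monodromy group is the full orientation-preserving group acting as $\pm1$ on the discriminant. We would address this by listing the rank-$3$ algebraic lattice explicitly in the basis of Proposition~\ref{prop:Mukai_Lattice_Basis} and writing down an isometry of $\AlgLat$ sending $\mw\mapsto\mv$ by hand. If orientation fails, we would twist by $\calO(h)$, which corresponds to tensoring by a line bundle and changes $k$, or dualize, which swaps $\alpha\leftrightarrow\alpha^{-1}$. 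The three rows of Table~\ref{tab:admissibleapeq3} are related by tensoring with $\calO(h)$, so one computation covers all cases.
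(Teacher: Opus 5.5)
Your skeleton is the same as the paper's: Yoshioka's identification $\HH^2(\MM(\mv,\alpha),\Z)\cong\mv^\perp$, plus Markman's result that for $n-1=3$ any Hodge isometry of $\HH^2$ forces birationality. But the one step with real content is not established, and the argument you give for it fails. That step is producing a Hodge isometry of $\HH^*(S,\alpha,\Z)$ sending $\mw$ to $\pm\mv$.

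For very general $Y$, a Hodge isometry of the twisted Mukai lattice has the form $\phi\oplus(\pm\mathrm{id}_{\T(S,\alpha)})$, where $\phi\in O(\AlgLat)$ acts by $\pm1$ on the discriminant group (order $18$, non-cyclic in Type~I). So you face an orbit problem in the rank-$3$ lattice $\AlgLat$ of signature $(2,1)$. That lattice cannot contain $U\oplus U$, so Eichler's criterion does not apply to it. Applying Eichler in the unimodular lattice $\HH^*(S,\Z)$ only gives an isometry with no reason to preserve $\T(S,\alpha)$, hence not a Hodge isometry.

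Your claim that primitive vectors of equal square are conjugate under isometries fixing the transcendental part is false in general. The divisibility in $\AlgLat$ and the class of $\mv/\mathrm{div}(\mv)$ in the discriminant group are invariants, and even when they agree, transitivity on such a small lattice has to be proved. Your by-hand fallback cannot be run as stated either. Addington--Giovenzana's Theorem~3(ii), as you use it, only says $\mw$ is some primitive class of square $6$. You would therefore need either transitivity of this restricted orthogonal group on all such classes, or an identification of $\mw$ by tracing $\lambda_2-\lambda_1$ through $K_{\topp}(\calA_Y)\cong\HH^*(S,\alpha,\Z)$.

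The paper instead starts from two inputs:
the Hodge isometry $\Phi\colon K_{\topp}(\calA_Y)\to\HH^*(S,\alpha,\Z)$ induced by $\calA_Y\simeq D^b(S,\alpha)$, and Addington--Giovenzana's Theorem~2, $\HH^2(Z(Y),\Z)\cong\langle\lambda_2-\lambda_1\rangle^\perp$. It then invokes Nikulin's uniqueness theorem (Theorem~1.14.4), using that $\lambda_2-\lambda_1$ and $\mv$ have the same square and divisibility, to adjust $\Phi$ so that $\lambda_2-\lambda_1\mapsto\mv$; Yoshioka and Markman finish.

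In either formulation, what must be supplied is that the adjusting isometry preserves $\T(S,\alpha)$. A concrete way is to write down, in the basis of Proposition~\ref{prop:Mukai_Lattice_Basis}, elements of $O(\AlgLat)$ acting by $\pm1$ on the discriminant that carry the relevant square-$6$ classes onto $\mv$.

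Two smaller corrections:
\begin{enumerate}
\item Orientation is not the main obstacle. Composing with $-\mathrm{id}$ (the shift) fixes it, and, as you note, for $2n-2=6$ every isometry acts by $\pm1$ on the discriminant. Spherical twists are unavailable anyway: $\AlgLat$ has no spherical classes here, because $\legendre{-1}{3}=-1$.
\item The rows of Table~\ref{tab:admissibleapeq3} are not related by tensoring with $\calO(h)$. That autoequivalence fixes $\alpha$ and shifts $k$ by $3$, whereas the rows correspond to different pairs $(\ia,\ca)$, i.e.\ different Brauer classes (rows $(1,2)$ and $(2,2)$ are inverse to each other). Each row needs its own check.
\end{enumerate}
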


\begin{proof}
    We follow the argument given in the proof of \cite{AddingtonGiovenzana}*{Theorem~3}. 
    First, since $(S,\alpha)$ is Hodge-theoretically associated to $Y$, \cite{HuyK3cat}*{Theorem~1.4} gives a derived equivalence $\calA_Y\cong D^b(S,\alpha)$, where $\calA_Y$ is the Kuznetsov component of $Y$. 
    As a consequence, there is a Hodge isometry $\Phi\colon K_{\topp}(\calA_Y)\xrightarrow{\sim} \HH^*(S,\alpha,\Z)$, where $K_{\topp}(\calA_Y)\subset K_{\topp}(Y)$ is introduced in \cite{AddingtonThomas}*{\S2.1} (see \cite{HuyK3cat}*{Proposition~3.3}). 
    By \cite{AddingtonGiovenzana}*{Theorem~2}, there is a Hodge isometry $\HH^2(Z(Y),\Z) \cong \langle \lambda_2-\lambda_1\rangle ^\perp$ (see also \cite{LPZ}, which uses $2\lambda_1+\lambda_2$), where $\lambda_1, \lambda_2 \in K_{\topp}(\calA_Y)$ are two special classes introduced in \cite{AddingtonThomas}.  
    Since $\lambda_2-\lambda_1$ and $\mv$ both have square $6$ and divisibility $2$, \cite{Nikulin}*{Theorem~1.14.4} implies that the isometry $\Phi$ can be modified to a $\Psi$ that sends $\lambda_2-\lambda_1$ to $\mv$. 
    Since both classes are algebraic, it follows that $\Psi$ restricts to a Hodge isometry $\langle \lambda_2-\lambda_1\rangle ^\perp \cong \mv^\perp$. 
    Thus, 
    \[
        \HH^2(Z(Y),\Z) \cong \langle \lambda_2-\lambda_1\rangle ^\perp \cong \mv^\perp \cong \HH^2(\MM_h(\mv,\alpha),\Z),
    \]
    where the last isometry comes from \cite{Yoshioka06}*{Theorem~3.19}.

    In general, a Hodge isometry $\HH^2(Z(Y),\Z) \cong \HH^2(\MM_h(\mv,\alpha),\Z)$ alone is not enough to conclude that the two hyperk\"ahler $8$-folds are birational -- the Torelli theorem for hyperk\"ahler manifolds \cites{Verbitsky, HuybrechtsTorelli} requires the isometry be a parallel-transport operator.
    However, results of \cite{Markman}*{\S9} show that two hyperk\"ahler manifolds $X$ and $X'$ of K3$^{[n]}$-type, with $n-1$ a prime power, are birational if and only if there is a Hodge isometry $\HH^2(X,\Z) \cong \HH^2(X',\Z)$. 
    Here, $n=4$, so it follows that $Z(Y)$ and $\MM_h(\mv,\alpha)$ are birational. 
\end{proof}

In \cite{BVA}, Berg and V\'arilly-Alvarado build off of \cite{AHTVA} to construct an \'etale $\PP^2$-bundle over $S$ representing $\alpha$. 
The $\PP^2$-bundle parametrizes twisted cubic curves in $Y$, which suggests a geometric connection to $Z(Y)$. 
Given Proposition~\ref{prop:disc18HKs}, it would be interesting to further explore the geometries of the \'etale $\PP^2$-bundle constructed in \cite{BVA} and the \'etale $\PP^2$-bundle constructed in Theorem~\ref{thm:putittogetherUhl}.

\subsection{Cubic fourfolds of discriminant 50}

Let $Y$ be a very general smooth cubic fourfold of discriminant $50$, so that $Y$ has an associated twisted K3 surfaces $(S,\alpha)$ with $S$ of degree $2$ and $\alpha \in \Br(S)[5]$. 
By \cite{MSTVA}*{Proposition 10} and its proof, the Brauer classes arising in this way are of lattice-theoretic Type III. 
Table~\ref{tab:alpha_valuesmod5} gives the lattice classification for choices $(\ia,\ca \bmod 5)\in (\Z/5\Z)^2$. 
In this case (see Table~\ref{tab:pnmidd}), there is always an admissible Mukai vector $\ma$ with $\ma^2 = -2$; Table~\ref{tab:admissibleapeq5} gives a such a spherical admissible $\ma$ for each choice of $(\ia, \ca \bmod 5)$.  
By Theorem~\ref{thm:putittogetherUhl}, there is a $\PP^4$-bundle, respectively a $\PP^9$-bundle, $P \to S$ representing $\alpha$ which is a subvariety of the $10$-fold, respectively $20$-fold, $\MM_{h}(\mv, \alpha)$, where $\mv=\ma+\mb$. 
The Uhlenbeck contraction contracts $P$ onto $S$.

\begin{table}
    \centering
    \renewcommand{\arraystretch}{1.2}
    \begin{tabular}{c|ccccc}
        \hline
        $i_{\alpha} \setminus c_{\alpha} \bmod 5$ & 0 & 1 & 2 & 3 & 4 \\
        \hline
        0 & I & II & III & III & II \\
        1 & II & I & II & III & III \\
        2 & II & III & III & II & I \\
        3 & II & III & III & II & I \\
        4 & II & I & II & III & III \\
        \hline
    \end{tabular}
    \caption{Values for pairs $(i_{\alpha}, c_{\alpha} \bmod 5)$}
    \label{tab:alpha_valuesmod5}
\end{table}

\begin{table}
    \centering
    \renewcommand{\arraystretch}{1.2}
    \begin{tabular}{c|cc}
        \hline
        $(\ia, \ca \bmod 5)$ & $\ma$ \\
        \hline
        $(0,2)$ & $\left(5,5\Ba+h, \frac{2-\ca}{5}\right)$ \\
        $(0,3)$ & $\left(10, 10\Ba+h, \frac{1-2\ca}{5}\right)$ \\
        $(1,3)$ & $\left(5, 5\Ba+h, \frac{3-\ca}{5}\right)$ \\
        $(1,4)$ & $\left(10, 10\Ba+3h, \frac{8-2\ca}{5}\right)$  \\
        $(2,1)$ & $\left(5, 5\Ba, \frac{1-\ca}{5}\right)$ \\
        $(2,2)$ & $\left(10, 10\Ba-3h, -\frac{1+2\ca}{5}\right)$  \\
        $(3,1)$ & $\left(5, 5\Ba, \frac{1-\ca}{5}\right)$  \\
        $(3,2)$ & $\left(10, 10\Ba+h, \frac{4-2\ca}{5}\right)$  \\
        $(4,3)$ & $\left(5, 5\Ba+2h, \frac{13-\ca}{5}\right)$  \\
        $(4,4)$ & $\left(10, 10\Ba-3h, -\frac{7+2\ca}{5}\right)$  \\
        \hline
    \end{tabular}
    \caption{Spherical admissible Mukai vectors $\ma$ for pairs $(i_{\alpha}, c_{\alpha} \bmod 5)$ of type III when $d=1$.}
    \label{tab:admissibleapeq5}
\end{table}

It is interesting that some choices give a $\PP^4$-bundle and other choices give a $\PP^9$-bundle. 
Recall that the lattice-theoretic association between $Y$ and $(S,\alpha)$ only uses the subgroup $\langle \alpha \rangle \subset \Br(S)$ generated by $\alpha$. 
One can check that for any $\alpha$ with $(\ia, \ca \bmod 5)$ as in Table~\ref{tab:admissibleapeq5}, the subgroup $\langle \alpha \rangle$ contains a class $m\alpha$, $0 \leq m \leq 4$, with $(\ia, \ca \bmod 5)$ leading to a $\PP^4$-bundle (the pair has an admissible Mukai vector of rank $5$). 
See Example~\ref{ex:Mukaidualpeq3}(1) for a similar example in the case of 3-torsion, where $\alpha$ and $\alpha^{-1}$ are naturally constructed as $\PP^2$- and $\PP^5$-bundles.

Finally, we wonder if there is an analog to Proposition~\ref{prop:disc18HKs}. 
Using the derived equivalence $\calA_Y \cong D^b(S,\alpha)$, one should be able to identify the $10$-fold $\MM_h(\mv,\alpha)$ with a moduli space of stable objects in $\calA_Y$ (at least birationally), or better yet with a space parametrizing certain curves in $Y$.  

\section{Points of contact with existing literature}\label{sec:contact}

\subsection{Two-torsion and a comparison to van Geemen-Kapustka}
\label{subsec:2torsionvGK}

Several authors give constructions of two-torsion Brauer classes over K3 surfaces with hyperk\"{a}hler contractions (\cites{vGK, HTintnums}).  
In~\cite{vGK}, van Geemen and Kapustka characterize all Brauer classes on a very general K3 surface $S$ that can be represented as the exceptional locus of a hyperk\"{a}hler fourfold contraction. 
For dimension reasons, any Brauer class represented by a fourfold contraction has order $2$.
\begin{theorem}[\cite{vGK}*{Theorem 0.1}]
    \label{thm:vGKmain}
    Let $(S,h)$ be a very general K3 surface and $\alpha \in \Br(S)[2]$ a nontrivial class. 
    Then there exists a hyperk\"ahler fourfold $X$ of K3$^{[2]}$-type containing a conic bundle representative $E\to S$ for $\alpha$ together with a contraction $X \to Y$ whose exceptional divisor $E$ contracts onto $S$ as in the diagram 
      \[
        \xymatrix{
            E\ar[d] \ar@{^{(}->}[r] & X \ar[d] \\
            S \ar@{^{(}->}[r] & Y
        }
    \]
    if and only if there exists a $B$-field representative for $\alpha$ with $B^2 \equiv \frac12 \bmod{\mathbb{Z}}$.
\end{theorem}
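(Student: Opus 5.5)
The plan is to prove both directions inside the framework of \S\ref{sec:contractiongeometry}--\S\ref{sec:UsingUhlenbeck}. The bridge is a purely arithmetic reformulation of the condition ``$B^2\equiv\tfrac12 \bmod \Z$'' for $n=2$. Write $B=\Ba$ as in~\eqref{eq:BfieldChoice}. Every other B-field lift of $\alpha$ has the form $B'=B+\lambda+\tfrac{k}{2}h$ with $\lambda\in\HH^2(S,\Z)$ and $k\in\Z$. Since $2B\in\HH^2(S,\Z)$ and $\lambda^2$ is even, the term $\lambda$ changes $B^2$ only by an integer, so modulo $\Z$
\[
    (B')^2 \equiv \Bigl(B+\tfrac{k}{2}h\Bigr)^2 = \frac{-\ca + k\ia + dk^2}{2}.
\]
Hence a lift with $B'^2\equiv\tfrac12$ exists if and only if $-\ca+k\ia+dk^2$ is odd for some $k$. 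Comparing with~\eqref{eq:diophproblem-2} for $p=2$, $r=1$, $m=-1$, this is exactly the solvability of $-\ca+dk^2+k\ia-2t=-1$. Thus the condition is equivalent to the existence of an admissible $\ma=(2,2\Ba+kh,t)$ with $\ma^2=-2$ and $r=1$; primitivity of $\ma$ and $\mv$ is automatic by Lemma~\ref{lem:primitiveMukaivec}.

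For the ``if'' direction, apply Theorem~\ref{thm:Uhladmissiblea} to this $\ma$. The hypothesis $\gcd(r,k)=1$ holds since $r=1$, and part~(3) applies because $\ma^2=-2\le 2nr-4=0$. The outputs are as follows.
\begin{enumerate}
    \item $\MM_h(\ma,\alpha)$ is a single point $A$, a rigid $\mu$-stable $\alpha$-twisted bundle of rank $2$.
    \item $X:=\MM_h(\mv,\alpha)$ has dimension $\mv^2+2=4$.
    \item The exceptional locus of the Uhlenbeck contraction is $Q=\PP(A)$. It has dimension $3$, so it is a divisor, and it is a $\PP^1$-bundle over $S$ representing $\alpha$ that is contracted onto $S\subset\MM^{\Uhl}(\mv,\alpha)$.
\end{enumerate}
Since $X$ is a moduli space of twisted sheaves with primitive $\mv$, it is of K3$^{[2]}$-type. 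This gives the required conic bundle.

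The ``only if'' direction is the main obstacle. Start from $X$ of K3$^{[2]}$-type with a divisorial contraction whose exceptional divisor $E\to S$ is a conic bundle representing $\alpha$. First, use the $\PP^1$-bundle structure to show that $\HH^2(X,\Z)$ contains, Hodge-theoretically, the transcendental Mukai lattice $\T(S,\alpha)\cong\Gamma_\alpha$. One route is to compute $\T(X)$ through the correspondence $\HH^2(S)\to\HH^4(E)\to\HH^2(X)$. Next, use Markman's description of the extended Mukai lattice to obtain a primitive $\mv\in\AlgLat$ with $\mv^2=2$ and $\HH^2(X)\cong\mv^\perp$. By the Bayer--Macr\`i description of the movable and nef cones, $X$ is then birational, and compatibly with the contraction, to some $\MM_\sigma(\mv,\alpha)$ with $\sigma$ near a divisorial wall $\calW$.

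Then analyze $\calW$ via Theorem~\ref{thm:BMMMPThm5.7}(2). The contracted divisor is ruled over a K3 surface, namely $S$ itself, rather than over a Hilbert scheme. This should exclude the Hilbert--Chow case and Brill--Noether walls with a non-surface base. I expect it to force a Li--Gieseker--Uhlenbeck wall in which the contracted curves are extensions of a rigid rank-$2$ object $A$ by points. In other words, $\calH$ is spanned by $\mb$ and a spherical $\ma=\mv-\mb$ of rank $2$, i.e.\ $r=1$, $\ma^2=-2$, as in Proposition~\ref{prop:floppingwalls1}(2). By the first paragraph, this yields the B-field condition.

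The delicate points are two. One must rule out the possibility that $E$ arises from an exotic wall in a different twisted model $(S',\alpha')$ rather than $(S,\alpha)$. One must also show that the isotropic class is $\mb$, so that the base of the fibration is identified with $S$. If this direct lattice argument becomes unwieldy, the fallback is to quote the classification in \cite{vGK}*{Theorem~0.1} for this direction.
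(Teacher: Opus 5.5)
Your ``if'' direction is exactly what the paper does. The paper does not prove this theorem; it quotes it from \cite{vGK}. In \S\ref{subsec:2torsionvGK} it then observes that Theorem~\ref{thm:putittogetherUhl} recovers this direction, namely Theorem~\ref{thm:Uhladmissiblea} applied to an admissible $\ma$ with $r=1$ and $\ma^2=-2$, after rephrasing the B-field condition as ``$\ca$ odd or $d+\ia$ odd''. Your computation $(\Ba+\tfrac{k}{2}h)^2=\tfrac{-\ca+k\ia+dk^2}{2}$ proves that rephrasing cleanly, together with the remark that changing the lift by $\lambda\in\HH^2(S,\Z)$ moves $B^2$ only by an integer. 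The paper merely asserts it.

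For the ``only if'' direction your sketch does not close, and the one concrete mechanism it offers is wrong. You exclude Hilbert--Chow and Brill--Noether walls because $E$ is ruled over a surface ``rather than over a Hilbert scheme''. But for $\mv^2=2$ every divisorial wall contracts a $\PP^1$-bundle over a surface:
\begin{itemize}
\item a Hilbert--Chow wall ($\mw\cdot\mv=1$) contracts a $\PP^1$-bundle over the K3 surface $\MM(\mw)$; for $S^{[2]}$ this is $\PP(T_S)\to S$;
\item a Brill--Noether wall contracts one over $\MM(\mv-\ms)$, where $(\mv-\ms)^2=0$.
\end{itemize}
In fact, for $\mv^2=2$ the Brill--Noether and Li--Gieseker--Uhlenbeck conditions are equivalent via $\mw=\mv-\ms$. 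This is the paper's remark that conditions (2)(a) and (2)(c) of Theorem~\ref{thm:BMMMPThm5.7} hold simultaneously for van Geemen--Kapustka's walls.

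So Hilbert--Chow must be excluded by a different argument. For instance, $\mw\cdot\mv=1$ makes $S'=\MM(\mw)$ a fine moduli space with $D^b(S,\alpha)\cong D^b(S')$. The ruling is then the untwisted $\PP(T_{S'})$, which contradicts $\alpha\neq 0$.

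What remains is precisely the two points you flag but do not resolve.
\begin{itemize}
\item You must identify $X$ birationally with some $\MM_\sigma(\mv,\alpha')$, compatibly with the contraction. On your route this needs $\T(X)\cong\T(S,\alpha)$ and an extension of that isometry to a Hodge isometry of Mukai lattices, neither of which you prove.
\item You must show that the isotropic class $\mw=\mv-\ms$ can be sent to $\mb$ by a twisted derived equivalence. That equivalence has to identify $(\MM(\mw),\alpha_{\mw}^{\pm1})$ with $(S,\alpha)$, where $\alpha_{\mw}$ is the obstruction class of $\MM(\mw)$. Only then does $\mb\cdot\mv=2$ force $r=1$ and $\ma^2=-2$, so that your first paragraph yields the B-field condition.
\end{itemize}
Since none of this is carried out, your ``only if'' currently rests entirely on the fallback citation of \cite{vGK}. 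That citation is also all the paper itself offers for this direction.
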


The condition on the $B$-field can be rephrased in terms of the invariants $(\ia, \ca\bmod 2)$: $\alpha \in \Br(S)[2]$ has $B$-field representative with $B^2 \notin \mathbb{Z}$ if and only if $\ca \equiv 1 \bmod{2}$ or $d+\ia \equiv 1 \bmod{2}$. 
The theorem therefore says there is a hyperk\"{a}hler fourfold construction of a nontrivial $\alpha \in \Br(S)[2]$ when $(\ia, \ca\bmod 2) \ne (1,0)$ if $d$ is odd, and when $(\ia, \ca \bmod 2) \ne (0,0)$ if $d$ is even. 

Having translated this condition,  we see that Theorem~\ref{thm:putittogetherUhl} and Tables~\ref{tab:2nmidd} and~\ref{tab:2midd} imply that there is a hyperk\"{a}hler fourfold construction of a two-torsion class $\alpha$ with $r_{\min}=1$ and ${\bf{a}}^2 = -2$ whenever $B^2 \notin \mathbb{Z}$. 
Our main results in this paper therefore recover one direction of Theorem~\ref{thm:vGKmain}. 

\begin{remark}
    van Geemen and Kapustka construct their contractions to be of Brill-Noether type, whereas we construct them to be of Li-Gieseker-Uhlenbeck type. No incongruity has occurred: both lattice-theoretic conditions (a) and (c) of Theorem~\ref{thm:BMMMPThm5.7} hold.
\end{remark}

For two-torsion classes with $B^2 \in \mathbb{Z}$ for every $B$-field representative, our results give information about the minimal dimension of hyperk\"ahler $X$ whose contraction locus represents the class. 
If $d$ is odd, then the proofs of Theorem~\ref{thm:putittogetherUhl} and  Proposition~\ref{prop:diophantinepeq2} yield a construction of any such class as a $\mathbb{P}^{2r-1}$-bundle $E\to S$ with $r=2^{v_2(d+1)}$, which is an irreducible component of the exceptional locus of a contraction of a hyperk\"{a}hler $4r$-fold.
For an example, see Example~\ref{ex:Mukaidualpeq2}(1) which considers $d=1$.
If $d$ is even, then we can construct a representative class with $\mathbf{a}^2 = 0$ and $r=1$, exhibiting the Brauer class as a smooth conic bundle within the $5$-dimensional exceptional locus of a divisorial contraction of a hyperk\"{a}hler sixfold.

\subsection{Mukai duality and a different construction via Addington-Takahashi}
\label{subsec:AddingtonTakahashi}

There is another geometric construction of some Brauer classes on K3 surfaces which arise from the moduli problem for Mukai dual K3 surfaces. 
Here, we follow \cite{AddingtonTakahashi} to realize this construction as a subvariety of a moduli space of sheaves. 
The upside to this construction is that the Severi-Brauer variety is a subvariety of a moduli space of {\it untwisted} sheaves, but the moduli space parameterizes sheaves on the {\it Mukai dual} K3 surface. 
In considering explicit examples for small values of $d$ and $p$, we compare the construction of Proposition~\ref{prop:ptorsionMukaiduality} to the constructions of \S\S\ref{sec:contractiongeometry}--\ref{sec:mainresultproofs} as well as to examples in the literature, e.g.~\cites{HTintnums, MSTVA, vGK}.

\begin{prop}\label{prop:ptorsionMukaiduality}
    Let $S$ be a degree $2d$ K3 surface of Picard rank 1, $p$ a prime number, and $\alpha \in \Br(S)[p]$ a class such that one of the following holds:
    \begin{enumerate}[leftmargin=*]
        \item if $p=2$ and $d$ is odd, then $\alpha$ has $(\ia, \ca \bmod 2) = (1,0)$; 
        \item if $p=2$ and $d$ is even, then $\alpha$ has $\ia =1$;
        \item if $p>2$ and $p\nmid d$, then $\alpha$ is of Type II; or
        \item if $p>2$ and $p\mid d$, then $\alpha$ is of Type D.
    \end{enumerate}
    There exists a degree $2dp^2$ K3 surface $T$ with $\Pic T=\Z f$, and there is a closed subvariety $X\subset \MM_f(p-1,f,pd-1)$ and a \(\PP^{p(d+1)-1}\)-bundle $X \to S$ which represents a class $\gamma \in \Br(S)$ such that $\langle \gamma \rangle = \langle \alpha \rangle$.
\end{prop}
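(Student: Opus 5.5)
The plan is to reverse the Mukai-duality construction: produce $T$ of degree $2dp^2$ whose moduli space $\MM_f(p,f,pd)$ recovers $S$ as a fine-up-to-twist moduli space, and then build $X$ from sections of the corresponding bundles on $T$, following \cite{AddingtonTakahashi}.

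\textbf{Step 1 (lattice theory).} First I show that under each of the hypotheses (1)--(4), the index-$p$ lattice $\Gamma_\alpha=\ker(\bar\alpha\colon T(S)\to\Z/p\Z)$ is isometric to the transcendental lattice of a Picard rank one K3 surface of degree $2dp^2$. That is, $\Gamma_\alpha\cong\langle -2dp^2\rangle\oplus U^2\oplus E_8(-1)^2$.
\begin{itemize}
\item By Theorem~\ref{thm:BrauerLatticeClassification}, the discriminant group $d(\Gamma_\alpha)$ is cyclic of order $2dp^2$ exactly in Types II, III and D. I would redo the analogous check by hand for the two $p=2$ cases.
\item Among the cyclic cases, the discriminant form of $\langle -2dp^2\rangle\oplus\Lambda'$ has $-2dp^2q(v)$ equal to a square. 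Via the identity $-2dp^2q(v)=\Da$ from the proof of Theorem~\ref{thm:BrauerLatticeClassification}, this singles out Type II when $p\nmid d$; in Type D the corresponding $p$-part computation should always succeed.
\item Nikulin's uniqueness (\cite{Nikulin}*{Corollary~1.13.3}) then gives the isometry.
\end{itemize}

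\textbf{Step 2 (the surface $T$ and the class $\gamma$).} Surjectivity of the period map produces $T$ with $\Pic T=\Z f$, $f^2=2dp^2$, and a Hodge isometry $T(T)\cong\Gamma_\alpha$. The vector $\mw=(p,f,pd)$ is primitive and isotropic, since $\mw^2=2dp^2-2p^2d=0$. Hence $\MM_f(\mw)$ is a K3 surface $S'$ with $T(T)\cong\ker(T(S')\to\Z/p\Z)$, where that map is the obstruction class $\gamma'$ to a universal sheaf (Mukai, C\u{a}ld\u{a}raru). Here the divisibility of $\mw\cdot(-)$ on the algebraic lattice is $p$.

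Comparing overlattices, $T(S')$ is the unique even unimodular-type overlattice containing $\Gamma_\alpha$ with the right discriminant. I would then use the Torelli theorem to get $S'\cong S$, with $\gamma'$ pulling back to a class $\gamma$ satisfying $\ker\bar\gamma=\Gamma_\alpha=\ker\bar\alpha$. Since a cyclic subgroup of order $p$ is determined by its kernel (\S\ref{subsec:MSTVAreview}), this gives $\langle\gamma\rangle=\langle\alpha\rangle$. The Torelli step should be possible after composing with $\pm1$ or an automorphism.

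\textbf{Step 3 (the bundle $X$).} For $E\in\MM_f(\mw)\cong S$, I would prove the following.
\begin{itemize}
\item $E$ is $\mu$-stable and locally free. Slope stability follows from primitivity of $f$ and the rank-divisibility argument of Lemma~\ref{lem:Uhladmissiblea}; local freeness follows because $E^{\dd}$ would have negative square.
\item $H^2(E)=\Hom(E,\calO_T)^*=0$ by slope.
\item $H^1(E)=0$. Hence $h^0(E)=\chi(E)=-\mw\cdot(1,0,1)=p+pd=p(d+1)$.
\end{itemize}
For each nonzero $s\in H^0(E)$, the cokernel $F_s$ of $s\colon\calO_T\to E$ has Mukai vector $\mw-(1,0,1)=(p-1,f,pd-1)$. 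I would show that $F_s$ is torsion free and Gieseker stable, so that $\MM_f(p-1,f,pd-1)$ receives a morphism from $X:=\PP(\mathcal{E}^{*})\to S$. Here $\mathcal{E}$ is the $\gamma'$-twisted pushforward of the universal sheaf, a twisted vector bundle of rank $p(d+1)$ by cohomology and base change. This exhibits $X\to S$ as an étale $\PP^{p(d+1)-1}$-bundle representing $\gamma^{\pm1}$. Finally I show that $X\to\MM_f(p-1,f,pd-1)$ is a closed immersion. The extension $0\to\calO_T\to E\to F_s\to0$ is recovered from $F_s$ because $\Ext^1(F_s,\calO_T)\cong\C$, which gives injectivity. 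Tangent injectivity then follows by a connecting-homomorphism argument as in Lemma~\ref{lem:gQtoM}.

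\textbf{Main obstacle.} I expect the main difficulty in Step 3 to be the vanishing $H^1(E)=0$ together with torsion-freeness and stability of every cokernel $F_s$, uniformly in $s$. For example, $s$ could vanish along a curve, or $F_s$ could contain a destabilizing subsheaf. I would first try to rule these out using Picard rank one: a torsion part of $F_s$ would be supported on a curve in $|mf|$ and force an impossible subsheaf $\calO_T(mf)\subset E$ with $m\ge1$ by slope. Stability of $F_s$ should then reduce, again via $\gcd$ and rank considerations, to $\mu$-stability of $E$. For $H^1(E)$ I would try the Bogomolov-type/Brill--Noether vanishing used in \cite{AddingtonTakahashi} for Mukai-dual K3 surfaces.
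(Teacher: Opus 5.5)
Your proposal is correct and follows essentially the same route as the paper. Remark~\ref{rmk:Sismukaidual} supplies $T$, the identification $S\cong\MM_{T,f}(p,f,pd)$, and the obstruction class generating $\langle\alpha\rangle$. The proof then takes $X$ to be the projectivization of the twisted bundle of sections (the Addington--Takahashi correspondence), uses the same Brill--Noether vanishing $H^1(E)=0$ that you flag to get the $\PP^{p(d+1)-1}$-bundle, and proves the closed immersion by a Lemma~\ref{lem:gQtoM}-type argument. The differences are minor: you read off $h^1(F_s)=1$, and hence injectivity, directly from the long exact sequence of $0\to\calO_T\to E\to F_s\to 0$ rather than from the emptiness of ${}_2\MM_{T,f}(p-1,f,pd-1)$, and you check torsion-freeness and stability of each $F_s$ by hand, which the paper leaves implicit.
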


\begin{remark}\label{rmk:Sismukaidual}
    The existence of the surface $T$ is explained in \cite{vanGeemen}*{Corollary~9.4} and \cite[\S2.6]{MSTVA}; we recall it briefly here. 
    Given the pair $(S,\alpha)$ where $\alpha$ satisfies one of (1)--(4) in Proposition~\ref{prop:ptorsionMukaiduality}, the subgroup $\langle \alpha \rangle \subset \Br(S)$ determines a sublattice $\Gamma_\alpha \hookrightarrow \T(S)$ of index $p$, which further embeds into the K3 lattice $\Lambda_{\mathrm{K3}}$. Via the Torelli theorem, this gives a degree $2dp^2$ K3 surface $(T,f)$ satisfying $\Gamma_\alpha \cong \T(T)$. 
    From $T$, one constructs the moduli space $M=\MM_f(p,f,pd)$. 
    Since $H^2(M,\Z)\cong \mv^\perp/\langle \mv \rangle$ for $\mv = (p, f, pd)$, one checks that there is an inclusion $\T(T) \hookrightarrow \T(M)$ of index $p$. 
    By construction, these overlattices of $\T(T)$ are isometric, i.e.\ $T(S) \cong T(M)$, and moreover this isometry extends to an isometry $H^2(S,\Z) \cong H^2(M,\Z)$ by \cite{Nikulin}*{Propositions~1.4.1, 1.4.2}.
    This gives the identification $M\cong S$. 
    Moreover, $M$ has a natural class $\beta \in \Br(S)[p]$ which is the obstruction the existence of a universal sheaf on $M\times T \cong S\times T$. 
    Said another way, $\beta$ is the class for which there exists a $\beta$-twisted universal sheaf $\calU$ on $S\times T$ inducing a twisted derived equivalence $D^b(S,\beta) \cong D^b(T)$. This implies $\langle \beta \rangle = \langle \alpha \rangle \subset \Br(S)[p]$. 
    In the proof of Proposition~\ref{prop:ptorsionMukaiduality}, we construct a Severi-Brauer variety for $\beta^{-1}$.
\end{remark}

Before the proof, we introduce some notation following \cite[\S3]{AddingtonTakahashi}. 
The authors consider the sequence of moduli spaces of sheaves on $T$ (we use subscripts $T,f$ to emphasize the surface and the polarization with which stability is taken): 
\[
    \cdots \hspace{1em} \MM_{T,f}(p-2,f,pd-2) \hspace{1em} \MM_{T,f}(p-1,f,pd-1) \hspace{1em} \MM_{T,f}(p,f,pd) \hspace{1em} \MM_{T,f}(p+1,f,pd+1) \hspace{1em} \cdots.
\]
Addington and Takahashi construct correspondences (smooth projective varieties) between the moduli spaces of sheaves in this sequence.

We are interested in a correspondence between $\MM_{T,f}(p-1,f,pd-1)$ and $\MM_{T,f}(p,f,pd)$, which in their notation corresponds to $k=1$ and $\chi = pd+p-1$, so that 
\begin{align*}
    \MM_{\chi-1} &= \MM_{pd+p-2} = \MM_{T,f}(p-1,f,pd-1),\\
    \MM_{\chi+1} &= \MM_{pd+p} = \MM_{T,f}(p,f,pd),
\end{align*}
where the subscript gives the Euler characteristic of the sheaves parametrized by the moduli space. 
Note that we are in their setting where we must allow negative ranks, but both $\MM_{\chi-k}$ and $\MM_{\chi+k}$ are to the right of their ``negative rank fix.''

\begin{proof}[Proof of Proposition~\ref{prop:ptorsionMukaiduality}]
    As explained in Remark~\ref{rmk:Sismukaidual}, the assumptions on $\alpha$ ensure that $\Gamma_\alpha$ is isometric to the transcendental lattice of a K3 surface $T$ of degree $2dp^2$ \cite{vanGeemen}*{Corollary~9.4}, \cite[\S2.6]{MSTVA}.
    Then there is an isomorphism $S\cong \MM_{T,f}(p,f,pd)$, and the Brauer class $\beta$, which is the obstruction to the existence of a universal sheaf on $S\times T$, generates the same subgroup of $\Br(S)[p]$ as $\alpha$. 
    However, there is always a $\beta$-twisted universal sheaf $\calU$ on $S\times T$.

    Letting $p\colon S\times T \to S$ be the projection, we define $X=\PP(\mathcal{E}xt^2_p(\calU,\calO_{S\times T}))$. 
    Informally, as explained in \cite{AddingtonTakahashi}*{\S3}, $X$ parametrizes pairs $(F, q)$ where $F\in \MM_{T,f}(p-1,f,pd-1)$ is a sheaf and $q$ is a quotient $q\colon H^1(T,F) \twoheadrightarrow \C$, or equivalently $(G, i)$ where $G\in M_{T,f}(p,f,pd)$ is a sheaf and $i$ is an inclusion $i\colon \C \hookrightarrow \HH^0(T,G)$.
    Then $X$ comes with morphisms
    \begin{center}
        \begin{tikzcd}
             & X \arrow[dr, "g"] \arrow[dl, "f", swap] \\
            \MM_{T,f}(p-1,f,pd-1) &  & \MM_{T,f}(p,f,pd)
        \end{tikzcd}    
    \end{center}
    which each forgets the cohomological information.

    First, we claim that $g$ is a \(\PP^{p(d+1)-1}\)-fibration. 
    For \(G\in \MM_{T,f}(p,f,pd)\), as noted above we have \(\chi(G)=pd+p\). 
    As explained in \cite[\S2]{AddingtonTakahashi}, this means that the general element has \(h^0(T,G)=pd+p\) and \(h^1(T,G)=0\). 
    There is a Brill-Noether stratification of \(\MM_{T,f}(p,f,pd)\) on which the cohomology can jump. 
    The locus \(_t \MM_{T,f}(p,f,pd)\) on which cohomology jumps by at least \(t\) has codimension \(t(\chi+t)\) {\it if} this number is \(\leq \frac{1}{2} \dim \M_{T,f}(p,f,pd)=1\). 
    We check for \(t=1\) that
    \[
        t(\chi+t)=pd+p+1>1,
    \]
    so the Brill-Noether locus is empty. 
    Thus, for all \(G\in \MM_{T,f}(p,f,pd)\), 
    \[
        g^{-1}(G)=\PP \HH^0(T,G)^*\cong \PP^{pd+p-1}.
    \]

    Next, we claim that $f$ is a closed immersion. 
    For any \(F\in \MM_{T,f}(p-1,f,pd-1)\), \(\chi(F)=pd+p-2\), so for $F$ general, \(h^0(T,F)=pd+p-2\) and \(h^1(T,F)=0\). 
    Since \(X\) parametrizes pairs \((F, \HH^1(T,F) \twoheadrightarrow \C)\), we see that the image of \(f\) is contained in the Brill-Noether locus \linebreak\(_1\MM_{T,f}(p-1,f,pd-1)\). 
    Again, the locus \(_t \MM_{T,f}(p-1,f,pd-1)\) has codimension \(t(\chi+t)\) {\it if} this number is \(\leq \frac{1}{2} \dim \M_{T,f}(p-1,f,pd-1)=pd+p\). 
    When \(t=1\) this quantity is 
    \[
        t(\chi+t)=pd+p-1\leq pd+p.
    \]
    So the Brill-Noether locus \(_1\MM_{T,f}(p-1,f,pd-1)\) has codimension \(pd+p-1\), or equivalently dimension \(pd+p+1\). 
    We also check that for \(t=2\), 
    \[
        t(\chi+t)=2(pd+p)>pd+p,
    \]
    so \(_2\MM_{T,f}(p-1,f,pd-1)=\emptyset\). 
    For \(F\) in the non-empty Brill-Noether locus, we have \(f^{-1}(F)=\PP \HH^1(F)\cong \PP^0\). 
    Thus, \(f\) is injective on closed points.

    The argument that $f$ is injective on tangent vectors at closed points uses arguments as in \S\ref{sec:contractiongeometry} (e.g.~the proof of Lemma~\ref{lem:gQtoM}), since the relationship between pairs $(F, \HH^1(T,F) \twoheadrightarrow \C)$ and $(G, \C \subset \HH^0(T,G))$ realizes $F \in \Ext^1(\calO_T[1],G)$. 
    For this reason we omit the details.

    For the final claim, we observe that $X$ is the projectivization of a $\beta^{-1}$-twisted vector bundle on $S$, and thus $g\colon X \to S$ represents $\beta^{-1}$. 
    The result then follows by Remark~\ref{rmk:Sismukaidual}, taking $\gamma = \beta^{-1}$, since $\langle \beta^{-1} \rangle = \langle \beta \rangle = \langle \alpha \rangle$.
\end{proof}

\begin{remark}
    We wonder if the equivalence $D^b(T)\cong D^b(S,\beta)$ induces an isomorphism between $\MM_{T,f}(p-1,f,pd-1)$ and $\MM_{S,h}(\mv,\beta)$ for some admissible $\mv \in \HH^*_{\alg}(S,\beta,\Z)$ via Theorem~\ref{thm:putittogetherUhl} in a way that identifies the two constructions of Severi-Brauer varieties on $S$. 
\end{remark}

We include some examples with small values of $p$ and $d$.

\begin{example}{\bf ${p=2}$.}\label{ex:Mukaidualpeq2}
Since $\alpha, \beta \in \Br(S)[2]$, it follows that $\alpha =\beta$.
    \begin{enumerate}
        \item Let $d=1$, so that $T$ is a degree $8$ K3 surface and $S\cong \MM_{T,f}(2,f,2)$. 
        We find a subvariety $X \subset \MM_{T,f}(1,f,1)\cong \Hilb^4(T)$ which is a $\PP^3$-bundle over $S$. 
        This recovers the classical construction of Mukai, which was previously worked out in this form in \cite[Example 4.10]{HTintnums}. 
        See also \cite[\S3.2]{MSTVA}. 
        
        As noted in \S\ref{subsec:2torsionvGK}, the construction in Theorem~\ref{thm:putittogetherUhl} also gives a $\PP^3$-bundle inside a hyperk\"ahler $8$-fold $\MM_{S,h}(\ma+\mb, \alpha)$, by picking an admissible vector $\ma$ with $\ma^2=-2$. 
        Explicitly, in this case $(\ia, \ca \bmod 2)=(1,0)$, so $\ma = (4, 4\Ba+h, 1-\ca)$ works.
        \smallskip
        
        \item When $d=2$, we recover the degree 16 to degree 4 duality, see \cite[\S3.4]{MSTVA}. 
        Here, $T$ is a degree 16 K3 surface and $S \cong \MM_{T,f}(2,f,4)$ is a degree 4 K3 surface. 
        The proposition produces a subvariety $X\subset \MM_{T,f}(1,f,3) \cong \Hilb^6(T)$ which is a $\PP^5$-bundle over $S$. 
        This construction again appears implicitly in \cite{HTintnums}*{p.~316}. 
        
        For comparison, the construction in Theorem~\ref{thm:putittogetherUhl} (and also \cite{vGK}) gives a $\PP^1$-bundle in a hyperk\"ahler fourfold, since the class arising from Mukai duality (i.e.~the class for which $\Gamma_\alpha$ is Hodge isometric to the transcendental lattice of a K3 surface) has $\ia = 1$. 
        This observation addresses a question posed in \cite[\S3.4]{MSTVA}. 
        Namely, this gives a natural description of a Severi-Brauer variety for the Brauer class $\alpha \in \Br(S)$ which obstructs the existence of a universal sheaf on $S\times T$. Theorem~\ref{thm:putittogetherUhl} also produces a $\PP^{5}$-bundle over $S$ in a $12$-fold $M_{s,h}(\ma + \ma, \alpha)$ with $\ma^2=-2$ and $r=3$, matching the dimensions above, and giving a rigid construction with $\dim M=0$.
    \end{enumerate}
\end{example}

\begin{example}{\bf ${p=3}$.}\label{ex:Mukaidualpeq3}
\begin{enumerate}
    \item When $d=1$, we have $T$ a degree 18 K3 surface and $S\cong \MM_{T,f}(3,f,3)$ a degree 2 K3 surface. 
    In this case, $X\subset \MM_{T,f}(2,f,2)$ is a $\PP^5$-bundle over $S$, contained in the $12$-fold $\MM_{T,f}(2,f,2)$. 
    By comparison, \cite[\S3.3]{MSTVA} constructs a $\PP^2$-bundle $W = \PP(A)\to S$ representing a class $\alpha$, and the $\PP^5$-bundle arises naturally as $\PP(\Sym^2 A) \to S$, representing $\alpha^{-1}$ (see the Claim in the proof of \cite{MSTVA}*{Lemma~18}). 

    When $\alpha \in \Br(S) [3]$ is of Type II, Table~\ref{tab:alpha_valuesmod3} gives explicit possible values for $(\ia, \ca \bmod 3)$ (from which explicit admissible vectors can be constructed). 
    In each case, $\legendre{\Da-4}{3}=0$, so there is an admissible Mukai vector $\ma$ with $\ma^2=-2$ and $r=1$ (see Table~\ref{tab:pnmidd}). 
    Thus, Theorem~\ref{thm:putittogetherUhl} produces a $\PP^2$-bundle over $S$ which is a subvariety of the $6$-fold $M_{S,h}(\ma+\mb, \alpha)$. Theorem~\ref{thm:putittogetherUhl} also gives a rigid construction with $\ma^2=-2$, $r=2$ in this case, yielding a $\PP^{5}$-bundle in a hyperk\"ahler $12$-fold.
    \smallskip
    
    \item When $d=2$, so that $S\cong \MM_{T,f}(3,f,6)$ is a degree $4$ K3 surface, we find a subvariety $X\subset \MM_{T,f}(2,f,5)$ which is a $\PP^8$-bundle over $S$ contained in the $18$-fold $\MM_{T,f}(2,f,5)$. 
    
    In this case, $\legendre{\Da-8}{3}=-1$, so there is no admissible Mukai vector $\ma$ with $\ma^2=-2$ and $r=1$, but there is one with $\ma^2=0$ and $r=1$, since $\legendre{\Da}{3}=1$. 
    Alternatively, one can construct an admissible $\ma$ with $\ma^2=-2$ and $r=3$. Using Theorem~\ref{thm:putittogetherUhl}, we find either a $\PP^2$-bundle over $S$ which is a subvariety of a hyperk\"ahler $8$-fold, or a $\PP^8$-bundle over $S$ as a subvariety of a hyperk\"ahler $18$-fold. 
\end{enumerate}
\end{example}

\begin{example}{\bf ${p=5}$.} 
\begin{enumerate}
    \item When $d=1$, the K3 surface $T$ has degree $50$ and $S\cong \MM_{T,f}(5,f,5)$ has degree $2$. 
    The construction gives $X\subset \MM_{T,f}(4,f,4)$ a $\PP^9$-bundle over $S$ contained in the $20$-fold $\MM_{T,f}(4,f,4)$. 

    Here, $\alpha \in \Br(S)[5]$ is of Type II, $\Da \in \{1,4\}$, and Table~\ref{tab:alpha_valuesmod5} gives explicit possible values for $(\ia, \ca \bmod 5)$.
    When $\Da=1$, we have $\legendre{\Da-4}{5}=-1$, and there is no admissible Mukai vector $\ma$ with $\ma^2=-2$ and $r=1$. 
    When $\Da=4$, we have $\legendre{\Da-4}{5}=0$, in which case there is an admissible Mukai vector $\ma$ with $\ma^2=-2$ and $r=1$. 
    In either case, there is also an admissible $\ma$ with $\ma^2=0$ and $r=1$. 
   So Theorem~\ref{thm:putittogetherUhl} always gives a $\PP^4$-bundle over $S$ inside either a hyperk\"ahler $10$-fold or $12$-fold. Furthermore, when $\Delta_\alpha = 1$, the same theorem also produces a $\PP^{9}$-bundle in a hyperk\"ahler $20$-fold $\M_{S,h}(\ma +\mb, \alpha)$ with $\ma^2=-2$, $r=2$. 
    \smallskip
    
    \item When $d=4$, we have $S\cong \MM_{T,f}(5,f,20)$ is a degree $8$ K3 surface and $X\subset \MM_{T,f}(4,f,19)$ is a $\PP^{24}$-bundle over $S$. 
    In this case, $\dim \MM_{T,f}(4,f,19)=50$.
    
    In this case, when $\Da =1$, we find $\legendre{\Da - 16}{5} = 0$ and when $\Da = 4$ we have $\legendre{\Da - 16}{5}=-1$. 
    Thus, sometimes there is an admissible Mukai vector $\ma$ with $\ma^2=-2$ and $r=1$, and there is always one with $\ma^2=0$ and $r=1$. 
    By Theorem~\ref{thm:putittogetherUhl}, we again find a $\PP^4$-bundle over $S$ inside a hyperk\"ahler $10$-fold or $12$-fold. We can also find an admissible vector suitable for Theorem~\ref{thm:putittogetherUhl} which produces a $\PP^{24}$-bundle in the K3$^{[25]}$-type hyperk\"ahler manifold $M_{S,h}(\ma + \mb, \alpha)$ , with $\ma^2=-2$, $r=5$.
\end{enumerate}    
\end{example}


\end{document}